\documentclass[a4paper,onecolumn,superscriptaddress,10pt]{compositionalityarticle}
\pdfoutput=1

\usepackage{./sty/leosstylefile}
\usepackage{./sty/tikzit}
\usetikzlibrary{positioning,intersections,quotes}

\tikzstyle{node}=[fill=black, draw=black, shape=circle, scale=0.5]
\tikzstyle{wnode}=[fill=white, draw=black, shape=circle, scale=0.5]
\tikzstyle{textbox}=[inner sep=2pt, shape=rectangle, fill=none]
\tikzstyle{textnode}=[inner sep=0mm, shape=circle, fill=white]
\tikzstyle{gnode}=[inner sep=0mm, minimum size=1mm, fill={rgb,255: red,221; green,221; blue,221}, draw={rgb,255: red,221; green,221; blue,221}, shape=circle]
\tikzstyle{refine}=[fill=black, draw=black, shape=regular polygon, regular polygon sides=3, rotate=180, scale=0.5]
\tikzstyle{coarsen}=[fill=white, draw=black, shape=regular polygon, regular polygon sides=3, scale=0.5]
\tikzstyle{bdytextbox}=[fill=white, draw=black, shape=rectangle]
\tikzstyle{redbox}=[fill=white, draw=red, shape=rectangle, text=red]
\tikzstyle{bluecirc}=[inner sep=1mm, fill=white, draw={rgb,255: red,4; green,51; blue,255}, shape=circle, text={rgb,255: red,4; green,51; blue,255}]
\tikzstyle{rednode}=[fill=red, draw=red, shape=circle, scale=0.5]
\tikzstyle{new style 0}=[fill=white, draw=red, shape=circle, scale=0.5]
\tikzstyle{bluenode}=[fill={rgb,255: red,4; green,51; blue,255}, draw={rgb,255: red,4; green,51; blue,255}, shape=circle, scale=0.5]
\tikzstyle{yellownode}=[fill={rgb,255: red,255; green,210; blue,75}, draw={rgb,255: red,255; green,210; blue,75}, shape=circle, scale=0.5]
\tikzstyle{blacksq}=[fill=black, draw=black, shape=rectangle, scale=0.5]
\tikzstyle{bluetext}=[fill=none, draw=none, shape=rectangle, text={rgb,255: red,4; green,51; blue,255}]
\tikzstyle{reg}=[draw, fill=white, rounded rectangle, rounded rectangle left arc=none, minimum height=1em, minimum width=1em, node font={\scriptsize}]
\tikzstyle{coreg}=[draw, fill=white, rounded rectangle, rounded rectangle right arc=none, minimum height=1em, minimum width=1em, node font={\scriptsize}]
\tikzstyle{turquoisenode}=[fill={rgb,255: red,115; green,255; blue,239}, draw=black, shape=circle, scale=0.5]
\tikzstyle{resistor}=[R]
\tikzstyle{inductor}=[L]
\tikzstyle{capacitor}=[C]
\tikzstyle{voltage-source}=[american voltage source]
\tikzstyle{current-source}=[american current source]
\tikzstyle{togray}=[none]

\tikzstyle{edge}=[-, draw=black]
\tikzstyle{diredge}=[->, draw=black]
\tikzstyle{dashed edge}=[-, draw=black]
\tikzstyle{loosedash}=[-, dash pattern=on 1.5pt off 3pt, draw=black]
\tikzstyle{dirdash}=[->, dashed, dash pattern=on 2pt off 0.5pt, draw=black]
\tikzstyle{mapsto}=[{|->}, draw=black]
\tikzstyle{gray diredge}=[draw={rgb,255: red,221; green,221; blue,221}, ->]
\tikzstyle{dark grey dirdash}=[->, dashed, dash pattern=on 2pt off 0.5pt, draw={rgb,255: red,81; green,81; blue,81}]
\tikzstyle{doubedge}=[-, draw=black, double=none, double distance=3pt, inner sep=0pt, thick]
\tikzstyle{thedge}=[-, line width=1.5pt, draw=black]
\tikzstyle{gray dashed}=[-, dashed, dash pattern=on 1pt off 1.5pt, draw={rgb,255: red,128; green,128; blue,128}]
\tikzstyle{rededge}=[-, draw=red]
\tikzstyle{gray edge}=[-, draw={rgb,255: red,128; green,128; blue,128}]
\tikzstyle{blthedge}=[-, thick, draw={rgb,255: red,4; green,51; blue,255}]
\tikzstyle{blthedge-extend}=[-, thick, draw={rgb,255: red,4; green,51; blue,255}, shorten >= -0.3pt, shorten <= -0.3pt]
\tikzstyle{blthdash}=[-, dashed, dash pattern=on 3pt off 1pt, thick, draw={rgb,255: red,4; green,51; blue,255}]
\tikzstyle{dirrededge}=[draw=red, ->]

\tikzstyle{object}=[inner sep=0mm, shape=circle, fill=none]
\tikzstyle{bullet}=[fill=black, draw=black, shape=circle, scale=0.3]
\tikzstyle{circ}=[fill=white, draw=black, shape=circle, scale=0.3]
\tikzstyle{objectbox}=[inner sep=3pt, shape=rectangle, fill=none]
\tikzstyle{bdyobjectbox}=[fill=white, draw=black, shape=rectangle]

\tikzstyle{morphism}=[->, draw=black]
\tikzstyle{dash morphism}=[->, dashed, dash pattern=on 1.5pt off 1pt, draw=black]
\tikzstyle{mapsto}=[{|->}, draw=black]
\tikzstyle{nat transf}=[-implies, double, double distance=3pt, thick]
\tikzstyle{gray nat transf}=[-implies, draw=gray, double, double distance=3pt, thick]
\tikzstyle{equality}=[-, double, double distance=3pt]
\tikzstyle{squig morphism}=[->, draw=black, line join=round, decorate, decoration={zigzag, segment length=4, amplitude=.9,post=lineto, post length=2pt}]
\tikzstyle{hookarrow}=[right hook->, draw=black]
\tikzstyle{hookarrowmirror}=[left hook->, draw=black]
\tikzstyle{dotted line}=[-, dotted, draw=black]

\newcommand{\authorsforheader}{Lobski and Zanasi}
\newcommand{\compositionalityIssue}{XX}
\newcommand{\compositionalityVolume}{X}
\newcommand{\paperdoinumber}{10.46298/compositionality-\compositionalityVolume-\compositionalityIssue}
\newcommand{\paperdoi}{https://doi.org/\paperdoinumber}
\newcommand{\receiveddate}{yyyy-mm-dd}
\newcommand{\accepteddate}{yyyy-mm-dd}

\begin{document}

\title{Layered Monoidal Theories II: Fibrational Semantics}
\author{Leo Lobski}
\email{leo.lobski.21@ucl.ac.uk}
\orcid{0000-0002-0260-0240}
\author{Fabio Zanasi}
\email{f.zanasi@ucl.ac.uk}
\orcid{0000-0001-6457-1345}
\affiliation{University College London, United Kingdom}
\maketitle

\begin{abstract}
{\em Layered monoidal theories} provide a categorical framework for studying scientific theories at different levels of abstraction, via string diagrammatic algebra. We introduce models for three closely related classes of layered monoidal theories: {\em fibrational}, {\em opfibrational} and {\em deflational} theories. We prove soundness and completeness of these theories for the respective models. Our work reveals connections between layered monoidal theories and well-known categorical structures such as Grothendieck fibrations and displayed categories. 
\end{abstract}

\section{Introduction}\label{sec:intro}
Layered monoidal theories generalise {\em monoidal} theories: their diagrammatic language allows several monoidal theories to be combined, together with functorial translations between them, all within a single string diagram. The motivation for the formalism is that a single system may admit multiple intertranslatable descriptions at different levels of granularity. Examples include chemical reactions, which may be modelled abstractly as reaction networks or more finely as molecular graphs~\cite{chem-trans-motifs}, and computer architecture, where high-level descriptions must ultimately be implemented as microelectronic circuits~\cite{kaye-thesis}. In Part I of this work~\cite{lmt-part1}, we focussed on the \emph{syntax} of layered monoidal theories. In this second part, we introduce their \emph{semantic models} and investigate questions of soundness and completeness. Since we begin by recalling the syntax, the exposition is self-contained.

A layered monoidal theory consists of several monoidal theories, together with monoidal functors between them. A typical term in a layered theory is drawn on the left of Figure~\ref{fig:typical-term}, where $x$ is a term in the monoidal theory $\omega$ (drawn as a box on the left) and $y$ is a term in the monoidal theory $\tau$ (drawn as a box on the right). The dotted lines indicate that $x$ and $y$ sit above $\omega$ and $\tau$, and that the ``functor boundary'' $\refine_f$ sits above $f$. Terms may slide through the functor boundaries from one monoidal theory to another, as shown on the right of Figure~\ref{fig:typical-term}. Albeit highly intuitive, this is a purely syntactic formalism, leaving open how to semantically interpret such structures. In this paper we show that these diagrammatic transformations can be interpreted in a sound way in well-studied mathematical structures. Beyond establishing internal coherence of the formalism, our work reveals a connection between layered monoidal theories and well-known categorical structures, such as fibrations~\cite{grothendieck2004,borceux94-fibred,jacobs-cltt,johnstone02-indexed,loregian-riehl20} and displayed categories~\cite{displayed-categories,benabou}.

\begin{figure}[h]
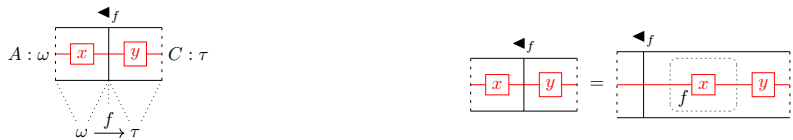

\begin{minipage}{0.45\textwidth}
\begin{equation*}
\scalebox{.7}{\input{journal-figures/term-layered-theory.tikz}}
\end{equation*}
\end{minipage}
\begin{minipage}{0.45\textwidth}
\begin{equation*}
\scalebox{.7}{\input{journal-figures/term-layered-pushing.tikz}}
\end{equation*}
\end{minipage}
\caption{Left: A typical term in a layered theory. Right: An equation for the functor boundary.%
\label{fig:typical-term}}
\end{figure}

As a necessary step towards semantics, we need to refine the classification of layered monoidal theories. The definition of a layered monoidal theory (Definition~\ref{def:layered-theory}) requires specifying a {\em recursive sorting procedure} (Definition~\ref{def:sorting-procedure}), which determines what kinds of terms are available within the theory. The role of such terms is to control what kinds of translations are present {\em between} the layers -- as opposed to the usual monoidal terms within a single layer. Different recursive sorting procedures give rise to different theories, even if the signature is kept fixed\footnote{Note that this is a departure from how monoidal theories are usually developed: in that case, there is a single sorting procedure, and consequently just one class of models -- (strict) monoidal categories.}. We shall consider three different (but related) recursive sorting procedures, corresponding to three classes of models. Namely, we define {\em opfibrational}, {\em fibrational} and {\em deflational} theories. The first two serve as building blocks for the third, which is the most expressive and central to our development.

The distinction between opfibrational, fibrational and deflational theories can be understood in terms of the functor boundaries (labelled as $\refine_f$ in Figure~\ref{fig:typical-term}). The choice of the direction of the functor boundary is, in principle, arbitrary: one may write $\omega$ on the right and $\tau$ on the left, while still depicting the functorial translation $f:\omega\rightarrow\tau$. It is, therefore, desirable to have a formalism where both directions are present, and the interaction between the functor boundaries is axiomatised. While the equations governing the behaviour of the boundaries should be symmetric, it is {\em a priori} not clear how to obtain both directions simultaneously.
In deflational theories, the most expressive case, boundaries indeed come in two forms: each generator between layers gives rise to both a ``forward'' and a ``backward'' boundary.

Opfibrational theories only contain the left-to-right functor boundaries as in Figure~\ref{fig:typical-term}, while fibrational theories reverse this direction. The two are mirror images of one another in all respects, and their corresponding models are categorically dual. Deflational theories are obtained by ``glueing'' the fibrational and the opfibrational version of the same theory along their shared internal morphisms (denoted by $x$ and $y$ in Figure~\ref{fig:typical-term}), thereby producing bidirectional boundaries. As a consequence of the modular construction of the theories, we need three classes of models -- one for each kind of theory mentioned above: {\em opfibrations with indexed monoids}, {\em fibrations with indexed comonoids} and {\em monoidal deflations} are the models of opfibrational, fibrational and deflational theories, respectively.

The paper establishes three soundness and completeness results, each arising from a free-forgetful adjunction. We summarise the results in Table~\ref{tab:soundness-completeness-results}: the first column lists the relevant theory, while the second column indicates the semantic structures with respect to which it is sound and complete. We include the classical result for monoidal theories as a point of reference. As expected from the close connection between the theories, the semantic structures are tightly linked: each (split) opfibration with indexed monoids contains models for monoidal theories as its fibres; each (split) fibration with indexed comonoids is obtained by dualising a (split) opfibration with indexed monoids; and the 1-categorical part of a (split) monoidal deflation is obtained by glueing a (split) opfibration with indexed monoids with its dual (split) fibration with indexed comonoids\footnote{What is meant by ``glueing'' is made precise by Corollary~\ref{cor:decomposition-biretrofunctor-opfibration} and Lemma~\ref{lma:monoidal-deflation-opfibration-indexed-monoids}.}.

\begin{table}
\centering
\renewcommand{\arraystretch}{1.5}
\begin{tabular}{ c c | c c }
theory & definition & semantics & definition \\
\hhline{==|==}
monoidal theory & -- & (strict) monoidal category & -- \\
\hline
opfibrational theory & \ref{def:opfib-layered-theory} & split opfibration with indexed monoids & \ref{def:opfib-indexed-mon} \\
\hline
fibrational theory & Figure~\ref{fig:fibrational-terms} & split fibration with indexed comonoids & -- \\
\hline
deflational theory & \ref{def:deflational-theory} & split monoidal deflation & \ref{def:monoidal-deflation}
\end{tabular}
\renewcommand{\arraystretch}{1}
\caption{The summary of theories and the semantics they characterise.\label{tab:soundness-completeness-results}}
\end{table}

From another perspective, (op)fibrational layered theories provide a string diagrammatic language for (strict) {\em indexed monoidal categories}. Moeller and Vasilakopoulou introduced a {\em monoidal Grothendieck construction}~\cite{monoidal-gro}, which provides an equivalence between (op)indexed monoidal categories and pseudomonoids in the category of (op)fibrations. The present work can be seen as the first step towards a diagrammatic treatment of this result. However, we remark that, unlike in~\cite{monoidal-gro}, all the monoidal categories that we treat as models are {\em strict}; see Subsection~\ref{subsec:opfibrations-indexed-monoidal}.

In light of their connection to indexed monoidal categories, one might reasonably call the theories under study {\em indexed monoidal} rather than {\em layered monoidal}. However, indexed monoidal categories provide a proper subclass of models of layered monoidal theories -- an important class that is not contained in indexed monoidal theories are deflations (Section~\ref{sec:deflations}). We therefore retain the term layered to emphasise the broader scope of the framework.

\paragraph{Structure of the article.} We begin by covering quite some background in Section~\ref{sec:preli}. After briefly defining terms and equations for layered theories in Section~\ref{sec:layered-theories}, we introduce {\em opfibrations with indexed monoids} in Section~\ref{sec:indexed-monoidal} and {\em monoidal deflations} in Section~\ref{sec:deflations}. These will, respectively, provide the models for the {\em opfibrational} and {\em deflational} layered theories. Section~\ref{sec:semantics} contains the main results of the article: we show that terms organise themselves into {\em free models} upon quotienting by appropriate structural equations.

\section{Preliminaries}\label{sec:preli}
Here we cover the preliminaries needed to follow the ensuing discussion, and give references to the relevant literature for more details. While most things are more or less known, we use this section to establish some notational and terminological conventions that will be used throughout the paper. Specifically, we cover (op)fibrations~\ref{sec:fib-opfib}, retrofunctors~\ref{subsec:retrofunctors}, indexed categories~\ref{subsec:indexed-categories}, profunctors~\ref{subsec:profunctors}, profunctor collages~\ref{subsec:collages} and monoidal theories~\ref{subsec:models-monoidal-theories}.

\subsection{Fibrations and opfibrations}\label{sec:fib-opfib}

While (op)fibrations were originally introduced by Grothendieck~\cite{grothendieck2004}, here we mostly follow Jacobs~\cite{jacobs-cltt}. Other expository texts include Borceux~\cite{borceux94-fibred}, Johnstone~\cite{johnstone02-indexed} and Loregian \& Riehl~\cite{loregian-riehl20}.

A functor is usually denoted by a lowercase Latin letter $p:\cat Y\rightarrow\cat X$, or just by an arrow $\cat Y\rightarrow\cat X$ with no symbol.

Given a functor $p:\cat Y\rightarrow\cat X$, we say that an object $y\in\Ob(\cat Y)$ is {\em above} an object $x\in\Ob(\cat X)$ if $py=x$. Similarly, a morphism $g\in\cat Y$ is above a morphism $f\in\cat X$ if $pg=f$. Given a morphism $f\in\cat X$, we denote by $\cat Y_f$ the set of all morphisms above $f$. In the special case when $f$ is an identity on some object $x\in\Ob(\cat X)$, this defines the subcategory of $\cat Y$ known as the {\em fibre} of $x$: the objects are all objects of $\cat Y$ above $x$, the morphisms are all morphisms above $\id_x$. We denote the fibre of $x$ simply by $\cat Y_x$.

Given a function $p:\Ob(\cat Y)\rightarrow\Ob(\cat X)$, a {\em liftable pair} $(f:x\rightarrow y,b)$ consists of a morphism $f\in\cat X$ and an object $b\in\Ob(\cat Y)$ with $pb=y$. Similarly, an {\em opliftable pair} $(a,f:x\rightarrow y)$ consists of a morphism $f\in\cat X$ and an object $a\in\Ob(\cat Y)$ with $pa=x$.

Let us fix a functor $p:\cat Y\rightarrow\cat X$.
\begin{definition}[Opcartesian map]\label{def:opcartesian-map}
A morphism $F:a\rightarrow b$ in $\cat Y$ is called {\em $p$-opcartesian} if for any morphism $G:a\rightarrow c$ in $\cat Y$ and any morphism $h:pb\rightarrow pc$ in $\cat X$ such that $pF;h=pG$ (i.e.~the lower triangle below commutes), there is a unique morphism $H:b\rightarrow c$ above $h$ such that $F;H=G$ (i.e.~the upper triangle below commutes):
\begin{equation*}
\scalebox{1}{\input{journal-figures/opcartesian-definition.tikz}}.
\end{equation*}
\end{definition}

\begin{remark}[Weakly opcartesian map]\label{rem:weakly-opcartesian-map}
We obtain the notion of a {\em weakly $p$-opcartesian map} upon restricting the quantification to $h$ being the identity map in the above definition: for any morphism $G:a\rightarrow c$ {\em with $pc=pb$} such that $pF=pG$, there is a unique map $H$ above $\id_{pb}$ with $F;H=G$. Clearly, each opcartesian map is, in particular, weakly opcartesian. The converse does not hold in general.
\end{remark}

\begin{definition}[Opfibration]\label{def:opfibration}
A functor $p:\cat Y\rightarrow\cat X$ is an {\em opfibration} if for every opliftable pair $(a,f:x\rightarrow y)$, there is a $p$-opcartesian morphism $F:a\rightarrow b$ above $f$:
\begin{equation*}
\scalebox{1}{\input{journal-figures/opfibration-definition.tikz}}.
\end{equation*}
Such morphism $F$ is called an {\em opcartesian lifting} of $(a,f:x\rightarrow y)$. Note that we drop the prefix $p$ if the functor is clear from the context.
\end{definition}
In the context of (op)fibrations, we refer to the domain category $\cat Y$ as the {\em total} category, and to the codomain category $\cat X$ as the {\em base} category.

\begin{remark}[Preopfibration]\label{rem:preopfibration}
We obtain the notion of a {\em preopfibration} if we require $F$ in the above definition to be {\em weakly} $p$-opcartesian rather than $p$-opcartesian.
\end{remark}

By the defining universal property of opcartesian maps, opcartesian liftings are unique up to an isomorphism. An opfibration which comes with a choice of an opcartesian lifting for each opliftable pair is called {\em cloven}.

If $p:\cat Y\rightarrow\cat X$ is a cloven opfibration, then every morphism $f:x\rightarrow y$ in $\cat X$ induces a functor between the fibres $f^*:\cat Y_x\rightarrow\cat Y_y$ by mapping each $a\in\cat Y_x$ to the codomain of the chosen opcartesian lifting of $(a,f:x\rightarrow y)$, and each morphism $G\in\cat Y_x$ to the unique map above $\id_y$ induced by the universal property of the opcartesian lifting of the liftable pair $(\dom(G),f)$, where $\dom(G)$ is the domain of $G$. The functor $f^*$ is referred to as the {\em reindexing functor} induced by $f$. In fact, defining a reindexing functor for each morphism in the base category of an opfibration is equivalent to providing a choice of opcartesian liftings.
\begin{proposition}\label{prop:opfibration-to-pseudofunctor}
Any cloven opfibration $p:\cat Y\rightarrow\cat X$ induces a pseudofunctor $\cat X\rightarrow\Cat$ by sending each object to its fibre and each morphism $f$ to the functor $f^*$.
\end{proposition}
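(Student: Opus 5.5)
The plan is to build the pseudofunctor $\Phi:\cat X\rightarrow\Cat$ explicitly, taking every piece of structure from the universal property of opcartesian maps (Definition~\ref{def:opcartesian-map}). On objects, set $\Phi(x)=\cat Y_x$. On morphisms, set $\Phi(f)=f^*$, the reindexing functor constructed just above.

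\textbf{Step 1: each $f^*$ is a functor.} Write $\bar f_a:a\rightarrow f^*a$ for the chosen opcartesian lifting of $(a,f)$. For $G:a\rightarrow a'$ in $\cat Y_x$, the morphism $f^*G$ is the unique map above $\id_y$ with $\bar f_a;f^*G=G;\bar f_{a'}$.
\begin{itemize}
\item Identities: $\id_{f^*a}$ satisfies the defining equation for $G=\id_a$, so $f^*\id_a=\id_{f^*a}$ by uniqueness.
\item Composition: both $f^*G;f^*G'$ and $f^*(G;G')$ lie above $\id_y$ and solve the equation for $G;G'$. Uniqueness again forces them to be equal.
\end{itemize}

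\textbf{Step 2: the structural isomorphisms.} For $a\in\cat Y_x$, the map $\id_a$ lies above $\id_x$ and is trivially opcartesian. Comparing it with the chosen lifting $\overline{\id_x}_a:a\rightarrow\id_x^*a$ gives the unit isomorphism $\eta_a:\id_x^*a\rightarrow a$. It is the unique map above $\id_x$ with $\overline{\id_x}_a;\eta_a=\id_a$.

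For $f:x\rightarrow y$ and $g:y\rightarrow z$, I first check that a composite of opcartesian maps is opcartesian. This follows by applying the universal properties twice, factoring $h$ through the intermediate stage. Hence $\bar f_a;\bar g_{f^*a}$ and $\overline{(f;g)}_a$ are both opcartesian liftings of $(a,f;g)$. Mutual factorisation over $\id_z$ then gives the compositor isomorphism $\mu_a:(f;g)^*a\rightarrow g^*f^*a$ in $\cat Y_z$. I will use the general fact that two opcartesian maps above the same morphism with the same domain are related by a unique vertical isomorphism compatible with them; this is the uniqueness-up-to-isomorphism already remarked in the paper.

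\textbf{Step 3: naturality of $\eta$ and $\mu$.} This is again a uniqueness argument. For $G:a\rightarrow a'$, both composites around the naturality square, precomposed with $\overline{(f;g)}_a$, equal $G;\bar f_{a'};\bar g_{f^*a'}$. Both are vertical, so by opcartesianness of $\overline{(f;g)}_a$ they coincide.

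\textbf{Step 4: the coherence axioms.} This is the main obstacle, mostly in bookkeeping rather than ideas. There are three axioms.
\begin{itemize}
\item Associativity: for $f;g;k$, the two composite isomorphisms $(f;g;k)^*a\rightarrow k^*g^*f^*a$ must agree.
\item Unitality: two unit laws relating $\mu$ and $\eta$.
\end{itemize}
The uniform strategy is to precompose both sides with the chosen lifting $\overline{(f;g;k)}_a$ and show that each equals $\bar f_a;\bar g_{f^*a};\bar k_{g^*f^*a}$. Since both sides are vertical and the lifting is opcartesian, uniqueness gives equality.

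Each verification unfolds the defining equations of $\mu$ and $\eta$. One also needs $k^*$ applied to $\mu$ to interact correctly with liftings, and that is exactly the defining equation of $k^*$ on morphisms. Care is required with the direction conventions, contravariant versus covariant; since the paper composes diagrammatically, the pseudofunctor is covariant. Once these equalities are established, the assignment $x\mapsto\cat Y_x$, $f\mapsto f^*$ with $\eta,\mu$ is a pseudofunctor.
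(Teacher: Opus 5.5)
Your proof is correct and is the standard argument (as in Jacobs) that the paper relies on. The paper states this proposition without proof, but its later remark, that the result fails for preopfibrations because the liftings must be genuinely opcartesian rather than weakly opcartesian, points to the step where you use closure of opcartesian maps under composition to build the compositor $\mu$; your remaining checks (unitor, naturality and coherence, each by precomposing with a chosen lifting and invoking uniqueness) are sound.
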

We say that an opfibration is {\em split} if the induced pseudofunctor is, in fact, a functor: i.e.~if for all $x\in\Ob(\cat X)$, the induced functor $\id_x^*$ is the identity functor on $\cat Y_x$ and for all composable maps we have $(f;g)^*=f^*;g^*$.

\begin{remark}
Note that the construction of the reindexing functor $f^*:\cat Y_x\rightarrow\cat Y_y$ already works for a preopfibration. However, the resulting assignment is not compositional (not even up to an isomorphism): the composition of two weakly opcartesian maps need not be weakly opcartesian. Thus, Proposition~\ref{prop:opfibration-to-pseudofunctor} fails for a preopfibration: its proof relies on the liftings being opcartesian (rather than merely weakly opcartesian). However, a preopfibration does induce a normal lax functor $\cat X\rightarrow\Prof$ into the 2-category of profunctors. We return to this in Section~\ref{subsec:collages} (see specifically Proposition~\ref{prop:displayed-factors-opfibration}).
\end{remark}

A fibration is the dual notion to opfibration, hence all the discussion here also holds for fibrations -- with the directions appropriately reversed.
\begin{definition}[Fibration]\label{def:fibration}
Given a functor $p:\cat Y\rightarrow\cat X$, a morphism $F:a\rightarrow b$ in $\cat Y$ is {\em $p$-cartesian} if it is $p$-opcartesian for $p:\cat Y^{op}\rightarrow\cat X^{op}$. The functor $p:\cat Y\rightarrow\cat X$ is a fibration if $p:\cat Y^{op}\rightarrow\cat X^{op}$ is an opfibration.
\end{definition}

The intuition behind (op)fibrations is that they capture indexing via disjoint unions of indexed sets. We demonstrate this with the following example.
\begin{example}[Family fibration]\label{ex:family-fibration}
Define the category $\Fam(\Set)$ of {\em families of sets} as follows:
\begin{itemize}
\item the objects are pairs $(I,f)$ of a set $I$ and a function $f:I\rightarrow\Set$ assigning to each element $i\in I$ a set $f(i)$,
\item a morphism $(I,f)\rightarrow (J,g)$ is a pair $\left(u,\left\{u_i\right\}_{i\in I}\right)$ of a function $u:I\rightarrow J$ and a family of functions $u_i:f(i)\rightarrow g(u(i))$,
\item the identity on $(I,f)$ is given by $(\id_I,\id_{fi})$,
\item the composition of $(u,u_i) : (I,f)\rightarrow (J,g)$ and $(v,v_j) : (J,g)\rightarrow (K,h)$ is given by $(u;v, u_i;v_{ui})$.
\end{itemize}
The projection $\Fam(\Set)\rightarrow\Set$ to the first component is a fibration:
\begin{itemize}
\item the cartesian maps $(u,\id_{fi}) : (I,f)\rightarrow (J,g)$ are given by commutative triangles
\begin{equation*}
\scalebox{1}{\input{journal-figures/families-fibration-cartesian.tikz}}
\end{equation*}
and the family of identity maps on each set $f(i)=g(u(i))$,
\item given a family of sets $(I,f)$ and a function $u:J\rightarrow I$, the cartesian lifting is given by
$$\left(u,\id_{(u;f)(j)}\right) : (J, u;f)\rightarrow (I,f).$$
\end{itemize}
\end{example}
Note that in Example~\ref{ex:family-fibration}, we did not use the structure of $\Set$ in the second component as the target of the indexing, apart from the fact that it is a category. This example, therefore, generalises to an arbitrary category: the category of {\em families over $\cat C$} is denoted by $\Fam(\cat C)$, and the projection to the first component $\Fam(\cat C)\rightarrow\Set$ is still a fibration.

Fibrations and opfibrations organise into categories. We will mostly focus on the fixed base case, however, the general case will also appear occasionally.
\begin{definition}[Morphism of (op)fibrations]
Let $p$ and $q$ in the diagram below be (op)fibrations. A {\em morphism} $(H,K):p\rightarrow q$ is a pair of functors such that the square
\begin{equation*}
\scalebox{1}{\input{journal-figures/morphism-opfibrations.tikz}}
\end{equation*}
commutes, and $H$ sends $p$-(op)cartesian maps to $q$-(op)cartesian maps.
\end{definition}
We denote the resulting categories of fibrations and opfibrations by $\Fib$ and $\OpFib$. For a fixed base category $\cat X$, there are also categories of (op)fibrations into $\cat X$, whose morphisms are commutative triangles which preserve the (op)cartesian maps: this is a special case of a morphism defined above when the functor $K$ between the bases is the identity. We denote the fixed base categories by $\Fib(\cat X)$ and $\OpFib(\cat X)$.

Given a morphism of (op)fibrations $(H,K):p\rightarrow q$, observe that $H$ sends $p$-(op)cartesian liftings to $q$-(op)cartesian liftings of the image of the (op)liftable pair under $K$. In the case of cloven (and hence split) opfibrations, we additionally require that the chosen liftings are preserved. The subcategories of split (op)fibrations are denoted by adding a subscript $\mathsf{sp}$ in all four cases defined above: the objects are split (op)fibrations, while morphisms are morphisms of (op)fibrations such that chosen liftings are mapped to chosen liftings.

Two important properties of (op)fibrations are closure under composition and pullbacks. We record this in the following propositions.
\begin{proposition}
If $p:\cat Y\rightarrow\cat X$ and $q:\cat X\rightarrow\cat Z$ are (op)fibrations, then $p;q : \cat Y\rightarrow\cat Z$ is also an (op)fibration.
\end{proposition}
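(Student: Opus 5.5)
I will prove the opfibration case directly; the fibration case then follows by duality. Indeed, by Definition~\ref{def:fibration}, $p$ and $q$ are fibrations exactly when $p:\cat Y^{op}\rightarrow\cat X^{op}$ and $q:\cat X^{op}\rightarrow\cat Z^{op}$ are opfibrations. Their composite is $(p;q)^{op}$, so the opfibration case gives the fibration case at once.

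\textbf{Key lemma.} If $F:a\rightarrow b$ is $p$-opcartesian and $pF$ is $q$-opcartesian, then $F$ is $(p;q)$-opcartesian.

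Take $G:a\rightarrow c$ in $\cat Y$ and $k:qpb\rightarrow qpc$ in $\cat Z$ with $qpF;k=qpG$. The proof goes in two steps.
\begin{itemize}
\item \emph{Down in $\cat X$.} Since $pF$ is $q$-opcartesian, there is a unique $h:pb\rightarrow pc$ above $k$ with $pF;h=pG$.
\item \emph{Up in $\cat Y$.} Since $F$ is $p$-opcartesian, there is a unique $H:b\rightarrow c$ above $h$ with $F;H=G$. This $H$ lies above $k$ under $p;q$.
\end{itemize}
For uniqueness, let $H'$ be any morphism above $k$ with $F;H'=G$. Applying $p$ gives $pF;pH'=pG$, and $pH'$ lies above $k$. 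By the uniqueness in the first step, $pH'=h$. By the uniqueness in the second step, $H'=H$.

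\textbf{Constructing liftings.} Given an opliftable pair $(a,f:z\rightarrow z')$ for $p;q$, so that $q(pa)=z$:
\begin{itemize}
\item First lift in $\cat X$: take a $q$-opcartesian $g:pa\rightarrow x'$ above $f$.
\item Then lift in $\cat Y$: take a $p$-opcartesian $F:a\rightarrow b$ above $g$.
\end{itemize}
Then $F$ lies above $f$, and by the lemma it is $(p;q)$-opcartesian. This is the required lifting, so $p;q$ is an opfibration.

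The only step with real content is the uniqueness argument in the lemma. Everything there reduces to the two universal properties applied in turn, so I expect no genuine obstacle. One could also remark that splitness is preserved when the chosen liftings are composed in this way, but the statement does not require it.
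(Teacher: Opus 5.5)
Your proof is correct: the key lemma (a $p$-opcartesian map lying over a $q$-opcartesian map is $(p;q)$-opcartesian), the two-stage construction of liftings, and the reduction of the fibration case to the opfibration case via the paper's definition of a fibration through opposite functors all go through as written. The paper records this proposition without proof as a standard fact, and your argument is the standard one. Your side remark that composing chosen liftings preserves splitness is also correct.
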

\begin{proposition}\label{prop:opfibrations-pullbacks}
Let $p$ in the diagram below be an (op)fibration. If the square
\begin{equation*}
\scalebox{1}{\input{journal-figures/opfibrations-pullbacks.tikz}}
\end{equation*}
is a pullback, then $q$ is also an (op)fibration.
\end{proposition}
The above propositions imply that the categories $\Fib(\cat X)$ and $\OpFib(\cat X)$ are cartesian, with the cartesian products given by pullbacks. The terminal object is given by the identity functor $\id_{\cat X}:\cat X\rightarrow\cat X$. We denote the resulting cartesian categories by $(\Fib(\cat X),\boxtimes,\id_{\cat X})$ and $(\OpFib(X),\boxtimes,\id_{\cat X})$.

\subsection{Retrofunctors}\label{subsec:retrofunctors}

A split opfibration may be thought of as a way of providing chosen liftings for a functor in a way that is (1) functorial (it strictly preserves identities and composition), (2) compatible with the functor in the sense that the lifting is above the morphism that induces it, and (3) minimal in the sense made precise by the requirement of being opcartesian (Definition~\ref{def:opcartesian-map}). If we drop the third requirement (minimality), we obtain the notion of a {\em lens}, which provides {\em some} functorial assignment of liftings compatible with the functor. Further, dropping the second condition (compatibility), we obtain the notion of a {\em retrofunctor}, which is simply a functorial assignment of lifts. Note that this does not require any functor to be present in order to be defined -- a function on object suffices. We remark that retrofunctors are more commonly known in the literature as {\em cofunctors}. However, we agree with Di~Meglio~\cite{dimeglio-mscthesis} that the term cofunctor is misleading, as the concept is not dual to a functor in any meaningful way. We therefore stick to {\em retrofunctor}. We follow Clarke~\cite{clarke-thesis} in our definition of a retrofunctor.

\begin{definition}[Retrofunctor]\label{def:retrofunctor}
A {\em retrofunctor} $(p,\varphi) : \cat Y\rightarrow\cat X$ consists of a function $p:\Ob(\cat Y)\rightarrow\Ob(\cat X)$ and a function $\varphi$ from the opliftable pairs to $\Mor(\cat Y)$ satisfying the following properties:
\begin{itemize}
\item the opliftable pair $(a,f:x\rightarrow y)$ is mapped to $\varphi(a,f) : a\rightarrow a^{\varphi}$, where the codomain $a^{\varphi}$ is above $y$, i.e.~$p\left(a^{\varphi}\right)=y$,
\item $\varphi(a,\id_{pa})=\id_a$, and $\varphi(a,f;g)=\varphi(a,f);\varphi\left(a^{\varphi},g\right)$.
\end{itemize}
\end{definition}
We depict the action of a retrofunctor as follows:
\begin{equation*}
\scalebox{1}{\input{journal-figures/retrofunctor-definition.tikz}}.
\end{equation*}

\subsection{Indexed categories}\label{subsec:indexed-categories}

We define indexed and opindexed categories, and state the well-known equivalence between (op)fibrations and (op)indexed categories.
\begin{definition}[(Op)indexed category]
An {\em $\cat X$-opindexed category} is a pseudofunctor $I:\cat X\rightarrow\Cat$. Dually, an {\em $\cat X$-indexed category} is a pseudofunctor $I:\cat X^{op}\rightarrow\Cat$.
\end{definition}
We write the natural isomorphisms that witness the pseudofunctor structure as $c_{f,g}:I(f;g)\xrightarrow{\sim}I(f); I(g)$ and $u_x:I(\id_x)\xrightarrow{\sim}\id_{Ix}$.

We say that an (op)indexed category is {\em strict} when the pseudofunctor is an actual functor.

Given a small category $\cat X$, we denote the categories of $\cat X$-indexed and $\cat X$-opindexed categories by $\ICat(\cat X)$ and $\OpICat(\cat X)$, whose morphisms are given by pseudonatural transformations. The strict versions are denoted by adding the subscript $\mathsf{st}$, where the morphisms are ordinary natural transformations.

\begin{definition}[Grothendieck construction]
Given a pseudofunctor $I:\cat X\rightarrow\Cat$, the {\em Grothendieck construction} is the category $\Gr(\cat X)$ defined as follows:
\begin{itemize}
\item objects are pairs $(x,a)$, where $x\in\Ob(X)$ and $a\in\Ob(Iy)$,
\item morphisms $(f,F):(x,a)\rightarrow (y,b)$ are pairs of a morphism $f:x\rightarrow y$ in $\cat X$ and a morphism $F:(If)(a)\rightarrow b$ in $I(y)$,
\item the identity on $(x,a)$ is given by $(\id_x,(u_x)_a)$,
\item the composite of $(f,F):(x,a)\rightarrow (y,b)$ and $(g,G):(y,b)\rightarrow (z,c)$ is given by
$$\left(f;g, (c_{f,g})_a ; (Ig)(F); G\right).$$
\end{itemize}
\end{definition}

The following equivalence is well-known. We give a proof sketch, and suggest the reader who has not seen the details before to complete the proof, as it is instructive, and all of the subsequent developments will build on this equivalence.
\begin{theorem}\label{thm:opfibrations-opindexed-categories-equivalence}
Let $\cat X$ be a small category. There are equivalences of categories
\begin{align*}
\Fib(\cat X) &\simeq \ICat(\cat X) \\
\OpFib(\cat X) &\simeq \OpICat(\cat X).
\end{align*}
Moreover, the equivalences restrict to split (op)fibrations and strict (op)indexed categories:
\begin{align*}
\Fib_{\mathsf{sp}}(\cat X) &\simeq \ICat_{\mathsf{st}}(\cat X) \\
\OpFib_{\mathsf{sp}}(\cat X) &\simeq \OpICat_{\mathsf{st}}(\cat X).
\end{align*}
\end{theorem}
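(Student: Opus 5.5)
We prove the opfibrational half; the fibrational half then follows by duality. By Definition~\ref{def:fibration}, $p$ is a fibration exactly when $p^{op}$ is an opfibration. Moreover, an $\cat X$-indexed category $\cat X^{op}\rightarrow\Cat$ composed with $(-)^{op}:\Cat\rightarrow\Cat$ gives an $\cat X^{op}$-opindexed category, and this correspondence respects (pseudo)natural transformations up to the evident reversal. The plan is to build two functors, $\Phi:\OpFib(\cat X)\rightarrow\OpICat(\cat X)$ and $\Gr:\OpICat(\cat X)\rightarrow\OpFib(\cat X)$, and to exhibit natural isomorphisms $\Phi\Gr\cong\id$ and $\Gr\Phi\cong\id$.

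\textbf{Defining $\Phi$.} Given an opfibration $p:\cat Y\rightarrow\cat X$, we choose a cleavage, which uses the axiom of choice since $\cat X$ is small but the fibres need not be. We then take the pseudofunctor of Proposition~\ref{prop:opfibration-to-pseudofunctor}. Its coherence isomorphisms $c_{f,g}$ and $u_x$ are the unique vertical maps comparing the chosen lift of $f;g$ with the composite of the chosen lifts of $f$ and $g$; both of these are opcartesian, because opcartesian maps compose. Uniqueness in Definition~\ref{def:opcartesian-map} gives the pseudofunctor axioms.

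Now let $H:p\rightarrow q$ be a morphism over $\cat X$. Restricting $H$ to fibres gives functors $H_x:\cat Y_x\rightarrow\cat Z_x$. Since $H$ preserves opcartesian maps, $H$ applied to the chosen lift of $(a,f)$ is an opcartesian lift of $(Ha,f)$. The comparison with the chosen lift in $q$ is therefore a vertical isomorphism $f^*H_x a\cong H_y f^*a$, and these isomorphisms form the pseudonaturality cells. Functoriality of $\Phi$ again reduces to uniqueness of factorisations.

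\textbf{Defining $\Gr$.} Given $I$, we take the projection $\Gr(I)\rightarrow\cat X$. We check that $(f,\id_{(If)a}):(x,a)\rightarrow(y,(If)a)$ is opcartesian. Suppose $(g,G):(x,a)\rightarrow(z,c)$ lies over $f;h$. Using the composition formula with $c_{f,h}$, the factorisation through $(f,\id)$ is forced to be $(h,K)$, where $K$ is $G$ precomposed with $(c_{f,h})_a^{-1}$. Given a pseudonatural transformation $\alpha:I\rightarrow J$, we define a morphism of opfibrations by $(x,a)\mapsto(x,\alpha_x a)$ and $(f,F)\mapsto(f,(\alpha_f)_a;\alpha_y F)$. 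The pseudonaturality axioms translate exactly into functoriality and the preservation of the lifts $(f,\id)$ up to isomorphism.

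\textbf{Equivalences.} For $\Gr\Phi\cong\id$, we send $(x,a)$ to $a$ and $(f,F)$ to the chosen lift of $(a,f)$ followed by $F$. This is bijective on objects, and fully faithful because every morphism above $f$ factors uniquely through the chosen lift. For $\Phi\Gr\cong\id$, the fibre of $\Gr(I)$ over $x$ has morphisms $(\id_x,F)$ with $F:(I\id_x)a\rightarrow b$; transporting along $u_x$ identifies this fibre with $Ix$, and the chosen lifts recover $If$ up to the $c$'s and $u$'s. The main obstacle, and the most tedious step, is verifying that these identifications are themselves (pseudo)natural and compatible with the coherence data. Every such verification reduces to the uniqueness clause of opcartesianness, so we handle them uniformly by that argument rather than chasing diagrams one by one.

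\textbf{Split case.} If $p$ is split with its given cleavage, the induced $c$ and $u$ are identities, so $\Phi p$ is a strict functor. A morphism preserving chosen lifts has identity pseudonaturality cells, so it maps to a strict natural transformation. Conversely, $\Gr$ of a strict $I$ with the lifts $(f,\id)$ is split, and a strict natural transformation preserves these lifts. The isomorphisms constructed above then restrict to the split and strict subcategories, giving the remaining equivalences.
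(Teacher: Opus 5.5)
Your proposal is correct and follows the same route as the paper's proof sketch: the left-to-right functor is the cleavage-induced pseudofunctor of Proposition~\ref{prop:opfibration-to-pseudofunctor}, the right-to-left functor is the projection out of the Grothendieck construction, and the split/strict case follows because the coherence cells become identities. You also supply the verifications the paper leaves to the reader: that the lifts $(f,\mathrm{id})$ are opcartesian, the pseudonaturality cells, the two comparison isomorphisms, and the duality reduction for the fibration case.
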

\begin{proof}[Proof sketch]
The left-to-right functor sends each (op)fibration to the pseudofunctor defined in Proposition~\ref{prop:opfibration-to-pseudofunctor}. The right-to-left functor sends each $\cat X$-(op)indexed category to the projection $\Gr(\cat X)\rightarrow\cat X$ to the first component from the Grothendieck construction.
\end{proof}
In fact, the above equivalences extend to 2-equivalences of 2-categories by adding the appropriate 2-cells to all the categories: we will, however, focus on the 1-categorical case here.

\subsection{(Op)fibrations with pseudomonoids and indexed monoidal categories}\label{subsec:opfibrations-indexed-monoidal}

Moeller and Vasilakopoulou~\cite{monoidal-gro} have extended the equivalence between (op)fibrations and (op)indexed categories in Theorem~\ref{thm:opfibrations-opindexed-categories-equivalence} to include monoidal structure. In fact, they have extended it in two ways: in the first case, each fibre is equipped with monoidal structure, which is preserved by the morphisms between the fibres; in the second case, there is a monoidal structure on the total and the base categories, which is preserved by the (op)fibration. Following Shulman~\cite{shulman2008}, we refer to the first (`fiberwise') case as {\em internal} monoidal structure, while to the second (`global') case as {\em external} monoidal structure. The two cases are, in general, distinct: we refer the reader to~\cite{monoidal-gro} for the details. Here we briefly sketch the situation of the first case, as our theory will build on a special case thereof.

On the side of (op)fibrations, the monoidal structure is captured by a pseudomonoid in the cartesian categories $(\Fib(\cat X),\boxtimes,\id_{\cat X})$ and $(\OpFib(X),\boxtimes,\id_{\cat X})$. A pseudomonoid in one of these categories is given by a triple $(\cat Y,\otimes,\one)$, where $\cat Y\rightarrow\cat X$ is an object (i.e.~an (op)fibration), while $\otimes:\cat Y\boxtimes\cat Y\rightarrow\cat Y$ and $\one :\cat X\rightarrow\cat Y$ are morphisms, i.e.~the diagrams
\begin{equation*}
\scalebox{1}{\input{journal-figures/gr-pseudomonoid.tikz}}
\end{equation*}
commute, and each functor preserves (op)cartesian maps, such that the usual unitality and associativity equations hold up to natural isomorphisms. A morphism of pseudomonoids is given by a morphism of (op)fibrations which commutes with the multiplication and the unit, again up to a natural isomorphism. We denote the resulting categories of pseudomonoids by $\PsMonFib(\cat X)$ and $\PsMonOpFib(\cat X)$. If the pseudomonoids are required to be actual monoids strictly preserved by the morphisms, the resulting categories of monoids are denoted by $\MonFib(\cat X)$ and $\MonOpFib(\cat X)$.

On the side of (op)indexed categories, the monoidal structure is captured by requiring the image of the pseudofunctor into $\Cat$ to be, in fact, contained in $\MonCat$.
\begin{definition}[(Op)indexed monoidal category]\label{def:opindexed-monoidal-category}
An {\em $\cat X$-opindexed monoidal category} is a pseudofunctor $I:\cat X\rightarrow\MonCat$. Dually, an {\em $\cat X$-indexed monoidal category} is a pseudofunctor $I:\cat X^{op}\rightarrow\MonCat$.
\end{definition}
We say that an (op)indexed monoidal category is {\em strict} if the pseudofunctor is an actual functor and its image is contained in $\MonCat_{\mathsf{st}}$, that is, each monoidal category and functor are strict.

Similarly to the plain indexed case, we denote the resulting categories by $\IMonCat(\cat X)$ and $\OpIMonCat(\cat X)$, and add the subscript $\mathsf{st}$ in the strict case.

\begin{warning}
The terms ``indexed monoidal category'' and ``monoidal indexed category'' are not used consistently in the literature. Some authors use it to refer to the external case when the indexing category $\cat X$ itself has a monoidal structure, and the pseudofunctor is required to preserve it in some sense. The terminology of Definition~\ref{def:opindexed-monoidal-category} is consistent with Hofstra \& De~Marchi~\cite{hofstra-demarchi2006} and Ponto \& Shulman~\cite{ponto-shulman12}.
\end{warning}

The following extends Theorem~\ref{thm:opfibrations-opindexed-categories-equivalence} to (op)fibrations with pseudomonoids and (op)indexed monoidal categories.
\begin{theorem}[Moeller and Vasilakopoulou~\cite{monoidal-gro}, Theorem 3.14]\label{thm:moeller-and-vasilakopoulou}
Let $\cat X$ be a small category. There are equivalences of categories
\begin{align*}
\PsMonFib(\cat X) &\simeq \IMonCat(\cat X) \\
\PsMonOpFib(\cat X) &\simeq \OpIMonCat(\cat X).
\end{align*}
\end{theorem}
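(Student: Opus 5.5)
Our plan is to build the equivalences on top of Theorem~\ref{thm:opfibrations-opindexed-categories-equivalence}, by checking that its two functors transport the extra monoidal data in both directions. We treat only the opfibration case $\PsMonOpFib(\cat X)\simeq\OpIMonCat(\cat X)$. The fibration case then follows by dualising, replacing $\cat X$ with $\cat X^{op}$ and (op)cartesian maps with their duals.

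\textbf{From a pseudomonoid to an opindexed monoidal category.} Let $(\cat Y,\otimes,\one)$ be a pseudomonoid in $(\OpFib(\cat X),\boxtimes,\id_{\cat X})$. The fibre of $\cat Y\boxtimes\cat Y$ over $x$ is $\cat Y_x\times\cat Y_x$, because the pullback is computed fibrewise. Since $\otimes$ commutes with the projections to $\cat X$, it restricts to functors $\otimes_x:\cat Y_x\times\cat Y_x\to\cat Y_x$. Likewise $\one$ gives an object $\one_x=\one(x)\in\cat Y_x$. The associator and unitors are isomorphisms in the category of opfibrations over $\cat X$, so their components are vertical, and hence each fibre is a monoidal category.

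For $f:x\to y$, $\otimes$ preserves opcartesian maps. So the tensor of two chosen lifts is an opcartesian lift of $(a\otimes b,f)$, and it is therefore canonically isomorphic to the chosen lift. This gives isomorphisms $f^*a\otimes f^*b\cong f^*(a\otimes b)$ and $\one_y\cong f^*\one_x$, where the latter holds because $\one(f)$ is opcartesian. Thus $f^*$ is strong monoidal. The coherence isomorphisms $c_{f,g}$ and $u_x$ of Proposition~\ref{prop:opfibration-to-pseudofunctor} are then monoidal natural transformations, since every map involved is the unique factorisation through an opcartesian map. Together this yields a pseudofunctor $\cat X\to\MonCat$.

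\textbf{From an opindexed monoidal category to a pseudomonoid.} Given $I:\cat X\to\MonCat$, we form $\Gr(I)$ and define
\[(f,F)\otimes(f,G)=\bigl(f,\ \mu^{-1};F\otimes G\bigr),\]
where $\mu$ is the strong monoidal structure of $If$. The unit is $\one(x)=(x,I_x)$, and $\one(f)$ uses $\epsilon^{-1}$. Functoriality of this $\otimes$ requires the monoidal naturality of $c_{f,g}$ and $u_x$. Both $\otimes$ and $\one$ send opcartesian maps, meaning those whose second component is invertible, to opcartesian maps, because tensors and composites of isomorphisms are isomorphisms. The associator and unitors are defined fibrewise and are pseudo-natural over the base by the monoidality of $If$.

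\textbf{Morphisms and the equivalence.} On morphisms we do the same. A morphism of pseudomonoids restricts to strong monoidal functors on the fibres, and these assemble into a pseudonatural transformation whose components are monoidal. Conversely, a monoidal pseudonatural transformation induces a functor on Grothendieck categories that commutes with $\otimes$ and $\one$ up to isomorphism. Finally, we check that the unit and counit of the equivalence in Theorem~\ref{thm:opfibrations-opindexed-categories-equivalence}, namely $\cat Y\simeq\Gr(\text{fibres})$ and $I\simeq\text{fibres of }\Gr(I)$, are compatible with the monoidal structures. Hence they lift to the monoidal categories.

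\textbf{Main obstacle.} We expect the hardest step to be the coherence bookkeeping: showing that the isomorphisms obtained from universal properties satisfy the monoidal-functor axioms, and that the associator of $\Gr(I)$ is natural with respect to non-vertical maps. Our first approach is to reduce every required equation to an equality of maps out of an opcartesian lift. Such maps are determined by their composite with the lift, so each check becomes an equation in a single fibre.
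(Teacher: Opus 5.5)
Your proof is correct. The paper gives no proof of this statement; it quotes it from Moeller and Vasilakopoulou. Its proof of the strict analogue, Theorem~\ref{thm:monoids-opfibrations-strict-imoncat-equivalence}, however, proceeds exactly as you do: restrict $\otimes$ and $\one$ to the fibres, and conversely equip $\Gr(I)$ with the fibrewise tensor and unit. So your argument is its non-strict version. There, strictness did the work that $\mu^{-1}$, $\epsilon^{-1}$, the monoidality of $c_{f,g}$ and $u_x$, and uniqueness of factorisation through opcartesian lifts do for you.

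Two small fixes are needed:
\begin{itemize}
\item The associator and unitors on $\Gr(I)$ must be vertical \emph{natural} transformations, not pseudo-natural ones. Their naturality with respect to non-vertical maps is exactly the coherence axioms of the strong monoidal functors $If$.
\item For morphisms, write $H_x$ for the restriction of a morphism of pseudomonoids $H$ to the fibre over $x$. The pseudonaturality isomorphisms $H_y\circ f^*\cong f^*\circ H_x$ must themselves be monoidal transformations. This follows from the same uniqueness argument.
\end{itemize}
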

As for Theorem~\ref{thm:opfibrations-opindexed-categories-equivalence}, this is, in fact, part of a 2-equivalence between 2-categories upon adding the appropriate 2-cells to all the categories.

While it would be interesting to study graphical languages for the general (non-strict) case as presented above, this would take us too far from the currently existing string diagrams for monoidal categories, almost all of which are only understood well in the strict case. Here, we therefore focus on the strict case stated below, and leave studying the non-strict case using the methods developed here for future work.
\begin{theorem}\label{thm:monoids-opfibrations-strict-imoncat-equivalence}
Let $\cat X$ be a small category. There are equivalences of categories
\begin{align*}
\MonFib_{\mathsf{sp}}(\cat X) &\simeq \IMonCat_{\mathsf{st}}(\cat X) \\
\MonOpFib_{\mathsf{sp}}(\cat X) &\simeq \OpIMonCat_{\mathsf{st}}(\cat X).
\end{align*}
\end{theorem}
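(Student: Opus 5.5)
The plan is to restrict the split/strict equivalence $\OpFib_{\mathsf{sp}}(\cat X)\simeq\OpICat_{\mathsf{st}}(\cat X)$ of Theorem~\ref{thm:opfibrations-opindexed-categories-equivalence} to monoids on one side and strict monoidal structure on the other. The fibration case then follows by duality: apply the opfibration case to $p^{op}:\cat Y^{op}\rightarrow\cat X^{op}$, since a monoid in $\Fib_{\mathsf{sp}}(\cat X)$ is exactly a monoid in the corresponding category of split opfibrations of opposite categories. So we only treat opfibrations.

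\emph{From monoids to strict opindexed monoidal categories.} Start with a monoid $(p:\cat Y\rightarrow\cat X,\otimes,\one)$ in split opfibrations, with cartesian product given by pullback $\boxtimes$.
\begin{itemize}
\item Since $\otimes:\cat Y\boxtimes\cat Y\rightarrow\cat Y$ commutes with the projections to $\cat X$, it restricts to functors $\otimes_x:\cat Y_x\times\cat Y_x\rightarrow\cat Y_x$, because the fibre of $\cat Y\boxtimes\cat Y$ over $x$ is $\cat Y_x\times\cat Y_x$. Likewise $\one$ gives an object $\one_x:=\one(x)\in\cat Y_x$.
\item Strict associativity and unitality of the monoid make each $(\cat Y_x,\otimes_x,\one_x)$ a strict monoidal category.
\item For $f:x\rightarrow y$, the chosen lifting in $\cat Y\boxtimes\cat Y$ of $((a,b),f)$ is the pair of chosen liftings. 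Since $\otimes$ preserves chosen liftings, $f^*(a\otimes b)=f^*a\otimes f^*b$, and on morphisms the same equation follows from uniqueness in the opcartesian universal property.
\item Since $\one$ preserves chosen liftings and identities in $\cat X$ are their own liftings under $\id_{\cat X}$, the image $\one(f)$ is the chosen lifting of $(\one_x,f)$. Hence $f^*\one_x=\one_y$.
\end{itemize}
So each $f^*$ is a strict monoidal functor, and splitness gives a functor $\cat X\rightarrow\MonCat_{\mathsf{st}}$. A monoid morphism $H$ strictly commutes with $\otimes$ and $\one$, so its fibrewise components are strict monoidal functors. This gives a morphism in $\OpIMonCat_{\mathsf{st}}(\cat X)$, assuming morphisms there are natural transformations with strict monoidal components.

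\emph{From strict opindexed monoidal categories to monoids.} Conversely, given $I:\cat X\rightarrow\MonCat_{\mathsf{st}}$, equip $\Gr(I)\rightarrow\cat X$ with the following structure:
\begin{itemize}
\item on objects, $(x,a)\otimes(x,b)=(x,a\otimes b)$;
\item on morphisms, $(f,F)\otimes(f,G)=(f,F\otimes G)$, which is well typed because $(If)(a\otimes b)=(If)a\otimes(If)b$;
\item the unit is $\one(x)=(x,I)$ and $\one(f)=(f,\id)$, using $(If)I=I$.
\end{itemize}
Functoriality of $\otimes$ uses that $Ig$ is strict monoidal, in order to commute it past $F\otimes G$ in the composite formula. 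The chosen liftings $(f,\id)$ are preserved on the nose, and the monoid laws hold strictly because each fibre is strict.

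\emph{Closing up, and the main obstacle.} It remains to check that the unit and counit of the underlying equivalence of Theorem~\ref{thm:opfibrations-opindexed-categories-equivalence} are monoid morphisms, respectively monoidal natural isomorphisms. In the strict/split setting the comparison $I\mapsto\Gr(I)\mapsto$ fibres is essentially the identity. In the other direction, $\cat Y\rightarrow\Gr(\text{fibres})$ sends $F:a\rightarrow b$ over $f$ to $(f,\bar F)$, where $\bar F:f^*a\rightarrow b$ is the factorisation through the chosen lifting. The main obstacle is showing that this factorisation respects $\otimes$, that is, $\overline{F\otimes G}=\bar F\otimes\bar G$. I would argue this by uniqueness: the lifting of $(a\otimes b,f)$ is the tensor of the liftings, so $(\text{lift}_a\otimes\text{lift}_b);(\bar F\otimes\bar G)=F\otimes G$, and the opcartesian property forces the equality.
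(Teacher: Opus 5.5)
Your proposal is correct and takes the same route as the paper: restrict $\otimes$ and $\one$ fibrewise to get strict monoidal fibres, and in the other direction put the fibrewise tensor and unit on the Grothendieck construction, where the chosen liftings $(f,\id)$ are preserved by strictness. The paper only sketches the opfibration case at the level of objects. You additionally verify, correctly, that each reindexing functor $f^*$ is strict monoidal, that the comparison $\cat Y\rightarrow$ Grothendieck construction respects $\otimes$ via the opcartesian uniqueness argument, and that the fibration case follows by duality (which also needs a fibrewise $(-)^{op}$ to undo the reversal of the fibres).
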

\begin{proof}
Let $(\cat Y,\otimes,\one)$ be a monoid on a split opfibration $\cat Y\rightarrow\cat X$. For each $x\in\Ob(\cat X)$, this defines a functor
\begin{align*}
\otimes_x : \cat Y_x\times\cat Y_x &\rightarrow\cat Y_x \\
(a,b) &\mapsto a\otimes b,
\end{align*}
and the unit is given by $\one(x)\in\Ob(\cat Y_x)$. Associativity and unitality follow from those of $\otimes$ and $\one$.

Conversely, given a functor $\cat X\rightarrow\MonCat_{\mathsf{st}}$ (i.e.~a strict opindexed monoidal category), we define the monoid on the Grothendieck construction as follows:
\begin{equation*}
\scalebox{1}{\input{journal-figures/gr-monoid-induces-by-opindexed.tikz}},
\end{equation*}
where $(\otimes_x,I_x)$ denotes the monoidal structure in the category indexed by $x\in\Ob(\cat X)$. Associativity and unitality now follow from the corresponding properties of strict monoidal categories. Moreover, the triangles above commute by definition, and opcartesian maps (i.e.~pairs whose second component is the identity) are preserved by strictness of the monoidal structure.
\end{proof}

\subsection{Profunctors}\label{subsec:profunctors}

We follow Loregian~\cite{loregian} in our discussion of profunctors. We fix the convention that profunctors are contravariant in the first variable, and covariant in the second one. Profunctors are also known under the names of {\em distributors}, {\em relators}, {\em correspondences} and {\em bimodules}.

\begin{definition}[Profunctor]
Let $\cat A$ and $\cat B$ be small categories. A {\em profunctor} from $\cat A$ to $\cat B$ is a functor $\cat A^{op}\times\cat B\rightarrow\Set$.
\end{definition}
We denote a profunctor $P$ from $\cat A$ to $\cat B$ by $P:\cat A\srarrow\cat B$. Thus, a profunctor assigns to each objects $a\in\Ob(\cat A)$ and $b\in\Ob(\cat B)$ a set $P(a,b)$, which we think of as ``morphisms'' $a\rightsquigarrow b$ between objects in different categories\footnote{Such arrows are sometimes called {\em proarrows} or {\em heteromorphisms}.}. The action of $P$ on morphisms is contravariant in the first variable (morphisms of $\cat A$) and covariant in the second variable (morphisms of $\cat B$):
\begin{equation*}
\scalebox{1}{\input{journal-figures/profunctor-def.tikz}}.
\end{equation*}
The assignment is functorial: $P(\id_a,\id_b)$ is the identity function on the set $P(a,b)$, and $P(f';f,g;g') = P(f,g);P(f',g')$. These assumption ensure that the arrows $a\rightsquigarrow b$ behave essentially like morphisms within a category, which leads one to ask how to compose two such arrows. This leads to defining composition of two profunctors.

Given profunctors $P:\cat A\srarrow\cat B$ and $Q:\cat B\srarrow\cat C$, their composite profunctor $P;Q:\cat A\srarrow\cat C$ is defined on objects by the following coend formula
\begin{equation}\label{eqn:profunctor-composition}
P;Q(a,c)\coloneq\int^{b\in\cat B}P(a,b)\times Q(b,c)\coloneq\quot{\coprod\limits_{b\in\cat B}P(a,b)\times Q(b,c)}{\sim},
\end{equation}
where $\sim$ is the equivalence relation on the set $\coprod_{b\in\cat B}P(a,b)\times Q(b,c)$ generated by requiring that for all $f\in P(a,b)$, $g\in\cat B(b,b')$ and $h\in Q(b',c)$ one has
$$\left(f,Q(g,\id_c)(h)\right)\sim\left(P(\id_a,g)(f),h\right).$$
On morphisms, we define
\begin{equation}\label{eqn:profunctor-composition-morphisms}
\scalebox{1}{\input{journal-figures/profunctor-composition-def.tikz}}.
\end{equation}

Note that the equivalence relation defined above can be thought of as a kind of associativity condition, ensuring that the situation
\begin{equation*}
\scalebox{1}{\input{journal-figures/profunctor-composition.tikz}}
\end{equation*}
unambiguously defines an element in $P;Q(a,c)$. In the next section, we shall see a construction making this formal: the above arrows will become actual morphisms within the same category, and the equivalence relation will guarantee associativity. We do not discuss the general theory of coends here, and simply treat the expression on the right-hand side of~\eqref{eqn:profunctor-composition} as the definition of the coend on the left. For a detailed exposition, we refer the reader to Loregian~\cite{loregian}.

\begin{example}\label{ex:identity-profunctor}
Let $\cat A$ be a category. Then the assignment
\begin{align*}
\cat A^{op}\times\cat A &\rightarrow\Set \\
(a,b) &\mapsto\cat A(a,b) \\
\left(f : a'\rightarrow a,g : b\rightarrow b'\right) &\mapsto f;{-};g : A(a,b)\rightarrow A(a',b')\ ::\ h\mapsto f;h;g
\end{align*}
defines a profunctor $\cat A\srarrow\cat A$.
\end{example}

\begin{definition}\label{def:bicat-profunctors}
The {\em bicategory of profunctors} $\Prof$ has
\begin{itemize}
\item all small categories as the 0-cells,
\item all profunctors as the 1-cells,
\item given profunctors $P,Q:\cat A\srarrow\cat B$, the 2-cells $P\rightarrow Q$ are given by the natural transformations,
\item the composition $\Prof(\cat A,\cat B)\times\Prof(\cat B,\cat C)\rightarrow\Prof(\cat A,\cat C)$ is given by~\eqref{eqn:profunctor-composition} and~\eqref{eqn:profunctor-composition-morphisms},
\item the identity on a 0-cell $\cat A$ is given by the profunctor in Example~\ref{ex:identity-profunctor},
\item composition and identities for 2-cells are those for the natural transformations.
\end{itemize}
\end{definition}

There are two ways to embed $\Cat$ into $\Prof$: both embedding functors are identity on objects, one is contravariant on the 2-cells, while the other is contravariant on the 1-cells. Hence, let us denote by $\Cat^{co}$ the category with the same 0- and 1-cells as in $\Cat$, and whose 2-cells are those of $\Cat$ with the direction reversed. Similarly, let us denote by $\Cat^{op}$ the category with the same 0- and 2-cells as in $\Cat$, but whose 1-cells are reversed.

First, let us define
$$-^{\refine}:\Cat^{co}\rightarrow\Prof$$
by mapping the functor $F:\cat A\rightarrow\cat B$ to the following profunctor:
\begin{align*}
F^{\refine} : \cat A^{op}\times\cat B &\rightarrow\Set \\
(a,b) &\mapsto\cat B(Fa,b) \\
\left(f : a'\rightarrow a,g : b\rightarrow b'\right) &\mapsto Ff;{-};g : B(Fa,b)\rightarrow B(Fa',b')\ ::\ h\mapsto Ff;h;g,
\end{align*}
and by mapping a natural transformation $\eta:F\rightarrow G$ to the natural transformation $\eta^{\refine}:G^{\refine}\rightarrow F^{\refine}$ whose $(a,b)$-component is given by $\eta_a;{-}$.

Likewise, we define
$$-^{\coarsen}:\Cat^{op}\rightarrow\Prof$$
by mapping the functor $F:\cat A\rightarrow\cat B$ to the following profunctor:
\begin{align*}
F^{\coarsen} : \cat B^{op}\times\cat A &\rightarrow\Set \\
(b,a) &\mapsto\cat B(b,Fa) \\
\left(f : b'\rightarrow b,g : a\rightarrow a'\right) &\mapsto f;{-};Fg : B(b,Fa)\rightarrow B(b',Fa')\ ::\ h\mapsto f;h;Fg,
\end{align*}
and by mapping a natural transformation $\eta:F\rightarrow G$ to the natural transformation $\eta^{\coarsen}:F^{\coarsen}\rightarrow G^{\coarsen}$ whose $(b,a)$-component is given by ${-};\eta_a$.

\begin{proposition}\label{prop:prof-embedding-adjoints}
Let $F:\cat A\rightarrow\cat B$ be a functor (i.e.~a 1-cell in $\Cat$). Then the 1-cells $F^{\refine}$ and $F^{\coarsen}$ are adjoint with
$$F^{\refine}\dashv F^{\coarsen}$$
in the bicategory $\Prof$.
\end{proposition}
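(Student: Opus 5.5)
To show $F^{\refine}\dashv F^{\coarsen}$ in $\Prof$, we exhibit a unit $\eta:\id_{\cat A}\rightarrow F^{\refine};F^{\coarsen}$ and a counit $\varepsilon:F^{\coarsen};F^{\refine}\rightarrow\id_{\cat B}$, both 2-cells, and verify the two triangle identities. Here $\id_{\cat A}$ denotes the hom-profunctor of Example~\ref{ex:identity-profunctor}.

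The first step is to compute the composites via the coend formula~\eqref{eqn:profunctor-composition}. For $F^{\refine};F^{\coarsen}:\cat A\srarrow\cat A$ we get
$$(F^{\refine};F^{\coarsen})(a,a')=\int^{b\in\cat B}\cat B(Fa,b)\times\cat B(b,Fa').$$
Each equivalence class $[h,k]$ with $h:Fa\rightarrow b$ and $k:b\rightarrow Fa'$ is identified with $[\id_{Fa},h;k]$ by the generating relation. Composition $[h,k]\mapsto h;k$ is well defined, and it is a bijection onto $\cat B(Fa,Fa')$ by this co-Yoneda argument, which I would check directly from the explicit quotient.

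For $F^{\coarsen};F^{\refine}:\cat B\srarrow\cat B$ we get
$$(F^{\coarsen};F^{\refine})(b,b')=\int^{a\in\cat A}\cat B(b,Fa)\times\cat B(Fa,b').$$
Here there is no simplification, but composition $[k,h]\mapsto k;h\in\cat B(b,b')$ is well defined. It respects the relation because $(k;Ff,h)\sim(k,Ff;h)$ and both sides compose to the same morphism.

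With these descriptions, we define the 2-cells. The unit $\eta_{a,a'}:\cat A(a,a')\rightarrow(F^{\refine};F^{\coarsen})(a,a')$ sends $f\mapsto[\id_{Fa},Ff]$, which corresponds to $Ff$ under the bijection above. The counit is $\varepsilon_{b,b'}[k,h]=k;h$. Naturality of both holds by functoriality of $F$ and associativity of composition, since the profunctor actions are just pre- and post-composition.

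It remains to check the triangle identities. The first says that the composite
$$F^{\refine}\cong\id;F^{\refine}\xrightarrow{\eta\,F^{\refine}}F^{\refine};F^{\coarsen};F^{\refine}\xrightarrow{F^{\refine}\varepsilon}F^{\refine};\id\cong F^{\refine}$$
is the identity; the second is the analogous condition for $F^{\coarsen}$. Their verification is the main bookkeeping obstacle, because one must make the unitors of $\Prof$ explicit; these are also co-Yoneda isomorphisms given by composition.

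We trace an element $h\in\cat B(Fa,b)$ through the first composite. The left unitor represents it as $[\id_a,h]$. The unit $\eta$ sends this to $[[\id_{Fa},\id_{Fa}],h]$, a triple-composite class $[\id_{Fa},\id_{Fa},h]$. The counit $\varepsilon$ composes the last two components, giving $[\id_{Fa},h]$, and the right unitor returns $h$.

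The second triangle is symmetric. For $k\in\cat B(b,Fa)$, the composite produces the class $[k,\id_{Fa},\id_{Fa}]$. The counit composes the first two components, giving $k$, and then $\id_{Fa}=F\id_a$ is absorbed by the unitor.

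Since every class is determined by a representative, and all maps in question are defined by composition, choosing representatives where one component is an identity suffices. This is the point to verify carefully: that working with chosen representatives is legitimate with respect to the coend equivalence relation, and that the associator of $\Prof$ acts as the identity on such triple classes.
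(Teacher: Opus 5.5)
Your proposal is correct and follows essentially the same route as the paper: you write down the unit and counit explicitly on coend representatives, take the counit to be composition, and check the triangle identities by tracing elements through the co-Yoneda unitors. Your unit $f\mapsto[\id_{Fa},Ff]$ is the same class as the paper's $(Ff,\id_{Fa'})$ under the generating coend relation, and your element-chasing of the triangle identities spells out what the paper leaves to the observation $(f,g)\sim(\id_{Fa},f;g)$.
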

\begin{proof}
The unit 2-cell $\eta : \id_{\cat A}\rightarrow F^{\refine};F^{\coarsen}$ is defined as
\begin{align*}
\eta_{a,a'} : \cat A(a,a') &\rightarrow\int^{b\in\cat B}\cat B(Fa,b)\times\cat B(b,Fa') \\
f &\mapsto (Ff,\id_{Fa'}),
\end{align*}
while the counit 2-cell $\varepsilon : F^{\coarsen};F^{\refine}\rightarrow\id_{\cat B}$ is defined as
\begin{align*}
\varepsilon_{b,b'} : \int^{a\in\cat A}\cat B(b,Fa)\times\cat B(Fa,b') &\rightarrow\cat B(b,b') \\
(g,h) &\mapsto g;h.
\end{align*}
Naturality and the triangle equations now follow by observing that for the composite $F^{\refine};F^{\coarsen}$ we have $(f,g)\sim (\id_{Fa},f;g)$ for all $f\in\cat B(Fa,b)$ and $g\in\cat B(b,Fa')$, and similarly, for the composite $F^{\coarsen};F^{\refine}$ we have $(Ff,g)\sim (\id_{Fa},Ff;g)$ for all $f\in\cat A(a,a')$ and $g\in\cat B(Fa',b)$.
\end{proof}

\begin{remark}\label{rem:counit-surjective-twocells}
We observe that the counit 2-cell defined in the proof of Proposition~\ref{prop:prof-embedding-adjoints} has surjective components on the image of the functor $F$. More precisely, for all $a,a'\in\cat A$, the function
$$\varepsilon_{Fa,Fa'} : \int^{a''\in\cat A}\cat B(Fa,Fa'')\times\cat B(Fa'',Fa') \rightarrow\cat B(Fa,Fa')$$
is surjective: a section is given by mapping $h:Fa\rightarrow Fa'$ to $(\id_{Fa},h)$. The surjectivity of the components will allow us to introduce the sections for the counit 2-cells for deflational theories in Subsection~\ref{subsec:opfib-defl-theories}.
\end{remark}

\subsection{Collages}\label{subsec:collages}

The notion of a {\em collage} generalises the Grothendieck construction (see Section~\ref{subsec:indexed-categories}) from (op)fibrations to arbitrary functors. What one gets on the other side of the equivalence are {\em displayed categories}, and a functor is obtained from a displayed category by taking its collage.

The term {\em displayed category} was introduced by Ahrens and Lumsdaine~\cite{displayed-categories} for the purposes of studying fibrations in the type theoretic setting, where equivalence of their presentation with the notion used in Definition~\ref{def:displayed-category} is pointed out. The definition used here, as well as Theorem~\ref{thm:collage-functors} and Proposition~\ref{prop:displayed-pseudo-decompose} are discussed in the notes taken by Streicher based on B\' enabou's course~\cite{benabou}.

Let $\cat X$ and $\cat Y$ be bicategories. Recall that a {\em lax functor} $(D,\varphi):\cat X\rightarrow\cat Y$ consists of:
\begin{itemize}
\item a function $D:\Ob(\cat X)\rightarrow\Ob(\cat Y)$,
\item for all $x,y\in\cat X$, a functor $D:\cat X(x,y)\rightarrow\cat Y(Dx,Dy)$,
\item for all $x\in\cat X$, a 2-cell $\varphi^x:\id_{Dx}\rightarrow D(\id_x)$ in $\cat Y$,
\item for all composable morphisms $f:x\rightarrow y$ and $g:y\rightarrow z$ in $\cat X$, a 2-cell $\varphi^{f,g}:D(f);D(g)\rightarrow D(f;g)$ in $\cat Y$ which is natural in $f$ and $g$,
\end{itemize}
such that lax associativity and unitality diagrams commute (see e.g.~\cite{leinster98,johnson-yau21}). We refer to the natural collections of 2-cells $\varphi$ as the {\em laxators}. We say that a lax functor is {\em normal} if for each $x\in\Ob(\cat X)$, the laxator $\varphi^x$ is the identity, i.e.~$\id_{Dx}=D(\id_x)$.
\begin{definition}[Displayed category]\label{def:displayed-category}
Let $\cat X$ be a small 1-category. A {\em displayed category} is a normal lax functor $(D,\varphi):\cat X\rightarrow\Prof$.
\end{definition}

Every functor $p:\cat Y\rightarrow\cat X$ induces a displayed category $D_p:\cat X\rightarrow\Prof$ as follows:
\begin{itemize}
\item for every $x\in\Ob(\cat X)$, let $D_p(x)\coloneq\cat Y_x$,
\item for every morphism $f:x\rightarrow y$ in $\cat X$, define
\begin{equation*}
\scalebox{1}{\input{journal-figures/functor-to-displayed.tikz}},
\end{equation*}
\item given $x\xrightarrow{f}y\xrightarrow{g}z$, the laxator $\varphi_p^{f,g} : D_p(f);D_p(g)\rightarrow D_p(f;g)$ is defined componentwise for each $a\in\cat Y_x$ and $c\in\cat Y_z$ by
\begin{align*}
\left(\varphi_p^{f,g}\right)_{a,c} : \int^{b\in\cat Y_y} D_p(f)(a,b)\times D_p(g)(b,c) &\rightarrow D_p(f;g)(a,c) \\
(F,G) &\mapsto (F;G).
\end{align*}
\end{itemize}
Note that the lax functor so defined is indeed normal: $D_p(\id_x)=\cat Y_x(-,-)=\id_{D_p(x)}$.

\begin{definition}[Collage]\label{def:collage}
Given a displayed category $(D,\varphi):\cat X\rightarrow\Prof$, its {\em collage} is the category $\coprod D$ defined as follows:
\begin{itemize}
\item the objects are pairs $(x,a)$, where $x\in\cat X$ and $a\in D(x)$,
\item a morphism $(f,F) : (x,a)\rightarrow (y,b)$ consists of a morphism $f:x\rightarrow y$ in $\cat X$ and an element $F\in D(f)(a,b)$,
\item the identity on $(x,a)$ is given by $(\id_x,\id_a)$,
\item the composition of $(f,F) : (x,a)\rightarrow (y,b)$ and $(g,G) : (y,b)\rightarrow (z,c)$ is given by $\left(f;g,\varphi^{f,g}_{a,c}(F,G)\right)$.
\end{itemize}
\end{definition}
Note that there is a forgetful functor $\coprod D\rightarrow\cat X$ projecting the objects and the morphisms to their first component. The key observation by B\' enabou is that {\em any} functor into $\cat X$ arises as the collage of some displayed category. In order to state this result, let us define the category $\Disp(\cat X)$ as having all displayed categories $\cat X\rightarrow\Prof$ as objects, and the morphisms are all lax transformations whose components are in the image of the embedding $-^{\refine}:\Cat^{co}\rightarrow\Prof$.
\begin{theorem}[B\' enabou]\label{thm:collage-functors}
Let $\cat X$ be a small category. The slice category over $\cat X$ is equivalent to the category of displayed categories on $\cat X$:
$$\quot{\Cat}{\cat X}\simeq\Disp(\cat X).$$
\end{theorem}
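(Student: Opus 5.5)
The plan is to build functors in both directions and show that both composites are naturally isomorphic to the identity. This mirrors the proof sketch of Theorem~\ref{thm:opfibrations-opindexed-categories-equivalence}, with the collage in place of the Grothendieck construction.

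\emph{From functors to displayed categories.} On objects of $\quot{\Cat}{\cat X}$, send $p:\cat Y\rightarrow\cat X$ to $D_p$ as defined above. We must check that $D_p$ is a normal lax functor.
\begin{itemize}
\item Each $D_p(f)(a,b)=\{F:a\rightarrow b \text{ in } \cat Y \mid pF=f\}$ is a profunctor $\cat Y_x\srarrow\cat Y_y$, with action given by pre- and post-composition with vertical maps.
\item The laxator $(F,G)\mapsto F;G$ is well defined on the coend, because composition in $\cat Y$ is associative: $(F;H,G)$ and $(F,H;G)$ have the same image.
\item Naturality is immediate, and the lax associativity and unitality axioms reduce to associativity and unitality of composition in $\cat Y$.
\item Since $\cat X$ is a $1$-category, there is nothing to check at the level of 2-cells of $\cat X$.
\end{itemize}
On morphisms, a commuting triangle $H:\cat Y\rightarrow\cat Y'$ over $\cat X$ restricts to functors $H_x:\cat Y_x\rightarrow\cat Y'_x$. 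We take the component at $x$ to be $H_x^{\refine}$. For each $f:x\rightarrow y$ we need a 2-cell relating $D_p(f);H_y^{\refine}$ and $H_x^{\refine};D_{p'}(f)$ in the direction fixed by the paper's convention for lax transformations. In coend terms it is $(F,h)\mapsto HF;h$, and it is well defined since $H$ preserves composition. The lax transformation axioms follow from functoriality of $H$.

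\emph{From displayed categories to functors.} Send $(D,\varphi)$ to the projection $\coprod D\rightarrow\cat X$. Checking that $\coprod D$ is a category is the crux. Associativity of composition is the lax associativity axiom for $\varphi$, and the identity laws come from normality, $D(\id_x)=\id_{Dx}=Dx(-,-)$, together with the unit axioms for $\varphi$. A lax transformation $\alpha$ with components $G_x^{\refine}$ induces a functor $\coprod D\rightarrow\coprod D'$ by $(x,a)\mapsto(x,G_xa)$ on objects. A morphism $(f,F)$ is sent to $(f,\alpha_f(F,\id_{G_yb}))$, where $(F,\id_{G_yb})$ is read as an element of the coend $D(f);G_y^{\refine}$; for this to work, $\alpha_f$ must point from $D(f);G_y^{\refine}$ to $G_x^{\refine};D'(f)$. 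Functoriality follows from the lax transformation axioms together with the coend relation.

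\emph{The two composites.} Starting from $p$, the collage $\coprod D_p$ has objects $(x,a)$ with $pa=x$, and morphisms $(f,F)$ with $pF=f$. The assignment $(x,a)\mapsto a$ is then an isomorphism $\coprod D_p\cong\cat Y$ over $\cat X$, and it is natural in $p$. Starting from $D$, the fibre of $\coprod D$ over $x$ is isomorphic to $Dx$. Normality makes this work: the vertical maps are exactly $D(\id_x)(a,b)=Dx(a,b)$. Similarly $D_{\coprod D}(f)\cong D(f)$ and the laxators agree, giving an invertible lax transformation with components identities$^{\refine}$.

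\emph{Main obstacle.} The hardest step is the bookkeeping on morphisms, namely showing that the morphism assignments are mutually inverse up to natural isomorphism. For a lax transformation $\alpha$ between $D$ and $D'$, the recovered transformation is determined by its values $\alpha_f(F,\id)$. The full map $\alpha_f(F,h)$ must then be recovered by naturality in the second variable together with the coend relation $(F,h)\sim(F;\id,h)$. I would handle this first via a Yoneda-style reduction: every element of $D(f);G_y^{\refine}$ is equivalent to one of the form $(F,h)=(F,\id)\cdot h$. After this reduction, the remaining verifications are routine.
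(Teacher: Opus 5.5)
Your proposal is correct and follows the same route as the paper's sketch: it sends $p$ to $D_p$ and $D$ to the projection $\coprod D\rightarrow\cat X$, and it spells out the checks on morphisms that the paper delegates to B\'enabou, Street and Loregian, namely the forced direction $D(f);G_y^{\refine}\Rightarrow G_x^{\refine};D'(f)$ of the 2-cells and the reduction of $\alpha_f$ to its values $\alpha_f(F,\id)$. One point to keep in mind is that a composite in $\Disp(\cat X)$ has components $G_x^{\refine};K_x^{\refine}$, which are only canonically isomorphic (by co-Yoneda) to $(G_x;K_x)^{\refine}$, so both constructions are functorial only modulo these isomorphisms; this imprecision comes from the paper's definition of $\Disp(\cat X)$, not from your argument.
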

\begin{proof}[Proof sketch]
Given a functor $p$ into $\cat X$, the corresponding displayed category is given by $D_p$. Conversely, given a displayed category $D$ on $\cat X$, the corresponding functor is given by the projection from the collage $\coprod D$ into $\cat X$
\end{proof}
We refer the reader to~\cite{benabou}, \cite{street-powerful-functors} and~\cite[Theorem 5.4.5]{loregian} for the detailed discussion and proofs.

Collage of a displayed category, together with Theorem~\ref{thm:collage-functors} allow to detect properties of displayed categories on $\cat X$ via properties of functors into $\cat X$. We shall need two such properties, namely, we characterise when $D:\cat X\rightarrow\Prof$ is a pseudofunctor (rather than merely a lax functor) and when it factors through $\Cat$. In the latter case, each morphism in $\cat X$ is, in fact, indexing a functor, so that we have an indexed category $\cat X\rightarrow\Cat$, showing that displayed categories generalise indexed categories, while the collage is indeed a generalised Grothendieck construction.

Definition~\ref{def:decomposition-lifting} characterises those functors which correspond to pseudofunctorial displayed categories. In order to state the definition, we need the notion of {\em path components} above a pair of composable maps, which we now proceed to describe.

Given a functor $\cat Y\rightarrow\cat X$, a pair of composable maps $f:x\rightarrow y$ and $g:y\rightarrow z$ in $\cat X$ and objects $a\in\cat Y_x$ and $c\in\cat Y_z$ in the fibres over $x$ and $z$, let us define the category $\cat Y(f,g)(a,c)$ as follows:
\begin{itemize}
\item the objects $(F,b,G)$ are composable pairs $F:a\rightarrow b$ and $G:b\rightarrow c$ such that $b\in\cat Y_y$, $F$ is above $f$ and $G$ is above $g$,
\item a morphism $H:(F,b,G)\rightarrow (F',b',G')$ is a map $H:b\rightarrow b'$ in the fibre $\cat Y_y$ such that the diagram below commutes:
\begin{equation*}
\scalebox{1}{\input{journal-figures/composable-pairs-morphism.tikz}}.
\end{equation*}
\end{itemize}
We say that two objects in $\cat Y(f,g)(a,c)$ are in the same {\em path component} if they are related by the equivalence relation generated by stipulating that $(F,b,G)\sim (F',b',G')$ if there is a morphism $(F,b,G)\rightarrow (F',b',G')$. In other words, two composable pairs are in the same path component if there is a zigzag of morphisms between them.

\begin{definition}[Factorisation lifting]\label{def:decomposition-lifting}
We say that a functor $\cat Y\rightarrow\cat X$ is a {\em factorisation lifting} if for any morphism $F:a\rightarrow c$ in $\cat Y$ above $f:x\rightarrow z$, whenever there are morphisms $g:x\rightarrow y$ and $g':y\rightarrow z$ such that $g;g'=F$, there are induced morphisms $G:a\rightarrow b$ and $G':b\rightarrow c$ above $G$ and $G'$, respectively, such that $G;G'=F$:
\begin{equation*}
\scalebox{1}{\input{journal-figures/decomposition-pair-induced.tikz}},
\end{equation*}
and, moreover, any such induced pairs of morphisms $(G,b,G')$ and $(H,b',H')$ are in the same path component of $\cat Y(g,g')(a,c)$.
\end{definition}
\begin{remark}
What we call a {\em factorisation lifting} is usually called a {\em Conduch\' e fibration}, a {\em Conduch\' e functor} or an {\em exponentiable functor}. However, we consider the term {\em factorisation lifting} more suggestive of the property such functor is required to satisfy. The term {\em exponentiable} should be reserved to the property of the last bullet point of Proposition~\ref{prop:displayed-pseudo-decompose}. Said proposition then establishes that a functor is a factorisation lifting if and only if it is exponentiable.
\end{remark}
By analogy with (op)fibrations, a factorisation lifting is {\em cloven} if it comes with chosen lifts for each factorisation. Further, a cloven factorisation lifting is {\em split} if the composition of any chosen lifts is the chosen lift of the composition, and the chosen lift of the trivial factorisation (either $f;\id_z$ or $\id_x;f$) is the corresponding trivial factorisation ($F;\id_c$ or $\id_a;F$).

The following proposition is due to Giraud~\cite{giraud64} and Conduch\'e~\cite{conduche72}. A detailed discussion can be found in Street~\cite{street-powerful-functors}.
\begin{proposition}\label{prop:displayed-pseudo-decompose}
For any functor $p:\cat Y\rightarrow\cat X$, the following are equivalent:
\begin{itemize}
\item $p$ is a factorisation lifting,
\item the displayed category $D_p:\cat X\rightarrow\Prof$ is a pseudofunctor,
\item the functor $p^*:\quot{\Cat}{\cat X}\rightarrow\quot{\Cat}{\cat Y}$ defined by taking pullbacks has a right adjoint.
\end{itemize}
\end{proposition}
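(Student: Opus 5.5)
I will prove the equivalences in the cycle (i)$\Leftrightarrow$(ii) and then (i)$\Leftrightarrow$(iii), where (i) is ``$p$ is a factorisation lifting'', (ii) is ``$D_p$ is a pseudofunctor'' and (iii) is ``$p^*$ has a right adjoint''.

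\textbf{Step 1: (i)$\Leftrightarrow$(ii).} Since $D_p$ is normal by construction, being a pseudofunctor amounts to every laxator component
\[
(\varphi_p^{f,g})_{a,c} : \int^{b\in\cat Y_y} D_p(f)(a,b)\times D_p(g)(b,c)\rightarrow D_p(f;g)(a,c), \qquad (F,G)\mapsto F;G,
\]
being a bijection. I will unfold the coend using~\eqref{eqn:profunctor-composition}. The coend is the set of pairs $(F,b,G)$, quotiented by the relation generated by sliding a fibre map $H\in\cat Y_y(b,b')$ across the pair. This relation is exactly the relation ``there is a morphism $(F,b,G)\rightarrow(F',b',G')$ in $\cat Y(f,g)(a,c)$''. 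Hence the coend is precisely the set of path components of $\cat Y(f,g)(a,c)$, and the laxator sends a component to the common composite. Surjectivity of the laxator is then the existence clause of Definition~\ref{def:decomposition-lifting}, and injectivity is the clause that any two lifted factorisations of the same morphism lie in the same path component. So (i)$\Leftrightarrow$(ii) holds componentwise. Naturality needs no separate check, because it is already part of the lax structure.

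\textbf{Step 2: (i)$\Rightarrow$(iii).} By Theorem~\ref{thm:collage-functors}, I identify $\quot{\Cat}{\cat X}$ with $\Disp(\cat X)$. For $q:\cat Z\rightarrow\cat Y$, I construct $\Pi_p q$ over $\cat X$ as follows:
\begin{itemize}
\item the fibre over $x$ is the category of sections of $q$ restricted to $\cat Y_x$;
\item a morphism over $f:x\rightarrow z$ from $s$ to $t$ is a family assigning to each $F\in D_p(f)(a,c)$ a morphism of $\cat Z$ above $F$ from $s(a)$ to $t(c)$, compatible with the actions of the fibres;
\item composition over $f;g$ is defined using the \emph{inverse} of the laxator: a morphism above $F$ with $pF=f;g$ is factored as $F=G;G'$, and the two given families are composed along this factorisation.
\end{itemize}
Well-definedness of this composition is exactly the path-component condition, so bijectivity of the laxator from Step 1 is what makes $\Pi_p q$ a category. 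I then check the hom-set bijection $\quot{\Cat}{\cat Y}(p^*r,q)\cong\quot{\Cat}{\cat X}(r,\Pi_p q)$ by currying. This is routine but tedious bookkeeping.

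\textbf{Step 3: (iii)$\Rightarrow$(i).} A left adjoint preserves colimits, so $p^*$ preserves pushouts. Let $\mathbf 2$ and $\mathbf 3$ denote the walking arrow and the walking composable pair, and take the pushout $\mathbf 3=\mathbf 2\sqcup_{\mathbf 1}\mathbf 2$ in $\quot{\Cat}{\cat X}$, mapping to $\cat X$ via $(g,g')$. Pulling this back along $p$ yields an isomorphism
\[
p^*\mathbf 2\sqcup_{p^*\mathbf 1}p^*\mathbf 2\;\cong\; p^*\mathbf 3 .
\]
The left-hand side is a pushout of categories, and its morphisms are composable formal pairs modulo the fibre of $y$. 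These are exactly the elements of the coend in Step 1. On the other hand, the morphisms of $p^*\mathbf 3$ over $g;g'$ are exactly the elements of $D_p(g;g')(a,c)$. The isomorphism therefore gives bijectivity of the laxator, i.e.\ (ii), and hence (i) by Step 1.

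The main obstacle is this last step. Computing morphisms in a pushout of categories requires the free composition and quotient description, and one must be careful that it matches the coend exactly, in particular for identities and fibre maps at the gluing object. If this becomes delicate, I would instead cite Street~\cite{street-powerful-functors} for the exponentiability direction and keep Step 1 as the core argument.
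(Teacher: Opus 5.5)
Your proposal is correct. Step~1 is the paper's own argument for (i)$\Leftrightarrow$(ii), written out in more detail. The paper simply observes that the laxator is invertible exactly when every $H$ above $f;g$ factors as $F;G$, uniquely up to the relation defining the coend. That is your identification of the coend with the set of path components of $\cat Y(f,g)(a,c)$.

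The genuine difference is the third bullet. The paper proves nothing there: it defers to Street~\cite{street-powerful-functors} and notes that only (i)$\Leftrightarrow$(ii) is used later. Your Steps~2 and~3 instead supply the classical Giraud--Conduch\'e argument, and both go through.

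In Step~2, your description of $\Pi_p q$ is in fact forced by Yoneda. Objects over $x$ must be maps $p^*(\mathbf 1\xrightarrow{x}\cat X)\rightarrow q$ in $\quot{\Cat}{\cat Y}$, i.e.\ sections of $q$ over $\cat Y_x$. Morphisms over $f$ must be maps out of $p^*(\mathbf 2\xrightarrow{f}\cat X)$, which is the collage of $D_p(f)$, i.e.\ your fibre-compatible families. Associativity is automatic, because the value at a triple factorisation does not depend on the bracketing. Your appeal to Theorem~\ref{thm:collage-functors} is never actually used.

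The obstacle you flag in Step~3 is harmless here. Both legs $\cat Y_y\hookrightarrow p^*\mathbf 2$ are full and injective on objects, and each collage has arrows in only one direction between its two fibres. So in the pushout, every morphism from the $x$-copy to the $z$-copy crosses each boundary exactly once. It therefore reduces to a pair $(F,G')$ modulo sliding fibre maps, which is precisely the coend. One checks directly that the category with these coends as hom-sets has the pushout's universal property, and the comparison map to $p^*\mathbf 3$ is the laxator $(F,G')\mapsto F;G'$.

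Your route makes the proposition self-contained, at the cost of constructing $\Pi_p$ and computing a pushout in $\Cat$. The paper's route buys brevity, which suffices because later results, e.g.\ Theorem~\ref{thm:minimal-deflation-indexed-category}, only invoke the first equivalence.
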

We only prove the equivalence of first two bullet points, as this suffices for developing the subsequent theory. The third one is stated for the sake of completeness of the presentation: the reader is referred to Street~\cite{street-powerful-functors} for the proof.
\begin{proof}
Observe that the laxator of $D_p$
$$\left(\varphi_p^{f,g}\right)_{a,c} : \int^{b\in\cat Y_y} D_p(f)(a,b)\times D_p(g)(b,c) \rightarrow D_p(f;g)(a,c)$$
is an isomorphism if and only if for all $H:a\rightarrow c$ above $f;g$ there are $F:a\rightarrow b$ and $G:b\rightarrow c$ above $f$ and $g$ such that $F;G=H$, and such pair is unique up to the equivalence defining the coend, i.e.~precisely when $p$ is a factorisation lifting.
\end{proof}
In light of Proposition~\ref{prop:displayed-pseudo-decompose}, a factorisation lifting being cloven corresponds to a choice of an inverse for each component of the laxator of $D_p$. If such choice of inverses is, moreover, functorial in both $f$ and $g$, the factorisation lifting is split.

The following proposition and the ensuing corollary connect our discussion of displayed categories and collages to opfibrations.
\begin{proposition}\label{prop:displayed-factors-opfibration}
A functor $p:\cat Y\rightarrow\cat X$ is an preopfibration if and only if the displayed category $D_p:\cat X\rightarrow\Prof$ factors through the embedding $-^{\refine}:\Cat^{co}\rightarrow\Prof$.
\end{proposition}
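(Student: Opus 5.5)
We prove both directions by comparing, for each $f:x\rightarrow y$ in $\cat X$, the profunctor $D_p(f):\cat Y_x\srarrow\cat Y_y$, $D_p(f)(a,b)=\cat Y_f(a,b)$ (morphisms $a\rightarrow b$ above $f$), with representable profunctors of the form $F^{\refine}$. Recall that $F^{\refine}(a,b)=\cat Y_y(Fa,b)$ for a functor $F:\cat Y_x\rightarrow\cat Y_y$. The factorisation through $-^{\refine}$ means that each $D_p(f)$ is naturally isomorphic to some $(f^*)^{\refine}$, and that the laxators and the identity assignment are compatible with this.

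\emph{($\Rightarrow$)} Suppose $p$ is a preopfibration, and choose for each opliftable pair $(a,f)$ a weakly opcartesian lift $\bar f_a:a\rightarrow f^*a$. As in the construction of reindexing functors, weak opcartesianness makes $f^*:\cat Y_x\rightarrow\cat Y_y$ a functor. For $G:a\rightarrow a'$ in $\cat Y_x$, the morphism $f^*G$ is the unique map above $\id_y$ with $\bar f_a;f^*G=G;\bar f_{a'}$; both composites lie above $f$, so the definition applies. Now define
\[ \cat Y_y(f^*a,b)\rightarrow\cat Y_f(a,b),\qquad H\mapsto\bar f_a;H. \]
This map is a bijection precisely by weak opcartesianness, since every $K:a\rightarrow b$ above $f$ with $b\in\cat Y_y$ factors uniquely. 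It is natural in $b$ trivially, and natural in $a$ by the defining equation of $f^*G$. For identities we choose $\bar{\id}_a=\id_a$, which is weakly opcartesian, so the isomorphism is the identity of $\cat Y_x(-,-)$ and normality is respected. The laxator $\varphi_p^{f,g}$ then transports to a 2-cell $(f^*)^{\refine};(g^*)^{\refine}\cong((f^*;g^*))^{\refine}\rightarrow((gf)^*)^{\refine}$. Since $-^{\refine}$ is locally fully faithful (it is the Yoneda embedding, contravariant on 2-cells), this 2-cell comes from a unique natural transformation $(f;g)^*\rightarrow f^*;g^*$. Explicitly, its component at $a$ is the map above $\id_z$ induced by factoring $\bar f_a;\bar g_{f^*a}$ through $\overline{(f;g)}_a$. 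Hence $D_p$ is, up to this isomorphism, the image of a normal lax functor into $\Cat^{co}$.

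\emph{($\Leftarrow$)} Suppose $D_p\cong F_{(-)}^{\refine}$ with $F_f:\cat Y_x\rightarrow\cat Y_y$. Let $\theta:\cat Y_y(F_fa,b)\cong\cat Y_f(a,b)$ be the isomorphism, and set $\bar f_a\coloneq\theta(\id_{F_fa}):a\rightarrow F_fa$, which lies above $f$. By Yoneda, naturality in $b$ gives $\theta(H)=\bar f_a;H$ for every $H$ above $\id_y$. Bijectivity of $\theta$ then says exactly that $\bar f_a$ is weakly opcartesian in the sense of Remark~\ref{rem:weakly-opcartesian-map}, so $p$ is a preopfibration.

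The main obstacle is being precise about what ``factors through'' means: whether it is strict factorisation or factorisation up to isomorphism, and whether the laxators must also lie in the image of $-^{\refine}$. The plan is to read it as ``pseudonaturally isomorphic, as a normal lax functor, to a composite with $-^{\refine}$''. Under that reading the laxators lift automatically, because $-^{\refine}$ is locally fully faithful. If the paper intends the weaker, objectwise condition that each $D_p(f)$ be representable, the argument above already handles it, because the direction ($\Leftarrow$) uses only representability of each $D_p(f)$.
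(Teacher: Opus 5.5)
Your proof is correct and follows the paper's argument: the paper also shows, one morphism $f$ at a time, that $D_p(f)\cong F^{\refine}$ for some functor $F$ exactly when every $(a,f)$ has a weakly opcartesian lift. One direction precomposes with the lift, and the other takes the image of $\id_{Fa}$ under the isomorphism. Your extra step, transporting the laxators along the isomorphisms and using that $-^{\refine}$ is locally fully faithful to get the comparison $(f;g)^*\rightarrow f^*;g^*$, goes beyond what the paper checks, and it is correct.
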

\begin{proof}
Observe that we have a natural isomorphism
\begin{equation*}
\scalebox{1}{\input{journal-figures/preopfib-factors.tikz}}
\end{equation*}
for some functor $F:\cat Y_x\rightarrow\cat Y_y$ if and only if the opliftable pair $(a,f:x\rightarrow y)$ has a weak opcartesian lifting $f_a:a\rightarrow F(a)$ for each $a\in\cat Y_x$. To prove this, we argue as follows. If such liftings exist, then $F\coloneq f^*$ is the usual reindexing functor between the fibres, and the isomorphism is given by the universal property of weak opcartesian maps: any $F':a\rightarrow b$ in $D_p(f)(a,b)$ uniquely factorises as
\begin{equation*}
\scalebox{1}{\input{journal-figures/preopfib-factors-factorises.tikz}}.
\end{equation*}
Conversely, given such a functor and a natural isomorphism, define the lifting of the opliftable pair $(a,f:x\rightarrow y)$ as $\alpha^{-1}_{a,Fa}\left(\id_{Fa}\right)$.
\end{proof}
\begin{corollary}\label{cor:preopfib-opfib-factlift}
For a preopfibration $\cat Y\rightarrow\cat X$, the following are equivalent:
\begin{itemize}
\item it is an opfibration,
\item it is a factorisation lifting,
\item the composition of weakly opcartesian maps is weakly opcartesian.
\end{itemize}
\end{corollary}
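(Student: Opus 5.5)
The plan is to prove the cycle of implications (1)$\Rightarrow$(2)$\Rightarrow$(3)$\Rightarrow$(1), where (1) is ``opfibration'', (2) is ``factorisation lifting'' and (3) is ``weakly opcartesian maps compose''. Throughout, Proposition~\ref{prop:displayed-factors-opfibration} lets us write $D_p(f)\cong (f^*)^{\refine}$ for each $f:x\rightarrow y$, where $f^*$ is the reindexing functor built from a chosen weakly opcartesian lifting $f_a:a\rightarrow f^*a$. Concretely, every $F:a\rightarrow b$ above $f$ factors uniquely as $f_a;\bar F$ with $\bar F$ in the fibre $\cat Y_y$.

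\textbf{(1)$\Rightarrow$(3).} In an opfibration, any weakly opcartesian map $F$ above $f$ is isomorphic, in the fibre, to the opcartesian lifting of $(\dom F,f)$. The reason is that both satisfy the universal property restricted to maps above $f$, so the comparison map in the fibre is invertible. Hence weakly opcartesian maps are opcartesian. Opcartesian maps compose by a routine two-step application of the universal property, so (3) follows.

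\textbf{(3)$\Rightarrow$(2).} Take $H:a\rightarrow c$ above $f;g$. By (3), $f_a;g_{f^*a}$ is weakly opcartesian above $f;g$, so $H=f_a;g_{f^*a};\bar H$ for a unique fibre map $\bar H$. This gives the factorisation $(f_a,\ g_{f^*a};\bar H)$. For the path-component condition, let $(F,b,G)$ be any factorisation of $H$. Write $F=f_a;\bar F$ with $\bar F:f^*a\rightarrow b$ in $\cat Y_y$. Then $\bar F$ is a morphism $(f_a,f^*a,\bar F;G)\rightarrow (F,b,G)$ in $\cat Y(f,g)(a,c)$. It remains to check that $(f_a,f^*a,\bar F;G)$ coincides with the canonical pair, i.e.\ that $\bar F;G=g_{f^*a};\bar H$. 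Both maps go from $f^*a$ to $c$ above $g$, and they agree after precomposing with $f_a$. So the question becomes cancellation of $f_a$ among maps above $g$, and this is the main obstacle.

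To handle it, factor both maps through $g_{f^*a}$: write $\bar F;G=g_{f^*a};K$ and $g_{f^*a};\bar H$ with $K,\bar H$ in the fibre. Precomposing with $f_a$ gives $f_a;g_{f^*a};K=f_a;g_{f^*a};\bar H$. Since $f_a;g_{f^*a}$ is weakly opcartesian by (3), uniqueness of the factorisation through it gives $K=\bar H$, and the two maps agree. Therefore every factorisation lies in the path component of the canonical one, which proves (2).

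\textbf{(2)$\Rightarrow$(1).} Here we use Proposition~\ref{prop:displayed-pseudo-decompose}: $D_p$ is a pseudofunctor, so its laxator is invertible. Combined with $D_p(f)\cong(f^*)^{\refine}$, this gives a natural isomorphism $(f^*)^{\refine};(g^*)^{\refine}\cong((f;g)^*)^{\refine}$. Since $-^{\refine}$ preserves composition up to isomorphism, the composite $(f^*;g^*)^{\refine}$ is isomorphic to the profunctor $((f;g)^*)^{\refine}$ represented by $(f;g)^*$. Tracing the laxator $(F,G)\mapsto F;G$ through these isomorphisms identifies the element corresponding to $\id$ with $f_a;g_{f^*a}$. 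Hence every map above $f;g$ factors uniquely through $f_a;g_{f^*a}$, so weakly opcartesian maps compose.

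Finally, let $h:y\rightarrow z$ and let $G:a\rightarrow c$ lie above $f;h$. Since weakly opcartesian maps compose, $f_a;h_{f^*a}$ is weakly opcartesian, so $G=f_a;h_{f^*a};\bar G$ for a unique fibre map $\bar G$. Then $h_{f^*a};\bar G$ is a map above $h$ with $f_a;(h_{f^*a};\bar G)=G$. Uniqueness of this map reduces to the same cancellation argument as in (3)$\Rightarrow$(2). Hence $f_a$ is fully opcartesian and $p$ is an opfibration.
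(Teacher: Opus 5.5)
Your proof is correct and takes essentially the route the paper intends: the corollary is left unproved there, as an immediate consequence of Propositions~\ref{prop:displayed-factors-opfibration} and~\ref{prop:displayed-pseudo-decompose}, and your step (2)$\Rightarrow$(1) is exactly that argument, identifying the invertible laxator of $D_p$ with precomposition by $f_a;g_{f^*a}$, while you prove (1)$\Rightarrow$(3) and the cancellation step of (3)$\Rightarrow$(2) by hand. Two small remarks: your cancellation argument already shows that each chosen lift $f_a$ is opcartesian, so (3)$\Rightarrow$(1) is immediate and your final paragraph repeats it; and in (2)$\Rightarrow$(1) you only show that chosen lifts compose to weakly opcartesian maps, which is all you use (it extends to arbitrary weakly opcartesian maps, since these are unique up to fibre isomorphism).
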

Proposition~\ref{prop:displayed-factors-opfibration} and Corollary~\ref{cor:preopfib-opfib-factlift}, of course, dualise to (pre)fibrations. Namely, a functor $p:\cat Y\rightarrow\cat X$ is an prefibration if and only if the displayed category $D_p:\cat X\rightarrow\Prof$ factors through the embedding $-^{\coarsen}:\Cat^{op}\rightarrow\Prof$, and Corollary~\ref{cor:preopfib-opfib-factlift} holds verbatim upon removing `op' throughout.

\subsection{Monoidal theories and models}\label{subsec:models-monoidal-theories}

The terms for monoidal theories, upon quotienting by the structural identities~\ref{def:str-id}, give rise to {\em string diagrams}~\cite{joyal-street88,joyal-street91,selinger,piedeleu-zanasi}, which are sound and complete with respect to monoidal categories.

\begin{definition}[Monoidal signature]
A {\em monoidal signature} is a pair $(C,\Sigma)$ of a set $C$ and a function $\Sigma : C^*\times C^*\rightarrow\Set$.
\end{definition}
Given a monoidal signature $(C,\Sigma)$, the elements of $C$ are called {\em colours}, and for $a,b\in C^*$, the elements in $\Sigma(a,b)$ are the {\em monoidal generators} with arity $a$ and coarity $b$.

A {\em morphism} between two monoidal signatures $\left(f,f_{a,b}\right) : (C,\Sigma)\rightarrow (D,\Gamma)$ consists of a function $f:C\rightarrow D$, and for each pair $(a,b)\in C^*\times C^*$, a function $f_{a,b}:\Sigma(a,b)\rightarrow\Gamma\left(f^*a,f^*b\right)$. We denote the resulting category of monoidal signatures by $\MSgn$.

\begin{definition}[Terms of a monoidal signature]\label{def:terms-monoidal}
Given a monoidal signature $(C,\Sigma)$, the set of {\em sorts} is given by $C^*\times C^*$. The {\em terms} are generated by the recursive sorting procedure below:
\begin{center}
  \centering\small\noindent
  \begin{bprooftree}
    \AxiomC{$\sigma\in\Sigma(a,b)$}
    \RightLabel{\;\;}
    \UnaryInfC{$\scalebox{.8}{\begin{tikzpicture}
	\begin{pgfonlayer}{nodelayer}
		\node [style=none] (4) at (-0.75, 0) {};
		\node [style=none] (5) at (0.75, 0) {};
		\node [style=bdytextbox] (6) at (0, 0) {$\sigma$};
	\end{pgfonlayer}
	\begin{pgfonlayer}{edgelayer}
		\draw [style=edge] (4.center) to (6);
		\draw [style=edge] (6) to (5.center);
	\end{pgfonlayer}
\end{tikzpicture}
} : (a,b)$}
    \DisplayProof
    \AxiomC{$a\in C$}
    \RightLabel{\;\;}
    \UnaryInfC{$\scalebox{.8}{\input{journal-figures/iddiag-dashed.tikz}} : (a,a)$}
    \DisplayProof
    \AxiomC{\phantom{$a\in C$}}
    \RightLabel{\;\;}
    \UnaryInfC{$\scalebox{.8}{\input{journal-figures/emptydiag.tikz}} : (\varepsilon,\varepsilon)$}
  \end{bprooftree}
  \begin{bprooftree}
    \AxiomC{$\scalebox{.8}{\begin{tikzpicture}
	\begin{pgfonlayer}{nodelayer}
		\node [style=none] (0) at (-0.75, 0) {};
		\node [style=none] (1) at (0.75, 0) {};
		\node [style=bdytextbox] (2) at (0, 0) {$t$};
	\end{pgfonlayer}
	\begin{pgfonlayer}{edgelayer}
		\draw [style=edge] (0.center) to (2);
		\draw [style=edge] (2) to (1.center);
	\end{pgfonlayer}
\end{tikzpicture}
} : (a,b)$}
    \AxiomC{$\scalebox{.8}{\begin{tikzpicture}
	\begin{pgfonlayer}{nodelayer}
		\node [style=none] (0) at (-0.75, 0) {};
		\node [style=none] (1) at (0.75, 0) {};
		\node [style=bdytextbox] (2) at (0, 0) {$s$};
	\end{pgfonlayer}
	\begin{pgfonlayer}{edgelayer}
		\draw [style=edge] (0.center) to (2);
		\draw [style=edge] (2) to (1.center);
	\end{pgfonlayer}
\end{tikzpicture}
} : (b,c)$}
    \RightLabel{\;\;}
    \BinaryInfC{$\scalebox{.8}{\input{journal-figures/t-s-compose.tikz}} : (a,c)$}
    \DisplayProof
    \AxiomC{$\scalebox{.8}{} : (a,b)$}
    \AxiomC{$\scalebox{.8}{} : (c,d)$}
    \RightLabel{.}
    \BinaryInfC{$\scalebox{.8}{\input{journal-figures/t-s-tensor.tikz}} : (ac,bd)$}
  \end{bprooftree}
\end{center}
When using the linear notation, we denote the terms generated by the above rules by $\sigma$, $\id_a$, $\id_{\varepsilon}$, $(t;s)$ and $(t\otimes s)$, respectively. We denote the set of terms by $\Term_{C,\Sigma}$, and by $S:\Term_{C,\Sigma}\rightarrow C^*\times C^*$ the ``underlying sort function" mapping $t:(a,b)\mapsto (a,b)$.
\end{definition}

A pair of terms $(t,s)\in\Term_{C,\Sigma}\times\Term_{C,\Sigma}$ is {\em parallel} if $S(t)=S(s)$. Let us denote the set of parallel terms by $P_{C,\Sigma}$.

Given a morphism $f:(C,\Sigma)\rightarrow (D,\Gamma)$ of monoidal signatures, it immediately extends to a function on terms, also denoted by $f:\Term_{C,\Sigma}\rightarrow\Term_{D,\Gamma}$, by recursively defining:\label{p:extend-morphism-signatures-terms}
\begin{center}
\begin{tabular}{c c c}
$\sigma : (a,b) \mapsto f_{a,b}(\sigma) : (f^*a,f^*b)$, & $\id_a : (a,a) \mapsto \id_{fa} : (fa,fa)$, & $\id_{\varepsilon} : (\varepsilon,\varepsilon) \mapsto \id_{\varepsilon} : (\varepsilon,\varepsilon)$, \\
$(t;s) : (a,c) \mapsto \left(f(t);f(s)\right) : (f^*a,f^*c)$, & \multicolumn{2}{c}{$(t\otimes s) : (ac,bd) \mapsto \left(f(t)\otimes f(s)\right) : (f^*(ac),f^*(bd))$.}
\end{tabular}
\end{center}
Observe that $f:\Term_{C,\Sigma}\rightarrow\Term_{D,\Gamma}$ preserves parallel terms: if $(t,s)\in P_{C,\Sigma}$, then $(ft,fs)\in P_{D,\Gamma}$.

\begin{definition}[Monoidal theory]\label{def:monoidal-theory}
A {\em monoidal theory} $\mathcal T$ is a triple $(C,\Sigma,E)$, where $(C,\Sigma)$ is a monoidal signature, and $E\sse P_{C,\Sigma}$ is a set of parallel terms. We refer to $E$ as the {\em equations} of $\mathcal T$.
\end{definition}

A morphism of monoidal theories $f:(C,\Sigma,E)\rightarrow (D,\Gamma,F)$ is given by a morphism of monoidal signatures $f:(C,\Sigma)\rightarrow (D,\Gamma)$ such that for all $(s,t)\in E$, we have $(fs,ft)\in F$. We denote the category of monoidal theories by $\MTh$.

\begin{definition}[Structural identities]\label{def:str-id}
Given a monoidal signature $(C,\Sigma)$, the set of {\em structural identities} $S$ is given by the following equations, where $s$, $s_i$ and $t_i$ range over the terms of the appropriate type:
\begin{equation*}
\scalebox{.7}{\input{journal-figures/structural-identities.tikz}}.
\end{equation*}
\end{definition}

\begin{definition}[Term congruence]
Given a monoidal theory $(C,\Sigma,E)$, the {\em term congruence} $\Eeq$ is the smallest equivalence relation on $\Term_{C,\Sigma}$ generated by $E\cup S$, which is a congruence with respect to $;$ and $\otimes$, i.e.~if $t_1\Eeq t_2$ and $s_1\Eeq s_2$, then $t_1;s_1\Eeq t_2;s_2$
and $t_1\otimes s_1\Eeq t_2\otimes s_2$, whenever the composition is defined.
\end{definition}

Note that the structural equations justify dropping all the dashed boxes when drawing the terms up to a term congruence.

Examples~\ref{def:symm-mon-thy}-{def:thy-indexed-monoids} will play an important role when developing layered monoidal theories.
\begin{example}[Symmetric monoidal theory]\label{def:symm-mon-thy}
A {\em symmetric monoidal theory} is a monoidal theory $(C,\Sigma,\mathcal S)$ such that for all $a,b\in C$, the set $\Sigma(ab,ba)$ contains the special generator, denoted by $\scalebox{.5}{\input{journal-figures/symmetry.tikz}}$, called the {\em symmetry}, and the set $\mathcal S$ contains the following equations, where $s$ ranges over all terms of the appropriate type, and the symmetry on the non-generator wires is defined recursively:
\begin{equation*}
\scalebox{.7}{\input{journal-figures/symmetry-eqns.tikz}}.
\end{equation*}
\end{example}
We remark that most of the concrete examples of monoidal theories we will see in this article will be symmetric.

\begin{example}[Theory of monoids]\label{def:thy-mon}
The {\em theory of monoids} is the monoidal theory $(\{\bullet\},\Sigma,\mathcal M)$ with generators \scalebox{.5}{\input{journal-figures/monoid.tikz}} and equations \scalebox{.5}{\input{journal-figures/monoid-equations.tikz}}.
\end{example}

\begin{example}[Theory of comonoids]\label{def:thy-comon}
The {\em theory of comonoids} is the monoidal theory $(\{\bullet\},\Sigma,\mathcal C)$ with generators \scalebox{.5}{\input{journal-figures/comonoid.tikz}} and equations \scalebox{.5}{\input{journal-figures/comonoid-equations.tikz}}.
\end{example}

\begin{example}[Theory with uniform comonoids]\label{def:thy-univ-comonoids}
We say that a symmetric monoidal theory $(C,\Sigma,\mathcal U)$ has {\em uniform comonoids} if every $a\in C$ has a comonoid structure (i.e.~the sets $\Sigma(a,aa)$ and $\Sigma(a,\varepsilon)$ contain the comonoid generators and $\mathcal U$ contains the comonoid equations from Definition~\ref{def:thy-comon}), and upon extending the comonoid structure to all $w\in C^*$ by the following recursion:
\begin{equation*}
\scalebox{.7}{\input{journal-figures/comon-extend.tikz}},
\end{equation*}
the set $\mathcal U$ contains the following equations, where $s$ ranges over all terms with appropriate type:
\begin{equation*}
\scalebox{.7}{\input{journal-figures/comon-nat-eqns.tikz}}.
\end{equation*}
\end{example}

\begin{example}[Theory with $1\mdash 1$-natural monoids]\label{def:thy-11-nat-monoids}
We say that a symmetric monoidal theory $(C,\Sigma,\mathcal N)$ has {\em $1\mdash 1$-natural monoids} if every $a\in C$ has a monoid structure (i.e.~the sets $\Sigma(aa,a)$ and $\Sigma(\varepsilon,a)$ contain the monoid generators and $\mathcal N$ contains the monoid equations from Definition~\ref{def:thy-mon}), and for every $\sigma\in\Sigma(a,b)$ with $a,b\in C$, we have the following equations:
\begin{equation*}
\scalebox{.7}{\input{journal-figures/locally-nat-monoids.tikz}}.
\end{equation*}
\end{example}

\begin{example}[Theory with indexed monoids]\label{def:thy-indexed-monoids}
We say that a symmetric monoidal theory has {\em indexed monoids} if it has both $1\mdash 1$-natural monoids (Definition~\ref{def:thy-11-nat-monoids}) and uniform comonoids (Definition~\ref{def:thy-univ-comonoids}).
\end{example}

A model interprets a monoidal signature (or theory) in a strict monoidal category.
\begin{definition}\label{def:model-monoidal-signature}
A {\em model} of a monoidal signature $(C,\Sigma)$ is a strict monoidal category $(\cat C,\otimes,I)$ together with a function $i:C\rightarrow\Ob(\cat C)$, and for each $a,b\in C^*$, a function $i_{a,b}:\Sigma(a,b)\rightarrow\cat C(i^*a,i^*b)$.
\end{definition}
We often denote a model of a monoidal signature $(C,\Sigma)$ simply by $(\cat C,i)$, where $\cat C$ is a strict monoidal category, and $i$ a family of functions as in the definition -- referred to as the {\em interpretation functions}.

\begin{definition}
The category of {\em monoidal models} $\MMod$ has as objects quadruples $(C,\Sigma,\cat C,i)$, where $(C,\Sigma)$ is a monoidal signature and $(\cat C,i)$ its model. A morphism
$$(f,F):(C,\Sigma,\cat C,i)\rightarrow (D,\Gamma,\cat D,j)$$
is given by a morphism of monoidal signatures $f:(C,\Sigma)\rightarrow (D,\Gamma)$ and a strict monoidal functor $F:\cat C\rightarrow\cat D$ such that the diagram on the left commutes (where $F_0$ is the action of $F$ on objects), and the diagram on the right commutes for all $a,b\in C^*$,
\begin{equation*}
\input{journal-figures/mor-mon-models.tikz}
\end{equation*}
where we denote by $F$ the appropriate restriction of the functor to the hom-set. Note that the restriction is indeed well-defined, as commutativity of the left diagram together with the fact that $F$ is strict monoidal imply that $i^* ; F_0 = f^* ; j^*$.
\end{definition}

Given a model $(\cat C,i)$ of a monoidal signature $(C,\Sigma)$, it extends to a function from terms to the morphisms of $\cat C$, also denoted by $i:\Term_{C,\Sigma}\rightarrow\Mor(\cat C)$, as follows:\label{p:extend-model-terms}
\begin{center}
\begin{tabular}{c c c}
$\sigma : (a,b) \mapsto i_{a,b}(\sigma) : (i^*a,i^*b)$, & $\id_a : (a,a) \mapsto \id_{ia} : (ia,ia)$, & $\id_{\varepsilon} : (\varepsilon,\varepsilon) \mapsto \id_I : (I,I)$, \\
$(t;s) : (a,c) \mapsto i(t);i(s) : (i^*a,i^*c)$, & \multicolumn{2}{c}{$(t\otimes s) : (ac,bd) \mapsto i(t)\otimes i(s) : (i^*(ac),i^*(bd))$,}
\end{tabular}
\end{center}
where $I$ is the unit of $\cat C$, while $;$ and $\otimes$ on the right-hand side are the composition and the monoidal product of $\cat C$.

\begin{definition}\label{def:model-monoidal-theory}
A {\em model} of a monoidal theory $(C,\Sigma,E)$ is a model $(\cat C,i)$ of the monoidal signature $(C,\Sigma)$ such that for all $(s,t)\in E$, we have $i(s)=i(t)$.
\end{definition}

We denote by $\MThMod$ the category of {\em models of monoidal theories}, whose objects are pairs of a monoidal theory and its model, and whose morphisms are pairs $(f,F)$ such that $f$ is a morphism of monoidal theories (hence, in particular, a morphism of monoidal signatures) and $(f,F)$ is a morphism in $\MMod$.

We note that every model of a monoidal signature $(C,\Sigma)$ can be viewed as a model of the monoidal theory $(C,\Sigma,\eset)$, so that $\MMod$ can be identified with the full subcategory of $\MThMod$ of theories with no equations.

We summarise the relationship between monoidal signatures, theories and their models in the following proposition.
\begin{proposition}\label{prop:monoidal-signatures-theories-models}
The vertical forgetful functors in the diagram below are fibrations, while the horizontal forgetful functors form a morphism of fibrations:
\begin{equation*}
\input{journal-figures/mthmod-pullback.tikz}.
\end{equation*}
Moreover, the objects in the fibre $\MMod(C,\Sigma)$ are precisely the models of the signature $(C,\Sigma)$, and the objects in the fibre $\MThMod(\mathcal T)$ are precisely the models of the theory $\mathcal T$.
\end{proposition}
\begin{proof}
The cartesian maps are those pairs whose monoidal functor part is (naturally isomorphic to) the identity functor.

Given an object $(C,\Sigma,E,\cat C,i)\in\MThMod_0$ and a morphism of monoidal theories $f:(D,\Gamma,F)\rightarrow (C,\Sigma,E)$, the cartesian lifting is given by $(f,\id_{\cat C}) : (D,\Gamma,F,\cat C,f;i)\rightarrow (C,\Sigma,E,\cat C,i)$, where the interpretation functions of the domain model are given by $f;i : D\rightarrow\cat C$ and
$$f_{a,b};i_{f^*a,f^*b} : \Gamma(a,b)\rightarrow\cat C(i^*f^*a,i^*f^*b).$$

For $\MMod\rightarrow\MSgn$, the situation is the same, except that all the theories are empty. It is then clear that the horizontal forgetful functors make the diagram commute, and that the top functor preserves cartesian liftings.
\end{proof}
Note that, while $\MTh\rightarrow\MSgn$ is also a fibration, the functor $\MThMod\rightarrow\MMod$ is {\em not} itself a fibration.

The usefulness of string diagrams lies in the fact that they are the {\em free models} of monoidal theories: an equation is derivable in the string diagrams of a monoidal theory if and only if it holds in all models of that theory.

\begin{definition}[Term model]\label{def:term-model}
Given a monoidal theory $\mathcal T = (C,\Sigma,E)$, the {\em term model} (or the {\em free model}) $F(\mathcal T)$ is the strict monoidal category defined by the following:
\begin{itemize}
\item the objects are $C^*$,
\item the morphisms $a\rightarrow b$ are the equivalence classes of terms with sort $(a,b)$ under the term congruence $\Eeq$,
\item the composition of $[s]$ and $[t]$ is given by $[s;t]$,
\item the monoidal unit is the empty word $\varepsilon\in C^*$,
\item the monoidal product is given by concatenation on objects and by $\otimes$ on morphisms.
\end{itemize}
We note that the operations $;$ and $\otimes$ above are well-defined since $\Eeq$ is a congruence. The interpretation function $i:C\rightarrow C^*$ sends the colour $a$ to the one-element word $a$, and $i_{a,b}:\Sigma(a,b)\rightarrow F(\mathcal T)(a,b)$ sends each generator to its equivalence class.
\end{definition}
For a monoidal signature $(C,\Sigma)$, we denote by $F(C,\Sigma)$ the free model on the theory $(C,\Sigma,\eset)$.

\begin{proposition}
The term model construction extends to functors $F:\MTh\rightarrow\MThMod$ and $F:\MSgn\rightarrow\MMod$, which are moreover the left adjoints to the respective forgetful functors.
\end{proposition}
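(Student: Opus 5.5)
To prove the proposition, I will first extend $F$ to morphisms, then exhibit the universal arrow from a theory to its term model, and finally specialise to signatures.

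\textbf{Action on morphisms.} Let $f:(C,\Sigma,E)\rightarrow(D,\Gamma,F')$ be a morphism of monoidal theories. Extend it to terms as on page~\pageref{p:extend-morphism-signatures-terms}. The first thing to check is that $s\Eeq t$ implies $f(s)\Eeq f(t)$. The relation $\{(s,t) : f(s)\Eeq f(t)\}$ is a congruence, because $f$ commutes with $;$ and $\otimes$ by its recursive definition. This congruence contains $E$, since $f$ sends equations into $F'$. It also contains the structural identities $S$, since $f$ sends each instance of a structural identity to an instance of the same identity. By minimality of the term congruence, it therefore contains $\Eeq$. Consequently $[t]\mapsto[f(t)]$, together with $f^*$ on objects, gives a well-defined strict monoidal functor $F(f)$. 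Strictness is immediate: concatenation and $\otimes$ are preserved on the nose, and $f^*$ preserves words. The interpretation squares commute by construction, so $(f,F(f))$ is a morphism in $\MThMod$. Functoriality of this assignment follows from functoriality of the extension to terms.

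\textbf{Adjunction.} The unit at $\mathcal T$ is the identity of $\mathcal T$: the underlying theory of $F(\mathcal T)$ is $\mathcal T$, with $i$ the canonical interpretation. Let $(\mathcal T',\cat C,j)$ be a model and $g:\mathcal T\rightarrow\mathcal T'$ a morphism of theories. I will produce a unique morphism $(g,G):(\mathcal T,F(\mathcal T),i)\rightarrow(\mathcal T',\cat C,j)$ in $\MThMod$ lying over $g$.
\begin{itemize}
\item \emph{Definition.} Set $G(a)=j^*g^*a$ on objects and $G[t]=j(g(t))$ on morphisms, using the extension of the model to terms from page~\pageref{p:extend-model-terms}.
\item \emph{Well-definedness.} This is the main point. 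The composite $g;j$ is a model of $\mathcal T$ in $\cat C$. The relation $\{(s,t) : j(g(s))=j(g(t))\}$ is a congruence, since $j$ is compositional. It contains $E$, because $g(E)\sse E'$ and $j$ validates $E'$. It contains $S$, because $\cat C$ is a strict monoidal category, so associativity, unit laws and interchange hold on the nose. Hence this relation contains $\Eeq$, and $G$ is well defined.
\item \emph{Strict monoidal functor.} Functoriality and strict monoidality follow directly from the recursive clauses defining $j$.
\item \emph{Uniqueness.} Any strict monoidal $G'$ making the interpretation diagrams commute must agree with $G$ on generators, identities and the empty word. Since $G'$ preserves $;$ and $\otimes$, induction on terms forces $G'=G$.
\end{itemize}
This gives the universal property, so $F\dashv U$.

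\textbf{Signatures.} The case $F:\MSgn\rightarrow\MMod$ is the special case $E=\eset$, via the identification of $\MMod$ with the full subcategory of theories without equations.

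I expect the main obstacle to be showing that $S$ is respected, both by $f$ and by interpretation in a strict category. I would handle this with a single lemma: the kernel of any compositional map out of $\Term$ into a strict monoidal category (or into another quotiented term set) is a congruence, so it suffices to check the generating equations.
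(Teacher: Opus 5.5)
Your proof is correct and takes essentially the paper's approach. The unit is the identity on $\mathcal T$, and the map out of the term model is the recursive extension of the interpretation to terms; this is the paper's counit $(\id_{\mathcal T},i)$, and your $G[t]=j(g(t))$ is just $F(g)$ followed by that counit. You phrase the adjunction through the universal arrow rather than unit and counit, and you also supply the well-definedness (kernel-is-a-congruence) and uniqueness checks that the paper leaves implicit.
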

\begin{proof}
The action of the functor on morphisms is given by the recursive extension of the morphism of monoidal signatures to terms defined on page~\pageref{p:extend-morphism-signatures-terms}.

The unit $\eta_{\mathcal T}:\mathcal T\rightarrow UF(\mathcal T) =\mathcal T$ is given by the identity $\id_{\mathcal T}$. The counit $\varepsilon_{\mathcal T,\mathcal M} : (\mathcal T, F(\mathcal T)) = FU(\mathcal T,\mathcal M)\rightarrow (\mathcal T,\mathcal M)$, where we denote $\mathcal M\coloneq (\mathcal C,i)$, is given by $(\id_{\mathcal T},i)$, where $i:F(\mathcal T)\rightarrow\mathcal C$ is given by $i^*$ on objects, and on morphisms by the recursive extension of $i$ to terms defined on page~\pageref{p:extend-model-terms}.
\end{proof}
\begin{corollary}\label{cor:monoidal-free-forgetful}
For every monoidal theory $\mathcal T$ and monoidal signature $(C,\Sigma)$, there are the following equivalences:
\begin{align*}
\MThMod(\mathcal T) &\simeq \quot{F(\mathcal T)}{\MonCat_{\mathsf{st}}}, \\
\MMod(C,\Sigma) &\simeq \quot{F(C,\Sigma)}{\MonCat_{\mathsf{st}}}.
\end{align*}
\end{corollary}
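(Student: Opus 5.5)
Both equivalences follow from the free-forgetful adjunctions of the preceding proposition, combined with the fibrational description of models in Proposition~\ref{prop:monoidal-signatures-theories-models}. I treat the theory case; the signature case is the special case $E=\eset$, since $F(C,\Sigma)=F(C,\Sigma,\eset)$.

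\emph{The functor $\Phi:\MThMod(\mathcal T)\rightarrow\quot{F(\mathcal T)}{\MonCat_{\mathsf{st}}}$.} An object of $\MThMod(\mathcal T)$ is a model $(\cat C,i)$ of $\mathcal T$. Send it to the strict monoidal functor $\hat i:F(\mathcal T)\rightarrow\cat C$. On objects $\hat i$ is $i^*$. On morphisms it is the recursive extension of $i$ to terms from page~\pageref{p:extend-model-terms}. This is precisely the counit component of the adjunction.

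Well-definedness of $\hat i$ on $\Eeq$-classes is checked as follows. It respects the equations in $E$ because $(\cat C,i)$ is a model. It respects the structural identities because $\cat C$ is a strict monoidal category. It respects closure under the congruence because the extension is defined compositionally.

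A morphism in the fibre $\MThMod(\mathcal T)$ is a pair $(\id_{\mathcal T},G)$ with $G$ strict monoidal and compatible with the interpretations, i.e.\ $i;G_0=j$ and $i_{a,b};G=j_{a,b}$. Induction on terms then gives $\hat i;G=\hat j$, so $G$ is a morphism in the coslice.

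\emph{The inverse $\Psi$.} Given a strict monoidal functor $H:F(\mathcal T)\rightarrow\cat C$, take the model $(\cat C,\,\eta;H)$. Here the interpretation sends a colour $a$ to $H(a)$ and a generator $\sigma$ to $H([\sigma])$. This is a model of $\mathcal T$: for every $(s,t)\in E$, induction shows that the extended interpretation of $s$ equals $H([s])=H([t])$, which equals the interpretation of $t$. Coslice morphisms are sent to themselves.

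\emph{Checking the two composites.} $\Psi\Phi=\id$ holds on the nose, since $\hat i$ restricted to generators is $i$. $\Phi\Psi=\id$ holds because two strict monoidal functors out of $F(\mathcal T)$ agreeing on colours and generators are equal. Indeed, every object of $F(\mathcal T)$ is a word in the colours and every morphism is the class of a term built from generators, identities, $;$ and $\otimes$, all of which strict monoidal functors preserve. So we in fact obtain an isomorphism of categories, hence the stated equivalence.

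The only step requiring care is the well-definedness of $\hat i$ on $\Eeq$-classes. It is handled by induction on the derivation of $\Eeq$, with the structural identities discharged by strictness of $\cat C$. Everything else is routine bookkeeping with the universal property of the term model.
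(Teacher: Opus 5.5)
Your proof is correct and is essentially the paper's argument. The paper obtains the corollary from the free--forgetful adjunction, whose counit component is exactly your $\hat i$; your explicit inverse $\Psi$, which restricts a strict monoidal functor $F(\mathcal T)\rightarrow\mathcal C$ along colours and generators, just spells out the step the paper leaves implicit and in fact yields an isomorphism of categories, not merely an equivalence.
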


We conclude our discussion of models by pointing out that the notion of a model of a monoidal theory can be extended to non-strict monoidal categories upon interpreting the term congruence as isomorphism rather than equality. However, obtaining a free model via string diagrams is more involved in this case, as one needs to keep track of the isomorphisms capturing non-strict associativity and unitality. We refer the reader to Wilson, Ghica and Zanasi~\cite{non-strict-monoidal,non-strict-monoidal-journal} for the details.

\section{Layered theories}\label{sec:layered-theories}
Here we define the syntax of layered monoidal theories, including the classes of {\em opfibrational}, {\em fibrational} and {\em deflational} theories in Subsection~\ref{subsec:opfib-defl-theories}. While the technical details here are sufficient to follow the theoretical developments, we refer the reader to Part~I~\cite{lmt-part1} for a more detailed discussion and examples. We remark that the terms {\em layered theory} and {\em layered monoidal theory} are used interchangeably; ditto the terms {\em layered signature} and {\em layered monoidal signature}.

\begin{definition}[Layered signature]\label{def:layered-signature}
A {\em layered signature} is a tuple $\left(\Omega,\mathcal F,\left\{\M_{\omega}\right\}_{\omega\in\Omega}\right)$,
where $\Omega$ is a set, $\mathcal F:\Omega\times\Omega\rightarrow\Set$ is a function, and $\M_{\omega}$ is a monoidal signature for each $\omega\in\Omega$.
\end{definition}
Given a layered signature with monoidal signatures $\M_{\omega}$, we write $\M_{\omega}=(C_{\omega},\Sigma_{\omega})$. We often abbreviate a layered signature to $(\Omega,\mathcal F,\M_{\omega})$, where the index $\omega$ is implicitly assumed to range over $\Omega$. We refer to the elements of $\Omega$ as {\em layers} and to the elements of $\mathcal F(\omega,\tau)$ as {\em generators} with domain layer $\omega$ and codomain layer $\tau$.

A {\em morphism of layered signatures} $\left(F,F_{\tau,\omega},F^{\omega}\right):\left(\Omega,\mathcal F,\M_{\omega}\right)\rightarrow\left(\Psi,\mathcal G,\M_{\psi}\right)$ is given by: (1) a function $F:\Omega\rightarrow\Psi$, (2) for every pair $(\omega,\tau)\in\Omega\times\Omega$, a function $F_{\omega,\tau}:\mathcal F(\omega,\tau)\rightarrow\mathcal G(F(\omega), F(\tau))$, and (3) for every $\omega\in\Omega$, a morphism of monoidal signatures $F^{\omega}:\M_{\omega}\rightarrow\M_{F(\omega)}$.

We denote the resulting category of layered signatures by $\LSgn$. As for the category of monoidal signatures $\MSgn$, we denote a morphism in $\LSgn$ simply by $F:(\Omega,\mathcal F,\M_{\omega})\rightarrow (\Psi,\mathcal G,\M_{\psi})$ as follows: $F:\Omega\rightarrow\Psi$ (without subscripts or superscripts) denotes the function on layers, $F_{\omega,\tau}:\mathcal F(\omega,\tau)\rightarrow\mathcal G(F(\omega),F(\tau))$ the functions on the generators, and $F^{\omega}:\M_{\omega}\rightarrow\M_{F(\omega)}$ the morphisms of monoidal signatures.

Next, we define types, terms and 2-terms generated by a layered signature, summarised in the following table:
\begin{center}
\renewcommand{\arraystretch}{1.5}
\begin{tabular}{ c | c | c | c }
level & name & meaning & equations \\
\hhline{=|=|=|=}
$\Type_{\mathcal L}$ & types & objects / 0-cells & 0-equations (Definition~\ref{def:0equations}) \\
\hline
$\Term^1_{\mathcal L}$ & terms & morphisms / 1-cells & 1-equations (Definition~\ref{def:1equations}) \\
\hline
$\Term^2_{\mathcal L}$ & 2-terms & 2-morphisms / 2-cells & 2-equations (Definition~\ref{def:2equations})
\end{tabular}
\renewcommand{\arraystretch}{1}
\end{center}

\begin{definition}[Types of a layered signature]\label{def:types-layered-signature}
Given a layered signature $(\Omega,\mathcal F,\M_{\omega})$ with $\M_{\omega}=(C_{\omega},\Sigma_{\omega})$, the set of {\em types} $\Type_{\mathcal L}$ is recursively generated as follows:
\begin{center}
  \centering\small\noindent
  \begin{bprooftree}
    \AxiomC{\phantom{$\omega\in\Omega$}}
    \RightLabel{\;}
    \UnaryInfC{$\varepsilon : \varepsilon$}
    \DisplayProof
    \AxiomC{$\omega\in\Omega$}
    \RightLabel{\;}
    \UnaryInfC{$\varepsilon : \omega$}
    \DisplayProof
    \AxiomC{$\omega\in\Omega$}
    \AxiomC{$a\in C_{\omega}$}
    \RightLabel{\;}
    \BinaryInfC{$a : \omega$}
    \DisplayProof
    \AxiomC{$A : \omega$}
    \AxiomC{$f\in\mathcal F(\omega,\tau)$}
    \RightLabel{\;}
    \BinaryInfC{$f(A) : \tau$}
    \DisplayProof
    \AxiomC{$A : \omega$}
    \AxiomC{$B : \omega$}
    \RightLabel{\;}
    \BinaryInfC{$AB : \omega$}
    \DisplayProof
    \AxiomC{$T$}
    \AxiomC{$S$}
    \RightLabel{,}
    \BinaryInfC{$T,S$}
  \end{bprooftree}
\end{center}
such that the types formed with the last rule define a monoid with the unit $\varepsilon : \varepsilon$, and for each $\omega\in\Omega$, the types formed with the second to last rule define a monoid with the unit $\varepsilon : \omega$.
\end{definition}
Given a morphism of layered signatures $F:\mathcal L\rightarrow\mathcal K$ (with $\mathcal L = (\Omega,\mathcal F,\mathcal M_{\omega})$), it induces a function $F:\Type_{\mathcal L}\rightarrow\Type_{\mathcal K}$ recursively defined as follows:
\begin{align*}
\varepsilon : \varepsilon &\mapsto \varepsilon : \varepsilon &
\varepsilon : \omega &\mapsto \varepsilon : F(\omega) &
a : \omega &\mapsto F^{\omega}(a) : F(\omega) \\
f(A) : \tau &\mapsto F_{\omega,\tau}(f)(F(A)) : F(\tau) &
AB : \omega &\mapsto F(A)F(B) : F(\omega) &
T,S &\mapsto F(T),F(S).
\end{align*}

The types which are generated without applying the first and the last rules in Definition~\ref{def:types-layered-signature} are called {\em internal}. Thus, internal types are of the form $A:\omega$ for some $\omega\in\Omega$. We denote the subset of internal types by $\IntType_{\mathcal L}$. Let us define the binary relation $P^0_{\mathcal L}\sse\IntType_{\mathcal L}\times\IntType_{\mathcal L}$ on the internal types as containing those terms which are in the same layer: $(A:\omega,B:\tau)\in P^0_{\mathcal L}$ if and only if $\omega = \tau$. We note that this relation is preserved by any function $F:\Type_{\mathcal L}\rightarrow\Type_{\mathcal K}$ induced by a morphism of layered signatures: if $(T,S)\in P^0_{\mathcal L}$, then $(F(T),F(S))\in P^0_{\mathcal K}$.

\begin{definition}[0-equations]\label{def:0equations}
Given a layered signature $\mathcal L$, a set of {\em 0-equations} is a subset $E^0\sse P^0_{\mathcal L}$.
\end{definition}
Given a set of 0-equations $E^0$, we extend it to the {\em type congruence} $\Zeq$, i.e.~to the smallest equivalence relation on $\Type_{\mathcal L}$ satisfying $E^0\sse {\Zeq}$ as well as the recursive clauses:
\begin{center}
  \centering\small\noindent
  \begin{bprooftree}
    \AxiomC{$A:\omega\Zeq B:\omega$}
    \AxiomC{$f\in\mathcal F(\omega,\tau)$}
    \RightLabel{\;}
    \BinaryInfC{$f(A):\tau\Zeq f(B):\tau$}
    \DisplayProof
    \AxiomC{$A:\omega\Zeq C:\omega$}
    \AxiomC{$B:\omega\Zeq D:\omega$}
    \RightLabel{\;}
    \BinaryInfC{$AB:\omega\Zeq CD:\omega$}
    \DisplayProof
    \AxiomC{$T\Zeq S$}
    \AxiomC{$U\Zeq K$}
    \RightLabel{.}
    \BinaryInfC{$T,U\Zeq S,K$}
  \end{bprooftree}
\end{center}

We call the pairs of types $\Type_{\mathcal L}\times\Type_{\mathcal L}$ {\em sorts}, and denote a sort by $(T \mid S)$.
\begin{definition}[Basic terms]\label{def:terms-layered-signature}
Let $(\Omega,\mathcal F,\M_{\omega})$ be a layered signature. The {\em basic terms} are generated by the recursive sorting procedure in Figure~\ref{fig:layered-terms}, with the side condition that the rules~\ref{term:int-box} and \ref{term:int-tensor} only apply to {\em internal} terms, defined as follows:
\begin{itemize}
\item the terms generated by the rules~\ref{term:int-unit},~\ref{term:int-id},~\ref{term:int-gen},~\ref{term:int-box} or~\ref{term:int-tensor} are internal,
\item if the terms $x : (T \mid S)$ and $y : (S \mid U)$ are internal, then so is the term $x;y : (T \mid U)$ obtained by~\ref{term:comp}.
\end{itemize}
\end{definition}

\begin{figure}
  \centering\small\noindent
  \begin{prooftree}
    \AxiomC{$\omega\in\Omega$}
    \RightLabel{\scriptsize\customlabel{term:int-unit}{(int-unit)}\;\;}
    \UnaryInfC{$\scalebox{.8}{\input{journal-figures/emptydiag-sheet.tikz}} : (\varepsilon : \omega\mid\varepsilon : \omega)$}
    \DisplayProof
    \AxiomC{$A:\omega$}
    \AxiomC{$A\neq\varepsilon$}
    \RightLabel{\scriptsize\customlabel{term:int-id}{(int-id)}\;\;}
    \BinaryInfC{$\scalebox{.8}{\input{journal-figures/iddiag-sheet.tikz}} : (A:\omega \mid A:\omega)$}
    \DisplayProof
    \AxiomC{$\sigma\in\Sigma_{\omega}(a,b)$}
    \RightLabel{\scriptsize\customlabel{term:int-gen}{(int-gen)}}
    \UnaryInfC{$\scalebox{.8}{\input{journal-figures/internalsigmadiag.tikz}} : (a:\omega\mid b:\omega)$}
  \end{prooftree}
  \begin{bprooftree}
    \AxiomC{$\scalebox{.8}{\input{journal-figures/internalxdiag.tikz}} : (A:\omega\mid B:\omega)$}
    \AxiomC{$f\in\mathcal F(\omega,\tau)$}
    \RightLabel{\scriptsize\customlabel{term:int-box}{(int-box)}\;\;}
    \BinaryInfC{$\scalebox{.8}{\input{journal-figures/f-box.tikz}} : (f(A):\tau \mid f(B):\tau)$}
    \DisplayProof
    \AxiomC{$x : (T\mid S)$}
    \AxiomC{$y : (S\mid U)$}
    \RightLabel{\scriptsize\customlabel{term:comp}{(comp)}}
    \BinaryInfC{$x;y : (T\mid U)$}
  \end{bprooftree}
  \begin{prooftree}
    \AxiomC{$\scalebox{.8}{\input{journal-figures/internalxdiag.tikz}} : (A:\omega\mid B:\omega)$}
    \AxiomC{$\scalebox{.8}{\input{journal-figures/internalydiag.tikz}} : (C:\omega\mid D:\omega)$}
    \RightLabel{\scriptsize\customlabel{term:int-tensor}{(int-tensor)}}
    \BinaryInfC{$\scalebox{.8}{\input{journal-figures/internalxydiag.tikz}} : (AC:\omega \mid BD:\omega)$}
  \end{prooftree}
  \begin{bprooftree}
    \AxiomC{\phantom{$\omega\in\Omega$}}
    \RightLabel{\scriptsize\customlabel{term:ext-unit}{(ext-unit)}\;\;}
    \UnaryInfC{$\scalebox{.8}{\input{journal-figures/emptydiag.tikz}} : (\varepsilon : \varepsilon\mid\varepsilon : \varepsilon)$}
    \DisplayProof
    \AxiomC{$x : (T\mid S)$}
    \AxiomC{$y : (U \mid W)$}
    \RightLabel{\scriptsize\customlabel{term:ext-tensor}{(ext-tensor)}}
    \BinaryInfC{$x\otimes y : (T,U \mid S,W)$}
  \end{bprooftree}
  \caption{Rules for generating the basic terms of a layered signature\label{fig:layered-terms}}
\end{figure}

In the definition of a layered signature (Definition~\ref{def:layered-signature}), the generators between the layers are all of the type 1-1, i.e.~the arity and coarity both consist of a single layer (this is in contrast to the generators within the layers, whose arity and coarity are lists of colours). This restriction is imposed as there is no uniform way to draw an arbitrary generator between a list (product) of layers. Since we do want the layers to interact, we introduce the mixed (co)arities at the level of terms. To this end, we need the notion of a {\em recursive sorting procedure}, which imposes some restrictions on what kinds of terms can be introduced. Each layered theory comes with a fixed recursive sorting procedure, within which the terms of the theory are generated.

\begin{definition}[Recursive sorting procedure]\label{def:sorting-procedure}
A {\em recursive sorting procedure} is a set of recursive rules such that
\begin{itemize}
\item given a layered signature $\mathcal L$, the rules generate the set of expressions $\Term^1_{\mathcal L}$ called {\em terms},
\item each term has a unique sort: we denote a term with its sort by $t:(T\mid S)$,
\item the set contains the rules for the {\em basic terms} of Figure~\ref{fig:layered-terms},
\item any additional rules do not change the set of internal terms, as defined in Definition~\ref{def:terms-layered-signature},
\item any morphism of layered signatures $F:\mathcal L\rightarrow\mathcal K$ uniquely extends to a function $F:\Term^1_{\mathcal L}\rightarrow\Term^1_{\mathcal K}$ by translating the types and terms in the assumption of each recursive rule, and then applying the corresponding rule in $\Term^1_{\mathcal K}$, which is moreover consistent with the induced function on types: $t:(T\mid S)\mapsto F(t) : (F(T)\mid F(S))$.
\end{itemize}
\end{definition}
The additional rules we shall consider are given in Definition~\ref{def:symmetry-terms} as well as Figures~\ref{fig:opfibrational-terms} and~\ref{fig:fibrational-terms}.

We say that two terms are {\em parallel} when they have the same sort, up to 0-equations, if there are any. Formally, given a set of 0-equations $E^0$, we define the binary relation $P^1_{\mathcal L}\sse\Term^1_{\mathcal L}\times\Term^1_{\mathcal L}$ by $\left(t:(T_t\mid S_t),s:(T_s\mid S_s)\right)\in P^1_{\mathcal L}$ if and only if both $T_t\Zeq T_s$ and $S_t\Zeq S_s$, and call the pairs of terms in $P^1_{\mathcal L}$ {\em parallel} with respect to $E^0$. Note that if $E^0$ is empty (i.e.~there are no equations), then terms are parallel if and only if they have the same sort.

\begin{proposition}\label{prop:parallel-terms-preserved}
Let $F:(\mathcal L,E^0_{\mathcal L})\rightarrow (\mathcal K,E^0_{\mathcal K})$ be a morphism of layered signatures such that the induced function on types $F:\Type_{\mathcal L}\rightarrow\Type_{\mathcal K}$ preserves the chosen 0-equations. Then the induced function on terms $F:\Term^1_{\mathcal L}\rightarrow\Term^1_{\mathcal K}$ preserves parallel terms: if $(s,t)\in P^1_{\mathcal L}$, then $(F(s),F(t))\in P^1_{\mathcal K}$.
\end{proposition}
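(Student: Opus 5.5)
The plan is to reduce the statement to the preservation of the type congruence, using the consistency condition on sorts from the last bullet of Definition~\ref{def:sorting-procedure}. Suppose $(s,t)\in P^1_{\mathcal L}$ with $s:(T_s\mid S_s)$ and $t:(T_t\mid S_t)$, so $T_s\Zeq T_t$ and $S_s\Zeq S_t$ with respect to $E^0_{\mathcal L}$. By that bullet, $F(s):(F(T_s)\mid F(S_s))$ and $F(t):(F(T_t)\mid F(S_t))$. It therefore suffices to show the following claim: if $T\Zeq S$ in $\mathcal L$, then $F(T)\Zeq F(S)$ in $\mathcal K$, for the congruence generated by $E^0_{\mathcal K}$.

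To prove the claim, I will define the relation $R$ on $\Type_{\mathcal L}$ by $T\mathrel{R}S$ if and only if $F(T)\Zeq F(S)$ in $\mathcal K$. Since $\Zeq$ in $\mathcal L$ is the \emph{smallest} equivalence relation containing $E^0_{\mathcal L}$ and closed under the three recursive clauses, it is enough to check that $R$ has these properties.

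First, $R$ is an equivalence relation, because it is the preimage of an equivalence relation under a function. Second, $E^0_{\mathcal L}\sse R$ is exactly the hypothesis that $F$ preserves the chosen 0-equations, since $F(E^0_{\mathcal L})\sse E^0_{\mathcal K}\sse{\Zeq}$. Third, I check each recursive clause against the recursive definition of $F$ on types:
\begin{itemize}
\item If $A:\omega\mathrel{R}B:\omega$ and $f\in\mathcal F(\omega,\tau)$, then $F(f(A))=F_{\omega,\tau}(f)(F(A))$. This is congruent to $F_{\omega,\tau}(f)(F(B))=F(f(B))$ by the first clause in $\mathcal K$, applied to $F(A):F(\omega)\Zeq F(B):F(\omega)$.
\item For internal concatenation, $F(AB)=F(A)F(B)$, and the second clause in $\mathcal K$ applies.
\item For external concatenation, $F(T,U)=F(T),F(U)$, and the third clause in $\mathcal K$ applies.
\end{itemize}

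One subtlety is that types are taken modulo the monoid laws for both concatenations, so I must check that $F$ is well defined on them. This holds because $F$ sends units to units and concatenations to concatenations.

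The only step needing care is this minimality (induction on the generation of $\Zeq$) argument, in particular matching the layer annotations, where the clause for $f(A)$ uses that $F(A)$ lies in layer $F(\omega)$. I expect it to be routine.
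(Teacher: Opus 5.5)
Your proposal is correct and follows the paper's approach. The paper's proof is a one-line version of yours: it combines the fact that $F$ sends a term of sort $(T\mid S)$ to one of sort $(F(T)\mid F(S))$ with an induction on the construction of the type congruence, and your preimage relation $R$ together with the minimality check spells out exactly that induction.
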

\begin{proof}
By the fact that terms with the same sort are mapped to the terms with the same sort, and by induction on the construction of the type congruence.
\end{proof}

\begin{definition}[1-equations]\label{def:1equations}
Given a layered signature $\mathcal L$ together with a recursive sorting procedure and a set of 0-equations $E^0$, a set of {\em 1-equations} with respect to $E^0$ is a subset $E^1\sse P^1_{\mathcal L}$.
\end{definition}
Given a set of 1-equations $E^1$, we extend it to the {\em term congruence} $\Oeq$, i.e.~the smallest equivalence relation on $P^1_{\mathcal L}$ containing $E^1$ that is preserved by the recursive rules in Figure~\ref{fig:layered-terms}.

\begin{definition}[Choice of 2-cells]\label{def:choice-2cells}
Given a layered signature $\mathcal L$ together with a recursive sorting procedure and a set of 0-equations $E^0$, a {\em choice of 2-cells} with respect to $E^0$ is a function $\eta : P^1_{\mathcal L}\rightarrow\Set$, assigning to each parallel pair of terms a set of {\em generating 2-cells}.
\end{definition}
Note that any layered signature in the sense of Definition~\ref{def:layered-signature} can be seen as having a choice of 2-cells by setting $\eta$ to be the constant function returning the empty set. A morphism between layered signatures with a choice of 2-cells $(F,F^2) : (\mathcal L, E^0_{\mathcal L}, \eta_{\mathcal L})\rightarrow (\mathcal K, E^0_{\mathcal K}, \eta_{\mathcal K})$ is given by a morphism of layered signatures $F:\mathcal L\rightarrow\mathcal K$ such that the induced function $F:\Type_{\mathcal L}\rightarrow\Type_{\mathcal K}$ preserves the 0-equations, and for each pair $(t,s)\in P^1_{\mathcal L}$, a function $F^2_{t,s}:\eta_{\mathcal L}(t,s)\rightarrow\eta_{\mathcal K}(F(t),F(s))$.

\begin{definition}[2-terms]\label{def:2terms}
Given a layered signature $\mathcal L$ with a choice of 2-cells $\eta$, a {\em 2-term} is an expression of the form $\alpha : (s,t)$, where $(s,t)\in P^1_{\mathcal L}$ is its {\em sort}, generated by the following recursive procedure:
\begin{center}
  \small
  \begin{bprooftree}
    \AxiomC{$a\in\eta(t,s)$}
    \RightLabel{\;\;}
    \UnaryInfC{$a : (t,s)$}
    \DisplayProof
    \AxiomC{$t\in\Term^1_{\mathcal L}$}
    \RightLabel{\;\;}
    \UnaryInfC{$\id_t : (t,t)$}
    \DisplayProof
    \AxiomC{$\alpha : (t,s)$}
    \AxiomC{$\beta : (s,k)$}
    \RightLabel{\;\;}
    \BinaryInfC{$\alpha;\beta : (t,k)$}
    \DisplayProof
    \AxiomC{$\alpha : (t_1,s_1)$}
    \AxiomC{$\beta : (t_2,s_2)$}
    \RightLabel{\;\;}
    \BinaryInfC{$\alpha\otimes\beta : (t_1\otimes t_2,s_1\otimes s_2)$}
  \end{bprooftree}
  \begin{prooftree}
    \AxiomC{$\alpha : (t_1,s_1)$}
    \AxiomC{$\beta : (t_2,s_2)$}
    \AxiomC{$t_1 : (T\mid S)$}
    \AxiomC{$t_2 : (S\mid K)$}
    \RightLabel{.}
    \QuaternaryInfC{$\alpha *\beta : (t_1;t_2, s_1;s_2)$}
  \end{prooftree}
  \end{center}
Akin to terms, we denote the set of 2-terms by $\Term^2_{\mathcal L}$.
\end{definition}
\begin{remark}
We have chosen not to extend the 2-terms to the recursively defined internal terms (i.e.~the ones generated by the rules~\ref{term:int-box} and~\ref{term:int-tensor}). There is no inherent reason for omitting these terms: upon adding them, one would just need to add the corresponding structural 2-equations (see Definition~\ref{def:str-2eqns}). However, in the models we will consider the 2-categorical structure only arises at the level of external terms, or does not need to be propagated between layers. Thus, for the sake of slightly reducing the complexity of the current presentation, we omit these terms. Moreover, in deflational theories, such terms are automatically induced by the external ones: see Corollary~\cite[Corollary~4.4]{lmt-part1}.
\end{remark}
Let $(F,F^2) : (\mathcal L, E^0_{\mathcal L}, \eta_{\mathcal L})\rightarrow (\mathcal K, E^0_{\mathcal K}, \eta_{\mathcal K})$ be a morphism between layered signatures with a choice of 2-cells. The functions between the generating 2-cells $F^2_{t,s}$ then yield a function $F^2:\Term^2_{\mathcal L}\rightarrow\Term^2_{\mathcal K}$ recursively defined as follows:
\begin{align*}
a : (t,s) &\mapsto F^2_{t,s}(a) : (F(t),F(s)) \\
\id_t : (t,t) &\mapsto \id_{F(t)} : (F(t),F(t)) \\
\alpha\otimes\beta : (t_1\otimes t_2,s_1\otimes s_2) &\mapsto F^2(\alpha)\otimes F^2(\beta) : (F(t_1)\otimes F(t_2),F(s_1)\otimes F(s_2)) \\
\alpha;\beta : (t,k) &\mapsto F^2(\alpha); F^2(\beta) : (F(t),F(k)) \\
\alpha *\beta : (t_1;t_2, s_1;s_2) &\mapsto F^2(\alpha) * F^2(\beta) : (F(t_1);F(t_2), F(s_1);F(s_2)).
\end{align*}

As for the terms, given a set of 1-equations $E^1$, a pair of 2-terms is in the {\em parallel 2-term relation} $P^2_{\mathcal L}$ with respect to $E^1$ if and only if both 2-terms have the same sort up to the 1-equations: $\left(\alpha:(t_{\alpha},s_{\alpha}), \beta:(t_{\beta},s_{\beta})\right)\in P^2_{\mathcal L}$ if and only if both $t_{\alpha}\Oeq t_{\beta}$ and $s_{\alpha}\Oeq s_{\beta}$. We draw a 2-term $\alpha : (t,s)$ either as \scalebox{.8}{\input{journal-figures/2term.tikz}}, or simply as an arrow $t\xrightarrow{\alpha} s$ (see e.g.~Figure~\ref{fig:structural-twocells-adjoints} on page~\pageref{fig:structural-twocells-adjoints}).

\begin{proposition}\label{prop:parallel-2terms-preserved}
Let $(F,F^2) : (\mathcal L, E^0_{\mathcal L}, \eta_{\mathcal L}, E^1_{\mathcal L})\rightarrow (\mathcal K, E^0_{\mathcal K}, \eta_{\mathcal K}, E^1_{\mathcal K})$
be a morphism between layered signatures with a choice of 2-cells such that the induced function on terms $F:\Term^1_{\mathcal L}\rightarrow\Term^1_{\mathcal K}$ preserves the specified 1-equations. Then the induced function on 2-terms $F^2:\Term^2_{\mathcal L}\rightarrow\Term^2_{\mathcal K}$ preserves parallel terms: if $(\alpha,\beta)\in P^2_{\mathcal L}$, then $(F^2(\alpha),F^2(\beta))\in P^2_{\mathcal K}$.
\end{proposition}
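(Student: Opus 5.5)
The plan is to mirror the proof of Proposition~\ref{prop:parallel-terms-preserved}, one level up. We first check that $F^2$ sends a 2-term to a 2-term of the translated sort: an easy induction on the recursive construction of 2-terms in Definition~\ref{def:2terms} shows that $\alpha:(t,s)$ is mapped to $F^2(\alpha):(F(t),F(s))$. This is immediate from the clauses defining $F^2$. For generators we use that $F^2_{t,s}$ lands in $\eta_{\mathcal K}(F(t),F(s))$. For identities, vertical composition and $\otimes$ it is the definition. For $*$ we also use that $F$ on terms commutes with $;$, which is part of the requirement in Definition~\ref{def:sorting-procedure} that $F$ extends to terms rule by rule. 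For $\otimes$ we use the analogous fact for the rule~\ref{term:ext-tensor}.

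Next, let $(\alpha,\beta)\in P^2_{\mathcal L}$, so that $\alpha:(t_\alpha,s_\alpha)$, $\beta:(t_\beta,s_\beta)$, $t_\alpha\Oeq t_\beta$ and $s_\alpha\Oeq s_\beta$ in $\mathcal L$. By the first step, $F^2(\alpha):(F(t_\alpha),F(s_\alpha))$ and $F^2(\beta):(F(t_\beta),F(s_\beta))$. It therefore suffices to show that $F$ preserves the term congruence: if $t\Oeq t'$ in $\mathcal L$, then $F(t)\Oeq F(t')$ in $\mathcal K$.

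We prove this by induction on the generation of $\Oeq$ as the smallest equivalence relation containing $E^1_{\mathcal L}$ that is closed under the recursive rules. For the base case, pairs in $E^1_{\mathcal L}$ are mapped into $E^1_{\mathcal K}\subseteq{\Oeq}$ by hypothesis. Reflexivity, symmetry and transitivity are preserved trivially. For closure under a recursive rule, say $t_1;t_2\Oeq t_1';t_2'$ obtained from $t_i\Oeq t_i'$, we use that $F(t_1;t_2)=F(t_1);F(t_2)$, and similarly for \ref{term:int-box}, \ref{term:int-tensor} and \ref{term:ext-tensor}. This again relies on $F$ being defined rule by rule, as required by Definition~\ref{def:sorting-procedure}. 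The inductive hypothesis then gives $F(t_i)\Oeq F(t_i')$, and closure of $\Oeq$ in $\mathcal K$ under the same rule finishes the step. One should also check that the translated terms remain parallel, so that the congruence in $\mathcal K$ applies to them; this is Proposition~\ref{prop:parallel-terms-preserved}, whose hypotheses hold because $(F,F^2)$ preserves 0-equations by definition.

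The only mildly delicate point is this congruence step. The relation $\Oeq$ lives on parallel pairs modulo 0-equations, so we must make sure that the rule applications in $\mathcal K$ are well-typed after translation. This follows from $F$ preserving the type congruence $\Zeq$, which is shown by an analogous induction on the clauses defining $\Zeq$. Everything else is routine unfolding of definitions.
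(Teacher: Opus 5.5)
Your proposal is correct and follows the same route as the paper. The paper's one-line proof likewise observes that $F^2$ sends a 2-term of sort $(t,s)$ to one of sort $(F(t),F(s))$, and that $F$ preserves the term congruence by induction on its construction; you have simply written out the inductive cases (the generating 1-equations, the equivalence closure, and closure under the term-forming rules) that the paper leaves implicit.
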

\begin{proof}
By the fact that 2-terms with the same sort are mapped to 2-terms with the same sort, and by induction on the construction of the term congruence.
\end{proof}

\begin{definition}[2-equations]\label{def:2equations}
Let $\mathcal L$ be layered signature with a set of 0-equations $E^0$. Given a set of 1-equations $E^1$ and a choice of 2-cells $\eta$ with respect to $E^0$, a set of {\em 2-equations} with respect to $E^1$ and $\eta$ is a subset $E^2\sse P^2_{\mathcal L}$.
\end{definition}
Given a set of 2-equations $E^2$, we extend it to the {\em 2-term congruence} $\Teq$, i.e.~the smallest equivalence relation on $P^2_{\mathcal L}$ containing $E^2$ that is preserved by the recursive rules in Definition~\ref{def:2terms}.

We now have all the ingredients to define a layered theory.

\begin{definition}[Layered theory]\label{def:layered-theory}
Let us fix a recursive sorting procedure (Definition~\ref{def:sorting-procedure}). A {\em layered theory} $(\mathcal L,E^0,E^1,\eta,E^2)$ consists of the following: a layered signature $\mathcal L$ (Definition~\ref{def:layered-signature}), a set of 0-equations $E^0$ (Definition~\ref{def:0equations}), a set of 1-equations $E^1$ with respect to $E^0$ (Definition~\ref{def:1equations}), a choice of 2-cells $\eta$ with respect to $E^0$ (Definition~\ref{def:choice-2cells}), a set of 2-equations $E^2$ with respect to $E^1$ and $\eta$ (Definition~\ref{def:2equations}).
\end{definition}
For a fixed recursive sorting procedure, a {\em morphism of layered theories}
$$(F,F^2) : (\mathcal L,E^0_{\mathcal L},E^1_{\mathcal L},\eta_{\mathcal L},E^2_{\mathcal L})\rightarrow (\mathcal K,E^0_{\mathcal K},E^1_{\mathcal K},\eta_{\mathcal K},E^2_{\mathcal K})$$
is given by a morphism of layered signatures with a choice of 2-cells $(F,F^2)$ (thus, in particular, the induced function $F:\Type_{\mathcal L}\rightarrow\Type_{\mathcal K}$ preserves the 0-equations), such that the induced functions $F:\Term^1_{\mathcal L}\rightarrow\Term^1_{\mathcal K}$ and $F^2:\Term^2_{\mathcal L}\rightarrow\Term^2_{\mathcal K}$ preserve the 1-equations and the 2-equations, respectively. We denote any category of layered theories by $\LTh$: note that each recursive sorting procedure results in a different category.
\begin{proposition}\label{prop:lth-lsgn-fibration}
Let us fix a recursive sorting procedure. The forgetful functor $\LTh\rightarrow\LSgn$ is a fibration. Moreover, the objects in the fibre $\LTh(\mathcal L)$ are precisely the layered theories with the signature $\mathcal L$.
\end{proposition}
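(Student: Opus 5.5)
The plan is to follow the proof of Proposition~\ref{prop:monoidal-signatures-theories-models}: cartesian liftings are obtained by pulling all equations and 2-cells back along a signature morphism. Fix a recursive sorting procedure. Let $F:\mathcal L\rightarrow\mathcal K$ be a morphism in $\LSgn$ and let $\mathcal T=(\mathcal K,E^0_{\mathcal K},E^1_{\mathcal K},\eta_{\mathcal K},E^2_{\mathcal K})$ be a layered theory over $\mathcal K$. The lifting is built in four stages, one per level of data.

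First, set $E^0_{\mathcal L}\coloneq\{(A,B)\in P^0_{\mathcal L} : F(A)\Zeq F(B)\}$, computed with respect to $E^0_{\mathcal K}$. Then $F$ preserves the type congruence; this follows by induction on its generation, because the induced function on types commutes with $f(-)$, with juxtaposition and with $,$. Next set $E^1_{\mathcal L}\coloneq\{(s,t)\in P^1_{\mathcal L} : F(s)\Oeq F(t)\}$. These pairs are parallel over $\mathcal L$ by construction, and their images are parallel by Proposition~\ref{prop:parallel-terms-preserved}. For the 2-cells, define $\eta_{\mathcal L}(t,s)\coloneq\eta_{\mathcal K}(F(t),F(s))$ and take $F^2_{t,s}$ to be the identity. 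Finally set $E^2_{\mathcal L}\coloneq\{(\alpha,\beta)\in P^2_{\mathcal L} : F^2(\alpha)\Teq F^2(\beta)\}$, using Proposition~\ref{prop:parallel-2terms-preserved}. By construction $(F,F^2)$ is then a morphism of layered theories $\overline{F}:F^*\mathcal T\rightarrow\mathcal T$ lying above $F$.

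The main step is cartesianness. Let $(G,G^2):\mathcal S\rightarrow\mathcal T$ be a morphism of theories, where $\mathcal S$ has signature $\mathcal J$, and suppose $G=H;F$ for some $H:\mathcal J\rightarrow\mathcal L$ in $\LSgn$. We must produce a unique morphism $(H,H^2):\mathcal S\rightarrow F^*\mathcal T$ above $H$ whose composite with $\overline{F}$ is $(G,G^2)$. The 2-cell components are forced: since $F^2$ is the identity, we must have $H^2_{t,s}\coloneq G^2_{t,s}$, which is well-typed because $\eta_{\mathcal L}(H(t),H(s))=\eta_{\mathcal K}(G(t),G(s))$. To see that $H$ preserves the equations, we use functoriality of the induced maps: by Definition~\ref{def:sorting-procedure}, the extension of a signature morphism to terms is unique, so $(H;F)(t)=F(H(t))$ on terms, and similarly on types and 2-terms. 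Therefore, if $(s,t)\in E^1_{\mathcal S}$, then $F(H(s))=G(s)\Oeq G(t)=F(H(t))$, which means $(H(s),H(t))\in E^1_{\mathcal L}$. Levels $0$ and $2$ are handled the same way. Uniqueness holds because the signature part is fixed to be $H$ and the 2-cell part is forced as above.

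The subtle point I expect is that morphisms of signatures with 2-cells must preserve parallelism relative to the 0-equations, so that $\eta_{\mathcal L}$ is a function on $P^1_{\mathcal L}$. This should follow from the first stage, since $F$ maps $P^1_{\mathcal L}$ into $P^1_{\mathcal K}$. For the second claim, note that the fibre $\LTh(\mathcal L)$ consists of theories whose signature is exactly $\mathcal L$, together with morphisms whose signature part is the identity. Its objects are therefore precisely the layered theories with signature $\mathcal L$.
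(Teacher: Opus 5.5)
The paper states this proposition without proof (as it also does for $\MTh\rightarrow\MSgn$), so the only thing to compare against is the construction of cartesian liftings in Proposition~\ref{prop:monoidal-signatures-theories-models}. Your strategy is the natural one: pull everything back along $F$, set $\eta_{\mathcal L}(t,s)\coloneq\eta_{\mathcal K}(F(t),F(s))$ with $F^2$ the identity, and observe that the 2-cell part of any mediating morphism is then forced. Your uniqueness argument is correct.

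One step depends on how ``$F$ preserves the $i$-equations'' is read. You pull back along the congruences ($F(A)\Zeq F(B)$, and likewise with $\Oeq$ and $\Teq$). This makes $\overline F$ a morphism only if preservation means sending $E^i_{\mathcal L}$ into the congruence generated by $E^i_{\mathcal K}$. Suppose instead it is read the way the paper defines morphisms of monoidal theories: $(s,t)\in E$ implies $(F(s),F(t))\in E'$. Then $\overline F$ need not be a morphism. Take $F=\id_{\mathcal K}$ and $E^0_{\mathcal K}=\{(a:\omega,b:\omega),(b:\omega,c:\omega)\}$; your $E^0_{\mathcal L}$ contains $(a:\omega,c:\omega)$, which is not in $E^0_{\mathcal K}$. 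Under that reading you must pull back the sets themselves, $E^0_{\mathcal L}\coloneq\{(A,B)\in P^0_{\mathcal L} : (F(A),F(B))\in E^0_{\mathcal K}\}$, and similarly at levels 1 and 2. Your cartesianness argument then goes through verbatim, since $G$ sends $E^i_{\mathcal S}$ into $E^i_{\mathcal K}$. The two readings genuinely need different pullbacks. The set-wise one is not cartesian under the congruence reading, because $\Zeq$ in $\mathcal K$ can relate images of types that no chain of types in $\mathcal L$ connects. So you should fix the notion of morphism explicitly before choosing the lifting.

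Second, the levels are not independent, so they cannot be handled ``the same way'' in parallel. To conclude $(H(s),H(t))\in E^1_{\mathcal L}$ you also need $(H(s),H(t))\in P^1_{\mathcal L}$. That means parallel with respect to the pulled-back $E^0_{\mathcal L}$, not $E^0_{\mathcal K}$. This holds only once you have shown that $H$ preserves the 0-equations, via Proposition~\ref{prop:parallel-terms-preserved} applied to $H$. The same fact is needed for $\eta_{\mathcal L}(H(t),H(s))$, and hence $H^2_{t,s}$, to be defined at all. Level 2 likewise relies on level 1 through Proposition~\ref{prop:parallel-2terms-preserved}. With the levels checked in the order 0, 1, 2, and the pullback matched to the chosen notion of morphism, the proof is complete.
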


Next, we define the structural equations that will be used to define the free models.

\begin{definition}[Structural 0-equations]\label{def:str-0eqns}
Given a layered signature, the {\em structural 0-equations} $S^0$ are given by:
\begin{align*}
(AB)C : \omega &= A(BC) : \omega, &
\varepsilon A : \omega &= A : \omega = A\varepsilon : \omega, &
f(AB):\tau &= f(A)f(B):\tau, &
f(\varepsilon) : \tau &= \varepsilon : \tau.
\end{align*}
\end{definition}

\begin{definition}[Structural 1-equations]\label{def:str-1eqns}
Given a layered signature with the structural 0-equations (Definition~\ref{def:str-0eqns}), the {\em structural 1-equations} $S^1$ are given by the following:
\begin{itemize}
\item the structural identities for monoidal theories (Definition~\ref{def:str-id}), where sequential composition is given by the rule~\ref{term:comp}, the identities are given by the terms~\ref{term:int-id}, parallel composition is given by the rule~\ref{term:ext-tensor}, and the monoidal unit is given by~\ref{term:ext-unit},
\item the structural identities for the monoidal product (the bottom half of Definition~\ref{def:str-id}) hold for the internal terms, where parallel composition is given by the rule~\ref{term:int-tensor}, the monoidal unit is given by~\ref{term:int-unit}, and the sequential composition is given by~\ref{term:comp}\footnote{We note that the structural identities for internal terms which only involve sequential composition and identities are already implied by requiring them for all terms.},
\item the following identity for the identity terms with sort $(AB:\omega\mid AB:\omega)$ generated by the rules~\ref{term:int-id} and~\ref{term:int-tensor}: $\scalebox{.8}{\input{journal-figures/int-id-equals-int-tensor.tikz}} : (AB:\omega\mid AB:\omega)$, where the term on the left-hand side is generated by the rule~\ref{term:int-id}, while the term on the right-hand side is obtained by applying~\ref{term:int-tensor} to the identity terms with sorts $(A:\omega\mid A:\omega)$ and $(B:\omega\mid B:\omega)$, which are, in turn, obtained from~\ref{term:int-id},
\item the identities in Figure~\ref{fig:structural-twocells-functors-int} involving the~\ref{term:int-box} terms, making each $f$ a monoidal functor.
\end{itemize}
\end{definition}

\begin{figure}
  \centering\small\noindent
  \scalebox{.8}{\input{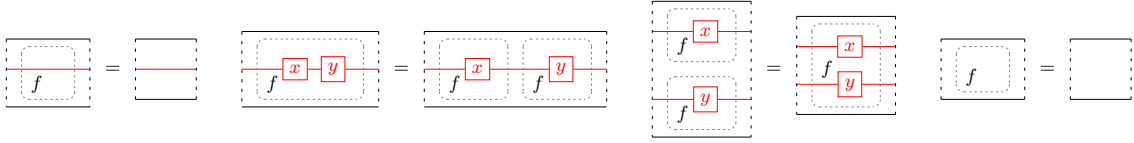}}
  \caption{1-equations defining monoidal functors inside the fibres\label{fig:structural-twocells-functors-int}}
\end{figure}

\begin{definition}[Structural 2-equations]\label{def:str-2eqns}
Given a layered signature and the structural 1-equations of Definition~\ref{def:str-1eqns}, the {\em structural 2-equations} $S^2$ are given in Figure~\ref{fig:str-2eqns} by the expressions on the left, as long as both sides are defined (on the right, we draw the situation in which each equation applies).
\end{definition}

\begin{figure}
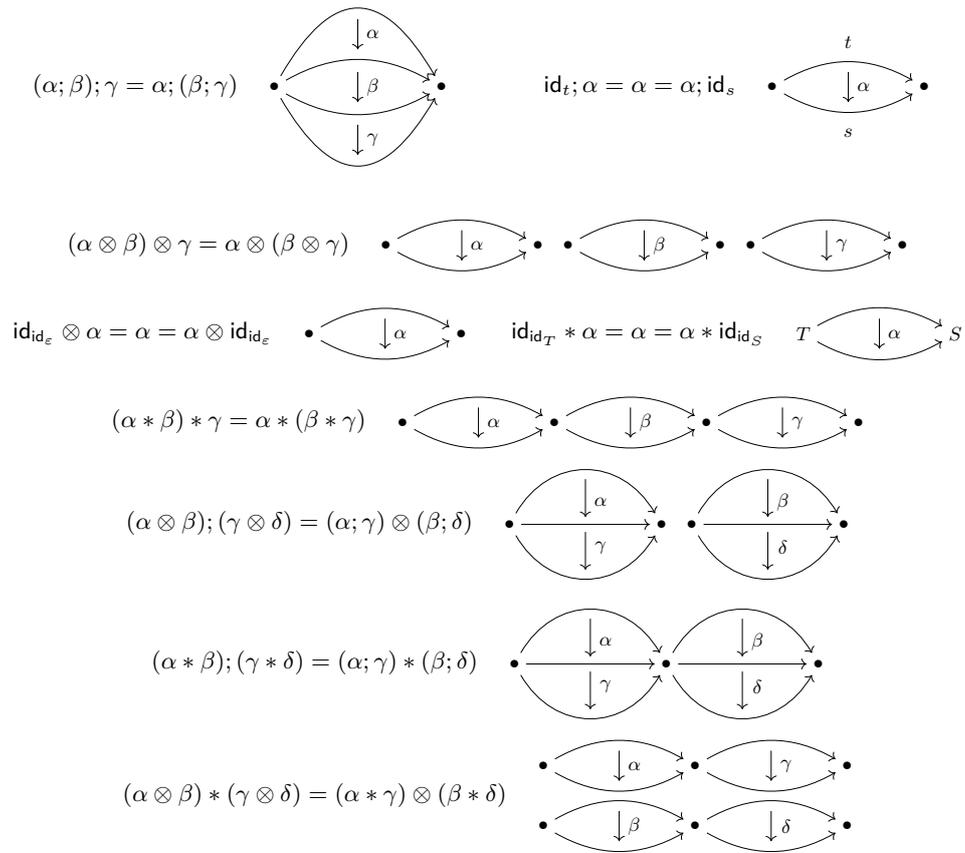

\centering\small
\renewcommand{\arraystretch}{3}
\begin{tabular}{c c}
$(\alpha;\beta);\gamma = \alpha;(\beta;\gamma)$\quad\scalebox{.8}{\input{journal-figures/2cells-compose-assoc.tikz}} & $\id_t;\alpha = \alpha = \alpha;\id_s$\quad\scalebox{.8}{\input{journal-figures/2cells-compose-id.tikz}} \\
\multicolumn{2}{c}{$(\alpha\otimes\beta)\otimes\gamma = \alpha\otimes(\beta\otimes\gamma)$\quad\scalebox{.8}{\input{journal-figures/2cells-tensor-assoc.tikz}}} \\
$\id_{\id_{\varepsilon}}\otimes\alpha = \alpha = \alpha\otimes\id_{\id_{\varepsilon}}$\quad\scalebox{.8}{\input{journal-figures/2cells-tensor-id.tikz}} & $\id_{\id_T} *\alpha = \alpha = \alpha *\id_{\id_S}$\quad\scalebox{.8}{\input{journal-figures/2cells-horizontal-id.tikz}} \\
\multicolumn{2}{c}{$(\alpha *\beta) *\gamma = \alpha *(\beta *\gamma)$\quad\scalebox{.8}{\input{journal-figures/2cells-horizontal-assoc.tikz}}} \\
\multicolumn{2}{c}{$(\alpha\otimes\beta);(\gamma\otimes\delta) = (\alpha;\gamma)\otimes (\beta;\delta)$\quad\scalebox{.8}{\input{journal-figures/2cells-tensor-compose.tikz}}} \\
\multicolumn{2}{c}{$(\alpha *\beta);(\gamma *\delta) = (\alpha;\gamma) * (\beta;\delta)$\quad\scalebox{.8}{\input{journal-figures/2cells-horizontal-compose.tikz}}} \\
\multicolumn{2}{c}{$(\alpha\otimes\beta) * (\gamma\otimes\delta) = (\alpha *\gamma)\otimes (\beta *\delta)$\quad\scalebox{.8}{\input{journal-figures/2cells-tensor-horizontal.tikz}}}
\end{tabular}
\renewcommand{\arraystretch}{1}
\caption{The structural 2-equations. Here $\id_T$ and $\id_S$ are the identity terms on the types $T$ and $S$, while $\id_{\varepsilon}$ is the identity term on $\varepsilon : \varepsilon$ obtained by the rule~\ref{term:ext-unit}.\label{fig:str-2eqns}}
\end{figure}

We say that a layered theory {\em has structural equations} when its sets of equations contain all the structural equations of Definitions~\ref{def:str-0eqns}, \ref{def:str-1eqns} and~\ref{def:str-2eqns}.

\noindent\begin{minipage}{0.4\textwidth}
\begin{definition}[Symmetry terms]\label{def:symmetry-terms}
Given a layered signature, the {\em symmetry terms} are generated by the rule on the right:
\end{definition}\end{minipage}
\begin{minipage}{0.6\textwidth}
\begin{center}
  \begin{prooftree}
    \AxiomC{$A:\omega$}
    \AxiomC{$B:\tau$}
    \RightLabel{\customlabel{term:swap}{(swap)}.\;\;}
    \BinaryInfC{$\scalebox{.8}{\input{journal-figures/symdiag-sheet1.tikz}} : (A:\omega, B:\tau \mid B:\tau, A:\omega)$}
  \end{prooftree}
\end{center}
\end{minipage}

\begin{definition}[Externally symmetric layered theory]\label{def:symmetry-1equations}
A layered theory is {\em externally symmetric} if it has the symmetry terms, and the structural 0-equations (Definition~\ref{def:str-0eqns}) and 1-equations (Definition~\ref{def:str-1eqns}) hold, and the 1-equations contain the equations for symmetric monoidal theories in Definition~\ref{def:symm-mon-thy} for the symmetry terms.
\end{definition}

\subsection{Opfibrational, fibrational and deflational theories}\label{subsec:opfib-defl-theories}

Opfibrational theories are defined for the recursive procedure specified in Definition~\ref{def:opfib-layered-theory} and have no generating 2-cells.

\begin{figure}
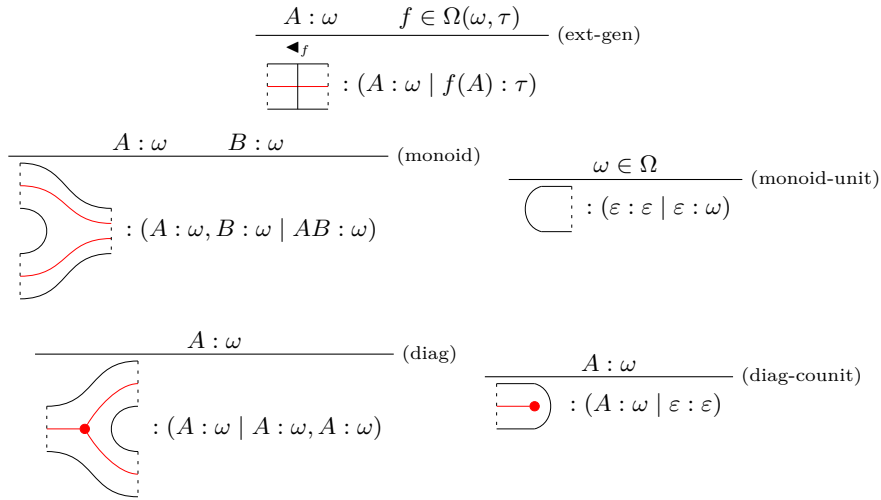

  \centering\small\noindent
  \begin{prooftree}
    \AxiomC{$A:\omega$}
    \AxiomC{$f\in\Omega(\omega,\tau)$}
    \RightLabel{\scriptsize\customlabel{term:ext-gen}{(ext-gen)}}
    \BinaryInfC{$\scalebox{.8}{\input{journal-figures/refine-sheet.tikz}} : (A:\omega \mid f(A):\tau)$}
  \end{prooftree}
  \begin{bprooftree}
    \AxiomC{$A:\omega$}
    \AxiomC{$B:\omega$}
    \RightLabel{\scriptsize\customlabel{term:monoid}{(monoid)}\;\;}
    \BinaryInfC{$\scalebox{.8}{\input{journal-figures/pants.tikz}} : (A:\omega, B:\omega \mid AB:\omega)$}
    \DisplayProof
    \AxiomC{$\omega\in\Omega$}
    \RightLabel{\scriptsize\customlabel{term:monoid-unit}{(monoid-unit)}}
    \UnaryInfC{$\scalebox{.8}{\input{journal-figures/cup.tikz}} : (\varepsilon : \varepsilon\mid\varepsilon : \omega)$}
  \end{bprooftree}
  \begin{prooftree}
    \AxiomC{$A:\omega$}
    \RightLabel{\scriptsize\customlabel{term:diag}{(diag)}\;\;}
    \UnaryInfC{$\scalebox{.8}{\input{journal-figures/copants-copy.tikz}} : (A:\omega \mid A:\omega, A:\omega)$}
    \DisplayProof
    \AxiomC{$A:\omega$}
    \RightLabel{\scriptsize\customlabel{term:diag-counit}{(diag-counit)}}
    \UnaryInfC{$\scalebox{.8}{\input{journal-figures/a-cap.tikz}} : (A:\omega \mid \varepsilon : \varepsilon)$}
  \end{prooftree}
  \caption{Rules for generating the opfibrational terms\label{fig:opfibrational-terms}}
\end{figure}

\begin{definition}[Opfibrational layered theory]\label{def:opfib-layered-theory}
We say that a layered theory is {\em opfibrational} if it has no generating 2-cells, and its recursive sorting procedure consists of the rules in Figure~\ref{fig:opfibrational-terms} and the symmetry terms of Definition~\ref{def:symmetry-terms} (in addition to the basic terms in Figure~\ref{fig:layered-terms}).
\end{definition}

The intuition behind the opfibrational terms is that we take an ``external'' view of the monoidal categories and monoidal functors between them. In more detail:
\begin{itemize}
\item the terms~\ref{term:monoid} and~\ref{term:monoid-unit} capture the monoidal product and unit in the layer $\omega$: e.g.~sliding the internal terms through the term~\ref{term:monoid} defines the monoidal product on morphisms (see Figure~\ref{fig:structural-twocells-monoidal}),
\item the terms~\ref{term:ext-gen} capture the monoidal functors $f:\omega\rightarrow\tau$: any internal term appearing on the left-hand side of the boundary is in the domain $\omega$, and can be pushed through the boundary into the codomain $\tau$ (see Figure~\ref{fig:structural-twocells-functors-ext}),
\item the terms~\ref{term:diag-counit} and~\ref{term:diag} capture the cartesian monoidal structure of the category of monoidal categories; note that they do not correspond to any deleting or copying {\em inside} a layer, rather, it might be instructive to think of them as {\em branching} or {\em nondeterminism}: \ref{term:diag} represents branching into two potential histories of a system, while \ref{term:diag-counit} corresponds to discarding one of the branches or histories.
\end{itemize}

\begin{remark}
A more accurate name for the theories of Definition~\ref{def:opfib-layered-theory} would be a {\em strict layered theory for opfibrations with indexed monoids}. Since this is somewhat cumbersome, and we shall not consider any other layered theories featuring an opfibration, we stick to the term {\em opfibrational}.
\end{remark}

We denote the resulting category of opfibrational layered theories by $\OpFTh$. In this case the morphisms are simply equation preserving morphisms of signatures rather than pairs of morphisms, as there are no generating 2-cells.

\begin{definition}[Structural opfibrational equations]\label{def:str-opfib-eqns}
By the {\em structural opfibrational equations} we mean the structural equations, the external symmetry equations, as well as the following sets of 1-equations, denoted by $S_{\mathsf{opf}}^1$: (1) the equations of uniform comonoids (Definition~\ref{def:thy-univ-comonoids}) for the terms~\ref{term:diag} and~\ref{term:diag-counit}, (2) the defining identities of monoidal categories in Figure~\ref{fig:structural-twocells-monoidal}, and (3) the defining identities of monoidal functors in Figure~\ref{fig:structural-twocells-functors-ext}.
\end{definition}

\begin{figure}
  \centering\small\noindent
  \scalebox{.8}{\input{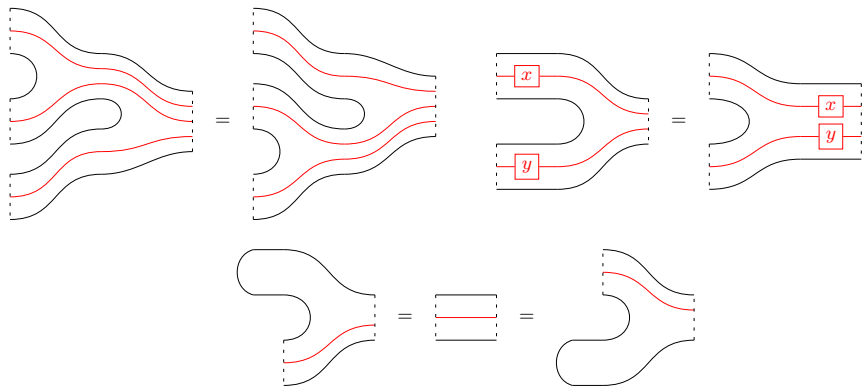}}
  \caption{1-equations defining monoidal categories\label{fig:structural-twocells-monoidal}}
\end{figure}

\begin{figure}
  \centering\small\noindent
  \scalebox{.8}{\input{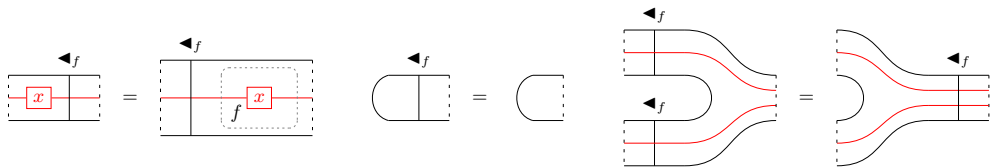}}
  \caption{1-equations defining monoidal functors\label{fig:structural-twocells-functors-ext}}
\end{figure}

Dualising the construction of opfibrational theories, we obtain {\em fibrational theories}, where the composition of internal and external terms go in opposite directions. Formally, we obtain opfibrational theories by replacing the opfibrational terms (Figure~\ref{fig:opfibrational-terms}) by the {\em fibrational terms} (Figure~\ref{fig:fibrational-terms}) in Definition~\ref{def:opfib-layered-theory}. Note that each fibrational term is a horizontal reflection of an opfibrational term, and that in the rule~\ref{term:ext-gen-op} we use of an unfilled triangle for emphasis -- the lack of filling plays no formal role.

\begin{figure}
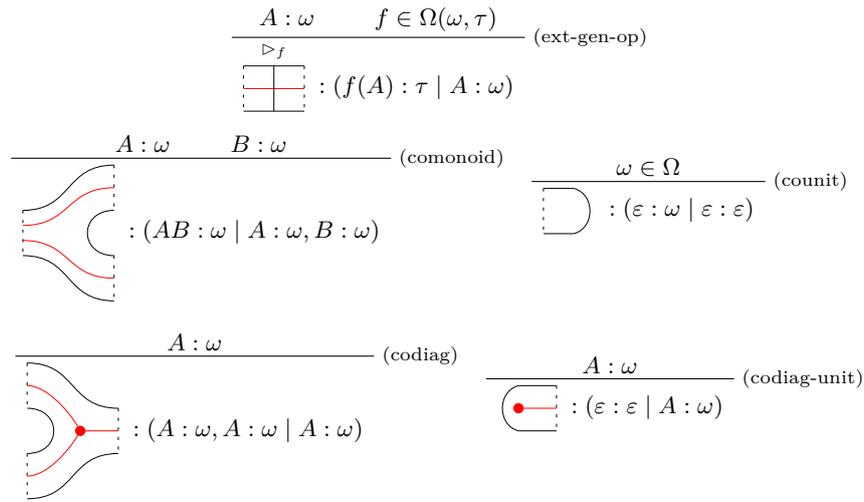

  \centering\small\noindent
  \begin{prooftree}
    \AxiomC{$A:\omega$}
    \AxiomC{$f\in\Omega(\omega,\tau)$}
    \RightLabel{\scriptsize\customlabel{term:ext-gen-op}{(ext-gen-op)}}
    \BinaryInfC{$\scalebox{.8}{\input{journal-figures/coarsen-sheet.tikz}} : (f(A):\tau \mid A:\omega)$}
  \end{prooftree}
  \begin{bprooftree}
    \AxiomC{$A:\omega$}
    \AxiomC{$B:\omega$}
    \RightLabel{\scriptsize\customlabel{term:comonoid}{(comonoid)}\;\;}
    \BinaryInfC{$\scalebox{.8}{\input{journal-figures/copants.tikz}} : (AB:\omega \mid A:\omega, B:\omega)$}
    \DisplayProof
    \AxiomC{$\omega\in\Omega$}
    \RightLabel{\scriptsize\customlabel{term:counit}{(counit)}\;\;}
    \UnaryInfC{$\scalebox{.8}{\input{journal-figures/cap.tikz}} : (\varepsilon : \omega \mid \varepsilon : \varepsilon)$}
  \end{bprooftree}
  \begin{prooftree}
    \AxiomC{$A:\omega$}
    \RightLabel{\scriptsize\customlabel{term:codiag}{(codiag)}\;\;}
    \UnaryInfC{$\scalebox{.8}{\input{journal-figures/pants-copy.tikz}} : (A:\omega, A:\omega \mid A:\omega)$}
    \DisplayProof
    \AxiomC{$A:\omega$}
    \RightLabel{\scriptsize\customlabel{term:codiag-unit}{(codiag-unit)}}
    \UnaryInfC{$\scalebox{.8}{\input{journal-figures/a-cup.tikz}} : (\varepsilon : \varepsilon \mid A:\omega)$}
  \end{prooftree}
  \caption{Rules for generating the fibrational terms\label{fig:fibrational-terms}}
\end{figure}

The structural fibrational equations are likewise obtained by dualising the structural opfibrational equations (Definition~\ref{def:str-opfib-eqns}). For the sake of completeness, and since they will be used for defining the structural deflational equations, we state the definition explicitly.
\begin{definition}[Structural fibrational equations]\label{def:str-fib-eqns}
By the {\em structural fibrational equations} we mean the structural equations, the external symmetry equations, as well as the following sets of 1-equations, where ``dual'' and ``dualising'' means reflecting the terms horizontally: (1) the equations of uniform monoids (the dual of Definition~\ref{def:thy-univ-comonoids}) for the terms~\ref{term:codiag} and~\ref{term:codiag-unit}, (2) the defining identities of monoidal categories obtained by dualising Figure~\ref{fig:structural-twocells-monoidal}, and (3) the defining identities of monoidal functors obtained by dualising Figure~\ref{fig:structural-twocells-functors-ext}.
\end{definition}
With these modifications, any properties of fibrational theories can be obtained by dualising the corresponding properties of opfibrational theories: all the diagrams are simply reversed.

We now arrive to our last -- and most important -- class of theories. Deflational theories can be thought of as glueing an opfibrational and a fibrational theory together along the internal terms.

\begin{definition}[Deflational layered theory]\label{def:deflational-theory}
A layered theory is {\em deflational} if its recursive sorting procedure consists of the rules for the symmetry terms of Definition~\ref{def:symmetry-terms}, as well as of the rules for opfibrational and fibrational terms in Figures~\ref{fig:opfibrational-terms} and~\ref{fig:fibrational-terms} (in addition to the basic terms in Figure~\ref{fig:layered-terms}).
\end{definition}
We denote the category of deflational layered theories by $\DeflTh$. In a deflational theory, every opfibrational term $x:(T\mid S)$ generated by a rule in Figure~\ref{fig:opfibrational-terms} has a corresponding fibrational term $\bar x:(S\mid T)$ generated by the corresponding rule in Figure~\ref{fig:fibrational-terms}. We use this to define the {\em structural 2-cells}, expressing $\bar x$ as the right adjoint to $x$.
\begin{definition}[Structural 2-cells]\label{def:str-twocells}
Given a deflational layered theory with layered signature $\mathcal L$, the {\em structural 2-cells} are given by the choice of 2-cells $\eta_{\mathsf{str}}$ defined as follows: for every opfibrational term $x:(T\mid S)$ generated by a rule in Figure~\ref{fig:opfibrational-terms} one has $\eta_{\mathsf{str}}\left(\id_{T},x;\bar x\right) \coloneq\{\eta_x\}$, $\eta_{\mathsf{str}}\left(\bar x;x,\id_{S}\right) \coloneq\{\varepsilon_x\}$, and $\eta_{\mathsf{str}}$ returns the empty set otherwise.
\end{definition}
We display the 10 structural 2-cells explicitly in Figure~\ref{fig:structural-twocells-adjoints}: the left-hand side contains the unit 2-cells ($\eta_x$), while the right-hand side contains the counit 2-cells ($\varepsilon_x$). We consistently omit the labels of the structural 2-cells, as the domain and the codomain determine them uniquely.

\begin{figure}
  \centering\small\noindent
  \scalebox{.75}{\input{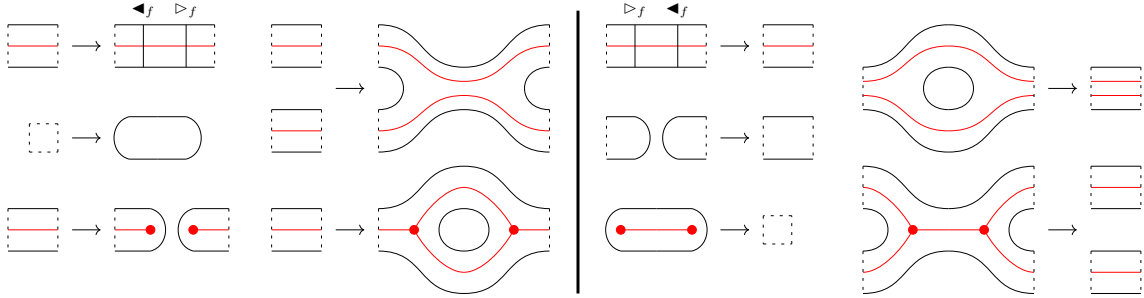}}
  \caption{Structural 2-cells for deflational theories\label{fig:structural-twocells-adjoints}}
\end{figure}

\begin{definition}[Structural deflational equations]\label{def:str-defl-eqns}
By the {\em structural deflational equations} we mean: the structural opfibrational equations (Definition~\ref{def:str-opfib-eqns}), the structural fibrational equations (Definition~\ref{def:str-fib-eqns}) and the following 2-equations for each opfibrational term $x:(T\mid S)$ and all terms $y:(T\mid T)$ and $z:(S\mid S)$ that are either internal or a product (obtained by the~\ref{term:ext-tensor}) of internal terms:
\begin{align*}
\left(\eta_x*\id_x\right);\left(\id_x*\varepsilon_x\right) &=\id_x &
\left(\id_{\bar x}*\eta_x\right);\left(\varepsilon_x*\id_{\bar x}\right) &=\id_{\bar x} &
\id_y*\eta_x &= \eta_x*\id_y &
\id_z*\varepsilon_x &= \varepsilon_x*\id_z.
\end{align*}
We denote the structural deflational {1-} and 2-equations by $S_{\mathsf{defl}}^1$ and $S_{\mathsf{defl}}^2$.
\end{definition}
Note that the structural deflational equations contain, in particular, the structural {0-}, {1-} and 2-equations of Definitions~\ref{def:str-0eqns}, \ref{def:str-1eqns} and~\ref{def:str-2eqns}. The additional 2-equations that are required to hold are the usual zigzag equations defining an adjunction (see Definition~\ref{def:zigzag-category} for their graphical depiction), as well as the equations stating the compatibility of the structural 2-cells with the ``sliding through'' 1-equations for internal terms in Figures~\ref{fig:structural-twocells-monoidal} and~\ref{fig:structural-twocells-functors-ext}.

\section{Indexed monoids}\label{sec:indexed-monoidal}
In this section, we study categories with indexed monoids (Definition~\ref{def:indexed-monoids}). We ultimately extend this notion to opfibrations with indexed monoids (Definition~\ref{def:opfib-indexed-mon}), a special case of which will be used as a semantics for opfibrational theories in Section~\ref{subsec:opfib-models}.

A symmetric monoidal category is denoted by $(\cat C,\otimes,I,\alpha,\lambda,\rho,\sigma)$, where $I$ is the monoidal unit, while the Greek letters denote the natural isomorphisms: $\alpha$ is the associator, $\lambda$ and $\rho$ are, respectively, the left and right unitors, while $\sigma$ is the symmetry. For brevity, we shall leave the natural isomorphisms implicit, and simply write $(\cat C,\otimes,I)$ for a monoidal category. We take the liberty to use string diagrams even when reasoning about non-strict monoidal categories: we trust that the reader will be able to infer the missing natural isomorphisms where necessary.

\subsection{Categories with indexed monoids}

Here we define categories with indexed monoids, and show that free models of monoidal theories with indexed monoids give the left adjoint to the forgetful functor from the categories with indexed monoids to categories.

The following definition should be compared to Definition~\ref{def:thy-univ-comonoids}, where we defined the monoidal theory with uniform comonoids.
\begin{definition}[Uniform comonoids]
A symmetric monoidal category $(\cat C,\otimes,I)$ has {\em uniform comonoids} if the following obtain:
\begin{itemize}
\item for every object $x\in\Ob(\cat C)$, there are morphisms $d_x: x\rightarrow x\otimes x$ and $e_x:x\rightarrow I$,
\item for every object $x$, the morphism $d_x$ is a comonoid with the counit $e_x$ (up to the natural isomorphisms),
\item $d_I=\rho_I^{-1}$ and $e_I=\id_1$,
\item $d$ and $e$ are natural, i.e.~for every morphism $f:x\rightarrow y$, we have $d_x;(f\otimes f)=f;d_y$ and $f;e_y = e_x$,
\item for all objects $x,y\in\Ob(\cat C)$, we have (up to associators) $(d_x\otimes d_y) ; (\id_x\otimes\sigma_{xy}\otimes\id_y)= d_{x\otimes y}$.
\end{itemize}
\end{definition}

Note that the naturality conditions imply that the uniform comonoids are unique, in the sense that if both $(d,e)$ and $(d',e')$ are uniform comonoids on the same symmetric monoidal category, then $(d,e)=(d',e')$. The following result is known as {\em Fox's theorem}. We refer the reader to Melli\` es~\cite[Section~6.4]{mellies-cat-semantics-linear-logic} for a more detailed discussion, including more refined versions of the equivalence.
\begin{proposition}\label{prop:fox}
A symmetric monoidal category $(\cat C,\otimes,I)$ is cartesian monoidal (i.e.~$\otimes$ is the cartesian product and $I$ is the terminal object) if and only if it has uniform comonoids.
\end{proposition}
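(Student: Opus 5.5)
We prove the two directions separately, with essentially all the work in the ``if'' direction. Throughout we write $\pi_1,\pi_2$ for the product projections and $\langle -,-\rangle$ for pairing.

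For the ``only if'' direction, assume $\otimes$ is the cartesian product and $I$ is terminal. We take $d_x=\langle\id_x,\id_x\rangle$ and let $e_x$ be the unique map $x\rightarrow I$. The required equations all follow from the universal properties:
\begin{itemize}
\item the comonoid laws hold because the relevant composites agree after postcomposing with each projection;
\item $d_I=\rho_I^{-1}$ and $e_I=\id_I$ hold because $I$ is terminal and $I\times I\cong I$;
\item naturality $d_x;(f\otimes f)=f;d_y$ holds because both sides equal $\langle f,f\rangle$, and $f;e_y=e_x$ holds by terminality;
\item the uniformity condition $(d_x\otimes d_y);(\id\otimes\sigma\otimes\id)=d_{x\otimes y}$ holds because both sides are $\langle\id_{x\times y},\id_{x\times y}\rangle$ once we compose with the four projections.
\end{itemize}

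For the ``if'' direction, assume $(d,e)$ are uniform comonoids.
\begin{itemize}
\item \emph{$I$ is terminal.} For any $f:x\rightarrow I$, naturality of $e$ gives $f;e_I=e_x$. Since $e_I=\id_I$, this yields $f=e_x$, so $e_x$ is the unique map $x\rightarrow I$.
\item \emph{Projections.} Define $\pi_1\coloneq(\id_x\otimes e_y);\rho_x$ and $\pi_2\coloneq(e_x\otimes\id_y);\lambda_y$.
\item \emph{Pairing.} Given $f:z\rightarrow x$ and $g:z\rightarrow y$, set $\langle f,g\rangle\coloneq d_z;(f\otimes g)$.
\item \emph{Existence.} We check $\langle f,g\rangle;\pi_1=f$: we get $d_z;(f\otimes(g;e_y))=d_z;(f\otimes e_z)$ by naturality of $e$, which is $f$ by the counit law. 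The case of $\pi_2$ is symmetric.
\end{itemize}

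The main obstacle is uniqueness of the pairing: we must show $h=d_z;\bigl((h;\pi_1)\otimes(h;\pi_2)\bigr)$ for every $h:z\rightarrow x\otimes y$.
\begin{itemize}
\item By naturality of $d$ at $h$, we have $d_z;(h\otimes h)=h;d_{x\otimes y}$.
\item Compose both sides with $\pi_1\otimes\pi_2$. The left side becomes $d_z;\bigl((h;\pi_1)\otimes(h;\pi_2)\bigr)$.
\item The right side becomes $h;d_{x\otimes y};(\pi_1\otimes\pi_2)$. Using uniformity, rewrite $d_{x\otimes y}$ as $(d_x\otimes d_y)$ followed by the middle symmetry.
\item After moving the counits past the symmetry (by naturality of $\sigma$), this expression reduces to $h;(d_x;(\id\otimes e_x))\otimes(d_y;(e_y\otimes\id))$, which is $h$ by the counit laws.
\end{itemize}
This bookkeeping with the associators, unitors and the symmetry is the step we expect to need the most care; we will do it diagrammatically and leave the coherence isomorphisms implicit. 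This completes the proof that $\otimes$ is the cartesian product.
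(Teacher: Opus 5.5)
Your proposal is correct and follows essentially the same route as the paper: the same projections $(\id_x\otimes e_y);\rho_x$ and $(e_x\otimes\id_y);\lambda_y$, the same pairing $d_z;(f\otimes g)$, existence via naturality of $e$ and the counit law, uniqueness via naturality of $d$ at $h$ combined with uniformity and the counit laws, and terminality of $I$ from naturality of $e$ together with $e_I=\id_I$. The converse is the same diagonal/terminal-map argument, which you spell out in more detail than the paper does.
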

\begin{proof}
Suppose $(\cat C,\otimes,I)$ has uniform comonoids $(d,e)$. Given maps $f$ and $g$ as below
\begin{equation*}
\scalebox{1}{\input{journal-figures/product-diagram.tikz}},
\end{equation*}
the map $(f,g) : x\rightarrow a\otimes b$ is given by $d_x;(f\otimes g)$, while the projections are given by $\pi_a\coloneq(\id_a\otimes e_b);\rho_a$ and $\pi_b\coloneq(e_a\otimes\id_b);\lambda_b$. It is then immediate that composing $(f,g)$ with the projections recovers the original maps $f$ and $g$. For uniqueness, suppose there is some map $h:x\rightarrow a\otimes b$ such that
\begin{equation*}
\scalebox{.7}{\input{journal-figures/fox-proof-uniqueness-equations.tikz}}.
\end{equation*}
We then have that $h=(f,g)$ as follows:
\begin{equation*}
\scalebox{.7}{\input{journal-figures/fox-proof-uniqueness.tikz}}.
\end{equation*}
Finally, observe that $I$ is the terminal object by naturality of $e$.

Conversely, if $(\cat C,\otimes,I)$ is cartesian monoidal, then $d_x$ is the diagonal and $e_x$ is the unique map into the terminal object.
\end{proof}

\begin{definition}[Consistent monoids]
{\em Consistent monoids} in a symmetric monoidal category $(\cat C,\otimes,I)$ consist of the following data:
\begin{itemize}
\item for every object $x\in\Ob(\cat C)$, there are morphisms $m_x: x\otimes x\rightarrow x$ and $u_x:I\rightarrow x$,
\item for every object $x$, the morphism $m_x$ is a monoid with the unit $u_x$ (up to the natural isomorphisms),
\item $m_I=\rho_I$ and $u_I=\id_I$,
\item $u$ is natural, i.e.~for every morphism $f:x\rightarrow y$, we have $u_x ; f = u_y$,
\item for all objects $x,y\in\Ob(\cat C)$, we have (up to associators) $(\id_x\otimes\sigma_{yx}\otimes\id_y) ; (m_x\otimes m_y) = m_{x\otimes y}$.
\end{itemize}
\end{definition}
Similarly to uniform comonoids, the units of consistent monoids are unique: in fact, naturality and $u_I=\id_I$ imply that $I$ is the initial object. Contrariwise, since naturality is not asked of monoid multiplication, they need not be unique, and therefore correspond to a choice of structure rather than being a property of a category. This motivates the following definition.
\begin{definition}[Indexed monoids]\label{def:indexed-monoids}
We call a choice of consistent monoids in a symmetric monoidal category with uniform comonoids {\em indexed monoids}. We denote a {\em category with indexed monoids} by $(\cat C,\otimes,I,m)$.
\end{definition}
Using Fox's theorem (Proposition~\ref{prop:fox}), we restate the definition of a category with indexed monoids as follows.
\begin{proposition}
A symmetric monoidal category $(\cat C,\otimes,I)$ has indexed monoids if
\begin{itemize}
\item it is cartesian monoidal,
\item $I$ is also an initial object (thus a zero object),
\item for every object $x\in\Ob(\cat C)$, there is a monoid $m_x: x\otimes x\rightarrow x$ with unit $u_x:I\rightarrow x$,
\item $m_I=\rho_I$,
\item for all objects $x,y\in\Ob(\cat C)$, we have (up to associators) $(\id_x\otimes\sigma_{yx}\otimes\id_y) ; (m_x\otimes m_y) = m_{x\otimes y}$.
\end{itemize}
\end{proposition}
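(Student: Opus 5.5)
The plan is to verify the two halves of Definition~\ref{def:indexed-monoids} separately. The statement is a repackaging of uniform comonoids together with consistent monoids. The statement is phrased as ``if'', but the argument gives both directions, and I will treat it as a characterisation.

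\emph{Comonoid half.} By Fox's theorem (Proposition~\ref{prop:fox}), $(\cat C,\otimes,I)$ has uniform comonoids if and only if it is cartesian monoidal. In that case $d_x$ is the diagonal and $e_x$ the unique map to the terminal object $I$. This gives exactly the first bullet point, and nothing about the monoids is involved.

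\emph{Monoid half.} I compare the consistent-monoid axioms with the remaining bullet points. The monoid axioms, the condition $m_I=\rho_I$ and the compatibility $(\id_x\otimes\sigma_{yx}\otimes\id_y);(m_x\otimes m_y)=m_{x\otimes y}$ appear verbatim in both lists. What remains is to show that naturality of $u$ together with $u_I=\id_I$ is equivalent to $I$ being initial, given a choice of units $u_x$.
\begin{itemize}
\item Suppose $u$ is natural and $u_I=\id_I$. Take any $f:I\rightarrow x$. Naturality applied to $f$ gives $u_I;f=u_x$, so $f=u_x$. Hence $u_x$ is the unique map $I\rightarrow x$, and $I$ is initial.
\item Conversely, suppose $I$ is initial. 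Then $u_x$ is forced to be the unique map $I\rightarrow x$. Naturality $u_x;f=u_y$ holds by uniqueness of maps out of $I$, and $u_I=\id_I$ holds for the same reason.
\end{itemize}
Since $I$ is already terminal by the comonoid half, it is a zero object.

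There is one subtle point. The conditions $d_I=\rho_I^{-1}$ and $e_I=\id_I$ in the definition of uniform comonoids must follow automatically in the cartesian setting, because $I$ is terminal. This is part of Fox's theorem as stated, so it can be quoted rather than reproved. The main thing to check carefully is therefore only the unit/initiality equivalence above, and it is immediate. I do not expect any genuine obstacle: everything else is bookkeeping of which axioms coincide.
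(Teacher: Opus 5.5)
Your proposal is correct and follows essentially the same route as the paper. The paper justifies this restatement in the same way: Fox's theorem (Proposition~\ref{prop:fox}) handles the uniform-comonoid half, the remark that naturality of $u$ together with $u_I=\id_I$ makes $I$ initial handles the units, and the remaining consistent-monoid axioms carry over verbatim. Your argument that initiality forces $u_x$, and hence its naturality and $u_I=\id_I$, is exactly what the stated ``if'' direction requires.
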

\begin{example}
Let $(\Vect_k,\oplus,\{0\})$ be the category of vector spaces over a fixed field $k$, with the direct sum as the monoidal product. Then, for every vector space $V$, addition of vectors $+:V\oplus V\rightarrow V$ gives an indexed monoid structure. This example generalises to any semiadditive category.
\end{example}
\begin{example}
Consider the forgetful functor $U:\Mon\rightarrow\Set_*$ from the category of monoids to the category of pointed sets. Define the category $U\mdash\Mon$ to have the same objects as $\Mon$, and the hom-sets as $U\mdash\Mon(M,N)\coloneqq\Set_*(UM,UN)$, i.e.~the maps are required to preserve the monoidal unit, but not the binary operation. The category $U\mdash\Mon$ has a cartesian monoidal structure given by the product of monoids, the zero object is the monoid with one element, and for every monoid $M=(X,\cdot,1)$, we define the morphism $m_M:M\times M\rightarrow M$ by the multiplication of the monoid: $(x,y)\mapsto x\cdot y$. This gives $U\mdash\Mon$ an indexed monoid structure.
\end{example}
Let us denote by $\IMon$ the category whose objects are symmetric monoidal categories with indexed monoids, and whose morphisms are strong monoidal functors $(F,\mu):(\cat C,\otimes,I,m)\rightarrow (\cat D,\otimes,I,m)$ such that for all $x\in\Ob(\cat C)$, we have $\mu_{xx};F(m_x)=m_{Fx}$.
\begin{definition}
Let $\cat I=(\cat C,\otimes,I,m)$ be a category with indexed monoids. A morphism $f:x\rightarrow y$ in $\cat C$ is a {\em monoid homomorphism} if $m_x;f = (f\otimes f) ; m_y$. We denote the set of monoid homomorphisms by $\hom(\cat I)$.
\end{definition}
\begin{proposition}\label{prop:imon-subcat-preserved}
The monoid homomorphisms in a category with indexed monoids form a wide monoidal subcategory. Moreover, the monoid homomorphisms are preserved by the morphisms in $\IMon$.
\end{proposition}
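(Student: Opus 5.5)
We verify three things in turn: that monoid homomorphisms contain the identities and are closed under composition, that they are closed under the monoidal product and contain the structural isomorphisms, and that strong monoidal functors in $\IMon$ carry homomorphisms to homomorphisms. Each verification is a direct diagram chase from the axioms of consistent monoids in Definition~\ref{def:indexed-monoids}. As elsewhere in the paper, we reason with string diagrams and leave the coherence isomorphisms implicit.

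\textbf{Wide subcategory.} For identities, $m_x;\id_x=m_x=(\id_x\otimes\id_x);m_x$, so every identity is a homomorphism; in particular the subcategory contains every object and is wide. For composition, let $f:x\rightarrow y$ and $g:y\rightarrow z$ be homomorphisms. Then
\[
m_x;f;g=(f\otimes f);m_y;g=(f\otimes f);(g\otimes g);m_z=((f;g)\otimes(f;g));m_z,
\]
where the last step is functoriality of $\otimes$.

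\textbf{Monoidal subcategory.} Let $f:x\rightarrow y$ and $f':x'\rightarrow y'$ be homomorphisms. By the consistency axiom, $m_{x\otimes x'}=(\id_x\otimes\sigma_{x'x}\otimes\id_{x'});(m_x\otimes m_{x'})$. We therefore compute
\[
m_{x\otimes x'};(f\otimes f')=(\id\otimes\sigma\otimes\id);\big((m_x;f)\otimes(m_{x'};f')\big)=(\id\otimes\sigma\otimes\id);(f\otimes f\otimes f'\otimes f');(m_y\otimes m_{y'}).
\]
Naturality of the symmetry slides $f\otimes f'\otimes f\otimes f'$ past $\sigma$, and consistency once more recombines the result into $\big((f\otimes f')\otimes(f\otimes f')\big);m_{y\otimes y'}$. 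The unit object $I$ is present because the subcategory is wide. We must also check that the structural isomorphisms are homomorphisms: the associator, the unitors, and the symmetry. For the symmetry this follows from consistency together with the symmetric monoidal coherence identities. For the unitors it uses $m_I=\rho_I$. The associator case again follows from consistency and coherence. In the strict reading adopted throughout the paper, these checks become trivial.

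\textbf{Preservation.} Let $(F,\mu)$ be a morphism in $\IMon$ and $f:x\rightarrow y$ a homomorphism. The defining condition $\mu_{xx};F(m_x)=m_{Fx}$ gives
\[
m_{Fx};Ff=\mu_{xx};F(m_x;f)=\mu_{xx};F(f\otimes f);F(m_y)=(Ff\otimes Ff);\mu_{yy};F(m_y)=(Ff\otimes Ff);m_{Fy},
\]
where the third step is naturality of $\mu$.

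The only step that is not purely routine is the interchange computation showing closure under $\otimes$. There the crucial input is the consistency axiom relating $m_{x\otimes x'}$ to $m_x\otimes m_{x'}$, combined with naturality of $\sigma$. We expect this to be the main, though modest, obstacle; the remaining steps are direct calculations.
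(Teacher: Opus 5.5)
Your proof is correct and follows the same route as the paper's. Identities and composites are handled directly, closure under $\otimes$ comes from the consistency axiom together with naturality of the symmetry, and preservation comes from $\mu_{xx};F(m_x)=m_{Fx}$ combined with naturality of $\mu$, exactly as in the paper's commutative diagram; you simply write out the computations the paper leaves implicit. One small quibble: even in the strict reading the symmetry check is not trivial, since only associators and unitors become identities, though your coherence argument for it is sound and the check is not needed for a merely monoidal (not symmetric monoidal) subcategory.
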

\begin{proof}
The identity morphisms are evidently monoid homomorphisms. Closure under composition and monoidal products follows from the defining equation of monoid homomorphisms and the consistency requirement. Preservation of a monoid homomorphism $f:x\rightarrow y$ under a functor $(F,\mu)$ in $\IMon$ amounts to the equation $m_{Fx};Ff = (Ff\otimes Ff);m_{Fy}$, which we obtain as the commutativity of the following diagram:
\begin{equation*}
\scalebox{1}{\input{journal-figures/monoid-homomorphism-preserved.tikz}},
\end{equation*}
where the bottom square commutes since $f$ is a monoid homomorphism, and the top square commutes by naturality of the strongator $\mu$.
\end{proof}

We define the functor $\hom:\IMon\rightarrow\Cat$ by mapping each $\cat I$ to $\hom(\cat I)$, and each functor to its restriction to monoid homomorphisms. We conclude this section by showing that the monoid homomorphism functor so defined has a left adjoint, obtained by freely adding indexed monoids.

\begin{definition}[Theory with indexed monoids generated by a category]\label{def:indexed-by-category}
Let $\cat X$ be a small category. The {\em theory with indexed monoids generated by $\cat X$} is the theory with indexed monoids (Example~\ref{def:thy-indexed-monoids}) $\mathsf{im}(\cat X)=(\Ob(\cat X),\Sigma,\mathcal I)$ whose set of colours is given by the objects of $\cat X$, and for each $a,b\in\Ob(\cat X)$ we have $\Sigma(a,b)\coloneqq\cat X(a,b)$, together with the following additional equations for all $a\in\Ob(\cat X)$ and all $f\in\cat X(a,b)$, $g\in\cat X(b,c)$ and $h\in\cat X(a,c)$ such that $f;g=h$:
\begin{equation*}
\scalebox{.7}{\input{journal-figures/id-eqn.tikz}}.
\end{equation*}
\end{definition}

\begin{definition}[Free category with indexed monoids]\label{def:free-indexed-cat}
Let $\cat X$ be a small category. The {\em free category with indexed monoids generated by $\cat X$} is the term model (Definition~\ref{def:term-model}) $\Fim(\cat X)$ of the theory with indexed monoids generated by $\cat X$ (Definition~\ref{def:indexed-by-category}).
\end{definition}

\begin{proposition}
In the situation of Definition~\ref{def:free-indexed-cat}, the interpretation functions $i:\Ob(\cat X)\rightarrow\Ob(\cat X)^*$ and $i_{a,b}:\cat X(a,b)\rightarrow \Fim(\cat X)(a,b)$ define a functor $i:\cat X\rightarrow \hom\left(\Fim(\cat X)\right)$. Moreover, the functor $i$ satisfies the following universal property: for every category with indexed monoids $\cat I$ and a functor $G:\cat X\rightarrow\hom\left(\cat I\right)$ in $\Cat$, there is a unique (up to a monoidal natural isomorphism) morphism $\bar G : \Fim(\cat X)\rightarrow\cat I$ in $\IMon$ such that the diagram
\begin{equation*}
\scalebox{1}{\input{journal-figures/indexed-universal.tikz}}
\end{equation*}
commutes; thus exhibiting $\Fim$ as the left adjoint to $\hom:\IMon\rightarrow\Cat$.
\end{proposition}
\begin{proof}
Functoriality of $i$ follows from the equations in Definition~\ref{def:indexed-by-category}. The fact that $i_{a,b}(f)=f$ is a monoid homomorphism is part of the definition of a theory with $1\mdash 1$-natural monoids (Definition~\ref{def:thy-11-nat-monoids}).

When defining a morphism $\bar G : \Fim(\cat X)\rightarrow\cat I$ in $\IMon$, the only freedoms we have is choosing where to map the colours (objects of $\cat X$) and the generators that are not part of the indexed monoid structure (the morphisms in $\hom\left(\Fim(\cat X)\right)$), both of which are uniquely (up to the structural isomorphisms of $\cat I$) determined by commutativity of the diagram.
\end{proof}

\subsection{Opfibrations with indexed monoids}\label{subsec:opfibrations-indexed-monoids}

Here we extend the notion of indexed monoids from categories to opfibrations. The central notion is that of an {\em opfibration with indexed monoids}, or {\em im-opfibration}, which is shown to be very closely related to monoids in the category of split opfibrations $\MonOpFib_{\mathsf{sp}}(\cat X)$ (Theorem~\ref{thm:monopfib-imopfib}). We choose to work with opfibrations rather than fibrations, however, all the developments dualise to fibrations, in which the appropriate notion is {\em indexed comonoids} rather than indexed monoids.

\begin{definition}[Im-opfibration]\label{def:opfib-indexed-mon}
We say that an opfibration $\cat Y\rightarrow\cat X$ {\em has indexed monoids} if the base category $\cat X$ has indexed monoids, $\cat Y$ is cartesian monoidal, the functor preserves and reflects the cartesian products, and the cartesian product of opcartesian maps in $\cat Y$ is opcartesian. For the sake of brevity, we refer to an opfibration with indexed monoids as an {\em im-opfibration}.
\end{definition}
Note that in the above definition, $\cat X$ having indexed monoids implies that $\cat X$ is cartesian, so that it makes sense to ask the functor to preserve and reflect the cartesian products.

An im-opfibration is {\em split} if it is a split opfibration and the chosen cartesian products are preserved and reflected strictly.

\begin{definition}[Morphism of im-opfibrations]\label{def:morphism-im-opfib}
A {\em morphism of im-opfibrations} is a morphism of opfibrations $(H,K)$ such that the functor between the base categories $K$ is a morphism in $\IMon$, and the functor between the total categories $H$ preserves the cartesian monoidal structure.
\end{definition}
A morphism of split im-opfibrations is a morphism of split opfibrations such that both functors are strict monoidal. Let us denote the category of im-opfibrations by $\imOpFib$, and the fixed base case by $\imOpFib(\cat X)$. As for (op)fibrations, the subcategory of split im-opfibrations is denoted by adding the subscript $\mathsf{sp}$ in each case.

Given an im-opfibration $\cat Y\rightarrow\cat X$, define its {\em restriction} as the functor $\cat Y^*\rightarrow\hom(\cat X)$, where $\hom(\cat X)$ is the subcategory of monoid homomorphisms, and $\cat Y^*$ is the wide subcategory of $\cat Y^*$ defined by $F\in\cat Y^*$ if and only if the image of $F$ is in $\hom(\cat X)$, while the action of the functor is given by that of the original functor.

\begin{proposition}
For any im-opfibration $\cat Y\rightarrow\cat X$, its restriction $\cat Y^*\rightarrow\hom(\cat X)$ is an opfibration. Moreover, the restriction extends to a functor $\hom:\imOpFib\rightarrow\OpFib$.
\end{proposition}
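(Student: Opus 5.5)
The proof is a direct check using the universal property of opcartesian maps. The one subtle point is to compare opcartesian maps of the restriction with those of the original opfibration. Throughout, write $p:\cat Y\rightarrow\cat X$ for the im-opfibration and $p^*:\cat Y^*\rightarrow\hom(\cat X)$ for its restriction.

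First I check that $\cat Y^*$ is a wide subcategory and $p^*$ a functor. Identities of $\cat Y$ lie over identities, which are monoid homomorphisms. If $F,G\in\cat Y^*$ are composable, then $p(F;G)=pF;pG$ is a composite of monoid homomorphisms, hence one by Proposition~\ref{prop:imon-subcat-preserved}.

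Next, the opfibration property. Take $a\in\cat Y^*$ over $x$ and $f:x\rightarrow y$ in $\hom(\cat X)$, and let $F:a\rightarrow b$ be a $p$-opcartesian lifting of $(a,f)$ in $\cat Y$. Since $pF=f\in\hom(\cat X)$, we have $F\in\cat Y^*$. To see that $F$ is $p^*$-opcartesian, take $G:a\rightarrow c$ in $\cat Y^*$ and $h$ in $\hom(\cat X)$ with $f;h=pG$. The $p$-opcartesian property gives a unique $H$ in $\cat Y$ above $h$ with $F;H=G$. Because $pH=h\in\hom(\cat X)$, $H$ lies in $\cat Y^*$. Uniqueness in $\cat Y^*$ follows from uniqueness in $\cat Y$. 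Hence every $p$-opcartesian map lying in $\cat Y^*$ is $p^*$-opcartesian.

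Conversely, any $p^*$-opcartesian map $F'$ with domain $a$ over $f$ factors as $F;\theta$, where $\theta$ is an isomorphism in the fibre over $\id_y$. This holds because both $F$ and $F'$ are $p^*$-opcartesian liftings of the same pair, and such liftings are unique up to vertical isomorphism.

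For functoriality, let $(H,K):p\rightarrow q$ be a morphism of im-opfibrations. Since $K$ is a morphism in $\IMon$, Proposition~\ref{prop:imon-subcat-preserved} shows that $K$ restricts to $\hom(K):\hom(\cat X)\rightarrow\hom(\cat X')$. If $G\in\cat Y^*$, then $qHG=KpG$ is a monoid homomorphism, so $H$ restricts to $\cat Y^*\rightarrow\cat Y'^*$, and the square still commutes.

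The main obstacle is showing that this restriction of $H$ preserves $p^*$-opcartesian maps. The restriction may have more opcartesian maps than are $p$-opcartesian, so preservation is not automatic. By the description above, a $p^*$-opcartesian $F'$ has the form $F;\theta$ with $F$ $p$-opcartesian and $\theta$ a vertical isomorphism. Then $HF$ is $q$-opcartesian, because $(H,K)$ is a morphism of opfibrations. Also $HF$ lies in $\cat Y'^*$, so by the first part of the argument it is $q^*$-opcartesian. Finally, $H\theta$ is a vertical isomorphism, and an opcartesian map followed by a vertical isomorphism is again opcartesian. Hence $HF'$ is $q^*$-opcartesian.

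Preservation of identities and composites of morphisms is immediate, since all the data are restrictions. This defines the functor $\hom:\imOpFib\rightarrow\OpFib$.
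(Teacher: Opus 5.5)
Your proof is correct and takes essentially the paper's route. Opcartesian liftings for $p^*$ are the $p$-opcartesian liftings in $\cat Y$ over monoid homomorphisms, and morphisms restrict by Proposition~\ref{prop:imon-subcat-preserved}. Your factorisation $F'=F;\theta$ with $\theta$ a vertical isomorphism also justifies the paper's unproved claim that $p^*$-opcartesian and $p$-opcartesian maps in $\cat Y^*$ coincide, so the ``main obstacle'' you raise (the restriction having extra opcartesian maps) never actually occurs.
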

\begin{proof}
The opcartesian maps are inherited by restriction: $F\in\cat Y^*$ is opcartesian if and only if $F\in\cat Y$ is opcartesian. The opcartesian liftings are likewise given by opcartesian liftings in $\cat Y$. The morphisms restrict to morphisms of opfibrations by Proposition~\ref{prop:imon-subcat-preserved}.
\end{proof}

\begin{theorem}\label{thm:monopfib-imopfib}
Let $\cat X$ be a small category. The category of split opfibrations with monoids with base $\cat X$ is equivalent to the category of split im-opfibrations with base $\Fim(\cat X)$:
$$\MonOpFib_{\mathsf{sp}}(\cat X)\simeq\imOpFib_{\mathsf{sp}}(\Fim(\cat X)).$$
\end{theorem}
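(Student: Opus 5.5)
We plan to go through strict opindexed monoidal categories, using Theorem~\ref{thm:monoids-opfibrations-strict-imoncat-equivalence}. This reduces the claim to an equivalence
$$\OpIMonCat_{\mathsf{st}}(\cat X)\simeq\imOpFib_{\mathsf{sp}}(\Fim(\cat X)),$$
that is, between functors $I:\cat X\rightarrow\MonCat_{\mathsf{st}}$ and split im-opfibrations over $\Fim(\cat X)$.

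\textbf{From indexed to im-opfibrations.} Given $I$, we build the Grothendieck construction over $\Fim(\cat X)$ itself. An object over a word $x_1\cdots x_n\in\Ob(\cat X)^*$ is a tuple $(a_1,\dots,a_n)$ with $a_i\in I(x_i)$. The fibre over the word is $\prod_i I(x_i)$. Reindexing along a morphism of $\Fim(\cat X)$ is defined using the universal property of the free theory with indexed monoids. Generators $f\in\cat X(a,b)$ act by $I(f)$. The comonoid generators act by diagonal and deletion of tuple entries. The monoid generators act by $\otimes_x$ and $I_x$, which are strictly associative and unital. The symmetries act by permuting entries.

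To check that this respects the equations of $\mathsf{im}(\cat X)$, we verify:
\begin{itemize}
\item functoriality of $I$ gives the equations of Definition~\ref{def:indexed-by-category};
\item strictness of each $I(x)$ gives the monoid equations;
\item strict monoidality of each $I(f)$ gives $1\mdash 1$-naturality of the monoids;
\item the comonoid equations hold trivially in $\Cat$.
\end{itemize}
Hence we obtain a functor $\Fim(\cat X)\rightarrow\Cat$. Since $\Fim(\cat X)$ is a term model, this is just a strict monoidal functor into $(\Cat,\times,1)$ that respects the equations, i.e. an instance of Corollary~\ref{cor:monoidal-free-forgetful}. Its Grothendieck construction is a split opfibration over $\Fim(\cat X)$. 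The total category is cartesian with product given by concatenation of tuples, and the projection preserves and reflects these products strictly. Products of opcartesian maps (second component identity) are again opcartesian.

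\textbf{From im-opfibrations to indexed.} Given a split im-opfibration $p:\cat Y\rightarrow\Fim(\cat X)$, we restrict along the inclusion $\cat X\rightarrow\hom(\Fim(\cat X))$ of one-letter words. We take $\cat Y_x$ for $x\in\Ob(\cat X)$ and reindex along generators. The monoidal structure on $\cat Y_x$ is defined as follows.
\begin{itemize}
\item Reflection of products gives $\cat Y_{xx}\cong\cat Y_x\times\cat Y_x$; strictness of the split structure should make this an equality of chosen products.
\item The tensor on $\cat Y_x$ is this identification followed by $m_x^*$.
\item The unit is $u_x^*$ applied to the unique object over the empty word.
\end{itemize}
Associativity and unitality then follow from splitness and the monoid equations in $\Fim(\cat X)$. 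Strict monoidality of $f^*$ follows from the $1\mdash 1$-naturality equation $m_a;f=(f\otimes f);m_b$ together with $(f\otimes f)^*=f^*\times f^*$. This last identity uses preservation of products by opcartesian maps.

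\textbf{Main obstacle.} We then show that the two constructions are mutually inverse up to natural isomorphism, and extend both to morphisms. Morphisms of split im-opfibrations are strict monoidal and preserve chosen lifts, so they correspond to natural transformations of the indexed functors with strict monoidal components. The main obstacle is the reconstruction step: showing that an arbitrary split im-opfibration over $\Fim(\cat X)$ is determined, up to isomorphism, by its restriction to single letters. This requires the following.
\begin{itemize}
\item Every fibre $\cat Y_{x_1\cdots x_n}$ is (strictly, by splitness) the product of the $\cat Y_{x_i}$, including the terminal fibre over $\varepsilon$. This uses that the unit of $\Fim(\cat X)$ is terminal and products are reflected.
\item The reindexing along the structural generators (diagonals, deletions, symmetries) is forced to be the canonical tuple operations, because these are the cartesian structure maps and $p$ preserves them.
\end{itemize}
I expect that here we must carefully invoke the opcartesian-products clause of Definition~\ref{def:opfib-indexed-mon} to identify the chosen lifts of $d_x$ and $e_x$ with diagonal and deletion.
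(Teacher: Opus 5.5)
You take a genuinely different route. The paper proves the theorem directly at the level of opfibrations. From a monoid on a split opfibration $\cat Y\rightarrow\cat X$ it builds $\cat Y_{\otimes}$ as the term model of a monoidal theory: uniform comonoids on $\cat Y$, plus opcartesian generators $a,b\rightarrow a\otimes b$ and $\varepsilon\rightarrow\one(x)$. Conversely, it restricts a split im-opfibration to $\cat Y^*\rightarrow\cat X$ and recovers the monoid from $\cat Y_f\times\cat Y_f\cong\cat Y_{f\times f}\rightarrow\cat Y_f$, using the universal property of the lift of $m_x$. The equivalence with $\OpIMonCat_{\mathsf{st}}(\cat X)$ (Corollary~\ref{cor:opfibrations-indexed-monoids-opindexed-monoidal}) is then derived from this.

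You reverse the order: you prove that corollary directly and compose with Theorem~\ref{thm:monoids-opfibrations-strict-imoncat-equivalence}. That theorem is proved independently, so there is no circularity. Your argument effectively factors the equivalence in three steps:
\begin{itemize}
\item split im-opfibrations over $\Fim(\cat X)$ are Grothendieck constructions of product-preserving functors $\Fim(\cat X)\rightarrow\Cat$;
\item by freeness, such functors are models of $\mathsf{im}(\cat X)$ in $\Cat$;
\item these models are exactly functors $\cat X\rightarrow\MonCat_{\mathsf{st}}$.
\end{itemize}
This makes the total category concrete (tuples), so the split opfibration structure, the strict cartesian structure and the opcartesianness of products come for free. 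It also makes the role of freeness explicit; to invoke Corollary~\ref{cor:monoidal-free-forgetful} you should use a strictified $(\Cat,\times,1)$, e.g.\ lists of categories. The paper's version stays inside opfibrations and exhibits the monoid as internalised into the base. However, its round trip $(\cat Y^*)_{\otimes}\simeq\cat Y$ is the same reconstruction problem you isolate, and it settles it in one sentence.

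On that obstacle, the tool you propose is not sufficient by itself. That products of opcartesian maps are opcartesian only shows that the chosen lift of $g\times h$ at $b\times c$ and $\hat g_b\times\hat h_c$ differ by a vertical isomorphism. It gives neither $(f\times f)^*=f^*\times f^*$ nor $\hat d_a=\Delta_a$. Yet an isomorphism with the Grothendieck construction in $\imOpFib_{\mathsf{sp}}$ must preserve chosen lifts on the nose.

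Concretely, take a Grothendieck construction and conjugate all reindexings into and out of the fibre over $xx$ by an automorphism of that fibre that is isomorphic, but not equal, to the identity. The result is still split, with the same total category and the same strictly preserved and reflected products. But it can make $\hat d_a$ non-diagonal and the induced tensor $m_x^*$ on $\cat Y_x$ fail to be strictly associative.

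So you must read ``split'' as including that chosen lifts of products are products of chosen lifts. The paper states this explicitly for monoidal deflations and uses it tacitly in its own proof. With that reading your obstacle closes quickly:
\begin{itemize}
\item The fibre over $\varepsilon$ is $\{1\}$, so the chosen lift of $e_z$ at $c$ is the unique map $c\rightarrow 1$, which is opcartesian.
\item Hence the projection $a\times c\rightarrow a$ is opcartesian and is the chosen lift of $\pi_1=\id_x\times e_z$.
\item Splitness and $d_x;\pi_i=\id_x$ give $\hat d_a;\pi_i=\id_a$, so $\hat d_a=\Delta_a$; the same argument applies to symmetries.
\item Opcartesianness of projections also gives $\cat Y_{s\times t}\cong\cat Y_s\times\cat Y_t$ on morphisms.
\end{itemize}
Therefore the reindexing functor $\Fim(\cat X)\rightarrow\Cat$ of $\cat Y$ is determined by $\cat Y_x$, $f^*$, $m_x^*$ and $u_x^*$, which is what you need.
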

\begin{proof}
Given a split opfibration $\cat Y\rightarrow\cat X$ with a monoid
\begin{equation*}
\scalebox{1}{\input{journal-figures/gr-pseudomonoid.tikz}},
\end{equation*}
define the split im-opfibration $\cat Y_{\otimes}\rightarrow\Fim(\cat X)$ as follows. The total category $\cat Y_{\otimes}$ is the term model of the following monoidal theory:
\begin{itemize}
\item the theory contains the monoidal theory with uniform comonoids generated by $\cat Y$,
\item for all $x\in\Ob(\cat X)$ and $a,b\in\cat Y_x$, we have the following additional generators:
\begin{equation*}
\scalebox{1}{\input{journal-figures/im-extension-generators.tikz}},
\end{equation*}
\item for all $a,b,c\in\Ob(\cat Y)$ in the same fibre, $(\sigma,\tau)\in\cat Y\boxtimes\cat Y$, and $f:x\rightarrow w$ in $\cat X$, the theory contains the following equations:
\begin{equation*}
\scalebox{1}{\input{journal-figures/im-extension-equations.tikz}}.
\end{equation*}
\end{itemize}
The action of the functor $\cat Y_{\otimes}\rightarrow \Fim(\cat X)$ is defined by extending $\cat Y\rightarrow\cat X$ to cartesian products, and by mapping the newly added generators to the freely added monoid multiplication and unit in the base. The opcartesian morphisms in $\cat Y_{\otimes}$ are generated as follows:
\begin{itemize}
\item if $f\in\cat Y$ is opcartesian, then so is $f\in\cat Y_{\otimes}$,
\item the newly added generators are opcartesian,
\item if $f$ and $g$ are opcartesian, then so are the composition $f;g$ (whenever defined) and the cartesian product $f\times g$.
\end{itemize}
It is then straightforward to construct the opcartesian liftings. Since the cartesian structure is preserved and reflected by construction, we indeed obtain an im-opfibration.

Conversely, given a split im-opfibration $\cat Y\rightarrow\Fim(\cat X)$, we proceed to define a monoid
\begin{equation*}
\scalebox{1}{\input{journal-figures/im-opfibration-to-monoid.tikz}}
\end{equation*}
on the split opfibration $\cat Y^*\rightarrow\cat X$, where $\cat Y^*$ is the wide subcategory of $\cat Y$ obtained by restriction, i.e.~$F\in\cat Y^*$ if and only if the image of $F$ is in $\cat X$. First, observe that, since the cartesian products are strictly preserved and reflected, for all morphisms $f,g\in\cat X$, we have $\cat Y_f\times\cat Y_g\simeq\cat Y_{f\times g}$, and the isomorphism is given (from left to right) by the action of the product functor on $\mathcal Y$. Further, each $f:x\rightarrow y$ in $\cat X$ is a monoid homomorphism by construction of $\Fim(\cat X)$, which induces a function $\cat Y_{f\times f}\rightarrow\cat Y_f$ by the universal property of the opcartesian liftings of the monoid multiplication on $x$. Hence, for every morphism $f\in\cat X$, we define the mapping $\otimes_f:\cat Y_f\times\cat Y_f\rightarrow\cat Y_f$ by composing the isomorphism $\cat Y_f\times\cat Y_f\simeq \cat Y_{f\times f}$ with the function $\cat Y_{f\times f}\rightarrow\cat Y_f$. We now define the functor $\otimes_{\cat Y}:\cat Y^*\boxtimes\cat Y^*\rightarrow\cat Y^*$ as the disjoint union of these mappings: $\otimes_{\cat Y}\coloneq\coprod_{f\in\cat X}\otimes_f$. Next, once again by strict preservation and reflection of cartesian products, we have that $\cat Y_{I}=\{I\}$, so that the reindexing functor $\one_x:\cat Y_{I}\rightarrow\cat Y_x$ induced by the monoidal unit on $x$ constitutes a choice of an object in $\cat Y_x$, which we denote by $\one(x)$. Moreover, the reindexing functor $f^*:\cat Y_x\rightarrow\cat Y_w$ induced by a morphism $f:x\rightarrow w$ in $\cat X$ preserves these chosen object in the sense that $f^*(\one(x))=\one(w)$. We thus define the unit functor $\one_{\cat Y}:\cat X\rightarrow\cat Y^*$ by $x\mapsto\one(x)$ on objects and by the opcartesian liftings on morphisms. Associativity and unitality are now induced by associativity and unitality of the monoids in $\Fim(\cat X)$, so that $(\cat Y^*,\otimes_{\cat Y},\one_{\cat Y})$ indeed defines a monoid in $(\OpFib_{\mathsf{sp}}(\cat X),\boxtimes,\id_{\cat X})$.

Applying the monoid construction to the im-opfibration obtained from a monoid recovers the original monoid: $\otimes_{\cat Y_{\otimes}}=\otimes$ and $\one_{\cat Y_{\otimes}}=\one$, since the restriction of $\cat Y_{\otimes}$ is $\cat Y$. In the other direction, we recover the original im-opfibration up to an isomorphism: $(\cat Y^*)_{\otimes}\simeq\cat Y$, as the objects and morphisms in $\cat Y$ that are not in $\cat Y^*$ are first removed and then added with potentially new labels.
\end{proof}

We observe that Theorem~\ref{thm:monopfib-imopfib} can be thought of as a kind of internalisation of the monoid on the opfibration. On the left-hand side of the equivalence, the monoid is an externally imposed structure, whereas on the right-hand side, the monoids are absorbed into the internal structure of the base and the total categories. In particular, given an im-opfibration $\cat Y\rightarrow\cat X$ and an object $x\in\Ob(\cat X)$, we denote by $\otimes_x : \cat Y_x\times\cat Y_x\rightarrow\cat Y_x$ the monoidal functor on the fibre $\cat Y_x$ obtained from $\otimes_f$ by setting $f=\id_x$ in the above proof. We refer to this monoidal structure as {\em fibrewise}. It is precisely this internalisation that will make opfibrations with indexed monoids convenient to work with when defining the semantics of layered theories in Section~\ref{subsec:opfib-models}.

Combining Theorem~\ref{thm:monopfib-imopfib} with Theorem~\ref{thm:monoids-opfibrations-strict-imoncat-equivalence}, we also have the following:
\begin{corollary}\label{cor:opfibrations-indexed-monoids-opindexed-monoidal}
Let $\cat X$ be a small category. The category of split im-opfibrations with base $\Fim(\cat X)$ is equivalent to the category of strict $\cat X$-opindexed monoidal categories:
$$\imOpFib_{\mathsf{sp}}\left(\Fim(\cat X)\right)\simeq\OpIMonCat_{\mathsf{st}}(\cat X).$$
\end{corollary}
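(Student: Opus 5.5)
The corollary follows by composing the two equivalences already established. Theorem~\ref{thm:monopfib-imopfib} gives an equivalence
$$\imOpFib_{\mathsf{sp}}\left(\Fim(\cat X)\right)\simeq\MonOpFib_{\mathsf{sp}}(\cat X),$$
read from right to left. The second line of Theorem~\ref{thm:monoids-opfibrations-strict-imoncat-equivalence} gives
$$\MonOpFib_{\mathsf{sp}}(\cat X)\simeq\OpIMonCat_{\mathsf{st}}(\cat X).$$
Equivalences of categories compose, since the composite of functors with (pseudo)inverses again has a pseudoinverse, with unit and counit isomorphisms obtained by whiskering. The corollary is therefore the composite
$$\imOpFib_{\mathsf{sp}}\left(\Fim(\cat X)\right)\simeq\MonOpFib_{\mathsf{sp}}(\cat X)\simeq\OpIMonCat_{\mathsf{st}}(\cat X).$$

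To make this explicit, I would describe the composite functor concretely. A split im-opfibration $\cat Y\rightarrow\Fim(\cat X)$ is sent first to its restriction $\cat Y^*\rightarrow\cat X$. This restriction carries the monoid $(\otimes_{\cat Y},\one_{\cat Y})$ built from the opcartesian liftings of the monoid multiplications and units in $\Fim(\cat X)$. The resulting monoid is then sent to the strict functor $\cat X\rightarrow\MonCat_{\mathsf{st}}$ taking $x$ to the fibre $(\cat Y_x,\otimes_x,\one(x))$ and each $f$ to its reindexing functor $f^*$. So the image of an im-opfibration is exactly its fibrewise monoidal structure together with the reindexing functors.

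Strict monoidality of each $f^*$ is what makes the image land in strict opindexed monoidal categories. It follows because the product of opcartesian maps is opcartesian, and because $f$ is a monoid homomorphism in $\Fim(\cat X)$, so $f^*(\one(x))=\one(w)$.

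The only point requiring care is that the two equivalences act on the same intermediate category with matching morphisms. Theorem~\ref{thm:monopfib-imopfib} uses split opfibrations with strict monoids and morphisms that strictly preserve the chosen liftings and the monoid structure. These are precisely the morphisms in $\MonOpFib_{\mathsf{sp}}(\cat X)$ used in Theorem~\ref{thm:monoids-opfibrations-strict-imoncat-equivalence}. Once this is checked, there is no real obstacle: functoriality on morphisms and the natural isomorphisms are inherited from the two theorems.
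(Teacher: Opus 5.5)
Your proposal is correct and is exactly the paper's argument: the paper obtains the corollary by composing Theorem~\ref{thm:monopfib-imopfib} with the second equivalence of Theorem~\ref{thm:monoids-opfibrations-strict-imoncat-equivalence}, which share the intermediate category $\MonOpFib_{\mathsf{sp}}(\cat X)$. Your explicit description of the composite is a faithful unpacking but is not needed for the proof: restrict to $\cat Y^*\rightarrow\cat X$, then take the fibres with their fibrewise monoidal structure together with the reindexing functors.
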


The reader may wonder what happens in the general case, when the base of an im-opfibration is not freely generated. In such a case, each monoid in the base still induces some monoidal structure on the fibres, but now we are able to detect (or impose) properties of monoidal categories at the level of the base. For example, requiring the commutativity equation \scalebox{.7}{\input{journal-figures/monoid-commutative.tikz}} to strictly hold in the split case would enforce the equation $a\otimes b=b\otimes a$ in the corresponding fibre. In the general (non-split) case, this defines a natural isomorphism, i.e. a symmetric monoidal category. Thus, in order to properly exploit the additional equations in the base, one needs to move beyond the split (strict) setting. It is unclear whether an analogue of Theorem~\ref{thm:monopfib-imopfib} holds in this case, as it is not obvious what the equations between the (pseudo)monoids on opfibrations should be. We leave exploring these developments to future work.

\section{Deflations}\label{sec:deflations}
We now wish to impose additional structure on a functor $\cat Y\rightarrow\cat X$ so that it can be used as a model for layered theories with bidirectional functor boundaries. Proposition~\ref{prop:displayed-pseudo-decompose} suggests it should be a factorisation lifting, while Proposition~\ref{prop:displayed-factors-opfibration} suggests it should somehow restrict to an opfibration, which would give the forward functor boundaries. We ensure that it also restricts to a fibration, giving the reverse functor boundary. We do this by freely adding right adjoints to the base category $\cat X$ (Definition~\ref{def:zigzag-category}), and by requiring that the adjoints lift to the total category (Definition~\ref{def:local-retrofunctor}).

Let $\cat X$ be a category. The collection of {\em zigzags} on $\cat X$ is recursively defined as follows:
\begin{itemize}
\item for every morphism $f:x\rightarrow y$ in $\cat X$, the expressions $f:x\rightarrow y$ and $\overline f:y\rightarrow x$ are zigzags,
\item if $\varphi:x\rightarrow y$ and $\gamma:y\rightarrow z$ are zigzags, then so is $\varphi ;\gamma : x\rightarrow z$.
\end{itemize}
In other words, a zigzag is a (bracketed) list of morphisms with possibly alternating directions; for example, $x\rightarrow y\leftarrow z\leftarrow w\rightarrow s$ is a zigzag $x\rightarrow s$.

\begin{definition}[Zigzag 2-category]\label{def:zigzag-category}
Given a small 1-category $\cat X$, the {\em zigzag 2-category} $\Zg(\cat X)$ of $\cat X$ is the strict 2-category whose 0-cells are the objects of $\cat X$ and the 1-cells are given by the zigzags, subject to the following equations:
\begin{align*}
\overline{\id}_x &= \id_x & f;g &= fg &
(\varphi;\gamma);\psi &= \varphi;(\gamma;\psi) & \overline f;\overline g &= \overline{gf},
\end{align*}
where $;$ is used for composition of zigzags and concatenation is used for composition in $\cat X$. Given a morphism $f:x\rightarrow y$ in $\cat X$, we have the following generating 2-cells:
\begin{align*}
\eta_f &: \id_x\rightarrow f;\overline f &
\varepsilon_f &: \overline f;f\rightarrow\id_y,
\end{align*}
subject to the zigzag equations defining an adjunction:
\begin{equation*}
\scalebox{.9}{\input{journal-figures/zigzag-equations-twocells.tikz}},
\end{equation*}
as well as the following coherence equations:
\begin{align*}
\eta_{\id_x} &= \id_{\id_x}  & \eta_{f;h} &= \eta_f;(\id_f*\eta_h*\id_{\overline f}) &
\varepsilon_{\id_x} &= \id_{\id_x} & \varepsilon_{f;h} &= (\id_{\overline h}*\varepsilon_f*\id_h);\varepsilon_h.
\end{align*}
\end{definition}
For any category, there are the following identity-on-objects inclusions into its zigzag 2-category:
\begin{alignat*}{2}
\cat X &\xhookrightarrow{\ \iota^*\ }\Zg(\cat X) {} &&\xhookleftarrow{\ \iota^{\circ}\ }\cat X^{op} \\
f &\longmapsto f\quad\ \overline f {} &&\longmapsfrom f.
\end{alignat*}

\begin{definition}[Local retrofunctor]\label{def:local-retrofunctor}
Let $\cat X$ and $\cat Y$ be strict 2-categories. A {\em local retrofunctor} $(p,\varphi):\cat Y\rightarrow\cat X$ consists of a 1-functor $p:\cat Y\rightarrow\cat X$, and for all pairs of objects $a,b\in\Ob(\cat Y)$, a retrofunctor $(p,\varphi_{a,b}) : \cat Y(a,b)\rightarrow\cat X(pa,pb)$ whose function part is given by the action of $p$ on morphisms, satisfying the following coherence equation for all $F:a\rightarrow b$, $F':b\rightarrow c$, $\alpha:pF\rightarrow g$, $\alpha':pF'\rightarrow g'$:
\begin{equation*}
\varphi_{a,b}(F,\alpha)*\varphi_{b,c}(F',\alpha') = \varphi_{a,c}(F;F',\alpha *\alpha').
\end{equation*}
\end{definition}

We will now focus on local retrofunctors whose codomain is a zigzag 2-category, whose functor part is a factorisation lifting, together with a certain condition on the liftings of counits. This situation will play such a prominent role in the subsequent development that we give it a special name: {\em deflation}. We shall further introduce monoidal deflations, which will be used to give semantics to layered theories in Section~\ref{subsec:deflational-models}.

Let $(p,\varphi):\cat Y\rightarrow\Zg(\cat X)$ be a local retrofunctor such that $p:\cat Y\rightarrow\Zg(\cat X)$ is a factorisation lifting. A {\em lifting} of the opliftable pair $(a,f:x\rightarrow y)$ such that $f\in\cat X$ is any morphism $F:a\rightarrow b$ obtained by decomposing the codomain of the 2-cell $\varphi_{a,a}\left(\id_a,\eta_f\right)$ into $F:a\rightarrow b$ above $f:x\rightarrow y$ and $\overline F:b\rightarrow a$ above $\overline f:y\rightarrow x$:
\begin{equation*}
\scalebox{1}{\input{journal-figures/decomposition-opfibration-lifting.tikz}}.
\end{equation*}
Dually, we say that $\overline F:b\rightarrow a$ is a {\em lifting} of the liftable pair $(\overline f:y\rightarrow x,a)$. In the context of deflations (which we about to define), the morphism of an opliftable pair will always be assumed to lie in $\iota^*\cat X$, and similarly, the morphism of a liftable pair will always be assumed to lie in $\iota^{\circ}\cat X$. While {\em a priori} the same pair can have many liftings, as we shall see imminently (Lemma~\ref{lma:deflation-unique-liftings}), in our situation of interest the liftings are essentially unique.
\begin{definition}[Deflation]\label{def:deflation}
Let $\cat X$ be a 1-category and $\cat Y$ be a strict 2-category. A {\em deflation} from $\cat Y$ to $\cat X$ is a local retrofunctor $(p,\varphi):\cat Y\rightarrow\Zg(\cat X)$ such that $p:\cat Y\rightarrow\Zg(\cat X)$ is a factorisation lifting, and for every lifting $F:a\rightarrow b$ of an opliftable pair $(a,f:x\rightarrow y)$, the codomain of the 2-cell $\varphi_{b,b}\left(\overline F;F,\varepsilon_f\right)$ is $\id_b$:
\begin{equation*}
\scalebox{1}{\input{journal-figures/deflation-def-counit-iso.tikz}}.
\end{equation*}
\end{definition}
We say that a deflation is {\em cloven} or {\em split} if the underlying factorisation lifting is.

\begin{lemma}\label{lma:deflation-unique-liftings}
Let $(p,\varphi):\cat Y\rightarrow\Zg(\cat X)$ be a deflation, and let $F:a\rightarrow b$ be a lifting of an opliftable pair $(a,f:x\rightarrow y)$. Then for any morphism $F':a\rightarrow b'$ above $f$, there is a unique morphism $\alpha:b\rightarrow b'$ in the fibre $\cat Y_y$ such that $F;\alpha =F'$:
\begin{equation*}
\scalebox{1}{\input{journal-figures/deflation-unique-liftings.tikz}}.
\end{equation*}
\end{lemma}
\begin{proof}
Define $\alpha:b\rightarrow b'$ as the codomain of the 2-cell $\varphi_{b,b'}\left(\overline F;F',\varepsilon_f\right)$:
\begin{equation*}
\scalebox{1}{\input{journal-figures/unique-lifting-mediator.tikz}}.
\end{equation*}
We claim that this 1-cell has the desired property. To see this, consider the following diagram of 2-cells in $\cat Y$:
\begin{equation*}
\scalebox{1}{\input{journal-figures/unique-lifting-mediator-commutes.tikz}},
\end{equation*}
which simplifies to:
\begin{align*}
&\phantom{=} \left(\varphi_{a,a}\left(\id_a,\eta_f\right)*\id_{F'}\right);\left(\id_F*\varphi_{b,b'}\left(\overline F;F',\varepsilon_f\right)\right) \\
&= \left(\varphi_{a,a}\left(\id_a,\eta_f\right)*\varphi_{a,b'}\left(F',\id_f\right)\right);\left(\varphi_{a,b}\left(F,\id_f\right)*\varphi_{b,b'}\left(\overline F;F',\varepsilon_f\right)\right) \\
&= \varphi_{a,b'}\left(F',\eta_f*\id_f\right);\varphi_{a,b'}\left(F;\overline F;F',\id_f*\varepsilon_f\right) \\
&= \varphi_{a,b'}\left(F',\left(\eta_f*\id_f\right);\left(\id_f*\varepsilon_f\right)\right) \\
&= \varphi_{a,b'}\left(F',\id_f\right) \\
&= \id_{F'},
\end{align*}
which, in particular, implies that $F;\alpha=F'$, as required.

For uniqueness, suppose that $\alpha':b\rightarrow b'$ is some 1-cell such that $F;\alpha'=F'$. We then compute
\begin{align*}
\varphi_{b,b'}\left(\overline F;F',\varepsilon_f\right) &= \varphi_{b,b'}\left(\overline F;F;\alpha',\varepsilon_f*\id_{\id_y}\right) \\
&= \varphi_{b,b}\left(\overline F;F,\varepsilon_f\right)*\varphi_{b,b'}\left(\alpha',\id_{\id_y}\right) \\
&= \varphi_{b,b}\left(\overline F;F,\varepsilon_f\right)*\id_{\alpha'}.
\end{align*}
Since the codomain of $\varphi_{b,b}\left(\overline F;F,\varepsilon_f\right)$ is $\id_b$, this, in particular, implies that $\alpha'=\alpha$.
\end{proof}

\begin{corollary}\label{cor:decomposition-biretrofunctor-opfibration}
Let $(p,\varphi):\cat Y\rightarrow\Zg(\cat X)$ be a (split) deflation. Consider the restrictions of $p$
\begin{equation*}
\scalebox{1}{\input{journal-figures/decomposition-biretrofunctor-opfibration.tikz}}
\end{equation*}
to the wide subcategories $\cat Y^*$ and $\cat Y^{\circ}$ of $\cat Y$ defined by $F\in\cat Y^*$ if and only if $pF\in\iota^*\cat X$, and $F\in\cat Y^{\circ}$ if and only if $pF\in\iota^{\circ}\cat X^{op}$. Then $p^*$ is a (split) opfibration, and $p^{\circ}$ is a (split) fibration.
\end{corollary}
\begin{proof}
Lemma~\ref{lma:deflation-unique-liftings} establishes that $p^*$ is a preopfibration. It is also a factorisation lifting by restricting the liftings of factorisations under $p$. Thus, by Corollary~\ref{cor:preopfib-opfib-factlift}, $p^*$ is an opfibration. The argument establishing that $p^{\circ}$ is a fibration is dual.
\end{proof}

\begin{definition}[Morphism of deflations]\label{def:morphism-deflations}
Let $(p,\varphi):\cat Y\rightarrow\Zg(\cat X)$ and $(q,\varphi):\cat Y'\rightarrow\Zg(\cat X')$ be deflations. A {\em morphism} $(H,K):p\rightarrow q$ is given by a 2-functor $H:\cat Y\rightarrow\cat Y'$ and a 1-functor $K:\cat X\rightarrow\cat X'$ such that the diagram
\begin{equation*}
\scalebox{1}{\input{journal-figures/morphism-deflations.tikz}},
\end{equation*}
where $K$ is freely extended to the zigzag 2-categories, commutes, and for every $F:a\rightarrow b$ in $\cat Y$ above $f:x\rightarrow y$ in $\Zg(\cat X)$ and every 2-cell $\alpha:f\rightarrow g$ we have
$$H\left(\varphi_{a,b}(F,\alpha)\right)=\varphi_{Ha,Hb}(HF,K\alpha).$$
\end{definition}
A morphism of split deflations is additionally required to strictly preserve the chosen liftings of factorisations. We denote the category of deflations by $\Defl$, and the fixed base case by $\Defl(\cat X)$. As before, we add the subscript $\mathsf{sp}$ for the subcategories of split deflations.

\begin{definition}[Minimal deflation]\label{def:minimal-deflation}
We say that a deflation $(p,\varphi):\cat Y\rightarrow\Zg(\cat X)$ is {\em minimal} if the only non-identity 2-cells of $\cat Y$ are the ones in the image of the retrofunctors $(p,\varphi_{a,b})$.
\end{definition}
An equivalent way to define a minimal deflation would be to require $p$ to be a 2-functor, and each retrofunctor $(p,\varphi_{a,b}):\cat Y(a,b)\rightarrow\cat X(pa,pb)$ to be a {\em lens}, i.e.~the lifted 2-cells should also be compatible with the action of $p$ on 2-cells, not just the 1-cells. Minimal deflations are equivalent to (op)indexed categories (Theorem~\ref{thm:minimal-deflation-indexed-category}).

\begin{remark}\label{rem:minimal-deflation}
The reason we wish to consider non-minimal deflations are many scenarios in which there are transformations between morphisms which are non-trivial, in the sense that they do not arise as liftings of the 2-cells in the base. For example, in~\cite[5.3]{lmt-part1} we consider 2-cells that are semantics-preserving graph rewrites. Moreover, the syntax developed in Section~\ref{sec:layered-theories} is sound and complete with respect to all deflations, not just the minimal ones. The general deflations, therefore, seem like a more natural semantic domain for interpreting layered theories.
\end{remark}
Let us denote by $\MinDefl(\cat X)$ the full subcategory of $\Defl(\cat X)$ on minimal deflations.

\begin{theorem}\label{thm:minimal-deflation-indexed-category}
Let $\cat X$ be a small category. There is an equivalence of categories between minimal deflations into $\cat X$ and opindexed categories on $\cat X$:
$$\MinDefl(\cat X)\simeq\OpICat(\cat X),$$
which restricts to split minimal deflations and strict functors $\cat X\rightarrow\Cat$.
\end{theorem}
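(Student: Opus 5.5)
The plan is to factor the equivalence through Theorem~\ref{thm:opfibrations-opindexed-categories-equivalence}, proving $\MinDefl(\cat X)\simeq\OpFib(\cat X)$ and then composing with $\OpFib(\cat X)\simeq\OpICat(\cat X)$. Its split version gives the restriction to split minimal deflations and strict functors $\cat X\rightarrow\Cat$.

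\textbf{From deflations to opfibrations.} Given a minimal deflation $(p,\varphi):\cat Y\rightarrow\Zg(\cat X)$, I send it to the restriction $p^*:\cat Y^*\rightarrow\cat X$. By Corollary~\ref{cor:decomposition-biretrofunctor-opfibration} this is an opfibration, and a split one when the deflation is split. A morphism $(H,\id_{\cat X})$ of deflations restricts to $\cat Y^*$. It preserves opcartesian maps for two reasons: liftings are defined by decomposing the codomain of $\varphi(\id_a,\eta_f)$, and $H$ commutes with $\varphi$ and with factorisations. This gives a functor $\MinDefl(\cat X)\rightarrow\OpFib(\cat X)$.

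\textbf{From opfibrations to minimal deflations.} Given an opfibration $q:\cat E\rightarrow\cat X$, I build a minimal deflation whose total 2-category $\cat Y_q$ has the same objects as $\cat E$. For each 1-cell of $\Zg(\cat X)$, the 1-cells above it are obtained by reading the zigzag via the displayed category $D_q$. By Proposition~\ref{prop:displayed-factors-opfibration}, $D_q$ factors through $-^{\refine}$ as $f\mapsto (f^*)^{\refine}$. I send $\overline f$ to $(f^*)^{\coarsen}$, and take hom-sets above a zigzag to be the corresponding composite profunctor in $\Prof$. Concretely, a morphism above $f$ is a map $f^*a\rightarrow b$ in the fibre, and a morphism above $\overline f$ is a map $b\rightarrow f^*a$.

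Proposition~\ref{prop:prof-embedding-adjoints} supplies $(f^*)^{\refine}\dashv(f^*)^{\coarsen}$. I lift the generating 2-cells $\eta_f,\varepsilon_f$ to the unit and counit of this adjunction, and set the 2-cells of $\cat Y_q$ to be exactly those images, so the deflation is minimal by construction. The retrofunctors $\varphi_{a,b}$ act by applying these unit and counit components, and the coherence equation of Definition~\ref{def:local-retrofunctor} is whiskering compatibility in $\Prof$.

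Three checks remain. First, $p$ is a factorisation lifting: hom-sets over a composite zigzag are a coend of hom-sets over the factors, which is exactly the condition in Proposition~\ref{prop:displayed-pseudo-decompose}. Second, the counit lifts to an identity: the lifting of $(a,f)$ is $(\id_{f^*a},\id_{f^*a})$, and $\varepsilon$ sends it to $\id_{f^*a}$. This is the same computation as Remark~\ref{rem:counit-surjective-twocells}. Third, the coherence equations for $\eta_{f;h}$ and $\varepsilon_{f;h}$ hold. If $q$ is only cloven, this needs the pseudofunctoriality isomorphisms, with everything taken up to the structure isomorphisms $c_{f,g}$. In the split case, strictness of $(f;g)^*=f^*;g^*$ makes $\cat Y_q$ a strict 2-category and the construction split.

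\textbf{Equivalence, and the main obstacle.} The round trip $\OpFib\rightarrow\MinDefl\rightarrow\OpFib$ returns $\cat E$ up to isomorphism, since $\cat Y_q^*$ is the collage of $D_q$ and Theorem~\ref{thm:collage-functors} applies. For the other round trip I must show a minimal deflation $\cat Y$ is determined by $p^*$. This is the hard step: minimality fixes the 2-cells, but I also need every 1-cell of $\cat Y$ to be determined by its restriction to $\cat Y^*$ and $\cat Y^{\circ}$.

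The route I would try first is as follows. By factorisation lifting, each 1-cell above a zigzag decomposes into pieces above single $f$ or $\overline f$, unique up to path components. Pieces above $f$ live in $\cat Y^*$. Pieces above $\overline f$ should correspond bijectively to fibre maps $b\rightarrow f^*a$ via the liftings $\overline F$ of liftable pairs, by the dual of Lemma~\ref{lma:deflation-unique-liftings}. Path components then match the coend relation, so $\cat Y\cong\cat Y_{p^*}$, naturally in morphisms of deflations. Care is needed in showing this bijection respects composition across alternating directions; this uses the zigzag and coherence 2-equations transported through $\varphi$.
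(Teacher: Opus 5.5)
\textbf{The gap: your counit check covers only the canonical lifting.} Your construction of a deflation from an opfibration is the paper's own: the collage of $\hat D\colon\Zg(\cat X)\to\Prof$, with $\eta_f,\varepsilon_f$ sent to the unit and counit of Proposition~\ref{prop:prof-embedding-adjoints}. Routing through $\OpFib(\cat X)$, instead of restricting Theorem~\ref{thm:collage-functors} on $\Zg(\cat X)$, is only a repackaging. Your ``hard step'' is exactly what the paper compresses into that restriction.

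The step that fails is your second check. Definition~\ref{def:deflation} imposes the counit condition on \emph{every} lifting, i.e.\ on every decomposition of the codomain of $\varphi_{a,a}(\id_a,\eta_f)$. In the collage this codomain is the class of $(\id_{f^*a},\id_{f^*a})$ in $\int^{b}\cat Y_y(f^*a,b)\times\cat Y_y(b,f^*a)$. By the coend relation, every pair $s\colon f^*a\to b$, $r\colon b\to f^*a$ with $s;r=\id_{f^*a}$ represents the same class. So $s$ is a lifting with partner $r$. The counit sends $[(r,s)]$ to the idempotent $r;s$, which is $\id_b$ only when $s$ is invertible.

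\textbf{This cannot be patched under the definition as written.} The paper's proof asserts that $\coprod\hat D$ is a deflation without checking the counit axiom, so it has the same hole. Take $f=\id_x$, where $\eta_{\id_x}=\varepsilon_{\id_x}=\id_{\id_x}$. Then any $s\colon a\to b$, $r\colon b\to a$ in a fibre with $s;r=\id_a$ is a lifting of $(a,\id_x)$, and the axiom forces $r;s=\id_b$. Hence in every deflation, split monomorphisms in fibres are invertible.

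Now take $\cat X=\mathbf 1$, where $\Zg(\mathbf 1)$ is trivial. A category with a non-invertible split mono (the walking retract, or $1\to 2$ in finite sets) is not the restriction of any minimal deflation. So the restriction functor is not essentially surjective, and the equivalence fails as literally stated.

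\textbf{The repair.} Require the counit condition only for the chosen lifting, or for some lifting of each opliftable pair. This is all that Lemma~\ref{lma:deflation-unique-liftings} and Corollary~\ref{cor:decomposition-biretrofunctor-opfibration} use. Morphisms of deflations preserve such liftings, because $H$ commutes with $\varphi$. With that reading your canonical check is exactly what is needed.

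Your hard step then goes through, using three facts:
\begin{itemize}
\item The analogue of Lemma~\ref{lma:deflation-unique-liftings} for $\overline F$ holds. For $G$ above $\overline f$, take $\beta$ to be the codomain of $\varphi(G;F,\varepsilon_f)$. Existence comes from the second zigzag identity, uniqueness from the counit condition.
\item $F$ is left-cancellable, by the same counit argument.
\item Computing $\varphi_{a,a'}(h,\eta_f)$ via $\id_a;h$ and via $h;\id_{a'}$ gives $F;\overline F;h=h;F';\overline{F'}$ for vertical $h$.
\end{itemize}
Together these show that reindexing along $f$ and along $\overline f$ agree on vertical maps. That is what makes composition across alternating directions match the coend composition of $\hat D$.
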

\begin{proof}
By Corollary~\ref{cor:decomposition-biretrofunctor-opfibration}, each deflation restricts to an opfibration into $\cat X$. By Proposition~\ref{prop:displayed-factors-opfibration}, the corresponding displayed category $\cat X\rightarrow\Prof$ factors as $\cat X\rightarrow\Cat\xrightarrow{\refine}\Prof$, thus producing the desired indexed category. It is indeed a pseudofunctor by Proposition~\ref{prop:displayed-pseudo-decompose}, since the original deflation is a factorisation lifting.

Conversely, any opindexed category $D:\cat X\rightarrow\Cat$ extends to a normal pseudofunctor $\hat D:\Zg(\cat X)\rightarrow\Prof$ given by the action of the original functor $D$ on the 0-cells, and by the following recursive definition on the 1-cells:
\begin{align*}
f &\mapsto (Df)^{\refine} &
\overline f &\mapsto (Df)^{\coarsen} &
\varphi;\gamma &\mapsto\hat D(\varphi);\hat D(\gamma),
\end{align*}
where $;$ on the right-hand side denotes the composition of profunctors. The generating 2-cells $\eta_f$ and $\varepsilon_f$ are, respectively, sent to the unit $\id_{Dx}\rightarrow (Df)^{\refine};(Df)^{\coarsen}$ and the counit $(Df)^{\coarsen};(Df)^{\refine}\rightarrow\id_{Dy}$ constructed in Proposition~\ref{prop:prof-embedding-adjoints}. The collage of $\hat D$ then gives the sought-after deflation $\coprod\hat D\rightarrow\Zg(\cat X)$, with the 2-cells between two parallel morphisms $(f,F)$ and $(g,G)$ of type $(x,a)\rightarrow (y,b)$ given by a 2-cell $\alpha:f\rightarrow g$ in $\Zg(\cat X)$ such that $\hat D(\alpha)_{a,b}(F)=G$.

The equivalence on the 1-categorical part then follows by restricting the equivalence of Theorem~\ref{thm:collage-functors} to deflations and indexed categories. Moreover, the 2-cells of the collage precisely recover the 2-cells of a minimal deflation, hence concluding the proof.
\end{proof}

\subsection{Monoidal deflations}

Next, we wish to incorporate monoidal structure into deflations. As for opfibrations, we do this via the means of indexed monoids. Recall that $\Fim(\cat X)$ denotes the free category with indexed monoids generated by a small category $\cat X$ (Definition~\ref{def:free-indexed-cat}). We denote the cartesian monoidal structure of $\Fim(\cat X)$ by $(\times,1)$. First, we observe that $\Zg(\Fim(\cat X))$ can be given a monoidal structure as follows.
\begin{proposition}\label{prop:zg-monoidal-structure}
Let $\cat X$ be a small category with a cartesian monoidal structure $(\times,1)$. The zigzag 2-category $\Zg(\cat X)$ becomes a strict monoidal 2-category upon imposing the following additional equation for all $f:x\rightarrow y$ and $g:z\rightarrow w$ in $\cat X$:
$$(f\times\id_w);\overline{(\id_y\times g)} = \overline{(g\times\id_x)};(\id_z\times f).$$
\end{proposition}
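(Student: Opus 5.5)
We have to define $\otimes$ on $\Zg(\cat X)$ at the levels of 0-, 1- and 2-cells. Then we must check that it is strictly associative and unital, and that it is strictly 2-functorial $\Zg(\cat X)\times\Zg(\cat X)\rightarrow\Zg(\cat X)$. The imposed equation is exactly what makes the interchange law hold for 1-cells of mixed direction.

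\emph{Objects and 1-cells.} On 0-cells we set $x\otimes z\coloneq x\times z$ with unit $1$. Strictness on objects requires that $\times$ be strictly associative and unital on $\Ob(\cat X)$. This holds in the case of interest, $\cat X=\Fim(\cat X')$, where objects are words. In general we would first strictify, or assume the chosen products are strict as in the split setting.

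On generating 1-cells we use whiskering. For $f:x\rightarrow y$ we put $f\otimes\id_z\coloneq f\times\id_z$ and $\overline f\otimes\id_z\coloneq\overline{f\times\id_z}$, and symmetrically on the other side. For a general zigzag $\varphi=\varphi_1;\dots;\varphi_n$ we whisker componentwise. We then define
$$\varphi\otimes\gamma\coloneq(\varphi\otimes\id_{\dom\gamma});(\id_{\cod\varphi}\otimes\gamma).$$
This is well defined with respect to the equations of Definition~\ref{def:zigzag-category}. The key facts are $\overline{f\times\id};\overline{g\times\id}=\overline{(g\times\id)(f\times\id)}=\overline{gf\times\id}$ and $\overline{\id_x\times\id_z}=\id$, which follow from functoriality of $\times$ in $\cat X$.

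\emph{Interchange.} Functoriality of $\otimes$ on 1-cells reduces to the middle-four interchange
$$(\varphi\otimes\id);(\id\otimes\gamma)=(\id\otimes\gamma);(\varphi\otimes\id)$$
for generating 1-cells $\varphi,\gamma$, and we split into four cases by direction.

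If both are forward, the identity holds already in $\cat X$ by bifunctoriality of $\times$. If one is forward and one backward, it is precisely the imposed equation, or its mirror image; the mirror version should be obtained by applying the imposed equation with the roles of $f$ and $g$ exchanged and using the symmetry of $\times$. I expect this to be the main subtlety, since one has to check that the single stated equation genuinely suffices for both orientations. If both are backward, we rewrite using $\overline f;\overline g=\overline{gf}$ and reduce to the forward case.

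Associativity and unitality of $\otimes$ on 1-cells then follow from those of $\times$ on generators.

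\emph{2-cells.} On generating 2-cells we define whiskering by
$$\eta_f\otimes\id_{\id_z}\coloneq\eta_{f\times\id_z},\qquad\varepsilon_f\otimes\id_{\id_z}\coloneq\varepsilon_{f\times\id_z}.$$
General 2-cells are extended by horizontal and vertical composition. We must check three things:
\begin{itemize}
\item the zigzag equations are preserved, which holds since they are instances of the zigzag equations for $f\times\id_z$;
\item the coherence equations for $\eta_{f;h}$ and $\varepsilon_{f;h}$ are preserved, by functoriality of $-\times\id_z$;
\item interchange holds for 2-cells, i.e.\ $(\alpha\otimes\id);(\id\otimes\beta)=(\id\otimes\beta);(\alpha\otimes\id)$.
\end{itemize}

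The interchange for 2-cells is where real work is needed. One side of the imposed 1-cell equation, for instance, carries the whiskered 2-cell $\eta_{g\times\id}$ through the identification given by that equation. To prove it, I would exhibit both sides as mates with respect to the adjunctions $f\times\id\dashv\overline{f\times\id}$. I would then use uniqueness of mates to show that the required equality of 2-cells is forced. If this is not derivable, the alternative is to impose the 2-cell interchange as an additional equation, in the same spirit as the stated 1-cell equation. I expect this compatibility of the unit and counit 2-cells with the imposed 1-cell equation to be the main obstacle.
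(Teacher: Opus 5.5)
Your construction is the paper's own. Its proof consists only of the generator-level definitions $(f,\overline g)\mapsto (f\times\id_w);\overline{(\id_y\times g)}$, $(\overline f,g)\mapsto\overline{(f\times\id_z)};(\id_x\times g)$, $\eta_f\times\eta_g\coloneq\eta_{f\times g}$, $\eta_f\times\varepsilon_g\coloneq\varepsilon_{\id_x\times g};\eta_{f\times\id_w}$, etc. These are exactly your whiskerings, and the paper verifies nothing further. The gap is in the verification you add: two of its steps would fail.

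\textbf{Mixed 1-cell interchange.} Functoriality on 1-cells needs \emph{both} mixed interchange laws, coming from $(f,\overline g)$ and $(\overline f,g)$ respectively:
\[
(f\times\id_w);\overline{(\id_y\times g)}=\overline{(\id_x\times g)};(f\times\id_z)
\qquad\text{and}\qquad
\overline{(f\times\id_z)};(\id_x\times g)=(\id_y\times g);\overline{(f\times\id_w)}.
\]
The equation as printed has right-hand side running $w\times x\to z\times y$, so it is not even parallel to its left-hand side; the first law above is its evident intended reading. The second law does not follow from the first. Conjugating by the symmetry cannot produce it, because in $\Zg(\cat X)$ the 1-cell $\sigma;\overline\sigma$ is not an identity: $\overline\sigma$ is merely 2-isomorphic to $\sigma^{-1}$.

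Here is a countermodel. Take $\cat X=\Fim(\cat X')$ with $\cat X'$ having objects $a,b$, and a monoid homomorphism $\phi$ from words to a group with $\phi(a)\phi(b)\neq\phi(b)\phi(a)$. Send each forward $f:x\to y$ to $\phi(x)\phi(y)^{-1}$ and each backward 1-cell to $1$. The target is the one-object 2-category with 1-cells the group and exactly one 2-cell between any two 1-cells. All defining equations of $\Zg(\cat X)$ and the first law hold there. The second law, however, becomes $\phi(x)\phi(z)\phi(w)^{-1}\phi(x)^{-1}=\phi(y)\phi(z)\phi(w)^{-1}\phi(y)^{-1}$, which fails when $f,g$ are the discards of $a,b$. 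Moreover, $\sigma;\overline\sigma$ is sent to a commutator. So both orientations must be imposed.

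\textbf{2-cell interchange.} Here 2-functoriality forces, for instance,
\[
\id_{\overline{(f\times\id_z)}}*\eta_{\id_x\times g}=\eta_{\id_y\times g}*\id_{\overline{(f\times\id_z)}},
\]
because these are the two images of
\[
(\id_{\overline f},\eta_g)=(\id_{\overline f},\id_{\id_z})*(\id_{\id_x},\eta_g)=(\id_{\id_y},\eta_g)*(\id_{\overline f},\id_{\id_z}).
\]
The mates correspondence is a bijection, so it can transport such an equality but never create one, and this equality is not derivable.

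For a countermodel, send every 1-cell to the unique 1-cell of the one-object, one-1-cell 2-category whose 2-cells form an abelian group $A$. Send $\eta_h\mapsto\psi(y)-\psi(x)$ and $\varepsilon_h\mapsto\psi(x)-\psi(y)$ for $h:x\to y$, where $\psi:\Ob(\cat X)\to A$ is arbitrary. The zigzag and coherence equations then hold. The displayed equation, however, becomes $\psi(x\times w)-\psi(x\times z)=\psi(y\times w)-\psi(y\times z)$. In $\Fim(\cat X')$ this fails for $\psi$ the square of the word length, with $f$ the unit $1\to a$ and $g$ the discard $a\to 1$.

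Your fallback is therefore not optional. The 2-cell interchange laws, which are the base-level analogues of $\id_y*\eta_x=\eta_x*\id_y$ in Definition~\ref{def:str-defl-eqns}, must be imposed as axioms, together with the second 1-cell law. With only the single displayed equation, the assignment (yours and the paper's) does not extend to a 2-functor on $\Zg(\cat X)$.
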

\begin{proof}
We define the monoidal functor $\times:\Zg(\cat X)\times\Zg(\cat X)\rightarrow\Zg(\cat X)$ on objects by the action of the product $\times$ on $\cat X$, while for all $f:x\rightarrow y$ and $g:z\rightarrow w$ in $\cat X$, we define the product on the generating zigzags as follows, where on the right-hand side $\times$ denotes the cartesian product of $\cat X$:
\begin{align*}
(f,g) &\mapsto f\times g, &
(\overline f,\overline g) &\mapsto \overline{f\times g}, &
(f,\overline g) &\mapsto (f\times\id_w);\overline{(\id_y\times g)}, &
(\overline f,g) &\mapsto \overline{(f\times\id_z)};(\id_x\times g).
\end{align*}
Finally, on the generating 2-cells we stipulate as follows, where, as before, $f:x\rightarrow y$ and $g:z\rightarrow w$ are in $\cat X$ and $\times$ on the right-hand side denotes the cartesian product of $\cat X$:
\begin{align*}
\eta_f\times\eta_g &\coloneq\eta_{f\times g}, &
\varepsilon_f\times\varepsilon_g &\coloneq\varepsilon_{f\times g}, &
\eta_f\times\varepsilon_g &\coloneq \varepsilon_{\id_x\times g};\eta_{f\times\id_w}, &
\varepsilon_f\times\eta_g &\coloneq \varepsilon_{f\times\id_z};\eta_{\id_y\times g}.
\end{align*}
The monoidal unit is given by that of $\cat X$, i.e.~by $1$.
\end{proof}
Whenever we write $\Zg(\Fim(\cat X))$, we assume that it has the monoidal structure as described in Proposition~\ref{prop:zg-monoidal-structure}, which we also denote by $(\times,1)$. Note that, despite notation, this monoidal structure is {\em not} cartesian monoidal.

\begin{definition}[Monoidal deflation]\label{def:monoidal-deflation}
A {\em monoidal deflation} is a deflation $(p,\varphi):\cat Y\rightarrow\Zg(\Fim(\cat X))$ such that $\cat Y$ is a strict monoidal 2-category with the monoidal structure $(\otimes,I)$ such that $p$ strictly preserves and reflects the monoidal structure:
\begin{itemize}
\item for all $a\in\Ob(\cat Y)$, we have $p(a)=1$ if and only if $a=I$,
\item for all $F,G\in\cat Y$, we have $p(F\otimes G)=pF\times pG$,
\item for all $H\in\cat Y$ and $f,g\in\Zg(\Fim(\cat X))$, if $pH=f\times g$, then there are $F,G\in\cat Y$ with $pF=f$, $pG=g$ and $F\otimes G=H$,
\end{itemize}
and the following additional equation holds for all 1-cells $F:a\rightarrow b$ and $G:c\rightarrow d$ in $\cat Y$ that, respectively, are above the domains of the 2-cells $\alpha$ and $\beta$ in $\Zg(\Fim(\cat X))$:
$$\varphi_{a\otimes c,b\otimes d}(F\otimes G,\alpha\times\beta)=\varphi_{a,b}(F,\alpha)\otimes\varphi_{c,d}(G,\beta).$$
\end{definition}
Note that, unlike in the definition of im-opfibrations (Definition~\ref{def:opfib-indexed-mon}), we do not require that the monoidal product on the total category of a monoidal deflation sends liftings to liftings in any sense. This, however, follows from other requirements, as we record in the following proposition.
\begin{proposition}\label{prop:monoidal-deflation-liftings-preserved}
Let $(p,\varphi):\cat Y\rightarrow\Zg(\Fim(\cat X))$ be a monoidal deflation. Let $F:a\rightarrow b$ and $G:c\rightarrow d$, respectively, be liftings of opliftable pairs $(a,f:x\rightarrow y)$ and $(c,g:z\rightarrow w)$. Then $F\otimes G$ is a lifting of $(a\otimes c,f\times g)$.
\end{proposition}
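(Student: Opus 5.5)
We have to show that $F\otimes G$ arises by decomposing the codomain of $\varphi_{a\otimes c,a\otimes c}\left(\id_{a\otimes c},\eta_{f\times g}\right)$ into a 1-cell above $f\times g$ followed by one above $\overline{f\times g}$. The plan is to use the monoidal compatibility equation of Definition~\ref{def:monoidal-deflation} together with the convention $\eta_f\times\eta_g\coloneq\eta_{f\times g}$ from Proposition~\ref{prop:zg-monoidal-structure}.

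First, since $F$ and $G$ are liftings, there are $\overline F:b\rightarrow a$ above $\overline f$ and $\overline G:d\rightarrow c$ above $\overline g$. By definition, the codomain of $\varphi_{a,a}(\id_a,\eta_f)$ is $F;\overline F$, and the codomain of $\varphi_{c,c}(\id_c,\eta_g)$ is $G;\overline G$. Strictness of the monoidal 2-category $\cat Y$ gives $\id_a\otimes\id_c=\id_{a\otimes c}$. The monoidal compatibility equation of Definition~\ref{def:monoidal-deflation}, applied with $\alpha=\eta_f$ and $\beta=\eta_g$, then yields
$$\varphi_{a\otimes c,a\otimes c}\left(\id_{a\otimes c},\eta_{f\times g}\right)=\varphi_{a,a}(\id_a,\eta_f)\otimes\varphi_{c,c}(\id_c,\eta_g).$$
Its codomain is therefore $(F;\overline F)\otimes(G;\overline G)$. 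By the interchange law in the strict monoidal 2-category $\cat Y$, this equals $(F\otimes G);(\overline F\otimes\overline G)$.

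Next we check the types. Since $p$ strictly preserves the product, $p(F\otimes G)=f\times g$ and $p(\overline F\otimes\overline G)=\overline f\times\overline g$. By Proposition~\ref{prop:zg-monoidal-structure}, the latter is $\overline{f\times g}$. So $(F\otimes G,\overline F\otimes\overline G)$ is a decomposition of the codomain above the factorisation $(f\times g);\overline{f\times g}$. Since $f\times g$ lies in $\iota^*\Fim(\cat X)$, the opliftable pair $(a\otimes c,f\times g)$ is of the correct form.

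The main obstacle is the word "any" in the definition of a lifting. A lifting is \emph{any} morphism obtained by decomposing the codomain, and the decomposition is only unique up to the path components of Definition~\ref{def:decomposition-lifting}. We must therefore confirm that the decomposition $(F\otimes G,\overline F\otimes\overline G)$ exhibited above is admissible. Read literally, it is: any factorisation of the codomain above $(f\times g);\overline{f\times g}$ counts, and we have produced one. If instead one insists on the chosen (cloven) decomposition, we would first invoke Lemma~\ref{lma:deflation-unique-liftings}. For any two liftings, it gives a unique fibre morphism $\alpha$ with $F_1;\alpha=F_2$, and applying it in both directions shows $\alpha$ is invertible. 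Liftings are thus determined up to fibre isomorphism, and tensoring such isomorphisms preserves them. This reduces the claim to the computation above.
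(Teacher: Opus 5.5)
Your argument is correct and is essentially the paper's proof: both rewrite $\varphi_{a\otimes c,a\otimes c}(\id_{a\otimes c},\eta_{f\times g})$ as $\varphi_{a,a}(\id_a,\eta_f)\otimes\varphi_{c,c}(\id_c,\eta_g)$, using $\eta_f\times\eta_g=\eta_{f\times g}$ and the monoidal compatibility of $\varphi$, and then use interchange to read off the codomain as $(F\otimes G);(\overline F\otimes\overline G)$. Your explicit check that $\overline F\otimes\overline G$ lies above $\overline{f\times g}$ is a useful addition (the paper's displayed $\overline F;\overline G$ is evidently a typo for $\overline F\otimes\overline G$), while your final paragraph on chosen decompositions is unnecessary, since the definition of a lifting accepts any decomposition.
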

\begin{proof}
The lifting of $(a\otimes c,f\times g)$ is defined by factorising the codomain of the following 2-cell:
\begin{align*}
\varphi_{a\otimes c,a\otimes c}(\id_{a\otimes c},\eta_{f\times g}) &= \varphi_{a\otimes c,a\otimes c}(\id_{a}\otimes\id_{c},\eta_{f}\times\eta_{g}) \\
&= \varphi_{a,a}(\id_{a},\eta_{f})\otimes\varphi_{c,c}(\id_{c},\eta_{g}),
\end{align*}
which is therefore equal to $(F;\overline F)\otimes (G;\overline G) = (F\otimes G);(\overline F;\overline G)$. Thus, $F\otimes G$ is indeed a lifting.
\end{proof}
Since liftings in deflations are unique up to an isomorphism, Proposition~\ref{prop:monoidal-deflation-liftings-preserved} implies that in monoidal deflations liftings of products are given by products of liftings. We say that a monoidal deflation is {\em split} if the underlying deflation is split, and the chosen liftings of products are the products of chosen liftings.

A morphism of monoidal deflations is given by a morphism of deflations $(H,K)$ such that $H$ is additionally a strict monoidal functor. We denote the various categories of monoidal deflations by $\MonDefl$.

The following lemma establishes a tight link between monoidal deflations and opfibrations with indexed monoids (Definition~\ref{def:opfib-indexed-mon}).
\begin{lemma}\label{lma:monoidal-deflation-opfibration-indexed-monoids}
Let $(p,\varphi):\cat Y\rightarrow\Zg(\Fim(\cat X))$ be a split deflation. If $p$ is monoidal, then the opfibration $p^*$ obtained by restriction
\begin{equation*}
\scalebox{1}{\input{journal-figures/monoidal-deflation-restriction-indexed-monoids.tikz}}
\end{equation*}
has indexed monoids. Moreover, if $p$ is minimal, then the converse also holds: if the restriction $p^*$ has indexed monoids, then the deflation $p$ is monoidal.
\end{lemma}
\begin{proof}
First, suppose that the (not necessarily minimal) deflation $p$ is monoidal. Given objects $a,b\in\Ob(\cat Y)$ with $pa=x$, define the counit $C_a:a\rightarrow 1$ as the opcartesian lifting of the counit $(a,x\rightarrow 1)$, and the maps
\begin{equation*}
\scalebox{1}{\input{journal-figures/monoidal-deflation-restriction-indexed-monoids-lifted-maps.tikz}}
\end{equation*}
as the opcartesian liftings of the diagonal and the symmetry, respectively. Compositionality of the opcartesian liftings, together with preservation and reflection of the monoidal structure, then implies that $a'=a''=\hat a$ and $b=\hat b$, so that we may define the above maps as the diagonal and the symmetry. The equations for symmetries and uniform comonoids then follow, once again by compositionality of the opcartesian liftings. Thus, $\cat Y^*$ is cartesian monoidal by Proposition~\ref{prop:fox}, and the cartesian monoidal structure is preserved and reflected by assumption, establishing that $p^*$ has indexed monoids.

For the (partial) converse, suppose that $p$ is minimal and that $p^*$ has indexed monoids. Let us denote by $\times^*:\cat Y^*\times\cat Y^*\rightarrow\cat Y^*$ the cartesian product functor on $\cat Y^*$. We define the monoidal functor $\otimes:\cat Y\times\cat Y\rightarrow\cat Y$ on objects by the action of $\times^*$. For defining $\otimes$ on morphisms, observe that Lemma~\ref{lma:deflation-unique-liftings} (and its dual) implies that the morphisms in $\cat Y$ are generated by morphisms of the following form:
$$\alpha;F;\beta;\overline G,$$
where $F$ is a lifting of some $f\in\iota^*(\cat X)$ and $\overline G$ is a lifting of some $\overline g\in\iota^{\circ}(\cat X^{op})$, while $\alpha$ and $\beta$ are in the fibres of the domain and codomain of $f$. Absorbing $\alpha$ and $\beta$ into $F$, we conclude that the morphisms of $\cat Y$ are generated by morphisms of the form $F;\overline G$, where $F\in\cat Y^*$ and $G$ is, as before, a lifting of some $\overline g\in\iota^{\circ}(\cat X^{op})$. Hence, it suffices to define the monoidal product on these two classes of morphisms. Hence let $F,F'\in\cat Y^*$ and let $\overline G,\overline{G'}$ lifting of liftable pairs, with $F:a\rightarrow b$ and $\overline G:c\rightarrow d$. We define
\begin{align*}
F\otimes F' &\coloneq F\times^* F', &
F\otimes\overline G &\coloneq (F\times^*\id_c);\overline{(\id_b\times^* G)}, \\
\overline G\otimes\overline{G'} &\coloneq \overline{G\times^* G'}, &
\overline G\otimes F &\coloneq \overline{(G\times^*\id_a)};(\id_d\times^* F).
\end{align*}
The monoidal unit is given by that of $\cat Y^*$, and this monoidal structure is preserved and reflected by $p$. It remains to define $\otimes$ on the 2-cells. Since the deflation $p$ is minimal, it suffices to define $\otimes$ on the liftings of the 2-cells in $\Zg(\Fim(\cat X))$, which we do as follows, where $\alpha$ and $\beta$ are any 2-cells in $\Zg(\Fim(\cat X))$, while $F:a\rightarrow b$ and $G:c\rightarrow d$ are above their domains:
$$\varphi_{a,b}(F,\alpha)\otimes\varphi_{c,d}(G,\beta) \coloneq \varphi_{a\times^*c,b\times^*d}(F\otimes G,\alpha\times\beta).$$
\end{proof}
Note that Lemma~\ref{lma:monoidal-deflation-opfibration-indexed-monoids} implies that every split monoidal deflation has a fibrewise monoidal structure induced by the fibrewise monoidal structure of the split opfibration with indexed monoids obtained by restriction. Moreover, this fibrewise monoidal structure coincides with the fibrewise monoidal structure induced by the split fibration with indexed comonoids obtained by restriction (Corollary~\ref{cor:decomposition-biretrofunctor-opfibration}). As for im-opfibrations, we denote the fibrewise monoidal structure in the fibre of $x$ by $\otimes_x$.

The construction in the proof of Lemma~\ref{lma:monoidal-deflation-opfibration-indexed-monoids} establishes the following theorem, linking monoidal deflations and opfibrations with indexed monoids. By results of Section~\ref{subsec:opfibrations-indexed-monoids} (specifically~Theorem~\ref{thm:monopfib-imopfib} and Corollary~\ref{cor:opfibrations-indexed-monoids-opindexed-monoidal}), minimal monoidal deflations are equivalent to opfibrations with a monoid and strict opindexed monoidal categories (i.e.~functors) $\cat X\rightarrow\MonCat_{\mathsf{st}}$.
\begin{theorem}\label{thm:monoidal-deflations-indexed-monoidal}
Let $\cat X$ be a small category. The categories of minimal split monoidal deflations into $\cat X$ and split im-opfibrations into $\Fim(\cat X)$ are equivalent:
$$\MinMonDefl_{\mathsf{sp}}(\cat X)\simeq\imOpFib_{\mathsf{sp}}(\Fim(\cat X)).$$
\end{theorem}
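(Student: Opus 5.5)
We will build two functors and show they are mutually inverse up to natural isomorphism, following the two halves of the proof of Lemma~\ref{lma:monoidal-deflation-opfibration-indexed-monoids}.

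\emph{Restriction.} The functor $R:\MinMonDefl_{\mathsf{sp}}(\cat X)\rightarrow\imOpFib_{\mathsf{sp}}(\Fim(\cat X))$ sends a minimal split monoidal deflation $(p,\varphi)$ to its restriction $p^*:\cat Y^*\rightarrow\Fim(\cat X)$. By Corollary~\ref{cor:decomposition-biretrofunctor-opfibration} this is a split opfibration, and by the first half of Lemma~\ref{lma:monoidal-deflation-opfibration-indexed-monoids} it has indexed monoids. We must also check strictness. The products on $\cat Y^*$ are preserved and reflected strictly because $p$ strictly preserves and reflects $(\otimes,I)$. Products of opcartesian maps are opcartesian by Proposition~\ref{prop:monoidal-deflation-liftings-preserved} together with the splitness convention that chosen liftings of products are products of chosen liftings. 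On a morphism $(H,K)$ of monoidal deflations, $R$ restricts $H$ to $\cat Y^*$. This restriction preserves chosen liftings because $H$ commutes with $\varphi$, so it commutes with the liftings of $\eta_f$ that define them; and it is strict monoidal by assumption.

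\emph{Extension.} The functor $E$ in the other direction starts from a split im-opfibration $q:\cat Z\rightarrow\Fim(\cat X)$. We compose Corollary~\ref{cor:opfibrations-indexed-monoids-opindexed-monoidal} with Theorem~\ref{thm:minimal-deflation-indexed-category}. Concretely, $q$ gives a strict functor $D:\Fim(\cat X)\rightarrow\Cat$. Its extension $\hat D:\Zg(\Fim(\cat X))\rightarrow\Prof$ has a collage $\coprod\hat D$, which is a minimal split deflation whose restriction $p^*$ recovers $q$ up to isomorphism via the Grothendieck construction. Since $p^*$ has indexed monoids, the second half of Lemma~\ref{lma:monoidal-deflation-opfibration-indexed-monoids} equips the collage with a monoidal structure, defined on generators $F\otimes\overline G$ and so on, making it a minimal monoidal deflation. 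Splitness of the monoidal structure follows because the chosen liftings in the collage are the pairs $(f,\id)$, and these are closed under products by strictness of $q$. On morphisms, $E$ uses functoriality of the collage construction from Theorem~\ref{thm:collage-functors}.

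\emph{Inverse equivalence.} The isomorphism $RE(q)\cong q$ is the split Grothendieck equivalence of Theorem~\ref{thm:opfibrations-opindexed-categories-equivalence}, and it is monoidal because both monoidal structures are $\times^*$. For $ER(p)\cong p$, Theorem~\ref{thm:minimal-deflation-indexed-category} already gives an isomorphism of minimal split deflations. It remains to see that this isomorphism is strict monoidal. The monoidal structure of a monoidal deflation is forced on 1-cells by its values on $\cat Y^*$ and on liftings $\overline G$. The reason is the identity $F\otimes\overline G=(F\otimes\id);(\id\otimes\overline G)$ in a strict monoidal 2-category, combined with the defining equation of $\Zg$ from Proposition~\ref{prop:zg-monoidal-structure} and reflection of products. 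On 2-cells, the structure is forced by minimality together with the equation $\varphi(F\otimes G,\alpha\times\beta)=\varphi(F,\alpha)\otimes\varphi(G,\beta)$. Hence the two monoidal structures agree.

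\emph{Main obstacle.} We expect the hardest step to be showing that the monoidal structure defined in the converse of Lemma~\ref{lma:monoidal-deflation-opfibration-indexed-monoids} is well defined and functorial. This requires checking that a morphism of the collage written as $F;\overline G$ in two different ways, differing by fibre maps absorbed via Lemma~\ref{lma:deflation-unique-liftings}, yields the same product. It also requires the interchange law on mixed composites $\overline G;F$, which is where the extra equation of Proposition~\ref{prop:zg-monoidal-structure} enters. Our first approach will be to reduce to the profunctor side: we will show that $\hat D$ is strict monoidal as a functor into $\Prof$, using the fibrewise products $\otimes_x$ and the componentwise formulas of Proposition~\ref{prop:prof-embedding-adjoints}, and then transport this monoidal structure through the collage. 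This approach should avoid the case analysis on generators.
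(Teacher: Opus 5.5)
Your proposal is correct. The restriction functor $R$ is the paper's, but your extension functor takes a genuinely different route.

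\textbf{The paper's route.} The paper builds $E(q)$ by hand:
\begin{itemize}
\item it keeps $\cat Z$ and freely adjoins a formal reverse $\overline F$ for each opcartesian $F$, subject only to $\overline{\id_a}=\id_a$ and $\overline F;\overline G=\overline{G;F}$;
\item it freely adjoins 2-cells $\varphi_{a,b}(F,\alpha)$ ``subject to the equations of a deflation'';
\item it then invokes the converse half of Lemma~\ref{lma:monoidal-deflation-opfibration-indexed-monoids}.
\end{itemize}
It never checks the action on morphisms or either round trip.

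\textbf{Your route.} You pass through the strict indexed category of $q$, extend it to $\hat D$ on $\Zg(\Fim(\cat X))$, and take the collage, as in Theorem~\ref{thm:minimal-deflation-indexed-category}. This buys a lot:
\begin{itemize}
\item Hom-sets over mixed zigzags are the correct coends. For $g:x\rightarrow y$ and $f:x\rightarrow z$, the morphisms $c\rightarrow d$ over $\overline g;f$ are $\int^{a}\cat Z_y(c,g^*a)\times\cat Z_z(f^*a,d)$.
\item Identities such as $\overline{G_a};h=g^*(h);\overline{G_{a'}}$, for a fibre map $h:a\rightarrow a'$, therefore hold automatically. In the paper's free presentation they are not among the listed 1-cell equations and must be read into that vague phrase.
\item The factorisation-lifting and counit axioms follow from pseudofunctoriality of $\hat D$ and from $F^{\refine}\dashv F^{\coarsen}$.
\item You get $ER\cong\id$ from an existing theorem. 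Your observation that the monoidal structure is forced by $\times^*$, the chosen liftings and minimality is exactly what upgrades that isomorphism to one of monoidal deflations.
\end{itemize}
The price is that all the monoidal content is concentrated in showing that $\hat D$ is a monoidal pseudofunctor into $(\Prof,\times)$. That is also the honest content behind the paper's bare appeal to the Lemma, so you have located the real obstacle correctly.

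\textbf{Two small repairs.}

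First, the strict functor $D:\Fim(\cat X)\rightarrow\Cat$ comes from Theorem~\ref{thm:opfibrations-opindexed-categories-equivalence} applied to $q$. It does not come from Corollary~\ref{cor:opfibrations-indexed-monoids-opindexed-monoidal}, which yields a functor $\cat X\rightarrow\MonCat_{\mathsf{st}}$.

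Second, Theorem~\ref{thm:minimal-deflation-indexed-category} is stated over the plain zigzag 2-category, whereas monoidal deflations live over the quotient of Proposition~\ref{prop:zg-monoidal-structure}. So you must check that $\hat D$ respects the extra equation already to define $E$ and to get $ER(p)\cong p$, not only when verifying interchange. It does respect it, provided the equation is read in its well-typed form
$$(f\times\id_w);\overline{(\id_y\times g)}=\overline{(\id_x\times g)};(f\times\id_z).$$
As printed, the two sides have types $x\times w\rightarrow y\times z$ and $w\times x\rightarrow z\times y$. In the corrected form, evaluated at $((a,b),(c,d))$, both sides are sent by $\hat D$ to profunctors canonically isomorphic to $\cat Z_y(f^*a,c)\times\cat Z_w(b,g^*d)$.
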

\begin{proof}
Given a minimal split monoidal deflation $(p,\varphi):\cat Y\rightarrow\cat \Zg(\Fim(X))$, the corresponding split im-opfibration is given by the restriction $p^*:\cat Y^*\rightarrow\Fim(\cat X)$.

Conversely, given a split im-opfibration $p:\cat Y\rightarrow\Fim(\cat X)$, we extend it to a split deflation $(\hat p,\varphi):\hat{\cat Y}\rightarrow\cat \Zg(\Fim(X))$ as follows. The category $\hat{\cat Y}$ has the same objects as $\cat Y$, while the morphisms are defined as:
\begin{itemize}
\item each morphism of $\cat Y$ is a morphism of $\hat{\cat Y}$, so that $\cat Y$ is a subcategory of $\hat{\cat Y}$,
\item for each opcartesian $F\in\cat Y$, we add the morphism $\overline F$ in the opposite direction and let $p\overline F\coloneq\overline{pF}$,
\item we impose the following equations on the morphisms: $\overline{\id_a} = \id_a$ and $\overline F;\overline G = \overline{G;F}$.
\end{itemize}
The 2-cells are generated by adding exactly those 2-cells required by the local retrofunctor, i.e.~for every 2-cell $\alpha$ in $\Fim(\cat X)$ and every $F:a\rightarrow b$ in $\hat{\cat Y}$ above the domain of $\alpha$ we add a generating 2-cell $\varphi_{a,b}(F,\alpha)$, subject to the equations of a deflation. Now, $(\hat p,\varphi)$ is a minimal split deflation, whose restriction is the split im-opfibration $p$ we started with. Lemma~\ref{lma:monoidal-deflation-opfibration-indexed-monoids} guarantees that $(\hat p,\varphi)$ is monoidal.
\end{proof}

In light of Theorem~\ref{thm:monoidal-deflations-indexed-monoidal}, one might ask why go through all the developments of the current section just to obtain something we already defined in the previous section? Why not directly work with im-opfibrations and be content? The answer is twofold. First, presenting an (op)indexed monoidal category (equivalently, an im-opfibration) as a monoidal deflation explicitly introduces the functor boundaries in both directions, displaying the categories and the functors involved in a highly modular way. This makes deflations ideal for a purely syntactic treatment, which is the subject of the next section. Second, dropping the requirement of minimality, we do obtain a genuine generalisation of (op)indexed monoidal categories: the axiomatisation of the next section is sound and complete with respect to all monoidal deflations, not merely the minimal ones.

\section{Semantics of layered theories}\label{sec:semantics}
In this final section, we define the models of opfibrational, fibrational and deflation theories in full generality. In each case, we construct a free-forgetful adjunction with the category of models: opfibrations with indexed monoids, fibrations with indexed monoids and monoidal deflations.

\subsection{(Op)fibrational models}\label{subsec:opfib-models}

Here we consider models of layered theories consisting of a split im-opfibration $\cat Y\rightarrow\Fim(\cat X)$ (see section~\ref{subsec:opfibrations-indexed-monoids}) together with an interpretation of all types, terms and 2-terms of the theory.

\begin{definition}[Opfibrational model of a layered signature]\label{def:model-layered-signature}
An {\em opfibrational model} of a layered signature $(\Omega,\mathcal F,\M_{\omega})$ is a split im-opfibration $p:\cat Y\rightarrow\Fim(\cat X)$, together with the following data:
\begin{itemize}
\item a function $i:\Omega\rightarrow\Ob(\cat X)$,
\item for every $\omega,\tau\in\Omega$, a function $i_{\omega,\tau}:\mathcal F(\omega,\tau)\rightarrow\cat X(i\omega,i\tau)$,
\item for every $\omega\in\Omega$, a function $i^{\omega}:C_{\omega}\rightarrow\Ob(\cat Y_{i\omega})$,
\item for every $\omega\in\Omega$ and all $a,b\in C_{\omega}^*$, a function $i^{\omega}_{a,b}:\Sigma_{\omega}(a,b)\rightarrow\cat Y_{i\omega}\left(\left(i^{\omega}\right)^*a,\left(i^{\omega}\right)^*b\right)$,
\end{itemize}
where $(i^{\omega})^*:C_{\omega}^*\rightarrow\Ob(\cat Y_{i\omega})$ is defined by extending $i^{\omega}$ to the fibrewise monoidal structure of $p$.
\end{definition}
Note that the last two bullet points imply that for each $\omega\in\Omega$, the fibre $\cat Y_{i\omega}$ is a model of the monoidal signature $\M_{\omega}$ in the sense of Definition~\ref{def:model-monoidal-signature}. We denote a model of $(\Omega,\mathcal F,\M_{\omega})$ by $(p,i)$.

Given an opfibrational model $(p:\cat Y\rightarrow\Fim(\cat X),i)$ of $\mathcal L=(\Omega,\mathcal F,\M_{\omega})$, it induces a function $i:\Type_{\mathcal L}\rightarrow\Ob(\cat Y)$ recursively defined as follows:
\begin{align*}
\varepsilon : \varepsilon &\mapsto 1, &
\varepsilon : \omega &\mapsto I_{i\omega} &
a : \omega &\mapsto i^{\omega}(a) \\
f(A) : \tau &\mapsto i_{\omega,\tau}(f)^*\left(i(A:\omega)\right) &
AB : \omega &\mapsto i(A:\omega)\otimes_{i\omega} i(B:\omega), &
T,S &\mapsto i(T)\times i(S),
\end{align*}
where $i_{\omega,\tau}(f)^*:\cat Y_{i\omega}\rightarrow\cat Y_{i\tau}$ is the reindexing functor induced by $i_{\omega,\tau}(f):i\omega\rightarrow i\tau$.

Likewise, the same opfibrational model further induces the function $i:\Term^1_{\mathcal L}\rightarrow\cat Y$ form the basic (Figure~\ref{fig:layered-terms}), symmetry (Definition~\ref{def:symmetry-terms}) and opfibrational (Figure~\ref{fig:opfibrational-terms}) terms as follows:
\begingroup
\small
\begin{align*}
\scalebox{.8}{\input{journal-figures/emptydiag-sheet.tikz}} : (\varepsilon :\omega\mid\varepsilon :\omega) &\mapsto \id_{I_{i\omega}} & \scalebox{.8}{\input{journal-figures/emptydiag.tikz}} &\mapsto \id_1 \\
\scalebox{.8}{\input{journal-figures/iddiag-sheet.tikz}} : (A:\omega\mid A:\omega) &\mapsto \id_{i(A:\omega)} & \scalebox{.8}{\input{journal-figures/symdiag-sheet1.tikz}} &\mapsto \scalebox{.8}{\input{journal-figures/symmetry.tikz}} \\
\scalebox{.8}{\input{journal-figures/internalsigmadiag.tikz}} : (a:\omega\mid b:\omega) &\mapsto i^{\omega}_{a,b}(\sigma) & \scalebox{.8}{\input{journal-figures/cup.tikz}} &\mapsto \scalebox{.8}{\begin{tikzpicture}
	\begin{pgfonlayer}{nodelayer}
		\node [style=wnode] (0) at (-0.5, 0) {};
		\node [style=none] (1) at (0.75, 0) {};
	\end{pgfonlayer}
	\begin{pgfonlayer}{edgelayer}
		\draw [style=edge] (0) to (1.center);
	\end{pgfonlayer}
\end{tikzpicture}
} \\
\scalebox{.8}{\input{journal-figures/f-box.tikz}} : (f(A):\tau\mid f(B):\tau) &\mapsto i_{\omega,\tau}(f)^*(i(x)) & \scalebox{.8}{\input{journal-figures/pants.tikz}} &\mapsto \scalebox{.8}{\input{journal-figures/monoid-multiplication.tikz}} \\
\scalebox{.8}{\input{journal-figures/internalxydiag.tikz}} : (AC:\omega\mid BD:\omega) &\mapsto i(x)\otimes_{\omega} i(y) & \scalebox{.8}{\input{journal-figures/a-cap.tikz}} &\mapsto \scalebox{.8}{\begin{tikzpicture}
	\begin{pgfonlayer}{nodelayer}
		\node [style=node] (0) at (0.75, 0) {};
		\node [style=none] (1) at (-0.5, 0) {};
	\end{pgfonlayer}
	\begin{pgfonlayer}{edgelayer}
		\draw [style=edge] (0) to (1.center);
	\end{pgfonlayer}
\end{tikzpicture}
} \\
\scalebox{.8}{\input{journal-figures/refine-sheet.tikz}} : (A:\omega\mid f(A):\tau) &\mapsto i_{\omega,\tau}(f)_{i(A:\omega)} & \scalebox{.8}{\input{journal-figures/copants-copy.tikz}} &\mapsto \scalebox{.8}{\input{journal-figures/comonoid-comultiplication.tikz}} \\
x;y &\mapsto i(x);i(y) & x\otimes y &\mapsto i(x)\times i(y), \\
\end{align*}
\endgroup
where $i_{\omega,\tau}(f)^*:\cat Y_{i\omega}\rightarrow\cat Y_{i\tau}$ is the reindexing functor induced by $i_{\omega,\tau}(f):i\omega\rightarrow i\tau$, and
$$i_{\omega,\tau}(f)_{i(A:\omega)}:i(A:\omega)\rightarrow i(f(A):\tau)$$
is the opcartesian lifting of the opliftable pair $\left(i(A:\omega), i_{\omega,\tau}(f):i\omega\rightarrow i\tau\right)$.

\begin{definition}\label{def:model-opfib-thy}
A {\em model of an opfibrational layered theory} with signature $\mathcal L$ is an opfibrational model $(p,i)$ of $\mathcal L$ such that the induced functions $i:\Type_{\mathcal L}\rightarrow\Ob(\cat Y)$ and $i:\Term^1_{\mathcal L}\rightarrow\cat Y$ preserve the 0- and the 1-equations, respectively.
\end{definition}

\begin{proposition}\label{prop:opfib-equations-preserved}
Any opfibrational model of a layered signature preserves the structural opfibrational equations (Definition~\ref{def:str-opfib-eqns}).
\end{proposition}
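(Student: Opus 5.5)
We check each group of structural opfibrational equations (Definition~\ref{def:str-opfib-eqns}) separately. In every case the two sides are interpreted by the recursive clauses for $i$ on types and terms, and the resulting objects or morphisms of $\cat Y$ are compared. The main tools are the following. The opfibration $p$ is split, so reindexing is strictly functorial. The fibrewise monoidal structure $\otimes_{x}$ is strict and is preserved strictly by reindexing (Theorem~\ref{thm:monopfib-imopfib} together with Theorem~\ref{thm:monoids-opfibrations-strict-imoncat-equivalence}). The total category $\cat Y$ is strict cartesian monoidal. Opcartesian liftings are characterised by their universal property.

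\textbf{0-equations.} Associativity and unitality of $AB:\omega$ hold because $\otimes_{i\omega}$ is a strict monoidal product with unit $I_{i\omega}$. The equations $f(AB)=f(A)f(B)$ and $f(\varepsilon)=\varepsilon$ hold because each reindexing functor $i(f)^*$ is strict monoidal. The monoid laws for external types $T,S$ follow from strictness of $\times$ with unit $1$.

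\textbf{Non-boundary 1-equations.} Several groups are immediate:
\begin{itemize}
\item The monoidal structural identities for $;$ and \ref{term:ext-tensor} hold because $\cat Y$ is a strict monoidal category under $\times$.
\item The internal structural identities hold because the fibre $\cat Y_{i\omega}$ is strict monoidal under $\otimes_{i\omega}$, and because $\id_{A}\otimes_{i\omega}\id_{B}=\id_{A\otimes B}$.
\item The \ref{term:int-box} identities of Figure~\ref{fig:structural-twocells-functors-int} hold because $i(f)^*$ is a strict monoidal functor.
\item The external symmetry equations and the uniform comonoid equations for \ref{term:diag} and \ref{term:diag-counit} hold because $\cat Y$ is cartesian monoidal: the diagonal and the terminal map form uniform comonoids by Proposition~\ref{prop:fox}, and the symmetry is the canonical cartesian one.
\end{itemize}

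\textbf{Sliding equations.} The remaining equations are those of Figures~\ref{fig:structural-twocells-monoidal} and~\ref{fig:structural-twocells-functors-ext}, where internal terms slide through \ref{term:monoid}, \ref{term:monoid-unit} and \ref{term:ext-gen}. I expect this step to be the main obstacle, because it requires matching the syntactic sliding with the universal-property definition of reindexing.

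\emph{Functor boundaries.} Let $F$ be the opcartesian lifting of $(a,i(f))$ and let $x:a\to b$ lie in the fibre. The reindexed morphism $i(f)^*(x)$ is by definition the unique vertical map satisfying $F;i(f)^*(x)=x;F'$, where $F'$ is the lifting of $(b,i(f))$. This is exactly the sliding equation.

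\emph{Composite boundaries.} Equations involving composites of boundaries, or boundaries with tensored types, hold by splitness: the chosen lifting of $f;g$ is the composite of the chosen liftings, and the lifting of $f(AB)$ agrees with the corresponding product of liftings.

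\emph{Monoid boundaries.} The \ref{term:monoid} term is interpreted as the opcartesian lifting of the base monoid multiplication $m_{i\omega}$ at $i(A)\times i(B)$. By the construction in the proof of Theorem~\ref{thm:monopfib-imopfib}, $\otimes_{i\omega}$ on morphisms is defined as the unique vertical map through which $(x\times y)$ composed with this lifting factors. This gives the sliding equation $(x\times y);m=m;(x\otimes y)$. The same argument applies to the cartesian product of opcartesian maps, which is again opcartesian by Definition~\ref{def:opfib-indexed-mon}.

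\emph{Associativity and unitality.} The associativity and unitality equations for \ref{term:monoid} and \ref{term:monoid-unit} follow from the corresponding monoid equations in $\Fim(\cat X)$, lifted via splitness. Liftings of equal base morphisms coincide, and liftings of $\id_1$ are identities since $\cat Y_1=\{1\}$.

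The interaction of \ref{term:ext-gen} with \ref{term:monoid} and with the unit amounts to $f$ being a monoid homomorphism in $\Fim(\cat X)$. Splitness then makes the two composites of chosen liftings equal.
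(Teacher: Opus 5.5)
Your proposal is correct and follows essentially the same route as the paper. Both check the equations group by group: the 0-equations and the non-boundary 1-equations follow from strictness of the fibrewise and cartesian monoidal structures, the boundary-sliding equation is exactly the universal-property definition of $i_{\omega,\tau}(f)^*(i(x))$, and the remaining equations involving \ref{term:monoid}, \ref{term:monoid-unit} and \ref{term:ext-gen} come from lifting base equations along chosen liftings, as in Theorem~\ref{thm:monopfib-imopfib}. Your version only spells out what the paper leaves to that citation: sliding through \ref{term:monoid} is the defining property of $\otimes_{i\omega}$ on morphisms, and the \ref{term:ext-gen}/monoid interaction reduces to $f$ being a monoid homomorphism in $\Fim(\cat X)$ together with splitness.
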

\begin{proof}
The structural 0-equations (Definition~\ref{def:str-0eqns}) are preserved by associativity and unitality of the monoids, and by the fact that the reindexing functors preserve the monoidal multiplication and units.

The structural 1-equations (Definition~\ref{def:str-1eqns}) are preserved since $\cat Y$ is (cartesian) monoidal, each fibre $\cat Y_{i\omega}$ is a monoidal category, and $\otimes_x:\cat Y_x\times\cat Y_x\rightarrow\cat Y_x$ is a functor.

The structural 1-equations in Figure~\ref{fig:structural-twocells-functors-int} are preserved since the reindexing functors between the fibres are strictly functorial and monoidal.

The symmetry equations are preserved since $\cat Y$ is cartesian monoidal, hence in particular symmetric monoidal. Likewise, the equations of uniform comonoids are preserved since $\cat Y$ is cartesian monoidal.

The equations in Figure~\ref{fig:structural-twocells-monoidal} are preserved by lifting the corresponding equations from the base category.

Preservation of the equations in Figure~\ref{fig:structural-twocells-functors-ext} is somewhat less immediate. For the first equation, assuming the sorts $x:(A:\omega\mid B:\omega)$ and $f\in\mathcal F(\omega,\tau)$ we have to show that
$$i(x);i_{\omega,\tau}(f)_{i(B:\omega)} = i_{\omega,\tau}(f)_{i(A:\omega)};i_{\omega,\tau}(f)^*(i(x)),$$
which holds by the definition of $i_{\omega,\tau}(f)^*(i(x))$: it is the unique map induced by the lifting property of the opcartesian map $i_{\omega,\tau}(f)_{i(A:\omega)}$ that makes the square
\begin{equation*}
\scalebox{1}{\input{journal-figures/interpretation-lifting-x.tikz}}
\end{equation*}
commute. The remaining two equations in Figure~\ref{fig:structural-twocells-monoidal} are preserved by Theorem~\ref{thm:monopfib-imopfib}.
\end{proof}

\begin{definition}\label{def:category-models-opfibrational-theories}
The category of {\em models of opfibrational theories} $\OpFThMod$ has as objects tuples $(\mathcal T,p,i)$ of an opfibrational theory $\mathcal T$ and its model $p:\cat Y\rightarrow\Fim(\cat X)$ (with the corresponding family of interpretation functions $i$). A morphism
$$(F,P,Q) : (\mathcal T,p:\cat Y\rightarrow\cat X,i)\rightarrow (\mathcal K,q:\cat Y'\rightarrow\cat X',j)$$
is given by a morphism of opfibrational theories $F:\mathcal T\rightarrow\mathcal K$ together with a morphism of im-opfibrations $(P,Q):p\rightarrow q$ such that the diagrams below commute, where we assume that the layered signature of $\mathcal T$ is $(\Omega,\mathcal F,\M_{\omega})$ and the layered signature of $\mathcal K$ is $(\Psi,\mathcal G,\M_{\psi})$:
\begin{equation*}
\scalebox{.9}{\input{journal-figures/mor-lay-mod.tikz}}.
\end{equation*}
Note that the bottom diagrams say that for every $\omega\in\Omega$, the restriction of $P:\cat Y\rightarrow\cat Y'$ to the fibre $P_{i\omega}:\mathcal Y_{i\omega}\rightarrow\mathcal Y'_{jF(\omega)}$ makes $(F^{\omega},P_{i\omega})$ into a morphism in $\MMod$.
\end{definition}

We denote the full subcategory of $\OpFThMod$ defined by the theories whose sets of equations are empty by $\OpFMod$, and call it the category of {\em opfibrational models of layered signatures}, since models of empty theories are in one-to-one correspondence with models of signatures. We summarise the relationship between opfibrational theories, models and layered signatures in the following proposition (cf.~Proposition~\ref{prop:monoidal-signatures-theories-models}).

\begin{proposition}\label{prop:opfibrational-signatures-theories-models}
The vertical forgetful functors in the diagram below are fibrations, while the horizontal forgetful functors form a morphism of fibrations:
\begin{equation*}
\scalebox{1}{\input{journal-figures/opfibrational-signatures-theories-models.tikz}}.
\end{equation*}
Moreover, the objects in the fibre $\OpFMod(\mathcal L)$ are precisely the opfibrational models of the layered signature $\mathcal L$, and the objects in the fibre $\OpFThMod(\mathcal T)$ are precisely the models of the opfibrational theory $\mathcal T$.
\end{proposition}
Similarly to Proposition~\ref{prop:monoidal-signatures-theories-models}, we note that $\OpFTh\rightarrow\LSgn$ is also a fibration, while the functor $\OpFThMod\rightarrow\OpFMod$ fails to be a fibration.

We conclude the discussion of opfibrational models by constructing free models generated by opfibrational theories, i.e.~by constructing the left adjoint to the vertical forgetful functors of Proposition~\ref{prop:opfibrational-signatures-theories-models}. We first define the total and the base categories generated by a theory and construct the im-opfibration between them.
\begin{definition}[Total category generated by an opfibrational theory]
Let $\mathcal T=(\mathcal L,E^0,E^1)$ be an opfibrational theory. The {\em total category} $T(\mathcal T)$ generated by $\mathcal T$ has the equivalence classes of types $\Type_{\mathcal L}$ under the type congruence generated by $E^0\cup S^0$ as objects, and the morphisms are the equivalence classes of terms $\Term^1_{\mathcal L}$ under the term congruence generated by $E^1\cup S_{\mathsf{opf}}^1$.
\end{definition}

\begin{definition}[Base category generated by a layered signature]
Let $\mathcal L=(\Omega,\mathcal F,\M_{\omega})$ be a layered signature. The {\em base category} $B(\mathcal L)$ generated by $\mathcal L$ is the free category with indexed monoids generated by $(\Omega,\mathcal F)$ viewed as a monoidal signature.
\end{definition}

Given an opfibrational theory $\mathcal T=(\mathcal L,E^0,E^1)$ with $\mathcal L=(\Omega,\mathcal F,\M_{\omega})$, define the functor $p_{\mathcal T}:T(\mathcal T)\rightarrow B(\mathcal L)$ on internal types by $A:\omega\mapsto\omega$, on all types by extension to products, and by action on the generating terms as follows, where $x:(A:\omega\mid B:\omega)$ is any internal term:
\begin{equation*}\label{page:total-to-base-opf}
\small\scalebox{.8}{\input{journal-figures/total-to-base.tikz}}.
\end{equation*}

\begin{proposition}\label{prop:syntactic-opfibrational-model}
For any opfibrational theory $\mathcal T$ with the layered signature $\mathcal L$, the functor $p_{\mathcal T}:T(\mathcal T)\rightarrow B(\mathcal L)$ is a split im-opfibration.
\end{proposition}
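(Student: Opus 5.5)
The argument is a check of the defining clauses of a split im-opfibration (Definition~\ref{def:opfib-indexed-mon}) for $p_{\mathcal T}$, carried out by structural induction on terms.

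\emph{Well-definedness.} First I will check that $p_{\mathcal T}$ is a functor. On objects, a type $A:\omega$ goes to $\omega$ and $T,S$ goes to the product, so type congruences are respected. The 0-equations only identify types in the same layer (by $P^0_{\mathcal L}$), and the structural 0-equations of Definition~\ref{def:str-0eqns} preserve layers.

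On morphisms, I will go through the generating equations of $E^1\cup S^1_{\mathsf{opf}}$ and show that both sides have the same image in $B(\mathcal L)=\Fim(\Omega,\mathcal F)$:
\begin{itemize}
\item internal terms go to identities, so every internal equation, the equations of Figure~\ref{fig:structural-twocells-functors-int}, and the theory's 1-equations between internal terms are trivially respected;
\item the sliding equations of Figures~\ref{fig:structural-twocells-monoidal} and~\ref{fig:structural-twocells-functors-ext} become identities of the form $m;\id=\id;m$ and $f;\id=\id;f$;
\item the uniform comonoid and symmetry equations map to the corresponding equations of the cartesian structure of $\Fim$;
\item the monoidal-category equations on (monoid)/(monoid-unit) map to the monoid equations in $\Fim$.
\end{itemize}
One point needs care: an arbitrary equation in $E^1$ between non-internal terms. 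For these I expect to argue by induction that the image of a term is determined by its sort, i.e.\ by the layer structure of its domain and codomain together with the external generators it contains. This is the part I would flag. If it fails, the remedy is to read $B$ as generated by the theory rather than the bare signature.

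\emph{Cartesian structure.} The total category $T(\mathcal T)$ carries the external tensor, with unit $\varepsilon:\varepsilon$. Its symmetry and the terms (diag)/(diag-counit) give uniform comonoids, so $T(\mathcal T)$ is cartesian monoidal by Proposition~\ref{prop:fox}. The functor $p_{\mathcal T}$ sends $,$ to $\times$ strictly.

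Reflection holds by construction of types. An object over $\omega_1\times\dots\times\omega_n$ is a list of internal types, because a type over $1$ must be $\varepsilon$: internal types are never sent to $1$, as $\omega\neq 1$ in $\Fim$. A morphism over $f\times g$ decomposes as a tensor. I would prove this by induction on terms modulo the structural equations, using a normal form in which every term is a composite of layers of tensors of generators.

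\emph{Opfibration and splitting.} By Theorem~\ref{thm:monopfib-imopfib}, morphisms of $\Fim$ are generated by $\mathcal F$, the monoid operations, and the cartesian structure. For each generator I choose the following lift:
\begin{itemize}
\item for $f\in\mathcal F(\omega,\tau)$ and $A:\omega$, the term (ext-gen) $A\to f(A)$;
\item for $m_\omega$, the term (monoid) $A,B\to AB$;
\item for $u_\omega$, the term (monoid-unit);
\item for diagonals, counits and symmetries, the corresponding terms.
\end{itemize}
Chosen lifts of composites and products are the composites and products of these chosen lifts. Splitness then follows from the structural 0-equations, in particular $f(AB)=f(A)f(B)$ and strict associativity, which make reindexing strictly functorial.

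\emph{Opcartesianness (main obstacle).} The hard step is showing that each chosen lift $F$ is opcartesian. Given $G:A\to C$ with $pG=pF;h$, I need a unique $H$ with $F;H=G$. For existence, I will put $G$ into a normal form that starts with an opfibrational generator (modulo sliding), using the equations of Figures~\ref{fig:structural-twocells-monoidal} and~\ref{fig:structural-twocells-functors-ext} to push internal terms past the boundaries to the right, so that $G=F;H$.

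Uniqueness is the delicate part. I plan to construct, for each generator, a left-inverse-like operation on terms that acts as ``stripping'' the leading boundary. This operation is defined on representatives and must be shown invariant under all of $E^1\cup S^1_{\mathsf{opf}}$. Invariance under the theory's equations $E^1$ needs special care, since these may mix boundaries. If that fails, I would instead compare with a semantic model: interpret into the Grothendieck construction of the opindexed monoidal category obtained from term models of the layers (Corollary~\ref{cor:opfibrations-indexed-monoids-opindexed-monoidal}), and deduce uniqueness from opcartesianness there.

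Finally, products of opcartesian maps are opcartesian, because chosen lifts are closed under tensor and the same normal form argument applies componentwise.
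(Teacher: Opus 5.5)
You follow the same route as the paper. The paper's proof is only a sketch: it asserts that the (int-gen)-free terms are the opcartesian ones, that every type and base generator has a unique such lift, and that this extends to a split, product-compatible lifting. The two steps you flag are exactly where this breaks. They are not merely delicate: for an arbitrary set of 1-equations $E^1$ they are false, so neither of your fallbacks can close them, and the paper's sketch does not address $E^1$ either.

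\textbf{Well-definedness.} The image of a term is \emph{not} determined by its sort, even with $E^0=\emptyset$. (monoid) and (swap);(monoid) both have sort $(A:\omega,A:\omega\mid AA:\omega)$, but they are sent to $m_\omega$ and $\sigma_{\omega,\omega};m_\omega$, which differ in the free $B(\mathcal L)$. If $E^1$ contains this pair, $p_{\mathcal T}$ is not a functor into $B(\mathcal L)$. Quotienting the base changes the statement, since the base need no longer have the form $\Fim(\cat X)$ that a model requires.

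\textbf{Opcartesianness.} Take the following theory:
\begin{itemize}
\item $f\in\mathcal F(\omega,\tau)$, $a\in C_\omega$, $c\in C_\tau$, and $s,s'\in\Sigma_\tau(\varepsilon,c)$;
\item $x,y:(f(a):\tau\mid f(a)c:\tau)$ the internal tensors of $\id_{f(a)}$ with $s$ and with $s'$;
\item $E^1=\{(\refine_f;x,\refine_f;y)\}$.
\end{itemize}
Here $p_{\mathcal T}$ is well defined, since both sides lie over $f$. Interpret in a strict version of $(\Set,\times,1)$:
\begin{itemize}
\item $a\mapsto\emptyset$ and $c\mapsto\{0,1\}$;
\item every type $f(-)$ and every unit goes to the point, and both tensors go to products;
\item (monoid) and (monoid-unit) go to identities;
\item (diag), (diag-counit) and (swap) go to diagonals, terminal maps and symmetries;
\item all (ext-gen) and (int-box) terms go to terminal maps;
\item $s$ and $s'$ go to the two points $0$ and $1$.
\end{itemize}
Every structural opfibrational equation holds; those involving $f$ are equalities of maps into a terminal object. $E^1$ also holds, since both sides have empty domain. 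Yet $x$ and $y$ are sent to the distinct points $0$ and $1$. Hence $[\refine_f;x]=[\refine_f;y]$ while $[x]\neq[y]$, so the chosen lift is not opcartesian.

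In fact no lift $F':a\to b'$ is opcartesian. Writing $\refine_f=F';k$ with $k$ over $\id_\tau$ would force $k;x=k;y$. But these are interpreted as distinct constant maps out of the non-empty set interpreting $b'$.

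This also defeats your semantic fallback. Any split im-opfibration model satisfying $E^1$ must identify $x$ and $y$, so the comparison map is not faithful exactly where you need to transfer uniqueness.

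\textbf{What is needed.} The proposition needs a hypothesis on $E^1$, for instance that $E^1$ relates only internal terms. Under such a hypothesis your approach is the natural way to make the paper's sketch rigorous:
\begin{itemize}
\item sliding internal terms rightwards gives a normal form, namely an (int-gen)-free skeleton followed by an external product of internal terms;
\item the term congruence then splits into a structural part on skeletons and the internal congruence;
\item stripping is therefore well defined.
\end{itemize}
You would still need a step your proposal omits: the converse of your well-definedness check. Every equation of $B(\mathcal L)$ must hold between the corresponding skeleton lifts at every type of $T(\mathcal T)$. These are $1$-$1$-naturality of $m$ and $u$ along generators, naturality of diagonals and counits, consistency of the monoids, and the monoid and comonoid laws. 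Only then is the skeleton lift of a base morphism unique. That uniqueness is what makes ``the chosen lift of a composite is the composite of chosen lifts'' well defined, and splitness rests on it.
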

\begin{proof}
We observe that the opcartesian morphisms in $T(\mathcal T)$ are given by those terms that are equal to a term with no non-trivial internal structure. More precisely, a morphism is opcartesian if and only if the equivalence class contains a term whose construction does not involve the rule~\ref{term:int-gen}. We now observe that if we set the internal term $x$ to be the identity term in the definition of $p_{\mathcal T}:T(\mathcal T)\rightarrow B(\mathcal L)$, then for any type $S$ and a generating morphism $f:p_{\mathcal T}(S)\rightarrow\omega$, there is a unique term $\hat f:S\rightarrow K$ in $T(\mathcal T)$ with $M_{\mathcal T}\left(\hat f\right)=f$, which is moreover opcartesian. Extending this to all morphisms gives the split opfibration structure of $p_{\mathcal T}$, which is, moreover, consistent with the (cartesian) monoidal structure of $T(\mathcal T)$. Finally, the opfibration $p_{\mathcal T}$ has indexed monoids by construction.
\end{proof}
\begin{corollary}
Any opfibrational theory $\mathcal T=(\mathcal L,E^0,E^1)$ gives rise to the opfibrational model $\left(p_{\mathcal T}:T(\mathcal T)\rightarrow B(\mathcal L),i\right)$, whose interpretation functions $i$ are given by:
\begin{align*}
i(\omega) &=\omega, &
i_{\omega,\tau}(f) &=f, &
i^{\omega}(a) &= a:\omega, &
i^{\omega}_{A,B}(\sigma) &= \sigma : (A:\omega\mid B:\omega).
\end{align*}
\end{corollary}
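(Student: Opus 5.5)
The plan is to assemble the corollary from Proposition~\ref{prop:syntactic-opfibrational-model}, which already says that $p_{\mathcal T}:T(\mathcal T)\rightarrow B(\mathcal L)$ is a split im-opfibration. Two things remain. First, the data $i$ must be well-typed in the sense of Definition~\ref{def:model-layered-signature}. Second, it must satisfy the equations of $\mathcal T$ in the sense of Definition~\ref{def:model-opfib-thy}.

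I will do the typing first. By definition, $B(\mathcal L)=\Fim(\Omega,\mathcal F)$, where $(\Omega,\mathcal F)$ is read as a monoidal signature. So $\omega\in\Omega$ is an object of the base. A generator $f\in\mathcal F(\omega,\tau)$ gives a morphism $\omega\rightarrow\tau$, and strictly it should be one of the monoid homomorphisms $i_{a,b}(f)$. Since $p_{\mathcal T}$ sends $A:\omega$ to $\omega$, the object $a:\omega$ lies in the fibre over $i(\omega)=\omega$. An internal generator $\sigma:(a:\omega\mid b:\omega)$ is sent by $p_{\mathcal T}$ to $\id_\omega$, so it lies in that fibre too. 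Its domain $a:\omega$ agrees with $(i^\omega)^*a$: the fibrewise product $\otimes_\omega$ in $T(\mathcal T)$ is the syntactic $AB:\omega$, because the monoid term~\ref{term:monoid} composed with the opcartesian lifting recovers concatenation by the structural 0-equations. Hence $(i^\omega)^*$ sends a word $a_1\cdots a_n$ to the type $a_1\cdots a_n:\omega$.

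Next I will check that the induced interpretation $i:\Type_{\mathcal L}\rightarrow\Ob(T(\mathcal T))$ is the quotient map. I will prove this by structural induction on types. The only case needing care is $f(A):\tau\mapsto f^*(i(A:\omega))$. The chosen opcartesian lifting of $(A:\omega,f)$ in $p_{\mathcal T}$ is the~\ref{term:ext-gen} term, whose codomain is $f(A):\tau$, so $f^*$ acts on objects as $A\mapsto f(A)$. The unit and product cases hold by the same identifications.

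The same induction over the rules of Figures~\ref{fig:layered-terms} and~\ref{fig:opfibrational-terms} and Definition~\ref{def:symmetry-terms} shows that $i:\Term^1_{\mathcal L}\rightarrow T(\mathcal T)$ sends each term to its own equivalence class. The box case uses that $f^*(x)$ is the unique mediating map through the~\ref{term:ext-gen} lifting. That map is the class of the~\ref{term:int-box} term, by the first equation in Figure~\ref{fig:structural-twocells-functors-ext}. The monoid, diagonal and symmetry cases are the syntactic liftings named in the proof of Proposition~\ref{prop:syntactic-opfibrational-model}.

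With that established, preservation of equations is immediate. Since $i$ is the quotient map, every pair in $E^0$ or $E^1$ is identified, because $E^0\subseteq{\Zeq}$ and $E^1\subseteq{\Oeq}$ by construction of $T(\mathcal T)$. The main obstacle is the identification of the chosen lifts and reindexing functors with the syntactic terms, that is, the~\ref{term:int-box} case. This depends on the splitting chosen in Proposition~\ref{prop:syntactic-opfibrational-model} being exactly the~\ref{term:ext-gen} terms. I would fix that choice explicitly.
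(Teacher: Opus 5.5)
Your proposal is correct and takes the route the paper intends. The corollary is stated there without proof, as an immediate consequence of Proposition~\ref{prop:syntactic-opfibrational-model}. Your check that the induced interpretation is just the quotient map supplies the detail the paper leaves implicit: the chosen lifts are the \ref{term:ext-gen}, \ref{term:monoid} and related terms from that proposition's splitting, so preservation of $E^0$ and $E^1$ is automatic.

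The one case you do not spell out, \ref{term:int-tensor}, goes exactly like the \ref{term:int-box} case. It uses the sliding equation of Figure~\ref{fig:structural-twocells-monoidal} and uniqueness of the mediating map through the chosen \ref{term:monoid} lifting.
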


We have finally arrived to the main theorem about opfibrational theories.
\begin{theorem}\label{thm:opfibrational-adjoint}
The construction of opfibrational models from opfibrational theories extends to a functor $\OpFTh\rightarrow\OpFThMod$, which is moreover the left adjoint to the forgetful functor.
\end{theorem}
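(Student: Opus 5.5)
We follow the pattern of the monoidal case (the term model construction giving the left adjoint to $\MThMod\rightarrow\MTh$). First we define the functor $\mathsf F:\OpFTh\rightarrow\OpFThMod$ on objects by $\mathcal T\mapsto(\mathcal T,p_{\mathcal T},i)$, using Proposition~\ref{prop:syntactic-opfibrational-model} and the ensuing corollary. The model preserves the 0- and 1-equations of $\mathcal T$ by construction, since $T(\mathcal T)$ is a quotient by $E^0\cup S^0$ and $E^1\cup S^1_{\mathsf{opf}}$. On a morphism $F:\mathcal T\rightarrow\mathcal K$ of opfibrational theories, the map on types and terms extends, by Definition~\ref{def:sorting-procedure}, to a function $\Term^1_{\mathcal L}\rightarrow\Term^1_{\mathcal K}$. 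This function respects the congruences: the specified equations are preserved by assumption, and the structural opfibrational equations are stated uniformly in the signature, so they are mapped to structural equations. It therefore descends to a strict monoidal functor $T(F):T(\mathcal T)\rightarrow T(\mathcal K)$. Likewise $F$ restricted to $(\Omega,\mathcal F)$ is a morphism of monoidal signatures, so the functoriality of $\Fim$ gives $B(F)$ in $\IMon$. We check that $(T(F),B(F))$ commutes with $p_{\mathcal T},p_{\mathcal K}$ on generators and sends chosen opcartesian liftings (the terms built without~\ref{term:int-gen}) to chosen liftings, so it is a morphism of split im-opfibrations. The commutation of the interpretation squares of Definition~\ref{def:category-models-opfibrational-theories} is immediate from the formulas for $i$. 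Functoriality of $\mathsf F$ follows from uniqueness of the recursive extension of $F$.

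The unit is the identity $\mathcal T\rightarrow U\mathsf F(\mathcal T)=\mathcal T$. For the counit at $(\mathcal T,p:\cat Y\rightarrow\Fim(\cat X),i)$ we take the triple $(\id_{\mathcal T},\hat\imath,\bar\imath)$. Here $\bar\imath:\Fim(\Omega,\mathcal F)\rightarrow\Fim(\cat X)$ is the unique $\IMon$-morphism extending $i$ on layers and generators, using the universal property of the free category with indexed monoids, or directly the term-model adjunction for monoidal theories. The functor $\hat\imath:T(\mathcal T)\rightarrow\cat Y$ is the interpretation $i:\Type_{\mathcal L}\rightarrow\Ob(\cat Y)$, $i:\Term^1_{\mathcal L}\rightarrow\cat Y$ defined before Definition~\ref{def:model-opfib-thy}. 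It is well defined on equivalence classes because $(p,i)$ preserves $E^0,E^1$ by Definition~\ref{def:model-opfib-thy}, and preserves the structural opfibrational equations by Proposition~\ref{prop:opfib-equations-preserved}. Being defined compositionally, it is strictly (cartesian) monoidal. It satisfies $\hat\imath;p=p_{\mathcal T};\bar\imath$, which we check on generating terms. It maps chosen opcartesian liftings to chosen ones: the generators~\ref{term:ext-gen} go to the chosen liftings $i_{\omega,\tau}(f)_{i(A)}$, the monoid and diagonal terms go to the chosen liftings of the corresponding base maps, and splitness together with strict preservation of products handles composites and products.

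Finally we verify the universal property directly instead of checking triangle identities. Given a theory morphism $F:\mathcal T\rightarrow\mathcal K$ into the underlying theory of a model $(\mathcal K,q,j)$, the required morphism $\mathsf F(\mathcal T)\rightarrow(\mathcal K,q,j)$ is the composite $\mathsf F(F)$ followed by the counit at $(\mathcal K,q,j)$. For uniqueness, any morphism $(F,P,Q)$ of models with first component $F$ is determined on generators: on layers and $\mathcal F$ by the top commuting squares, and on colours and internal generators by the bottom squares. It is then determined everywhere because the opfibrational structure is preserved: $P$ must send the lifting terms to chosen liftings and the monoid and diagonal terms to the im-structure, and $P$ is strict monoidal. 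The main obstacle is the well-definedness and opcartesian-preservation of $\hat\imath$, especially for the \ref{term:int-box} terms and the equations of Figure~\ref{fig:structural-twocells-functors-ext}. For these I would rely on Proposition~\ref{prop:opfib-equations-preserved}, together with an induction on terms showing that $\hat\imath$ maps the class of opcartesian terms identified in the proof of Proposition~\ref{prop:syntactic-opfibrational-model} to chosen liftings. This is also where splitness is essential, since it makes the interpretation of composite lifting terms equal to the chosen lifting of the composite rather than merely isomorphic to it.
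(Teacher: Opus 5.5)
Your proposal is correct and follows essentially the same route as the paper. The unit is the identity, and the counit is the interpretation of the term model into the given model, well defined by Proposition~\ref{prop:opfib-equations-preserved}. Your existence and uniqueness argument matches the paper's two observations: the term model is initial in the fibre over $\mathcal T$, and any morphism out of it over $F:\mathcal T\rightarrow\mathcal K$ factors as $\mathsf F(F)$ followed by the counit at the model of $\mathcal K$. You spell out details the paper leaves implicit, such as strictness, splitness and determination on generators, but the argument is the same.
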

\begin{proof}
Any morphism of opfibrational theories $F:\mathcal T\rightarrow\mathcal K$ recursively extends to types and terms, as shown in Section~\ref{sec:layered-theories}, giving the functor between the total categories $T(\mathcal T)\rightarrow T(\mathcal K)$. The induced functor between the base categories $B(\mathcal T)\rightarrow B(\mathcal K)$ is given by extending the action of $F$ on the layers and the generators to a morphism of indexed monoids. This gives a morphism of im-opfibrations.

By Proposition~\ref{prop:opfib-equations-preserved}, the term model of $\mathcal T$ is the initial object of the fibre $\OpFThMod(\mathcal T)$. Moreover, any morphism out of the term model of $\mathcal T$ into any model of $\mathcal K$ factors through the term model of $\mathcal K$, thus exhibiting the desired adjunction.
\end{proof}
\begin{corollary}\label{cor:opfibrational-completeness}
For any opfibrational theory $\mathcal T$ and layered signature $\mathcal L$, we have the following equivalences:
\begin{align*}
\OpFThMod(\mathcal T) &\simeq \quot{F(\mathcal T)}{\MonOpFib_{\mathsf{sp}}}, \\
\OpFMod(\mathcal L) &\simeq \quot{F(\mathcal L)}{\MonOpFib_{\mathsf{sp}}}.
\end{align*}
\end{corollary}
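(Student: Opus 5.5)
This corollary is the usual consequence of a free--forgetful adjunction whose left adjoint is fibred over the category of theories; the plan mirrors Corollary~\ref{cor:monoidal-free-forgetful}. Fix an opfibrational theory $\mathcal T$ with signature $\mathcal L$, and write $F(\mathcal T)$ for the syntactic split im-opfibration $p_{\mathcal T}:T(\mathcal T)\rightarrow B(\mathcal L)$ of Proposition~\ref{prop:syntactic-opfibrational-model}, together with its canonical interpretation $i$. By Theorem~\ref{thm:monopfib-imopfib}, the base $B(\mathcal L)=\Fim(\Omega,\mathcal F)$ is free, so $p_{\mathcal T}$ may equally be regarded as an object of $\MonOpFib_{\mathsf{sp}}$; this is how I read the coslice, with morphisms the (split, monoid-preserving) morphisms of opfibrations.

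First, I will define a functor $\Phi:\OpFThMod(\mathcal T)\rightarrow\quot{F(\mathcal T)}{\MonOpFib_{\mathsf{sp}}}$. A model $(p:\cat Y\rightarrow\Fim(\cat X),i)$ is sent to the morphism $(\bar i,\bar i_0):p_{\mathcal T}\rightarrow p$, where $\bar i$ and $\bar i_0$ are the recursive extensions of $i$ to types and terms, and to $B(\mathcal L)$ via the universal property of $\Fim$. This component is exactly the counit of the adjunction of Theorem~\ref{thm:opfibrational-adjoint} at $(\mathcal T,p,i)$. Well-definedness on equivalence classes follows from Proposition~\ref{prop:opfib-equations-preserved} (structural equations) together with Definition~\ref{def:model-opfib-thy} (the theory's own 0- and 1-equations). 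That $(\bar i,\bar i_0)$ preserves opcartesian liftings and the monoid structure holds because opcartesian terms are generated by \ref{term:ext-gen}, \ref{term:monoid} and \ref{term:monoid-unit}, and these are interpreted as chosen liftings. Morphisms in the fibre $\OpFThMod(\mathcal T)$ have identity theory component, so they are sent to commuting triangles.

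Conversely, $\Psi$ sends a morphism $(P,Q):p_{\mathcal T}\rightarrow q$ to the model $(q,j)$, where $j$ is obtained by restricting $(P,Q)$ along the interpretation $i$ of $F(\mathcal T)$: layers and generators go to $Q(\omega)$ and $Q(f)$, and colours and internal generators go to $P(a:\omega)$ and $P(\sigma)$. Since $P$ is a functor out of the quotient, $j$ satisfies the equations of $\mathcal T$, so $(q,j)$ is a model of $\mathcal T$.

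It remains to check that $\Psi\Phi$ and $\Phi\Psi$ are (naturally isomorphic to) the identities. $\Psi\Phi=\mathrm{id}$ holds immediately, since restricting the extension returns the original interpretation functions. For $\Phi\Psi=\mathrm{id}$, I need a uniqueness statement: any morphism $(P,Q)$ out of $p_{\mathcal T}$ is determined by its values on generators. This is the main obstacle, because $P$ must agree with the recursive extension on \emph{all} terms, including \ref{term:int-box}, \ref{term:diag} and symmetries. I would prove it by induction on the sorting rules. Compatibility with the split structure forces the images of \ref{term:ext-gen}, \ref{term:monoid} and \ref{term:monoid-unit} to be the chosen liftings. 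Strict preservation of the cartesian structure fixes \ref{term:diag}, \ref{term:diag-counit} and the symmetries. The image of \ref{term:int-box} is forced by the uniqueness clause of opcartesian maps, exactly as in the proof of Proposition~\ref{prop:opfib-equations-preserved}. Up to the structural isomorphisms, this gives the equivalence. The signature statement is the special case of the theory with empty equations, using the identification of $\OpFMod(\mathcal L)$ with models of that theory from Proposition~\ref{prop:opfibrational-signatures-theories-models}.
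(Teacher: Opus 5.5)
Your proposal is correct and follows the paper's route. The paper states this corollary as an immediate consequence of the adjunction in Theorem~\ref{thm:opfibrational-adjoint}, whose proof rests on the term model being initial in the fibre $\OpFThMod(\mathcal T)$ (via Proposition~\ref{prop:opfib-equations-preserved}), just as Corollary~\ref{cor:monoidal-free-forgetful} follows from the monoidal free--forgetful adjunction.

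Your functors $\Phi,\Psi$, and the induction showing that a morphism out of $p_{\mathcal T}$ is fixed by its values on generators, make explicit the uniqueness the paper leaves implicit. One small slip: the opcartesian terms are all those built without \ref{term:int-gen}, so they also include \ref{term:diag}, \ref{term:diag-counit} and \ref{term:swap}, not only \ref{term:ext-gen}, \ref{term:monoid} and \ref{term:monoid-unit}. Your cartesian-structure argument already handles these.
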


The class of models characterised by fibrational theories are {\em fibrations with indexed comonoids} $\cat Y\rightarrow\Fim(\cat X)^{op}$ obtained by appropriately dualising Definition~\ref{def:opfib-indexed-mon}. The free fibrational models are obtained by imposing the structural fibrational equations (Definition~\ref{def:str-fib-eqns}) on the fibrational terms in Figure~\ref{fig:fibrational-terms}.

\subsection{Deflational models}\label{subsec:deflational-models}

As anticipated, the deflational models are given by split monoidal deflations (Definition~\ref{def:monoidal-deflation}).

\begin{definition}[Deflational model]\label{def:deflational-model}
A {\em deflational model} of a layered signature $(\Omega,\mathcal F,\mathcal M_{\omega})$ is a split monoidal deflation $(p,\varphi):\cat Y\rightarrow\Zg(\Fim(\cat X))$ together with the following data:
\begin{itemize}
\item a function $i:\Omega\rightarrow\Ob(\cat X)$,
\item for every $\omega,\tau\in\Omega$, a function $i_{\omega,\tau}:\mathcal F(\omega,\tau)\rightarrow\cat X(i\omega,i\tau)$,
\item for every $\omega\in\Omega$, a function $i^{\omega}:C_{\omega}\rightarrow\Ob(\cat Y_{i\omega})$,
\item for every $\omega\in\Omega$ and all $a,b\in C_{\omega}^*$, a function $i^{\omega}_{a,b}:\Sigma_{\omega}(a,b)\rightarrow\cat Y_{i\omega}\left(\left(i^{\omega}\right)^*a,\left(i^{\omega}\right)^*b\right)$,
where $(i^{\omega})^*:C_{\omega}^*\rightarrow\Ob(\cat Y_{i\omega})$ is defined by extending $i^{\omega}$ to the fibrewise monoidal structure of $p$.
\end{itemize}
\end{definition}
Note that a deflational model on $(p,\varphi):\cat Y\rightarrow\Zg(\Fim(\cat X))$ is at the same time an opfibrational model on the restriction $p^*:\cat Y^*\rightarrow\Fim(\cat X)$ {\em and} a fibrational model on the restriction $p^{\circ}:\cat Y^{\circ}\rightarrow\Fim(\cat X)^{op}$. We denote a model of $(\Omega,\mathcal F,\mathcal M_{\omega})$ by $(p,\varphi,i)$.

Since any deflational model $(p,\varphi,i)$ of $\mathcal L$ restricts to an opfibrational model, it induces the function $i:\Type_{\mathcal L}\rightarrow\Ob(\cat Y)$, as $\cat Y^*$ and $\cat Y$ have the same objects. Moreover, this function is equal to the function induced by the restricted fibrational model.

The opfibrational restriction $p^*:\cat Y^*\rightarrow\Fim(\cat X)$ is likewise used to obtain the function from the basic (Figure~\ref{fig:layered-terms}), symmetry (Definition~\ref{def:symmetry-terms}) and opfibrational (Figure~\ref{fig:opfibrational-terms}) terms into $\cat Y^*$, as described in Section~\ref{subsec:opfib-models}. In order to obtain the function $i:\Term_{\mathcal L}^1\rightarrow\cat Y$, it remains to interpret the fibrational terms (Figure~\ref{fig:fibrational-terms}), as well as to extend the interpretation to composite terms. Hence, let each fibrational term be interpreted as the dual of the corresponding opfibrational term, while the composition and product are interpreted as composition and the (global) monoidal product in $\cat Y$. Explicitly, we stipulate:
\begingroup
\small
\begin{align*}
\scalebox{.8}{\input{journal-figures/coarsen-sheet.tikz}} &\mapsto \overline{i_{\omega,\tau}(f)_{i(A:\omega)}} &
\scalebox{.8}{\input{journal-figures/cap.tikz}} &\mapsto \scalebox{.8}{\begin{tikzpicture}
	\begin{pgfonlayer}{nodelayer}
		\node [style=wnode] (0) at (0.75, 0) {};
		\node [style=none] (1) at (-0.5, 0) {};
	\end{pgfonlayer}
	\begin{pgfonlayer}{edgelayer}
		\draw [style=edge] (0) to (1.center);
	\end{pgfonlayer}
\end{tikzpicture}
} &
\scalebox{.8}{\input{journal-figures/copants.tikz}} &\mapsto \scalebox{.8}{\input{journal-figures/monoid-multiplication-op.tikz}} \\
\scalebox{.8}{\input{journal-figures/a-cup.tikz}} &\mapsto \scalebox{.8}{\begin{tikzpicture}
	\begin{pgfonlayer}{nodelayer}
		\node [style=node] (0) at (-0.75, 0) {};
		\node [style=none] (1) at (0.5, 0) {};
	\end{pgfonlayer}
	\begin{pgfonlayer}{edgelayer}
		\draw [style=edge] (0) to (1.center);
	\end{pgfonlayer}
\end{tikzpicture}
} &
\scalebox{.8}{\input{journal-figures/pants-copy.tikz}} &\mapsto \scalebox{.8}{\input{journal-figures/comonoid-comultiplication-op.tikz}} &
\begin{split}
x;y &\mapsto i(x);i(y) \\
x\otimes y &\mapsto i(x)\otimes i(y).
\end{split}
\end{align*}
\endgroup

\begin{definition}\label{def:deflational-theory-model}
A {\em model of a deflational layered theory} $(\mathcal L,E^0,E^1,\eta,E^2)$ consists of: (1) a deflational model $(p,\varphi,i)$ of $\mathcal L$ (Definition~\ref{def:deflational-model}) such that the induced functions $i:\Type_{\mathcal L}\rightarrow\Ob(\cat Y)$ and $i:\Term_{\mathcal L}^1\rightarrow\cat Y_1$ preserve $E^0$ and $E^1$, and (2) for each parallel pair of terms $(x,y)\in P^1_{\mathcal L}$ with sort $(T\mid S)$ a function
$$i_{x,y}:\eta(x,y)\sqcup\eta_{\mathsf{str}}(x,y)\rightarrow\cat Y_2\left(i(x),i(y)\right)$$
from the generating 2-cells into the 2-cells $i(x)\rightarrow i(y)$, such that the structural 2-cells are mapped to the following 2-cells returned by the retrofunctor:
\begin{align*}
&\eta_x\mapsto\varphi_{i(T),i(T)}\left(\id_{i(T)},\eta_{i(x)}\right) & &\varepsilon_x\mapsto\varphi_{i(S),i(S)}\left(i(\bar x);i(x),\varepsilon_{i(x)}\right),
\end{align*}
and the function $i:\Term^2_{\mathcal L}\rightarrow\cat Y_2$ on all 2-terms (Definition~\ref{def:2terms}) recursively obtained from $\eta\sqcup\eta_{\mathsf{str}}$ preserves $E^2$.
\end{definition}

\begin{proposition}\label{prop:deflational-model-preserves-structural}
Any deflational model of a deflational layered theory preserves the structural deflational equations (Definition~\ref{def:str-defl-eqns}).
\end{proposition}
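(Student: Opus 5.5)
The structural deflational equations (Definition~\ref{def:str-defl-eqns}) fall into three groups, and I will treat each in turn. The first group is the structural opfibrational equations (Definition~\ref{def:str-opfib-eqns}). The second is the structural fibrational equations (Definition~\ref{def:str-fib-eqns}). The third consists of the 2-equations: the zigzag equations and the compatibility of $\eta_x,\varepsilon_x$ with internal terms.

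For the first group, the deflational model restricts to the opfibrational model on $p^*:\cat Y^*\rightarrow\Fim(\cat X)$. By Lemma~\ref{lma:monoidal-deflation-opfibration-indexed-monoids} this is a split im-opfibration. The interpretation of basic, symmetry and opfibrational terms is defined through this restriction, and $\cat Y^*\hookrightarrow\cat Y$ is a subcategory inclusion compatible with composition and $\otimes$. Hence Proposition~\ref{prop:opfib-equations-preserved} gives preservation of all structural 0-equations, of the 1-equations in $S^1_{\mathsf{opf}}$, and of the external symmetry equations. The fibrational 1-equations follow dually from the fibrational restriction $p^{\circ}$ of Corollary~\ref{cor:decomposition-biretrofunctor-opfibration}. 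Here I will use that the fibrational terms are interpreted as the bars of the opfibrational liftings, and that the fibrewise monoidal structures of $p^*$ and $p^\circ$ coincide. The structural 2-equations of Figure~\ref{fig:str-2eqns} hold because $\cat Y$ is a strict monoidal 2-category: vertical, horizontal and monoidal composition satisfy associativity, unit and interchange laws.

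For the zigzag equations I will use the local retrofunctor axioms, computing as in the proof of Lemma~\ref{lma:deflation-unique-liftings}. Take $F=i(x)$, with $\overline F=i(\bar x)$ the dual lifting, so that $\varphi(\id,\eta_f)$ has codomain $F;\overline F$. The steps, in order, are:
\begin{itemize}
\item write $\id_F=\varphi(F,\id_f)$;
\item use the coherence $\varphi(F,\alpha)*\varphi(F',\alpha')=\varphi(F;F',\alpha*\alpha')$ to rewrite $(\eta_x*\id_x);(\id_x*\varepsilon_x)$ as $\varphi(F,\eta_f*\id_f);\varphi(F;\overline F;F,\id_f*\varepsilon_f)$;
\item use functoriality of the retrofunctor $\varphi_{a,b}$ to combine this into $\varphi(F,(\eta_f*\id_f);(\id_f*\varepsilon_f))$;
\item apply the zigzag equation in $\Zg(\Fim(\cat X))$ to get $\varphi(F,\id_f)=\id_F$.
\end{itemize}
The second zigzag equation is handled symmetrically.

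For the compatibility equations $\id_y*\eta_x=\eta_x*\id_y$, note that $y$ is internal, or a product of internal terms. Hence $i(y)$ lies in a fibre, above an identity, and $\id_{i(y)}=\varphi(i(y),\id_{\id})$. Both sides then become $\varphi(i(y),\id*\eta_f)$ and $\varphi(i(y),\eta_f*\id)$ by the coherence axiom, and these agree since $\id*\eta_f=\eta_f*\id$ in the base. The same argument handles $\varepsilon_x$.

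The main obstacle is that $x$ ranges over all ten opfibrational generators, not just~\ref{term:ext-gen}. For \ref{term:monoid}, \ref{term:diag} and their units, the base morphism is a structural morphism of $\Fim(\cat X)$ or a product of such morphisms. So I must check that $i(x)$ is genuinely the chosen lifting whose bar is $i(\bar x)$, i.e.\ that $\varphi(\id,\eta_{p i(x)})$ has codomain exactly $i(x);i(\bar x)$. I would settle this using splitness together with Proposition~\ref{prop:monoidal-deflation-liftings-preserved}: in a split monoidal deflation, products of chosen liftings are chosen liftings. The symmetry and external tensor cases are then handled by the monoidal compatibility $\varphi(F\otimes G,\alpha\times\beta)=\varphi(F,\alpha)\otimes\varphi(G,\beta)$ from Definition~\ref{def:monoidal-deflation}.
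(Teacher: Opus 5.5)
Your proposal is correct and follows the paper's proof: the structural opfibrational and fibrational equations come from Proposition~\ref{prop:opfib-equations-preserved} and its dual, and the 2-equations come from the local retrofunctor's compatibility with vertical and horizontal composition. Your zigzag computation is exactly the one in Lemma~\ref{lma:deflation-unique-liftings}, and you also make explicit the well-definedness check the paper leaves implicit, namely that each generator's interpretation is the chosen lifting, so that $\eta_x$ lands in $i(x);i(\bar x)$. One small correction: there are five opfibrational generator rules, which together give the ten structural 2-cells, not ten generators.
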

\begin{proof}
The structural opfibrational and fibrational equations are preserved by Proposition~\ref{prop:opfib-equations-preserved} (and its dual). The 2-equations are preserved since the local retrofunctor preserves both vertical and composition, and therefore equations between 2-cells.
\end{proof}

\begin{definition}\label{def:category-deflational-models}
The category of {\em models of deflational theories} $\DeflThMod$ has as objects tuples $(\mathcal T,p,\varphi,i)$ of a deflational theory $\mathcal T$ and its model $(p,\varphi):\cat Y\rightarrow\Zg(\Fim(\cat X))$ with the family of interpretation functions $i$. A morphism
$$(F,F^2,P,Q) : (\mathcal T,p,\varphi,i)\rightarrow (\mathcal K,q,\rho,j),$$
is given by a morphism of deflational theories $(F,F^2):\mathcal T\rightarrow\mathcal K$ together with a morphism of monoidal deflations $(P,Q):(p,\varphi)\rightarrow (q,\rho)$ such that the diagrams of Definition~\ref{def:category-models-opfibrational-theories} as well as the diagrams below commute, where $(p,\varphi):\cat Y\rightarrow\Zg(\Fim(\cat X))$ and $(q,\rho):\cat Y'\rightarrow\Zg(\Fim(\cat X'))$, and we write $\eta_{\mathcal T}$ and $\eta_{\mathcal K}$ for the choice of 2-cells in $\mathcal T$ and $\mathcal K$, and $(\Omega,\mathcal F,\mathcal M_{\omega})$ for the layered signature of $\mathcal T$ and $(\Psi,\mathcal G,\mathcal M_{\psi})$ for the layered signature of $\mathcal K$:
\begin{equation*}
\scalebox{1}{\input{journal-figures/category-deflational-models-diagram.tikz}},
\end{equation*}
where $F:\Term^1_{\mathcal L}\rightarrow\Term^1_{\mathcal K}$, $i:\Term^1\rightarrow\cat Y_1$ and $j:\Term^1_{\mathcal K}\rightarrow\cat Y'_1$ are the induced map, and $Q^2$ denotes the action of $Q$ on 2-cells.
\end{definition}

As for monoidal and opfibrational theories, we denote the full subcategory of $\DeflThMod$ of {\em deflational models of layered signatures} defined by the theories whose sets of equations and 2-cells are empty by $\DeflMod$. As in Propositions~\ref{prop:monoidal-signatures-theories-models} and~\ref{prop:opfibrational-signatures-theories-models}, below we summarise the relationship between theories, models and signatures.

\begin{proposition}\label{prop:deflational-signatures-theories-models}
The vertical forgetful functors in the diagram below are fibrations, while the horizontal forgetful functors form a morphism of fibrations:
\begin{equation*}
\scalebox{1}{\input{journal-figures/deflational-signatures-theories-models.tikz}}.
\end{equation*}
Moreover, the objects in the fibre $\DeflMod(\mathcal L)$ are precisely the deflational models of the layered signature $\mathcal L$, and the objects in the fibre $\DeflThMod(\mathcal T)$ are precisely the models of the deflational theory $\mathcal T$.
\end{proposition}
As in Propositions~\ref{prop:monoidal-signatures-theories-models} and~\ref{prop:opfibrational-signatures-theories-models}, we note that $\DeflTh\rightarrow\LSgn$ is also a fibration, while the functor $\DeflThMod\rightarrow\DeflMod$ fails to be a fibration.

We now turn to the construction of free deflational models. As one might anticipate from the preceding development, the free models are given by quotienting the types, terms and 2-terms by the structural equations, and any free deflational model restricts to a free opfibrational model and a free fibrational model.
\begin{definition}[Total category generated by a deflational theory]
Let $\mathcal T=(\mathcal L,E^0,E^1,\eta,E^2)$ be a deflational theory. The {\em deflational total category} $T_{\mathsf{defl}}(\mathcal T)$ generated by $\mathcal T$ has the equivalence classes of types $\Type_{\mathcal L}$ under the type congruence generated by $E^0\cup S^0$ as the 0-cells, the equivalence classes of terms $\Term^1_{\mathcal L}$ under the term congruence $E^1\cup S_{\mathsf{defl}}^1$ as the 1-cells, and the equivalence classes of terms $\Term^2_{\mathcal L}$ generated by $\eta\sqcup\eta_{\mathsf{str}}$ under the term congruence $E^2\cup S_{\mathsf{defl}}^2$ as the 2-cells.
\end{definition}

\begin{definition}[Zigzag base category generated by a layered signature]
Let $\mathcal L=(\Omega,\mathcal F,\M_{\omega})$ be a layered signature. The {\em zigzag base category} $\Zg(B(\mathcal L))$ generated by $\mathcal L$ is the zigzag 2-category of the free category with indexed monoids generated by $(\Omega,\mathcal F)$ viewed as a monoidal signature.
\end{definition}

Given a deflational theory $\mathcal T=(\mathcal L,E^0,E^1,\eta,E^2)$ with $\mathcal L=(\Omega,\mathcal F,\M_{\omega})$, define the functor $p_{\mathcal T}:T_{\mathsf{defl}}(\mathcal T)\rightarrow \Zg(B(\mathcal L))$ on objects by letting $A:\omega\mapsto\omega$ on internal types, and on all types by extension to monoidal products. On morphisms, $p$ is defined by mapping the basic, symmetry and opfibrational terms exactly as in the opfibrational case (see page~\pageref{page:total-to-base-opf}), while each fibrational term is mapped to the dual of the image of the corresponding fibrational term. Explicitly, we define:
\begin{equation*}
\scalebox{.8}{\input{journal-figures/total-to-base-fib.tikz}}.
\end{equation*}
The definition of the retrofunctors
$$\left(p_{\mathcal T}, \varphi_{T,S}\right) : T_{\mathsf{defl}}(\mathcal T)(T\mid S)\rightarrow \Zg(B(\mathcal L))(pT,pS)$$
is more interesting. First, for every $f\in\mathcal F(\omega,\tau)$ define the functions
\begin{align*}
\varphi_{A:\omega,B:\omega}(-,\eta_f) : T_{\mathsf{defl}}(\mathcal T)_{\omega}(A:\omega\mid B:\omega)_0 &\rightarrow T_{\mathsf{defl}}(\mathcal T)(A:\omega\mid B:\omega)_1 \\
\varphi_{C:\tau,D:\tau}(-,\varepsilon_f) : T_{\mathsf{defl}}(\mathcal T)_{\bar f;f}(C:\tau\mid D:\tau)_0 &\rightarrow T_{\mathsf{defl}}(\mathcal T)(C:\tau\mid D:\tau)_1
\end{align*}
via the following assignments:
\begin{equation*}
\scalebox{.8}{\input{journal-figures/local-retrofunctor-free-model.tikz}},
\end{equation*}
where $x:(A:\omega\mid B:\omega)$ is any term in the fibre above $\omega$, while $y:(C:\tau\mid f(A):\tau)$ and $z:(f(A):\tau\mid D)$ are any terms in the fibre above $\tau$: note that the terms in the fibres are precisely the internal terms. Further, note that the structural equations guarantee that it does not matter which side of the term $x$ the unit is applied at: the resulting 2-cells are equal. When defining the retrofunctor on the counit, note that indeed any term above $\bar f;f$ decomposes in the displayed way, by pushing any internal terms between $\coarsen_f$ and $\refine_f$ outside: the structural equations guarantee that it does not matter which side the terms are pushed to. The functions $\varphi_{T,S}$ are defined analogously on any opfibrational-fibrational term pair. Since the only non-trivial 2-cells in $\Zg(B(\mathcal L))$ are the units and the counits, it remains to extend the functions to composite terms, which is done by the following recursion:
\begin{align*}
\varphi_{(T,T'),(S,S')}\left(x\otimes y,\eta_{f\times g}\right) &\coloneq\varphi_{T,S}\left(x,\eta_f\right)\otimes\varphi_{T',S'}\left(y,\eta_g\right) \\
\varphi_{(T,T'),(S,S')}\left(x\otimes y,\varepsilon_{f\times g}\right) &\coloneq\varphi_{T,S}\left(x,\varepsilon_f\right)\otimes\varphi_{T',S'}\left(y,\varepsilon_g\right) \\
\varphi_{T,S}(x,\alpha;\alpha') &\coloneq \varphi_{T,S}(x,\alpha);\varphi_{T,S}(y,\alpha') \\
\varphi_{T,S}(x;y,\alpha*\alpha') &\coloneq \varphi_{T,U}(x,\alpha)*\varphi_{U,S}(y,\alpha').
\end{align*}
\begin{remark}
Note that it would have been sufficient to only define $\varphi_{A:\omega,A:\omega}\left(\id_A,\eta_f\right)$ on all the internal identity terms and, likewise, $\varphi_{f(A):\tau,f(A):\tau}\left(\coarsen_f;\refine_f,\varepsilon_f\right)$ on only the opfibrational-fibrational composites -- the above recursion then takes care of all the other terms. However, we find it more intuitive to define the whole function for a fixed (co)unit at once. This also makes it evident that the construction is well-defined with respect to the structural identities.
\end{remark}

\begin{proposition}
For any deflational theory $\mathcal T$ with the layered signature $\mathcal L$, the local retrofunctor
$$\left(p_{\mathcal T},\varphi\right):T_{\mathsf{defl}}(\mathcal T)\rightarrow \Zg(B(\mathcal L))$$ is a split monoidal deflation.
\end{proposition}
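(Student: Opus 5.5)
The plan is to verify, in order, the ingredients of Definitions~\ref{def:local-retrofunctor}, \ref{def:deflation} and~\ref{def:monoidal-deflation}, reusing Proposition~\ref{prop:syntactic-opfibrational-model} for everything that concerns only one direction of boundaries.

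First, I will check that $p_{\mathcal T}$ is a well-defined 1-functor $T_{\mathsf{defl}}(\mathcal T)\rightarrow\Zg(B(\mathcal L))$. This requires that every generating 1-equation in $E^1\cup S^1_{\mathsf{defl}}$ is sent to an equality of zigzags. The opfibrational equations are already handled in the opfibrational case. The fibrational ones are their mirror images, and they land in $\overline{(-)}$, which reverses composition by $\overline f;\overline g=\overline{gf}$. Equations mixing $\refine_f$ and $\coarsen_g$ with product structure map onto the additional equation of Proposition~\ref{prop:zg-monoidal-structure}. Equations in $E^1$ are between parallel terms, so they involve the same boundaries.

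Next, I will show that each $(p_{\mathcal T},\varphi_{T,S})$ is a retrofunctor and that the interchange coherence of Definition~\ref{def:local-retrofunctor} holds. Here the recursion defining $\varphi$ is chosen exactly so that vertical composition, horizontal composition $*$ and $\otimes$ are respected. The work lies in well-definedness. The value of $\varphi$ must not depend on the representative term or on the chosen decomposition of a 2-cell of $\Zg(B(\mathcal L))$ into generators. For the terms, I will use the 2-equations $\id_y*\eta_x=\eta_x*\id_y$ and $\id_z*\varepsilon_x=\varepsilon_x*\id_z$, which make sliding internal terms past units and counits harmless. For the 2-cells, the zigzag equations of the base are matched by the zigzag 2-equations in $S^2_{\mathsf{defl}}$. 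The coherence equations $\eta_{f;h}=\dots$ and $\varepsilon_{f;h}=\dots$ are matched by the recursive definition on composite opfibrational–fibrational pairs. Identity preservation is immediate from $\varphi(x,\id)=\id_x$.

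Then I will establish the factorisation lifting property. Every term over $\Zg(B(\mathcal L))$ can be written, using the sliding equations, as an alternating composite of boundary terms interleaved with internal terms. A factorisation of its image therefore lifts by cutting at the corresponding boundary. Two different cuts differ only by where internal terms are placed, so they are connected by fibre morphisms and lie in the same path component. Choosing to push internal terms to a fixed side (say the left) gives a cleavage. This cleavage is split because pushing is functorial, and the trivial factorisations are preserved. The counit condition of Definition~\ref{def:deflation} holds by construction: $\varphi(\coarsen_f;\refine_f,\varepsilon_f)$ has codomain the identity, by the assignment in the display above. Liftings of opliftable pairs are precisely the $\refine_f$ terms, by the assignment for $\eta_f$.

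Finally, monoidality follows from how the structure is built. $\otimes$ on $T_{\mathsf{defl}}(\mathcal T)$ is \ref{term:ext-tensor}, and $p_{\mathcal T}$ maps it to $\times$. Reflection holds because types are words and every term over $f\times g$ splits as a tensor of terms over $f$ and over $g$, by the same normal-form argument. The equation $\varphi(F\otimes G,\alpha\times\beta)=\varphi(F,\alpha)\otimes\varphi(G,\beta)$ is exactly the first two recursive clauses, extended to all 2-cells using the interchange structural 2-equations. Splitness of the monoidal structure, meaning that chosen lifts of products are products of chosen lifts, follows from the same fixed pushing convention.

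I expect the main obstacle to be the well-definedness of $\varphi$ together with the path-component part of the factorisation lifting. Both need a normal-form argument for deflational terms modulo $S^1_{\mathsf{defl}}$, and such a normal form is not available ready-made. I would first try to reduce this to the opfibrational case (Proposition~\ref{prop:syntactic-opfibrational-model}) and its dual, by showing that any term is an alternating composite of $\cat Y^*$- and $\cat Y^{\circ}$-terms, and then argue inductively on the number of direction changes.
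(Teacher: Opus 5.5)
Your treatment of the factorisation-lifting property, the splitting, and the counit condition is essentially the paper's argument. You inflate a decomposition in $\Zg(B(\mathcal L))$ into the corresponding boundary terms, let internal terms slide, and read the counit condition off the definition of $\varphi$.

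One imprecision: in Definition~\ref{def:deflation} the opliftable pairs range over all morphisms of $B(\mathcal L)$. The liftings are therefore arbitrary composites and products of opfibrational terms, as the paper says, not just the $\refine_f$. For these, the counit condition comes from the recursive clauses for $\varphi$ together with $\varepsilon_{f;h}=(\id_{\overline h}*\varepsilon_f*\id_h);\varepsilon_h$ and $\varepsilon_{f\times g}=\varepsilon_f\times\varepsilon_g$.

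For monoidality you take a different route. The paper reduces it to Lemma~\ref{lma:monoidal-deflation-opfibration-indexed-monoids} plus Proposition~\ref{prop:syntactic-opfibrational-model}; you check Definition~\ref{def:monoidal-deflation} directly for \ref{term:ext-tensor}. Your route is the more robust one, for two reasons.
\begin{itemize}
\item The converse half of that Lemma is stated only for minimal deflations. $T_{\mathsf{defl}}(\mathcal T)$ is in general not minimal once $\eta\neq\emptyset$, because a generating 2-cell between internal terms is not of the form $\varphi(F,\alpha)$.
\item The Lemma builds a product, and one would still have to check that it is \ref{term:ext-tensor}.
\end{itemize}
The compatibility $\varphi(F\otimes G,\alpha\times\beta)=\varphi(F,\alpha)\otimes\varphi(G,\beta)$ only concerns lifted 2-cells, and it follows from the recursion whether or not the deflation is minimal.

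What the paper's route buys is that preservation and reflection are inherited from the one-directional case. You can get the same without the new normal form you worry about, once factorisation lifting is established. Given $H$ over $f\times g$, pad $f$ and $g$ with identities (using $\overline{\id}=\id$) to alternating zigzags of the same shape. Then $f\times g=(f_1\times g_1);\overline{(f_2\times g_2)};\cdots$. Lift this decomposition and split each one-directional factor by the reflection in Proposition~\ref{prop:syntactic-opfibrational-model} or its dual. Intermediate objects match because types are words, and you reassemble by interchange.

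One step is wrong as stated: ``Equations in $E^1$ are between parallel terms, so they involve the same boundaries.'' Parallel only means equal sorts up to $E^0$. For example, $\refine_f;\coarsen_f$ and $\id_A$ both have sort $(A:\omega\mid A:\omega)$. However, they lie over $f;\overline f$ and $\id_\omega$, which are distinct 1-cells of $\Zg(B(\mathcal L))$, related only by the 2-cell $\eta_f$. Any $E^1$ containing that equation makes $p_{\mathcal T}$ ill-defined over the free base.

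The paper never proves that $p_{\mathcal T}$ and $\varphi$ are well defined, nor the local-retrofunctor axioms; the statement presupposes ``the local retrofunctor''. You should likewise take this as given, or state explicitly the hypothesis that $E^1$ respects $p_{\mathcal T}$, rather than argue it holds for arbitrary theories.
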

\begin{proof}
The functor $p$ is a decomposition lifting by construction: given a decomposition in the base category, its chosen lifting is given by the corresponding sequence of (op)fibrational terms (i.e.~by ``inflating'' the diagrams), interspersed with the internal terms present in the original morphism. Any two such liftings are equal, since the internal terms slide inside the diagrams. Since the codomain of $\varphi_{S,S}\left(\bar F;F,\varepsilon_f\right)$, where $F:T\rightarrow S$ is a lifting of $f$ (i.e.~some composite of opfibrational terms), is $\id_S$ by definition, we indeed have a split deflation.

To show that the deflation is monoidal, we use Lemma~\ref{lma:monoidal-deflation-opfibration-indexed-monoids}: it is equivalent to show that the opfibrational restriction has indexed monoids. We observe that the opfibrational restriction gives the syntactic opfibrational model $p_{\mathcal T}:T(\mathcal T)\rightarrow B(\mathcal L)$, which, by Proposition~\ref{prop:syntactic-opfibrational-model} has indexed monoids.
\end{proof}

\begin{corollary}
Any deflational theory $\mathcal T=(\mathcal L,E^0,E^1,\eta,E^2)$ gives rise to the deflational model $\left(p_{\mathcal T},\varphi,i\right)$ whose interpretation functions $i$ are given by:
\begin{align*}
i(\omega) &=\omega, &
i_{\omega,\tau}(f) &=f, &
i^{\omega}(a) &= a:\omega, &
i^{\omega}_{A,B}(\sigma) &= \sigma : (A:\omega\mid B:\omega), &
i_{x,y}(\alpha) &=\alpha.
\end{align*}
\end{corollary}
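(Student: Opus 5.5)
The plan is to invoke the preceding proposition, which says that $(p_{\mathcal T},\varphi):T_{\mathsf{defl}}(\mathcal T)\rightarrow\Zg(B(\mathcal L))$ is a split monoidal deflation, and then check two things. First, the displayed data form a deflational model of $\mathcal L$ (Definition~\ref{def:deflational-model}). Second, this model satisfies conditions (1) and (2) of Definition~\ref{def:deflational-theory-model}.

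For the first part, the checks are typing conditions. The layers are the objects of $B(\mathcal L)$, and each $f\in\mathcal F(\omega,\tau)$ is a morphism $\omega\rightarrow\tau$ there. The type $a:\omega$ lies over $\omega$. The term $\sigma:(A:\omega\mid B:\omega)$ is internal, so it lies in the fibre $T_{\mathsf{defl}}(\mathcal T)_\omega$. Its domain and codomain agree with $(i^\omega)^*A$ and $(i^\omega)^*B$, because the fibrewise product $\otimes_\omega$ is juxtaposition $AB:\omega$ (Lemma~\ref{lma:monoidal-deflation-opfibration-indexed-monoids} together with the description of the opfibrational restriction as the syntactic model of Proposition~\ref{prop:syntactic-opfibrational-model}).

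The central step, and the one I expect to be the main obstacle, is a lemma: the induced interpretations $i:\Type_{\mathcal L}\rightarrow\Ob(T_{\mathsf{defl}}(\mathcal T))$ and $i:\Term^1_{\mathcal L}\rightarrow T_{\mathsf{defl}}(\mathcal T)_1$ send every type and every term to its own equivalence class. I would prove this by induction on the generating rules, with three cases needing care.
\begin{itemize}
\item For types $f(A):\tau$, the reindexing $f^*(A:\omega)$ in the split opfibration must be the class of $f(A):\tau$.
\item For \ref{term:ext-gen}, the chosen opcartesian lifting of $(A:\omega,f)$ must be the class of the generator term itself.
\item For \ref{term:int-box}, $f^*(x)$ must equal the box term.
\end{itemize}
All three follow from how the splitting of $p_{\mathcal T}$ was chosen in Proposition~\ref{prop:syntactic-opfibrational-model}: the chosen liftings are exactly the terms without \ref{term:int-gen}. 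The box case additionally uses the sliding equations of Figure~\ref{fig:structural-twocells-functors-ext}, via uniqueness in the opcartesian property. The fibrational terms are interpreted as the duals $\overline{(\cdot)}$, which in $T_{\mathsf{defl}}(\mathcal T)$ are by construction the corresponding fibrational term classes. The remaining rules (monoid, diagonal, symmetry, composition, tensor) hold by the definition of the structure on $T_{\mathsf{defl}}(\mathcal T)$.

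Once the lemma is in place, preservation of $E^0$ and $E^1$ is immediate, since the 0- and 1-cells of $T_{\mathsf{defl}}(\mathcal T)$ are quotients by congruences containing $E^0$ and $E^1$. For the 2-cells, set $i_{x,y}(\alpha)=[\alpha]$ on $\eta\sqcup\eta_{\mathsf{str}}$. The required conditions $\eta_x\mapsto\varphi(\id_{i(T)},\eta_{i(x)})$ and $\varepsilon_x\mapsto\varphi(i(\bar x);i(x),\varepsilon_{i(x)})$ then hold by the explicit definition of the retrofunctors on units and counits in the free model, combined with the lemma identifying $i(x)$ with $[x]$. By induction on Definition~\ref{def:2terms}, the induced map on 2-terms is the quotient map, so it preserves $E^2$, which is contained in the 2-term congruence.
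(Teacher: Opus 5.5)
Your proposal is correct and follows the paper's route. The paper gives no separate proof: it reads the corollary off the preceding proposition that $(p_{\mathcal T},\varphi)$ is a split monoidal deflation, using the tautological interpretation, and your lemma that the induced interpretation on types, terms and 2-terms is just the quotient map is exactly the verification the paper leaves implicit. Two small points of precision. First, the internal-tensor case does not hold ``by definition''; it needs the same opcartesian-uniqueness argument you give for the box terms, now using the sliding equations for the monoid terms. Second, to match the required shape $\Zg(\Fim(\cat X))$ you should take $\cat X$ to be the free category on the graph $(\Omega,\mathcal F)$, so that $B(\mathcal L)=\Fim(\cat X)$ and $i$ takes values in $\Ob(\cat X)=\Omega$.
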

As for opfibrational theories, the syntactic models give the left adjoint to the forgetful functors from models to theories.
\begin{theorem}\label{thm:deflational-adjoint}
The construction of deflational models from deflational theories extends to a functor $\DeflTh\rightarrow\DeflThMod$, which is moreover the left adjoint to the forgetful functor.
\end{theorem}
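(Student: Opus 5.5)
The plan follows the proof of Theorem~\ref{thm:opfibrational-adjoint}, adding the 2-dimensional data. I first define the functor on morphisms. A morphism of deflational theories $(F,F^2):\mathcal T\rightarrow\mathcal K$ extends recursively to types, terms and 2-terms, as in Section~\ref{sec:layered-theories}. Propositions~\ref{prop:parallel-terms-preserved} and~\ref{prop:parallel-2terms-preserved} show that it descends to the quotients by $E^0\cup S^0$, $E^1\cup S^1_{\mathsf{defl}}$ and $E^2\cup S^2_{\mathsf{defl}}$. This uses that structural equations go to structural equations, and that structural 2-cells go to structural 2-cells, since $F$ sends (op)fibrational generating terms to (op)fibrational generating terms. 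The result is a strict monoidal 2-functor $T_{\mathsf{defl}}(\mathcal T)\rightarrow T_{\mathsf{defl}}(\mathcal K)$. On the base, the action of $F$ on layers and generators extends uniquely to a morphism $B(\mathcal L)\rightarrow B(\mathcal K)$ in $\IMon$, using the universal property of $\Fim$. It then extends freely to $\Zg(B(\mathcal L))\rightarrow\Zg(B(\mathcal K))$, which preserves the extra equation of Proposition~\ref{prop:zg-monoidal-structure}.

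The main check is compatibility with the retrofunctors, $H(\varphi(x,\alpha))=\rho(Hx,K\alpha)$. It suffices to check this on the generating cases, the units $\eta_f$ and counits $\varepsilon_f$ on internal and (op)fibrational terms. The recursive clauses defining $\varphi$ on $\otimes$, vertical composition and $*$ are then preserved because the functor is defined by the same recursion. On generators it holds since both sides are the same structural 2-cell. Chosen liftings of factorisations are preserved because they are built from the same inflated (op)fibrational terms. Together with commutation of the interpretation squares of Definition~\ref{def:category-deflational-models}, which holds by construction, this gives a morphism in $\DeflThMod$. Functoriality is immediate from the recursive definitions.

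For the adjunction I show that $(p_{\mathcal T},\varphi,i)$ is initial in the fibre $\DeflThMod(\mathcal T)$, and then argue as in Theorem~\ref{thm:opfibrational-adjoint} that any morphism into a model over $\mathcal K$ factors through the term model of $\mathcal K$. This gives the universal arrow at each $\mathcal T$; the unit is the identity on $\mathcal T$. Given a model $(q,\rho,j)$ of $\mathcal T$ over $\Zg(\Fim(\cat X'))$, I construct the comparison as follows.
\begin{itemize}
\item On the base, I take the morphism $B(\mathcal L)\rightarrow\Fim(\cat X')$ in $\IMon$ induced by $j$ on layers and generators, and extend it to zigzags.
\item On the total 2-category, I take the recursively induced maps $j$ on types, terms and 2-terms. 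These are well defined on the quotients because the model preserves $E^0,E^1,E^2$ by assumption and preserves the structural deflational equations by Proposition~\ref{prop:deflational-model-preserves-structural}.
\end{itemize}
Strict monoidality and 2-functoriality hold by the recursive definition. Compatibility with the retrofunctors holds because Definition~\ref{def:deflational-theory-model} forces $\eta_x$ and $\varepsilon_x$ to be interpreted as $\rho(\id,\eta_{j(x)})$ and $\rho(j(\bar x);j(x),\varepsilon_{j(x)})$. The coherence axiom of a local retrofunctor and the monoidal axiom of Definition~\ref{def:monoidal-deflation} then propagate this along the recursion defining $\varphi$. Uniqueness holds because every 0-, 1- and 2-cell of $T_{\mathsf{defl}}(\mathcal T)$ is generated by the interpreted generators together with structure that any morphism of split monoidal deflations must preserve.

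The main obstacle is preservation of chosen liftings and of the counit retrofunctor values on arbitrary terms above $\bar f;f$. In the free model, $\varphi(-,\varepsilon_f)$ is defined after pushing internal terms outside. I must show that $j$ sends this to $\rho(-,\varepsilon)$ in the target model, which holds only if the target's $\rho$ satisfies the same sliding. I would derive this from the local retrofunctor coherence, decomposing $y;\bar F;F;z$ and using $\rho(y,\id)=\id_y$, together with the splitness of the target deflation.
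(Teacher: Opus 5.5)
Your proposal is correct and follows the same route as the paper. You extend theory morphisms recursively to types, terms and 2-terms, and on the base via the free indexed-monoid extension followed by the zigzag extension; the adjunction then comes from initiality of the syntactic model in the fibre $\DeflThMod(\mathcal T)$ (Proposition~\ref{prop:deflational-model-preserves-structural}) together with factorisation through the term model of $\mathcal K$, and your explicit retrofunctor-compatibility checks, notably the counit case via local retrofunctor coherence and $\rho(y,\id)=\id_y$, fill in details the paper leaves implicit.
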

\begin{proof}
Any morphism of deflational theories $F:\mathcal T\rightarrow\mathcal K$ recursively extends to types, terms and 2-terms, as shown in Section~\ref{sec:layered-theories}, giving the 2-functor between the total categories $T_{\mathsf{defl}}(\mathcal T)\rightarrow T_{\mathsf{defl}}(\mathcal K)$. The induced functor between the base categories $\Zg(B(\mathcal T))\rightarrow\Zg(B(\mathcal K))$ is given by extending the action of $F$ on the layers and the generators to a morphism of indexed monoids, and then further extending to a morphism of zigzag 2-categories. This gives a morphism of deflations compatible with the morphism of theories, and hence a morphism in $\DeflThMod$.

By Proposition~\ref{prop:deflational-model-preserves-structural}, the syntactic model of $\mathcal T$ is the initial object of the fibre $\OpFThMod(\mathcal T)$. Moreover, any morphism out of the term model of $\mathcal T$ into any model of $\mathcal K$ factors through the term model of $\mathcal K$, thus exhibiting the desired adjunction.
\end{proof}
\begin{corollary}\label{cor:deflational-completeness}
For any deflational theory $\mathcal T$ and layered signature $\mathcal L$, we have the following equivalences:
\begin{align*}
\DeflThMod(\mathcal T) &\simeq \quot{F(\mathcal T)}{\MonDefl_{\mathsf{sp}}}, \\
\DeflMod(\mathcal L) &\simeq \quot{F(\mathcal L)}{\MonDefl_{\mathsf{sp}}}.
\end{align*}
\end{corollary}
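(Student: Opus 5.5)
The plan is to derive Corollary~\ref{cor:deflational-completeness} from the free--forgetful adjunction of Theorem~\ref{thm:deflational-adjoint}, in the same way Corollary~\ref{cor:monoidal-free-forgetful} follows from the monoidal term model. Fix a deflational theory $\mathcal T$ and write $F(\mathcal T)=(p_{\mathcal T},\varphi,i)$ for its syntactic model on $T_{\mathsf{defl}}(\mathcal T)\rightarrow\Zg(B(\mathcal L))$. I will build a functor $\Phi:\DeflThMod(\mathcal T)\rightarrow\quot{F(\mathcal T)}{\MonDefl_{\mathsf{sp}}}$ and show it is an equivalence.

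\textbf{Step 1: the functor $\Phi$.} Take a model $(\mathcal T,q,\rho,j)$ in the fibre. Restricting the adjunction of Theorem~\ref{thm:deflational-adjoint} to this fibre makes the syntactic model initial. So there is a unique morphism $(\id_{\mathcal T},\id,P,Q):F(\mathcal T)\rightarrow(\mathcal T,q,\rho,j)$. Its deflation part $(P,Q)$ is a morphism of split monoidal deflations out of $p_{\mathcal T}$, which is an object of the coslice. Concretely, $Q$ extends $j$ on layers and generators to a morphism in $\IMon$ and then to zigzag 2-categories. $P$ is the recursive interpretation of types, terms and 2-terms. It is well defined on equivalence classes by the model conditions together with Proposition~\ref{prop:deflational-model-preserves-structural}. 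Morphisms in the fibre are those whose theory component is $\id_{\mathcal T}$. Uniqueness of the map out of the initial object forces the triangles to commute, so $\Phi$ is functorial.

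\textbf{Step 2: the inverse $\Psi$.} Take a morphism $(P,Q):p_{\mathcal T}\rightarrow q$ of split monoidal deflations. Precompose the syntactic interpretation $i$ with it: set $j(\omega)=Q(\omega)$, $j_{\omega,\tau}(f)=Q(f)$, $j^{\omega}(a)=P(a:\omega)$, $j^{\omega}_{a,b}(\sigma)=P(\sigma)$, and $j_{x,y}(\alpha)=P(\alpha)$ on 2-cells. This is a deflational model of $\mathcal T$. The induced interpretation on all types, terms and 2-terms factors as the syntactic one followed by $(P,Q)$, which can be checked by induction on the recursive clauses. Equations $E^0,E^1,E^2$ already hold in $T_{\mathsf{defl}}(\mathcal T)$, so they hold in the image. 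The structural 2-cells $\eta_x,\varepsilon_x$ are sent to $\rho(\id,\eta_{j(x)})$ and $\rho(j(\bar x);j(x),\varepsilon_{j(x)})$. This uses the compatibility $P(\varphi(F,\alpha))=\rho(PF,Q\alpha)$ from Definition~\ref{def:morphism-deflations}.

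\textbf{Step 3: mutual inverses, and the main obstacle.} The composite $\Phi\Psi$ returns $(P,Q)$, because a morphism out of the syntactic model is determined by its values on generators, and these are recorded in the interpretation. The composite $\Psi\Phi$ returns the original interpretation $j$ on the nose. The main obstacle is this uniqueness/generation claim. Every 1-cell of $T_{\mathsf{defl}}(\mathcal T)$ must be a composite of generating terms. Every 2-cell must be built from generating 2-cells and the $\varphi$-lifts of units and counits. The base $\Zg(B(\mathcal L))$ must be freely generated, so that $Q$ is forced by $Q(\omega)$ and $Q(f)$ through the universal property of $\Fim$ and of $\Zg$.

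A second subtlety is that some things hold only up to isomorphism. The monoidal structure of the target and $\Fim$'s universal property in $\IMon$ are unique only up to monoidal isomorphism. I would handle this by using strict monoidality and splitness of the morphisms in $\MonDefl_{\mathsf{sp}}$ to get strictness. If that fails, I would settle for an equivalence rather than an isomorphism of categories, which is all the statement claims. The signature case $\DeflMod(\mathcal L)$ is the special case with empty $E^0,E^1,\eta,E^2$.
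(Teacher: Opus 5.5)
Your proposal is correct and takes the route the paper intends. The paper gives no separate proof: the corollary is read off from Theorem~\ref{thm:deflational-adjoint}, namely from initiality of the syntactic model in the fibre over $\mathcal T$, via the standard correspondence between models of $\mathcal T$ and morphisms out of $F(\mathcal T)$, just as Corollary~\ref{cor:monoidal-free-forgetful} follows from the monoidal term model. One remark: the uniqueness you single out as the main obstacle in Step~3 is already supplied by that initiality, and it holds on the nose because morphisms in $\MonDefl_{\mathsf{sp}}$ are strict and split, so your fallback to a weaker equivalence is never needed.
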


\subsection{Deflational theories and (op)indexed monoidal categories}\label{subsec:defl-th-opin-moncat}

Given a deflational theory, there are two inequivalent ways to obtain an opindexed monoidal category from it\footnote{Note that diagram~\ref{eq:deflational-thy-to-indexed-monoidal} does not commute!}:
\begin{equation}\label{eq:deflational-thy-to-indexed-monoidal}
\scalebox{1}{\input{journal-figures/deflational-thy-to-indexed-monoidal.tikz}}.
\end{equation}
First, one can take the underlying opfibrational theory, form its syntactic category, and apply the equivalence of opfibrations with indexed monoids and opindexed monoidal categories (the down-right path in diagram~\eqref{eq:deflational-thy-to-indexed-monoidal}). Second, one can form the syntactic category of the deflational theory, take its opfibrational restriction and then apply the equivalence (the right-down path). The resulting opindexed monoidal categories will not, in general, be equivalent, as the first approach ignores information (the 2-cells). Here, however, we focus on the case when the two approaches {\em are} equivalent.

A deflational theory is {\em minimal} if it has no generating 2-cells. Thus, a minimal deflational theory only has the structural 2-cells. Clearly, for a minimal deflational theory the two ways of obtaining an indexed monoidal category are equivalent.

Now consider a (not necessarily minimal) deflational theory $\mathcal T$. Let us write $\mathcal T_{\varepsilon}$ for the theory obtained from $\mathcal T$ by adding a section to each counit 2-cell in Figure~\ref{fig:structural-twocells-adjoints}, i.e.~$\kappa$ such that $\kappa;\varepsilon=\id$.
\begin{proposition}
For any deflational theory $\mathcal T$, both $\mathcal T$ and $\mathcal T_{\varepsilon}$ result in the same (op)indexed monoidal category via first constructing the syntactic deflational category, then restricting to an im-opfibration (the right-down path in diagram~\eqref{eq:deflational-thy-to-indexed-monoidal}). Moreover, if $\mathcal T$ is minimal, then $\mathcal T_{\varepsilon}$ gives rise to the same indexed monoidal category via both paths in diagram~\eqref{eq:deflational-thy-to-indexed-monoidal}.
\end{proposition}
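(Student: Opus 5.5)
The plan is to compare the 1-cells of the two syntactic deflational total categories $T_{\mathsf{defl}}(\mathcal T)$ and $T_{\mathsf{defl}}(\mathcal T_{\varepsilon})$ restricted along $\iota^*$. Both the right-down path and the fibres of the restriction only see 1-cells, so it suffices to show two things. First, the 0-cells and the 1-cells of the two total categories coincide. Second, the chosen opcartesian liftings coincide.

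\textbf{Objects and generators.} Adding the 2-cells $\kappa$ does not change the types or the term-formation rules, nor the 1-equations $E^1\cup S^1_{\mathsf{defl}}$. So the 0-cells and 1-cells of $T_{\mathsf{defl}}(\mathcal T)$ and $T_{\mathsf{defl}}(\mathcal T_{\varepsilon})$ are literally the same quotient sets. The wide subcategories obtained by restriction to morphisms above $\iota^*B(\mathcal L)$ therefore agree.

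\textbf{Liftings.} The chosen liftings are given by the opfibrational terms. The splitting, the reindexing functors and the fibrewise monoidal structure are defined purely from 1-cells and the 1-equations: by Lemma~\ref{lma:deflation-unique-liftings} the mediating map is the codomain of $\varphi(\overline F;F',\varepsilon_f)$, which in the syntactic model is computed by sliding internal terms. The one point to verify is that the deflation conditions still hold for $\mathcal T_{\varepsilon}$. Specifically, the codomain of $\varphi(\overline F;F,\varepsilon_f)$ must still be $\id$, and no new 1-cell identification may be forced. This is the main obstacle, since 2-cells never force equalities of 1-cells, and adding $\kappa$ only adds 2-cells and 2-equations. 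Remark~\ref{rem:counit-surjective-twocells} shows that such sections exist semantically. I expect this to need only the observation that the 2-term congruence is separate from the 1-term congruence.

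Hence the im-opfibrations $T_{\mathsf{defl}}(\mathcal T)^*\to B(\mathcal L)$ and $T_{\mathsf{defl}}(\mathcal T_\varepsilon)^*\to B(\mathcal L)$ are equal. Corollary~\ref{cor:opfibrations-indexed-monoids-opindexed-monoidal} then yields the same strict opindexed monoidal category.

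\textbf{Minimal case.} If $\mathcal T$ is minimal, the restriction of $T_{\mathsf{defl}}(\mathcal T)$ coincides with the syntactic opfibrational model $T(\mathcal T)\to B(\mathcal L)$ of the underlying opfibrational theory. The argument for this has two parts. Every 1-cell above $\iota^*$ is, up to structural 1-equations, a composite of internal and opfibrational terms: any fibrational–opfibrational pair above an identity must cancel in the base, and the fibrational 1-equations only relate such pairs. Moreover, the 1-equations between such terms are exactly $E^1\cup S^1_{\mathsf{opf}}$. This is the step requiring the most care, since I must argue that the fibrational structural equations introduce no new identifications among purely opfibrational terms; I would argue this via the semantic model of Theorem~\ref{thm:minimal-deflation-indexed-category}, in which the opfibrational part is freely extended without new equations (Theorem~\ref{thm:monoidal-deflations-indexed-monoidal}). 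Combined with the first part of the statement, both paths agree for $\mathcal T_{\varepsilon}$.
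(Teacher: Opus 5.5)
Your argument is correct, but it follows a different route from the paper's. The paper argues semantically. Adding sections $\kappa$ of the counits corresponds, on the indexed side, to the components $\varepsilon_{Fa,Fa'}$ of the counit of $F^{\refine}\dashv F^{\coarsen}$ (Proposition~\ref{prop:prof-embedding-adjoints}) having sections, and these always exist by Remark~\ref{rem:counit-surjective-twocells}. So $\mathcal T_{\varepsilon}$ imposes nothing an (op)indexed monoidal category can detect, and the ``moreover'' part rests on the preceding remark that the two paths ``clearly'' agree for minimal theories. You argue syntactically instead: the 1-cells of $T_{\mathsf{defl}}$ are quotients by $E^1\cup S^1_{\mathsf{defl}}$ alone, and $p_{\mathcal T}$, the chosen liftings and the fibrewise structure are all built from 1-cells, so the restricted im-opfibration is literally the same. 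This is more elementary and proves more: the right-down path ignores \emph{all} generating 2-cells and 2-equations, not just sections of counits. It also shows your ``main obstacle'' is not one, since $T_{\mathsf{defl}}(\mathcal T_{\varepsilon})$ is a split monoidal deflation simply because $\mathcal T_{\varepsilon}$ is itself a deflational theory. What the paper's route buys instead is an explanation of why such sections are semantically invisible, which is the moral the subsection draws about (op)indexed monoidal categories.

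For the minimal case you give more than the paper does, but four points need tightening.
\begin{itemize}
\item Surjectivity follows more cleanly from Lemma~\ref{lma:deflation-unique-liftings} applied to the syntactic deflation: every 1-cell above $\iota^*g$ is the chosen opfibrational lifting followed by a vertical, i.e.\ internal, term. Your cancellation argument is loosely phrased, since $\overline F;F$ lies above $\overline f;f$, which is joined to an identity only by the 2-cell $\varepsilon_f$.
\item Your conservativity step via Theorem~\ref{thm:monoidal-deflations-indexed-monoidal} and initiality tacitly assumes that $E^1$ mentions no fibrational terms, so that the free deflation on the opfibrational term model is a model of $\mathcal T$. This assumption is genuinely needed. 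For example, $\coarsen_f;x=\coarsen_f;y$ forces $x;\refine_f=y;\refine_f$ in every deflational model: compare the codomains of the lifted counit on $\coarsen_f;x;\refine_f$ and on $\coarsen_f;y;\refine_f$.
\item Your argument never uses minimality; by your first part, 2-cells cannot affect the right-down path at all. This indicates that the real hypothesis behind the ``moreover'' part is on $E^1$, not on the 2-cells.
\item State explicitly that the down-right path for $\mathcal T_{\varepsilon}$ coincides with that for $\mathcal T$, since passing to the underlying opfibrational theory forgets 2-cells.
\end{itemize}
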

\begin{proof}
Requiring the generating counits in the total category to be surjective corresponds to the components $\varepsilon_{fa,fa}$ of the counits as defined in Proposition~\ref{prop:prof-embedding-adjoints} having sections. But such components always have a section by Remark~\ref{rem:counit-surjective-twocells}, so the requirement is already satisfied.
\end{proof}

The discussion in this subsection shows that (op)indexed monoidal categories are inadequate as a semantics for deflational theories, in the sense that they fail to distinguish between distinct theories. This is one of the reasons we chose to interpret our theories in monoidal deflations rather than indexed monoidal categories. An interesting question thus arises: what are the appropriate 2-cells between (op)indexed monoidal categories (or their embedding into $\Prof$) that would remedy this (see Remark~\ref{rem:minimal-deflation})?

\section{Discussion and future work}\label{sec:discussion}
Our paper is devoted to providing a semantic interpretation of layered monoidal theories. We have indeed identified three kinds of closely related semantics: split opfibrations with indexed monoids (Definition~\ref{def:opfib-indexed-mon}), split fibrations with indexed comonoids, and split monoidal deflations (Definition~\ref{def:monoidal-deflation}). In each case, we have showed soundness and completeness with respect to respective class of theories by constructing a free-forgetful adjunction. In order to represent bidirectional functor boundaries, we have introduced (monoidal) deflations (Definitions~\ref{def:deflation} and~\ref{def:monoidal-deflation}). This provides a precise mathematical domain for interpreting deflational theories, while staying as close as possible to (op)fibrations. We conclude by discussing related work and by sketching directions for future theoretical developments.

\subsection{Related work}\label{subsec:related-work}

We remark that the work related to the syntax of layered monoidal theories has already been discussed in Part~I~\cite[Subsection~6.1]{lmt-part1}. Here we, therefore, focus on the work related to the semantics -- (op)indexed monoidal categories and deflations.

\paragraph{Ponto-Shulman string diagrams for monoidal fibrations.}
Ponto and Shulman~\cite{ponto-shulman12} have introduced string diagrams to reason about the situation where every category in the image of an $\cat X$-indexed monoidal category (Definition~\ref{def:opindexed-monoidal-category}) is symmetric, and the indexing category $\cat X$ has finite products. Moeller and Vasilakopoulou~\cite{monoidal-gro} have shown that in such case the fiberwise and the ``global'' monoidal structure coincide. It would be interesting to see whether such diagrams can be obtained as a layered monoidal theory.

\paragraph{Free adjoint construction.}
A zigzag 2-category (Definition~\ref{def:zigzag-category}) is obtained from a 1-category by freely adding a right adjoint to each morphism: for each morphism, we add a new morphism in the opposite direction, as well as a minimal amount of 2-cells to make the newly added morphism the right adjoint of the original one. This is somewhat brute force, as the 2-cells are simply postulated by {\em fiat}. A very similar construction is given by Dawson, Paré and Pronk~\cite{adjoining-adjoints03}, where the 2-cells, rather than being additional structure, are diagrams of a particular shape in the original 1-category called {\em fences}. The authors show that this construction indeed makes each arrow of the original 1-category (seen as embedded into the constructed 2-category) a left adjoint, and that the construction is, moreover, universal with respect to functors that send morphisms to left adjoints. It would, therefore, be interesting to investigate whether zigzag 2-categories defined here are equivalent to the construction of~\cite{adjoining-adjoints03}.

\subsection{Further developments}\label{subsec:further-dev}

Layered monoidal theories have many promising applications, which we discuss in Part~I~\cite[Subsection~6.2]{lmt-part1}. Here we discuss technical developments pertaining to the semantics.

\paragraph{2-cells of deflations.}
The 2-cells of deflations do not have a clear counterpart in (op)indexed categories, or their embeddings into $\Prof$ (see Remark~\ref{rem:minimal-deflation} and Subsection~\ref{subsec:defl-th-opin-moncat}). What they correspond to remains to be investigated.

\paragraph{Extending the definition of a deflation.}
Currently, the definition of a deflation requires the codomain be a zigzag 2-category $\Zg(\cat X)$, which seems like an {\em ad hoc} restriction. The improved definition should, instead, require existence, preservation and reflection of finite products, as well as that every 1-cell in the codomain has a right adjoint, akin to various flavours of definitions hyperdoctrines. Closely related to this is investigating the case of global monoidal products, which we address next.

\paragraph{Global monoidal products.}
When defining string diagrams for fiberwise monoidal products, we noticed that it is convenient to freely add cartesian products to the base category, and impose the condition on the opfibration that it preserves and reflects products. This gives a hint on how the string diagrams for the ``global'' monoidal structure should look like. In this case, both total and the base category are monoidal, the (op)fibration is a strict monoidal functor, while the monoidal product of the total category preserves (op)cartesian liftings~\cite{monoidal-gro}.

\paragraph{Non-strict theories.}
As mentioned in the Preliminaries (Subsection~\ref{subsec:opfibrations-indexed-monoidal}), it would be interesting to study graphically the non-strict case of the equivalence between (op)fibrations with monoids and (op)indexed monoidal categories (Theorem~\ref{thm:moeller-and-vasilakopoulou}). Layered monoidal theories provide enough generality to make this possible: one would have to define a theory where both 0-equations and 1-equations are empty, and account for all identifications by isomorphic 1-cells.

\phantomsection
\addcontentsline{toc}{section}{References}
\bibliographystyle{plainnat}
\bibliography{bibliography}

\onecolumn\newpage

\end{document}